\documentclass[10pt,reqno,hidelinks]{amsart}
\usepackage{amssymb}
\usepackage{amsmath, amssymb, verbatim, url, mathrsfs} 
\usepackage{stmaryrd}
\usepackage{tabularx}
\usepackage{mathtools}
\usepackage{verbatim}
\usepackage{amsthm}
\usepackage{framed}
\usepackage{cite}
\usepackage{wasysym}
\usepackage{upgreek}
\usepackage{color}
\usepackage[dvipsnames]{xcolor}
\usepackage{array}
\usepackage{tensor}
\usepackage{accents} 
\usepackage[T1]{fontenc}
\usepackage{dsfont}
\usepackage[colorlinks,linkcolor=blue,citecolor=blue]{hyperref}
\usepackage{enumerate}
\usepackage{enumitem}
\usepackage[normalem]{ulem}
\usepackage{longtable}
\usepackage{mathtools}

\numberwithin{equation}{section}

\theoremstyle{definition} 

\newtheorem{proposition}{Proposition}[section]
\newtheorem{lemma}[proposition]{Lemma}
\newtheorem{corollary}{Corollary}[section]
\newtheorem{theorem}{Theorem}[section]

\newtheorem{remark}{Remark}[section]

\newtheorem{definition}[proposition]{Definition}

\newtheorem*{theorem*}{Theorem}
\newtheorem*{mquestion*}{Main Question}
\newtheorem*{claim*}{Claim}
\newtheorem{claim}{Claim}[section]
\newtheorem*{intuition*}{Intuition}

\newcommand{\ca}{\mathsf{a}}
\newcommand{\ba}{\bar{\mathsf{a}}}
\newcommand{\cb}{\mathsf{b}}
\newcommand{\cg}{\mathsf{g}}

\newcommand{\cc}{\mathsf{c}}
\newcommand{\mft}{\mathfrak{t}}
\newcommand{\mfg}{\mathfrak{g}}
\newcommand{\cm}{\mathsf{m}}
\newcommand{\bc}{\bar{\mathsf{c}}}

\newcommand{\ufo}{\underline{f_0}}

\newcommand{\ck}{\mathsf{k}}
\newcommand{\mf}{\beta}

\newcommand{\vertiiii}[1]{{\left\vert\kern-0.25ex\left\vert\kern-0.25ex\left\vert\kern-0.25ex\left\vert #1 \right\vert\kern-0.25ex\right\vert\kern-0.25ex\right\vert\kern-0.25ex\right\vert}}
\newcommand{\vertiii}[1]{{\left\vert\kern-0.25ex\left\vert\kern-0.25ex\left\vert #1 \right\vert\kern-0.25ex\right\vert\kern-0.25ex\right\vert}}

\newcommand{\mfu}{\mathfrak{u}}

\newcommand{\mfv}{\mathfrak{v}}

\newcommand{\uf}{\underline{f}}

\newcommand{\htau}{\hat{\tau}}
\newcommand{\ttau}{\tilde{\tau}}

\newcommand{\txi}{\tilde{\zeta}}

\newcommand{\Rbb}{\mathbb{R}}
\newcommand{\Zbb}{\mathbb{Z}}
\newcommand{\Tbb}{\mathbb{T}}

\newcommand{\del}[1]{{\partial_{#1}}}

\newcommand{\AND}{{\quad\text{and}\quad}}

\newcommand{\Li}{L^\infty}

\newcommand{\supp}{\mathop{\mathrm{supp}}}

\newcommand{\p}[1]{
	\begin{pmatrix}
		#1
	\end{pmatrix}
}

\newcommand{\ta}{\mathtt{a}}

\newcommand{\tc}{\mathtt{c}}

\newcommand{\U}{\mathcal{U}}

\newcommand{\Pbp}{\mathbb{P}^{\perp}}

\newcommand{\be}{\begin{equation}}
	\newcommand{\ee}{\end{equation}}

\allowdisplaybreaks

\DeclareMathOperator{\diag}{diag}

\setlength{\hoffset}{-20mm}
\setlength{\voffset}{-17mm}

\setlength{\textwidth}{17cm}
\setlength{\textheight}{23.5cm}%

\setlength{\marginparwidth}{25mm}%

\setlength\ULdepth{3pt}

\begin{document}

	\title[Emergence of nonlinear Jeans  instabilities]{The emergence of nonlinear Jeans-type instabilities for quasilinear wave equations}

	\author{Chao Liu}
	
	\address[Chao Liu]{Center for Mathematical Sciences and School of Mathematics and Statistics, Huazhong University of Science and Technology, Wuhan 430074, Hubei Province, China; Beijing International Center for	Mathematical Research (BICMR), Peking University, No.5 Yiheyuan Road Haidian District,
		Beijing, China.}
	\email{chao.liu.math@foxmail.com}

\begin{abstract} 
This article contributes a key ingredient to the longstanding open problem of understanding the fully nonlinear version of Jeans instability, as highlighted by A. Rendall \cite{Rendall2005}. 
We establish a family of  self-increasing  blowup solutions for the following class of quasilinear wave equations (a model of the Peebles' and Noh–Hwang's equations) that have not previously
been studied: 
\begin{equation*} 
	\partial^2_t  \varrho-  \biggl( \frac{ \mathsf{m}^2 (\partial_{t}\varrho  )^2}{(1+\varrho )^2}  + 4(\mathsf{k}-\mathsf{m}^2)(1+\varrho  )\biggr)  \Delta \varrho  =  F(t,\varrho,\partial_{\mu} \varrho)     
\end{equation*}   
where $F$ is given by
\begin{equation*}
	F(t,\varrho,\partial_{\mu} \varrho):=
	\underbrace{\frac{2}{3 }  \varrho  (1+  \varrho  ) }_{ \text{(i) self-increasing}} \underbrace{-\frac{1}{3} \partial_{t}\varrho  }_{ \text{(ii) damping}} 
	+ \underbrace{\frac{4}{3} \frac{(\partial_{t}\varrho )^2}{1+\varrho } }_{\text{(iii) Riccati}} +  \underbrace{ \biggl(\mathsf{m}^2 \frac{ (\partial_{t}\varrho )^2}{(1+\varrho )^2}  + 4(\mathsf{k}-\mathsf{m}^2) (1+\varrho  )  \biggr)  q^i \partial_{i}\varrho }_{\text{(iv) convection}} -  \mathtt{K}^{ij}   \partial_{i}\varrho\partial_{j}\varrho    .  
\end{equation*}
The result implies the solutions can attain arbitrarily large values  over time, leading to self-increasing singularities at some  future endpoints of null geodesics provided the inhomogeneous perturbations of data are sufficiently small. Moreover, the solution exhibits almost blowup behavior in the long-wavelength domain.   This phenomenon is referred to as the \textit{nonlinear Jeans-type instability} because this wave equation is closely related to the nonlinear version of the Jeans instability problem in the Euler--Poisson and Einstein--Euler systems, which characterizes the formation of nonlinear structures in the universe. The growth rate of $\varrho$ is significantly faster than that of the solutions to the classical linearized Jeans instability.

 \vspace{2mm}

{{\bf Keywords:} blowup, quasilinear wave equations, ODE blowup, Jeans instability, self-increase blowup}

\vspace{2mm}

{{\bf Mathematics Subject Classification:} Primary 35L05; Secondary 35A01, 35L02, 35L10 
}
\end{abstract}

	\maketitle
	
	\setcounter{tocdepth}{2}
	
	\pagenumbering{roman} \pagenumbering{arabic}

\section{Introduction}
Let $\ck>0$, $\cm^2 \leq \ck$, $\beta \in (0,+\infty)$, $\beta_0 \in (0,+\infty)$,  $t_0 \in (0,+\infty)$ and $\mathbf{t}_0=\ln t_0$ be given constants, and let $q^i$ be a given vector field. Suppose $\psi \in C^1_0(\Rbb^n)$ and $\psi_0 \in C^1_0(\Rbb^n)$ are given positive-valued functions with 
$\supp \psi = \supp \psi_0 =B_1(0)$  (where $B_1(0)$ denotes the unit open ball centered at the origin). Let $\mathtt{K}_{ij}$ be analytic functions in all their variables. Our goal is to find the self-increasing blowup  solutions to the following  non-covariant quasilinear wave equation with competing nonlinear terms\footnote{The damping is detrimental to the instability and does not play a dominant role in the analysis.}.
\begin{enumerate}[label={(Eq.\arabic*)}]
	\item\label{Eq1}  $
	\begin{gathered} 
		 \partial^2_t \varrho-  \cg \delta^{ij} \del{i}\del{j} \varrho  = 	\underbrace{\frac{2}{3 }  \varrho  (1+  \varrho  ) }_{ \text{(i) self-increas.}} \underbrace{-\frac{1}{3} \del{t}\varrho  }_{ \text{(ii) damping}} 
		+ \underbrace{\frac{4}{3} \frac{(\del{t}\varrho )^2}{1+\varrho } }_{\text{(iii) Riccati}} +  \underbrace{ \cg q^i \del{i}\varrho }_{\text{(iv) convec.}} -  \mathtt{K}^{ij}(t,\varrho,\del{\mu}\varrho)    \del{i}\varrho\del{j}\varrho     , \quad  \text{in}\; [\mathbf{t}_0,\mathbf{t}^\star) \times \Rbb^n,    \label{e:eq3}       \\
		  \varrho|_{t=\mathbf{t}_0}= \beta+ \psi(x^k)   \AND 	\del{t} \varrho|_{t=\mathbf{t}_0}=  \beta_0 +  \psi_0(x^k)   ,  \quad   \text{in}\; \{\mathbf{t}_0 \} \times \Rbb^n ,  
	\end{gathered}
	$\\
	where\footnote{In this article, we use the index convention given in \S\ref{s:AIN}, i.e., $\mu=0,\cdots,n$ and $i=1,\cdots,n$, $x_0=t$, and the Einstein summation convention. } 
	\begin{equation*}
	 \cg= \cg(   \varrho,\del{t}\varrho )  =  \cm^2 \frac{ (\del{t}\varrho )^2}{(1+\varrho )^2}  + 4(\ck-\cm^2) (1+\varrho  )    .
	\end{equation*}  \end{enumerate}

To study \ref{Eq1}, this article first applies a simple exponential-time coordinate transformation (where $e^t$ is treated as the new time) to convert it into \ref{Eq2}, and then analyzes \ref{Eq2}.
	\begin{enumerate}[label={(Eq.\arabic*)}] 
	\setcounter{enumi}{1}
	\item\label{Eq2}$ 
	\begin{gathered}
		\partial^2_t \varrho-  	\cg \delta^{ij} \del{i}\del{j} \varrho = 
		\frac{2}{3t^2}  \varrho  (1+  \varrho )-\frac{4}{3 t} \del{t}\varrho   + \frac{4}{3} \frac{(\del{t}\varrho )^2}{1+\varrho } + \cg q^i \del{i}\varrho   - \frac{1}{ t^2} \mathtt{K}^{ij}(t,\varrho,\del{\mu}\varrho ) \del{i}\varrho\del{j}\varrho      , \quad    \text{in}\; [t_0,t^\star) \times \Rbb^n,    
		 \\
	  \varrho|_{t=t_0}=   \beta + \psi(x^k)  \AND 	\del{t} \varrho|_{t=t_0}=     \beta_0 + \psi_0(x^k)   ,  \quad  \text{in}\; \{t_0 \} \times \Rbb^n ,  
	\end{gathered}$ \\
	where  
	\begin{equation}\label{e:Fdef}
		\cg =\cg(t, \varrho,\del{t}\varrho )  :=     \cm^2\frac{ (\del{t}\varrho )^2}{(1+\varrho )^2}  + 4(\ck-\cm^2) \frac{1+\varrho  }{t^2} . 
	\end{equation} 
\end{enumerate}
For convenience, we denote $\mathtt{g}^{\alpha\beta} =-\delta^\alpha_0\delta^\beta_0+\cg\delta^{ij}\delta_i^\alpha\delta_j^\beta$ and view $\mathtt{g}_{\alpha\beta}$ as a Lorentzian metric.

\begin{claim}[see Appendix \ref{s:pfclm} for the proof]\label{t:clm2}
Equations \ref{Eq1} and \ref{Eq2} can be transformed into one another through an exponential-logarithmic time transformation.
\end{claim}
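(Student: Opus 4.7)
The claim is a direct computational verification that the substitution $\mathbf{t}=\ln t$ (equivalently $t=e^{\mathbf{t}}$) carries \ref{Eq2} to \ref{Eq1}. The plan is to apply this change of variable, track how each term scales in $t$, and confirm that the nonlinear structure, including the principal part $\cg$, is preserved up to relabeling.

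First, I would record the chain rule consequences: with $\partial_{\mathbf{t}}=t\,\partial_t$, one has $\partial_{\mathbf{t}}^2=t\,\partial_t+t^2\partial_t^2$, hence
\begin{equation*}
t^2\partial_t^2\varrho=\partial_{\mathbf{t}}^2\varrho-\partial_{\mathbf{t}}\varrho,\qquad t\,\partial_t\varrho=\partial_{\mathbf{t}}\varrho,\qquad t^2(\partial_t\varrho)^2=(\partial_{\mathbf{t}}\varrho)^2.
\end{equation*}
Then I would multiply the equation \ref{Eq2} through by $t^2$ so that every explicit $1/t$ and $1/t^2$ is absorbed and only the combinations above remain. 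The terms $\frac{2}{3t^2}\varrho(1+\varrho)$ and $-\frac{4}{3t}\partial_t\varrho$ become $\frac{2}{3}\varrho(1+\varrho)$ and $-\frac{4}{3}\partial_{\mathbf{t}}\varrho$, while $\frac{4}{3}\frac{(\partial_t\varrho)^2}{1+\varrho}$ turns into $\frac{4}{3}\frac{(\partial_{\mathbf{t}}\varrho)^2}{1+\varrho}$, and the $\mathtt{K}^{ij}$ term loses its $1/t^2$.

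The only slightly subtle point is the coefficient $\cg$. Using \eqref{E:Fdef}, one computes
\begin{equation*}
t^2\,\cg(t,\varrho,\partial_t\varrho)=\cm^2\frac{t^2(\partial_t\varrho)^2}{(1+\varrho)^2}+4(\ck-\cm^2)(1+\varrho)=\cm^2\frac{(\partial_{\mathbf{t}}\varrho)^2}{(1+\varrho)^2}+4(\ck-\cm^2)(1+\varrho),
\end{equation*}
which is exactly the $\cg$ appearing in \ref{Eq1}. Thus the principal spatial part $-t^2\cg\,\delta^{ij}\partial_i\partial_j\varrho$ and the convective term $t^2\cg\,q^i\partial_i\varrho$ are converted to their \ref{Eq1} counterparts verbatim. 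Combining this with the earlier substitutions and moving the spurious $\partial_{\mathbf{t}}\varrho$ coming from $\partial_{\mathbf{t}}^2=\partial_{\mathbf{t}}+t^2\partial_t^2$ to the right-hand side, the coefficient of $\partial_{\mathbf{t}}\varrho$ becomes $-\frac{4}{3}+1=-\frac{1}{3}$, matching the damping term in \ref{Eq1}. The reverse implication is immediate by running the same substitution with $t=e^{\mathbf{t}}$.

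Finally, I would verify the data: at $t=t_0$ we have $\mathbf{t}=\mathbf{t}_0=\ln t_0$, so the value $\varrho|_{\mathbf{t}_0}=\beta+\psi(x^k)$ is preserved; the first derivative transforms by $\partial_{\mathbf{t}}\varrho|_{\mathbf{t}_0}=t_0\,\partial_t\varrho|_{t_0}=t_0(\beta_0+\psi_0(x^k))$, so the data in \ref{Eq1} correspond to \ref{Eq2} after rescaling $\beta_0,\psi_0$ by $t_0$, which is harmless for the statement. There is no substantive obstacle here; the proof is bookkeeping, and the only place needing care is ensuring that the $t^2$ normalization converts the two pieces of $\cg$ consistently, which the computation above confirms.
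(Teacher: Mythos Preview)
Your proof is correct and follows essentially the same approach as the paper: multiply \ref{Eq2} by $t^2$, apply the change of variable $\mathbf{t}=\ln t$ with $\partial_{\mathbf{t}}=t\partial_t$ and $t^2\partial_t^2=\partial_{\mathbf{t}}^2-\partial_{\mathbf{t}}$, and check that each term matches. Your additional remarks on the transformation of $\cg$ and on the $t_0$-rescaling of the initial velocity data are accurate and go slightly beyond what the paper records explicitly.
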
  
\begin{remark}[Focusing on \ref{Eq2}]
	Since, according to Claim \ref{t:clm2},  \ref{Eq1} can be transformed into  \ref{Eq2}, the subsequent sections of this article focus on  \ref{Eq2}, with results that can be reformulated for  \ref{Eq1}. 
\end{remark}
\begin{remark}[Main goals] 
	The main goal of this article is to study the long-time behavior of solutions to equations \ref{Eq1} and \ref{Eq2}, which serve as toy models of the Euler--Poisson and Einstein--Euler systems after specific transformations and approximations (the Peebles and Noh–Hwang equations, see Remark \ref{t:origin}). We conclude that 
	\begin{enumerate}
		\item (Self-increasing blowup at some points) by making the initial inhomogeneities sufficiently small (i.e., with very long wavelengths), the solutions can attain arbitrarily large values, leading to self-increasing singularities at some  future endpoints of null geodesics; 
		\item (Almost blowups in the long-wavelength domain) the smaller the initial inhomogeneity is, the larger the long-wavelength domain can be ensured, thereby guaranteeing that the solution can grow arbitrarily large, provided the initial inhomogeneity is sufficiently small.  This is consistent with both linear physical analysis and observations. Outside the long-wavelength region, where the wavelengths become shorter, other types of singularities may also occur. 
		\item Additionally, we provide estimates of the growth rates.
	\end{enumerate}
	 For detailed results, see Theorems \ref{t:mainthm1} and \ref{t:mainthm2}.  
\end{remark}	 
\begin{remark}[Motivations]	 
	This work is motivated by the nonlinear Jeans-type instability problem for the Euler--Poisson system (and the Einstein--Euler system as well), a crucial aspect in astrophysics that characterizes the mass accretions of self-gravitating systems. Understanding the formation of stellar systems and nonlinear structures in the universe relies on the Jeans instability. However, except for our recent partial results \cite{Liu2022,Liu2022b,Liu2023a,Liu2023}, research on the Jeans instability has mainly focused on the linear regime since Jeans' work \cite{Jeans1902} in 1902 for Newtonian gravity, which was later extended to general relativity by Lifshitz \cite{Lifshitz1946} in 1946 and to the expanding background universe by Bonnor \cite{Bonnor1957} (see also \cite{Zeldovich1971,ViatcehslavMukhanov2013}). Despite advancements, the linearized Jeans instability is inadequate for dense regions, and its predicted growth rate (on the order of $\sim t^{\frac{2}{3}}$, see Peebles \cite{Peebles2020}, Weinberg \cite{Weinberg1976},  Mukhanov \cite{ViatcehslavMukhanov2013}, Zeldovich \cite{Zeldovich1971}, Bonnor \cite{Bonnor1957} and Liu \cite{Liu2022}) is too slow to explain observed large-scale inhomogeneities in the universe and galaxy formations. These inadequacies emphasize the need for a comprehensive study of the nonlinear analysis of Jeans instability. Despite Rendall's remark in \cite{Rendall2005} that there are currently no results on fully nonlinear Jeans instability, our series of works aim to address this \textit{long-standing open problem in astrophysics}. This article contributes to a key ingredient of this aim since the equations \ref{Eq1}--\ref{Eq2} play a crucial role (see the next Remark \ref{t:origin}) in understanding and modeling the nonlinear Jeans instability for the Euler–Poisson system (see, e.g.,  \cite{Peebles2020,ViatcehslavMukhanov2013,Zeldovich1971,Scherrer2001,Liu2022,Liu2023,Liu2023a,Noh2004,Hwang2013a,Hwang2005,Hwang2007,Hwang2006,Hwang1999,Noh2005,Fosalba1998} and the recent review \cite[\S $4.4$]{Fajman2024} for more details). Besides,  it has its own interest in studying the solutions of the quasilinear wave equation \ref{Eq1}.   
\end{remark}
\begin{remark}[The origin of \ref{Eq2}]	 \label{t:origin}	 
A large class of wave equations of the form of Equation \ref{Eq2} has appeared extensively in the astrophysical and cosmological literature related to the study of structure formation, nonlinear Jeans instability, and higher-order approximations of nonlinear Jeans instability. However, these works in the  cosmological literature either derive such equations under various specific conditions or solve them using numerical methods, various simplifications, approximations, or even linearizations. To date, there has been no substantial analytical study of this class of nonlinear wave equations.  For example, equations of the form of \ref{Eq2} have been derived in the renowned work \cite[\S II.9]{Peebles2020} by  Nobel laureate Peebles, where Peebles obtained a nonlinear equation for the density contrast $\delta$ (denoted as $\varrho$ in this article) from the cosmological Euler–Poisson system. Similar equations have also been derived and analyzed in our previous works \cite{Liu2022,Liu2022b,Liu2023a}. In addition, combining the covariant equations of Hawking and Ellis  \cite{Hawking1966,Ellis2009} with Bardeen's concept of gauge invariants \cite{Bardeen1980}, Noh and Hwang (see, e.g., \cite{Noh2004,Hwang2013a,Hwang2005,Hwang2007,Hwang2006,Hwang1999,Noh2005}) carried out a series of studies deriving second- and higher-order approximations of the Einstein–Euler system in various gauges. Under certain conditions and approximations, expanding the Peebles and Noh–Hwang equations yields a second-order form structurally similar to \ref{Eq2}. Thus, \textit{Equation \ref{Eq2} captures their essential terms} and serves as a \textit{simplified model and effective approximation} for this broad class of equations. Although the specific forms of the Peebles and Noh–Hwang equations vary with physical conditions, we aim to develop methods based on this model that are applicable to their general study.
  In the near future, we will return to the full Euler–Poisson equations. Beyond these, equations of this form can also be found in various other works, such as those by Fosalba, Gaztañaga, and Scherrer \cite{Fosalba1998,Scherrer2001}. In summary, such equations are widespread in the literature on cosmology and structure formation, highlighting the need for further study.  
\end{remark}
\begin{remark}[Instabilities] 
	If the initial data are zero, it is clear that \ref{Eq1} and \ref{Eq2} have the trivial solution $\varrho \equiv 0$. However, if the initial perturbations $\beta+\psi(x^i)$ and $\beta_0+\psi_0(x^i)$ are positive and the inhomogeneities $\psi(x^i)$ and $\psi_0(x^i)$ are small (long wavelength instabilities), we will conclude that the solutions can increase to arbitrarily large values, significantly deviating from the zero solution, which reflects the so-called ``instability''.
	In fact, this instability for homogeneous perturbations (i.e., $\psi=\psi_0=0$) has been previously shown in our previous work \cite{Liu2022b,Liu2023a} (see also Theorem \ref{t:mainthm0} for details) since in the homogeneous case, \ref{Eq2} simplifies to an ordinary differential equations (ODEs) as given by \eqref{e:feq0b}--\eqref{e:feq1b} below. 
\end{remark}
\begin{remark}[Classical Jeans instability for expanding universe] 
	Recall the classical Jeans instability for expanding universe: when considering very long wavelengths (much larger than the Jeans scales), the perturbations are nearly homogeneous. By linearizing the Euler–Poisson system and following standard procedures, such as those in \cite[\S$6.3$]{ViatcehslavMukhanov2013} with a Fourier transformation focusing on very low-frequency terms (frequencies near zero), we arrive at an approximated ODE:
\begin{equation}\label{e:LJeans}
	\partial^2_t f(t)+\frac{4}{3t} \del{t} f(t)-\frac{2}{3t^2} f(t)  =   0 . 
\end{equation}
	This is an Euler-type ODE with the general solution:
	$f(t)=c_1 t^{-1}+c_2 t^{\frac{2}{3}}$. 
	After linearizing \ref{Eq2} and focusing on the very low-frequency wave, \ref{Eq2} reduces to \eqref{e:LJeans} as well. In fact, the Euler–Poisson system can be simplified and modeled by \ref{Eq2} (see \cite{Liu2023} for similar ideas). Since the solution to \eqref{e:LJeans} includes a self-increasing term $t^{\frac{2}{3}}$, this behavior is termed the classical Jeans instability in astrophysics because the solution deviates from zero and grows as $t^{\frac{2}{3}}$ (see \cite{ViatcehslavMukhanov2013}). Correspondingly, we call similar nonlinear behaviors (if such behaviors can be found) in the original Euler–Poisson system nonlinear Jeans instability. As mentioned earlier, since \ref{Eq2} models the Euler–Poisson system, we refer to this self-increasing behavior as a nonlinear Jeans-type instability. We can also prove that the solution develops at least one self-increasing singularity.
\end{remark}
\begin{remark}[Without null conditions, etc.]
	It is evident that  \ref{Eq1} and \ref{Eq2} fail to satisfy Klainerman’s null condition \cite{Klainerman1984}, and John and Klainerman's conditions for  almost global existences \cite{John1984}. Although the global existence and lifespan of quasilinear wave equations have been extensively studied (see \cite{Li2017,Hoermander1997} and the references therein for a detailed review), \ref{Eq1} does not fit within any of the existing frameworks that have been explored.  
\end{remark} 
\begin{remark}[Competitive nonlinear terms]\label{t:nlterms} 
	On the right-hand side of \ref{Eq1}, as  labeled under the respective terms, there are four types of nonlinear terms that compete with each other. They are: 
	\begin{enumerate}[leftmargin=*,label={(\roman*)}]
		\item The self-increasing term (or Jeans-type term), which drives  the solution itself to grow similarly to the   classical Jeans instability \eqref{e:LJeans}; 
		\item The damping term, which suppresses the solution and counteracts the self-increasing term. However, the damping is not dominant, while the other nonlinear terms are  dominant and comparable, as discussed in Remark \ref{t:nlnrmech}; 
		\item  The Riccati term, which can cause blowups in the derivatives of $\varrho$ and lead to the formation of shock singularities  (see \cite{Speck2016b, Speck2016, Holzegel2016, Speck2016a, Miao2016, Miao2018} for details). If only term (iii) is present on the right-hand side of \ref{Eq1} and $\varrho$ is spatially homogeneous, then by multiplying the equation by $1/(1+\varrho)$, it reduces to
		$\partial_t \left( \partial_t \ln(1+  \varrho ) \right)   =    \frac{1}{3}  (\partial_t \ln(1+  \varrho ))^2$ 
		which is a Riccati equation. Consequently, we still refer to (iii) as the Riccati term, and this term results in finite-time blowups of both $\partial_t \ln(1+  \varrho )$ and $\varrho$  itself;  
		\item  The convection term describes the transport or advection of the wave due to the movement of the medium or field. It typically represents how the wave propagates through a medium with a velocity field, affecting both the speed and direction of wave propagation. The convection term introduces a directional bias in $\cg q^i$, causing the wave to propagate more strongly in a particular direction depending on the sign and magnitude of $\cg q^i$.
	\end{enumerate}
	If the right-hand side of \ref{Eq1} only includes the Riccati term and $\cm=0$, \ref{Eq1} reduces to an equation closely related to the shock formation problems studied in \cite{Speck2016b, Speck2016, Holzegel2016, Speck2016a}. However, due to the presence of the self-increasing, damping, and convection terms, this article concludes that the solution can reach arbitrarily large values and develop self-increasing singularities at certain future endpoints of null geodesics, provided the inhomogeneous part of the data is sufficiently small. Once the Riccati term eventually becomes dominant, $\varrho$ is no longer small. Even if it leads to a shock, it becomes a problem involving a large variable $\varrho$, as pointed in \cite[Remark 1.9]{Speck2016}.
\end{remark}
\begin{remark}[Coefficients] 
	In \ref{Eq1} and \ref{Eq2}, the coefficients $\frac{4}{3}$, $\frac{2}{3}$, and $\frac{4}{3}$ are used, though these values can be generalized in future studies. We specifically choose this set of coefficients because \ref{Eq2} with these values is closely related to the nonlinear version of the Jeans instability problem in the Euler–Poisson system (see \cite{ViatcehslavMukhanov2013, Zeldovich1971, Scherrer2001, Liu2023, Liu2023a} for details). As a result, \ref{Eq2} can be viewed as a simplified model of the nonlinear Jeans instability in expanding Newtonian universes.
\end{remark}

\subsection{Reference ODEs}\label{s:refsol}
To state the main results, we need the following reference ODEs studied in our previous article \cite{Liu2022b} for any given positive constants $\beta\in(0,+\infty)$ and $\beta_0\in(0,+\infty)$:
\begin{gather}
	\partial^2_\mft f(\mft)+\frac{4}{3 \mft} \del{\mft} f(\mft)-\frac{2}{3 \mft^2}  f(\mft)(1+  f(\mft))-\frac{4}{3}\frac{(\del{\mft} f(\mft))^2}{1+f(\mft)}=   0 , \label{e:feq0b}\\
	f(t_0)= \beta>0 \AND
	\del{t}f(t_0)=   \beta_0  >0.  \label{e:feq1b}
\end{gather} 
In this article, we refer to \eqref{e:feq0b}--\eqref{e:feq1b} as the \textit{reference ODE} for \ref{Eq1} and \ref{Eq2}. For general results on its solution, please refer to Appendix \ref{s:ODE} or \cite{Liu2022b}. We denote $t_m$ as the \textit{blowup time} of the solution $f(\mft)$, where $t_m>t_0>0$.

\subsection{Main Theorems}\label{s:mainthm}
Before presenting the main theorems, we define the following constants, which depend only on the initial data $t_0$, $\beta$, and $\beta_0$. These constants are used to characterize the self-increasing behavior of the solution. 
\begin{gather}
	\mathtt{A}:= \frac{3t_0 }{ 5 }\biggl(  \frac{t_0   \mf_0}{(1+\mf)^2} - \frac{2}{3}  \frac{\mf  } {1+\mf} \biggr), \quad 	\mathtt{B}:=   - \frac{3}{5 t_0^{\frac{2}{3} } } \Bigl(  \frac{\mf }{1+\mf}  + \frac{t_0 \mf_0}{(1+\mf)^2} \Bigr)<0,  \label{e:ttA}  \\ 
	\mathtt{C}:=   \frac{3} {5 t_0^{ \frac{2}{3}}} \Bigl( \ln( 1+\mf) +  \frac{t_0\mf_0}{1+\mf}\Bigr) >0 \AND 
	\mathtt{D}:=  
	\frac{2 t_0 }{5}  \Bigl(  \ln( 1+\mf)  - \frac{3} { 2} \frac{t_0\mf_0}{1+\mf}\Bigr).  \label{e:ttD}  
\end{gather}

To simplify the expressions in the statements and the calculations, we introduce \textbf{assumptions} on the initial data $\beta$ and $\beta_0$, as well as on the parameters $\ck$ and $q^i$ in \ref{Eq1} and \ref{Eq2}. It is important to note that these assumptions can be relaxed or even removed with additional effort and calculations using our method.
\begin{enumerate}[label={(A\arabic*)}]
	\item \label{Asp1} The initial data satisfies $\beta_0^2\geq 4\beta (1+\beta)^{2} t_0^{-2}$; 
	\item \label{Asp2}  The direction of convection is assumed to be constant and can be normalized $q^i=|q|\delta^i_1$ and $|q|\in(3,100)$; 
	\item \label{Asp3} $\ck=\frac{1}{4}$. 
\end{enumerate}

To present our main theorems, we introduce the following domains (illustrated in Fig. \ref{f:fig0}) to simplify the exposition. Specifically, these domains represent the domain of the \textit{inhomogeneous solution} $\mathcal{I}$ and the domain of the \textit{homogeneous solution} $\mathcal{H}$, denoted by
\begin{align}
	\mathcal{I}:=&\left\{(t,x)\in[t_0,t_m)\times \Rbb^n\;\bigg|\; |x|<1 +  \int_{t_0}^{t}   \sqrt{  \cg(y,f(y),f_0(y)) } dy \right\} ,  \label{e:cai}\\
	\mathcal{H}:=&\left\{(t,x)\in[t_0,t_m)\times \Rbb^n\;\bigg|\; |x| \geq 1 +  \int_{t_0}^{t}   \sqrt{  \cg(y,f(y),f_0(y)) } dy \right\} ,  \label{e:cah}
\end{align}
and 
the \textit{domain of influence} of the \textit{inhomogeneous data}, enclosed by a \textit{characteristic conoid surface} $\mathcal{C}$, 
\begin{equation}\label{e:char1}
	\mathcal{C}:=\left\{(t,x)\in[t_0,t_m)\times \Rbb^n\;\bigg|\; |x|=1 +  \int_{t_0}^{t}   \sqrt{  \cg(y,f(y),f_0(y)) } dy \right\}  . 
\end{equation}

The following theorems assert (see Fig. \ref{f:fig0b}) there is a hypersurface $\mathcal{T}$ (depending on the smallness of the inhomogeneous data) that separates the inhomogeneous domain $\mathcal{I}$ into two domains: an upper dark-shaded domain  and a lower light-shaded domain. The solution beneath the surface $\mathcal{T}$ (denoted as $\mathcal{K}$, see Theorem \ref{t:mainthm1} and \ref{t:mainthm2} for precise definitions) and within the homogeneous domain $\mathcal{H}$—that is, the solution in $\mathcal{K} \cup \mathcal{H}$ (comprising the light-shaded and white domains)—can be determined within this article. However, the solution in the upper dark-shaded domain, $\mathcal{I} \setminus \mathcal{K}$, remains unknown, which we refer to as the \textit{uncharted domain} $\mathcal{I} \setminus \mathcal{K}$. Moreover, we establish that the solution is inhomogeneous in the light-shaded domain $\mathcal{K} \cap \mathcal{I}$ but homogeneous in the white domain $\mathcal{H}$. Additionally, the known solution exhibits self-increasing behavior and \textit{reaches a self-increasing singularity at the point $p_m = (t_m, +\infty, 0, \dots, 0)$}. Although the solution itself grows within $\mathcal{K}\cup\mathcal{H}$, the spatial gradients of the solution remain small in this region. Consequently, we refer to $\mathcal{K}\cup\mathcal{H}$ as the \textit{long-wavelength domain}.

\medskip

\begin{tabularx}{0.95\textwidth} { 
		| >{\raggedright\arraybackslash}X 
		| >{\centering\arraybackslash}X 
		| >{\raggedleft\arraybackslash}X | }
	\hline		
	The white domain $\mathcal{H}$  & The homogeneous and self-increasing solution    \\
	\hline
	The light-shaded domain $\mathcal{K}\cap \mathcal{I}$ & The inhomogeneous and self-increasing solution    \\
	\hline
	The dark-shaded domain $\mathcal{I}\setminus \mathcal{K}$ &  The solution remains unknown  \\
	\hline
\end{tabularx}

\begin{figure}[htbp]
	\begin{minipage}[t]{0.5\linewidth}
		\centering
		\includegraphics[width=1\textwidth]{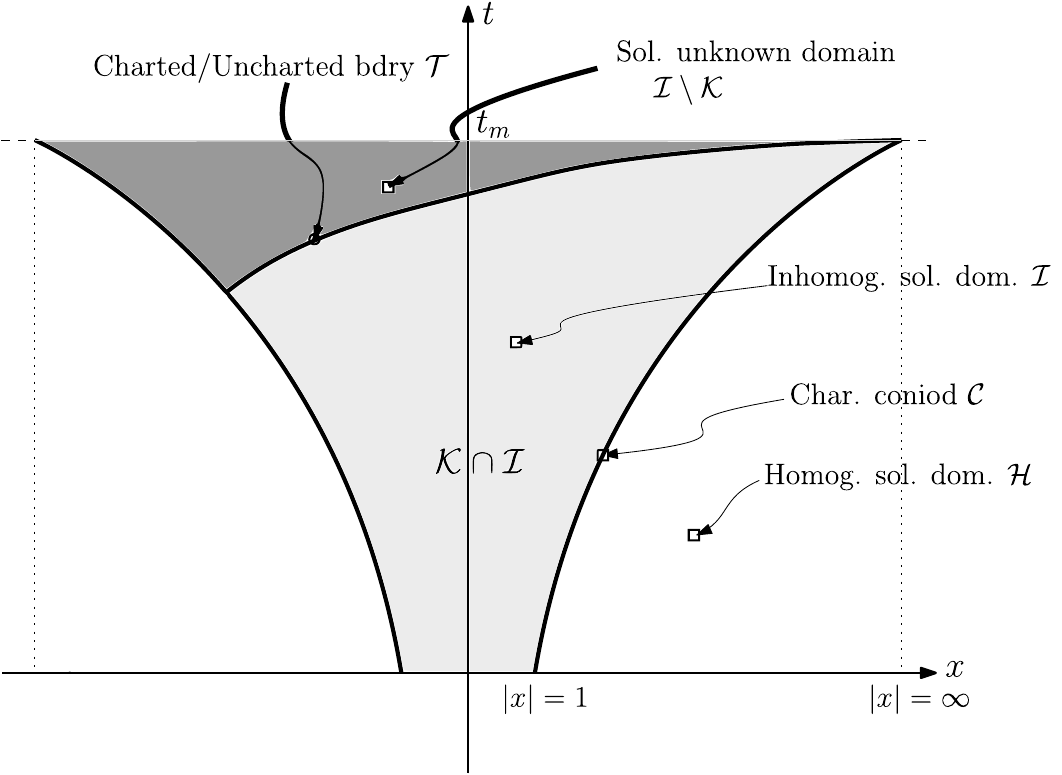}
		\caption{Domains in Main Theorems}
		\label{f:fig0}
	\end{minipage}%
	\begin{minipage}[t]{0.5\linewidth}
		\centering
		\includegraphics[width=1\textwidth]{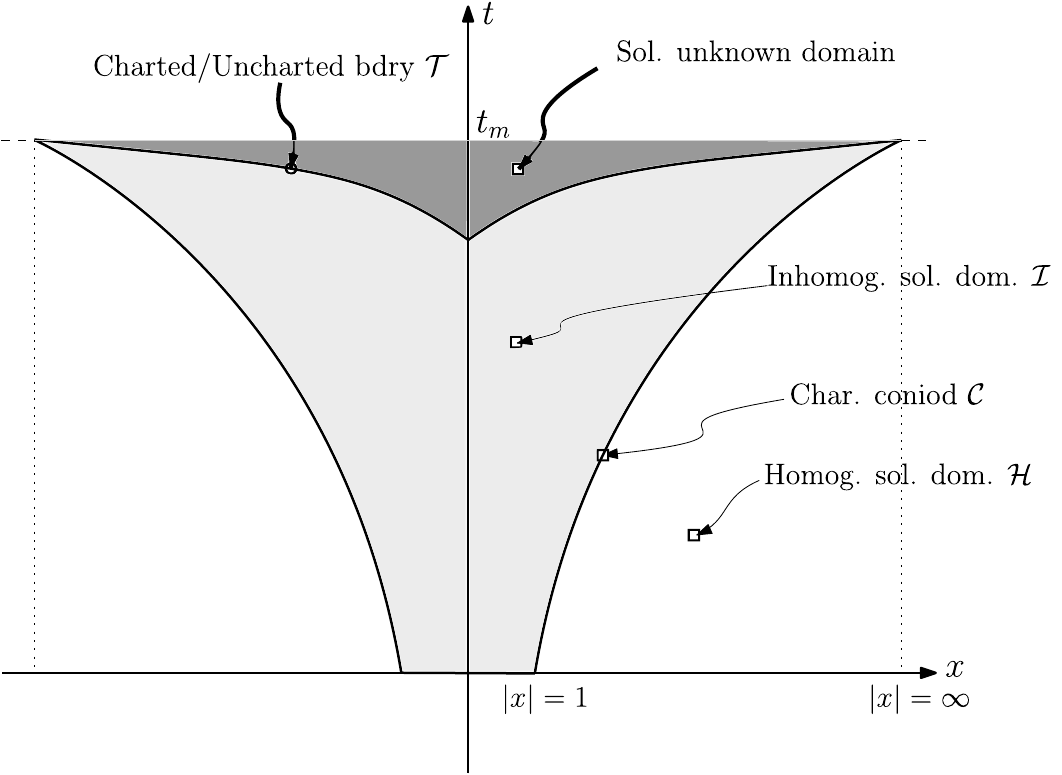}
		\caption{If $q^i\propto x^i/|x|$, domains of solutions}
		\label{f:fig0b}
	\end{minipage}
\end{figure}
 
 Now let us present  the main theorems and defer their proofs to  \S\ref{s:pfmthm}.  
  \begin{theorem}[The main theorem for \ref{Eq1}]\label{t:mainthm1}
 	Suppose $k\in\Zbb_{\frac{n}{2}+3}$, $\mathtt{A}, \;\mathtt{B},\;\mathtt{C},\; \mathtt{D}$ are constants depending on the initial data $\beta$ and $\beta_0$, as defined by \eqref{e:ttA}--\eqref{e:ttD}, and that Assumptions \ref{Asp1}--\ref{Asp3} hold. Let $(\psi,\psi_0)\in C^1_0(\Rbb^n) $  be given functions with $\supp (\psi,\psi_0)= B_1(0)$, $f(\mft)$  be the solution to  \eqref{e:feq0b}--\eqref{e:feq1b} as given by Theorem \ref{t:mainthm0}. 
 	Then 
 	there exist sufficiently small constants $\sigma_0>0$ and  $\delta_0>0$, 	such that if  the initial data satisfy
 	\begin{equation*}
 		\|  \psi \|_{H^k(B_1(0))} +	\|\del{i} \psi \|_{H^{k}(B_1(0))} +  \|  \psi_0\|_{H^k(B_1(0))} \leq  e^{  - \frac{153}{\delta_0} }    \sigma_0^2   ,   
 	\end{equation*}
 	then  there exists a  hypersurface   $t=\mathcal{T}(x,\delta_0)$  satisfying
 	\begin{equation*}  
 		\Gamma_{\delta_0}:=\{(t,x)\in[\mathbf{t}_0,t_m)\times\Rbb^n\;|\;t=\mathcal{T}(x,\delta_0)\}  \subset\mathcal{I} ,  \quad \lim_{a\rightarrow +\infty} \mathcal{T}(a\delta^i_1,\delta_0)=t_m \AND \lim_{\delta_0\rightarrow 0+} \mathcal{T}(x,\delta_0) =t_m , 
 	\end{equation*}  such that  there is a solution $\varrho\in C^2(	\mathcal{K}\cup \mathcal{H})$ to \ref{Eq2}  where $ 	\mathcal{K}:=\left\{(t,x)\in[\mathbf{t}_0,t_m)\times \Rbb^n\; |\;t<\mathcal{T}(x,\delta_0)\right\} $ satisfying
 	\begin{enumerate}[leftmargin=*,label={(\arabic*)}] 	
 		\item\label{b1.1}  if we denote
 		\begin{equation*}
 			\mathbf{1}_{-}(x^1):=1-C \sigma_0^2 e^{  - \frac{50}{ \delta_0}} e^{ -\frac{x^1}{2} }   \searrow 1 \AND	\mathbf{1}_{+}(x^1):=1+C \sigma_0^2 e^{  - \frac{50}{ \delta_0}} e^{ -\frac{x^1}{2} }  \searrow 1 ,\quad \text{as }x^1\rightarrow +\infty , 
 		\end{equation*} 
 		then there are estimates 
 		for $(t,x)\in\mathcal{K}\cap\mathcal{I} $, 
 		\begin{align*}
 			\mathbf{1}_{-}(x^1) f_0\bigl(	e^{\mathbf{t}_0}+\mathbf{1}_{-}(x^1) (e^{t}-e^{\mathbf{t}_0})\bigr)  \leq 	\varrho_0(t,x) & \leq \mathbf{1}_{+}(x^1) f_0\bigl(	e^{\mathbf{t}_0}+\mathbf{1}_{+}(x^1) (e^{t}-e^{\mathbf{t}_0}) \bigr)  ,  \\
 			-C \sigma_0^2 e^{  - \frac{50}{ \delta_0}} e^{ -\frac{x^1}{2} }  (1+f \bigl(	e^{\mathbf{t}_0}+\mathbf{1}_{-}(x^1) (e^{t}-e^{\mathbf{t}_0})\bigr)  ) \leq 	\varrho_i (t,x)& \leq  C \sigma_0^2 e^{  - \frac{50}{ \delta_0}} e^{ -\frac{x^1}{2} }  \bigl(1+f \bigl(	e^{\mathbf{t}_0}+\mathbf{1}_{+}(x^1) (e^{t}-e^{\mathbf{t}_0}) \bigr)\bigr)  ,  \\
 			\mathbf{1}_{-}(x^1) f \bigl(	e^{\mathbf{t}_0}+\mathbf{1}_{-}(x^1) (e^{t}-e^{\mathbf{t}_0})\bigr)  \leq 	\varrho (t,x)& \leq \mathbf{1}_{+}(x^1) f \bigl(	e^{\mathbf{t}_0}+\mathbf{1}_{+}(x^1) (e^{t}-e^{\mathbf{t}_0}) \bigr)    . 
 		\end{align*}
 		Moreover, both $\varrho_0$ and $\varrho$ reach the self-increasing singularities at the point $p_m:=(t_m,+\infty,0,\cdots,0)$: 
 		\begin{gather*}
 			\lim_{\mathcal{K}\ni(t,x)\rightarrow p_m} \varrho=\lim_{\mathcal{K}\ni(t,x)\rightarrow p_m} f=+\infty  ,  \\
 			\lim_{\mathcal{K}\ni(t,x)\rightarrow p_m} \varrho_0=\lim_{\mathcal{K}\ni(t,x)\rightarrow p_m} f_0=+\infty 
 			\AND 
 			\lim_{\mathcal{K}\ni(t,x)\rightarrow p_m} \varrho_i=0. 
 		\end{gather*}    
 		\item \label{b5.1}  		if the parameter $\delta_0 \to 0^+$, which corresponds to the case where the initial inhomogeneities becomes sufficiently small. Then, along the boundary $\Gamma_{\delta_0}$ separating the charted and uncharted regions, the quantities $\varrho$ and $\varrho_0$ become arbitrarily large. More precisely, 
\begin{gather*}
	\lim_{\delta_0\rightarrow 0+} \varrho|_{\Gamma_{\delta_0}}=f(t_m)=+\infty  \AND 
	\lim_{\delta_0\rightarrow 0+} \varrho_0|_{\Gamma_{\delta_0}} =f_0(t_m)=+\infty  .
\end{gather*}   
 		This phenomenon is referred to as \emph{almost blowups}.
 		\item\label{b4.1}   $\varrho\equiv f$ for  $(t,x) \in \mathcal{H}$ where $\mathcal{H}$ is defined by \eqref{e:cah}. 
 		\item\label{b2.1}  the growth rate of $\varrho$ can be estimated using visualizable functions.   
 		\begin{align*}
 			\varrho(t,x)  \geq  & 
 			\mathbf{1}_{-}(x^1) f \bigl(	e^{\mathbf{t}_0}+\mathbf{1}_{-}(x^1) (e^{t}-e^{\mathbf{t}_0})\bigr) > 
 			\mathbf{1}_{-}(x^1) \bigl( e^ { \mathtt{C}  (	e^{\mathbf{t}_0}+\mathbf{1}_{-}(x^1) (e^{t}-e^{\mathbf{t}_0}))^{\frac{2}{3}}  }  -1\bigr)    
 			\intertext{and}
 			\varrho(t,x )   \leq &\mathbf{1}_{+}(x^1) f \bigl(	e^{\mathbf{t}_0}+\mathbf{1}_{+}(x^1) (e^{t}-e^{\mathbf{t}_0}) \bigr)   <\frac{3}{2} \left( \frac{1}{ 1+ \frac{\mathtt{A}}{	e^{\mathbf{t}_0}+\mathbf{1}_{+}(x^1) (e^{t}-e^{\mathbf{t}_0})}  + \mathtt{B}  (	e^{\mathbf{t}_0}+\mathbf{1}_{+}(x^1) (e^{t}-e^{\mathbf{t}_0}))^{\frac{2}{3}}   }   -1\right) 
 		\end{align*} 
 		for all $(t,x)\in \mathcal{K}\cap \mathcal{I}$. 
 		\item\label{b3.1}  if the initial data satisfy
 		$ \breve{\beta} :=  \frac{e^{\mathbf{t}_0} \mf_0 }{1+\mf}-1>0  $, then $\varrho$ has an improved lower bound, indicating finite-time blowups.
 		\begin{equation*}
 			\varrho(t,x)  \geq 
 			\mathbf{1}_{-}(x^1) f \bigl(	e^{\mathbf{t}_0}+\mathbf{1}_{-}(x^1) (e^{t}-e^{\mathbf{t}_0})\bigr)  > 
 			\mathbf{1}_{-}(x^1)  \left( \frac{1+\mf}{\Bigl(  \frac{\beta_0 e^{\frac{4}{3}\mathbf{t}_0}}{1+\beta}    \bigl(	e^{\mathbf{t}_0}+\mathbf{1}_{-}(x^1) (e^{t}-e^{\mathbf{t}_0})\bigr)^{-\frac{1}{3}} -\breve{\beta} \Bigr)^{3}}   -	1 \right)  
 		\end{equation*}  
 		for all  $(t,x^k)\in \mathcal{K}\cap \mathcal{I}$.     
 	\end{enumerate} 
 \end{theorem} 
 
 \begin{theorem}[The main theorem for \ref{Eq2}] \label{t:mainthm2}
	Suppose $k\in\Zbb_{\frac{n}{2}+3}$, $\mathtt{A}, \;\mathtt{B},\;\mathtt{C},\; \mathtt{D}$ are constants depending on the initial data $\beta$ and $\beta_0$, as defined by \eqref{e:ttA}--\eqref{e:ttD}, and that Assumptions \ref{Asp1}--\ref{Asp3} hold. Let $(\psi,\psi_0)\in C^1_0(\Rbb^n) $ be given functions with $\supp (\psi,\psi_0)= B_1(0)$, $f(\mft)$  be the solution to  \eqref{e:feq0b}--\eqref{e:feq1b} as given by Theorem \ref{t:mainthm0}.  
	Then 
	there exist sufficiently small constants $\sigma_0>0$ and  $\delta_0>0$, 	such that if  the initial data satisfy
	\begin{equation*}
		\|  \psi \|_{H^k(B_1(0))} +	\|\del{i} \psi \|_{H^{k}(B_1(0))} +  \|  \psi_0\|_{H^k(B_1(0))} \leq  e^{  - \frac{153}{\delta_0} }    \sigma_0^2   ,   
	\end{equation*} then  there exists a  hypersurface   $t=\mathcal{T}(x,\delta_0)$  satisfying
	\begin{equation*}   
		\Gamma_{\delta_0}:=\{(t,x)\in[t_0,t_m)\times\Rbb^n\;|\;t=\mathcal{T}(x,\delta_0)\}  \subset\mathcal{I} ,  \quad \lim_{a\rightarrow +\infty} \mathcal{T}(a\delta^i_1,\delta_0)=t_m \AND \lim_{\delta_0\rightarrow 0+} \mathcal{T}(x,\delta_0) =t_m,
	\end{equation*}
	such that  there is a solution $\varrho\in C^2(	\mathcal{K}\cup \mathcal{H})$ to \ref{Eq2}  where $ 	\mathcal{K}:=\left\{(t,x)\in[t_0,t_m)\times \Rbb^n\; |\;t<\mathcal{T}(x,\delta_0)\right\} $ satisfying
	\begin{enumerate}[leftmargin=*,label={(\arabic*)}] 	 	
		\item\label{b1}  if we denote
		\begin{equation*}
			\mathbf{1}_{-}(x^1):=1-C \sigma_0^2 e^{  - \frac{50}{ \delta_0}} e^{ -\frac{x^1}{2} }   \searrow 1 \AND	\mathbf{1}_{+}(x^1):=1+C \sigma_0^2 e^{  - \frac{50}{ \delta_0}} e^{ -\frac{x^1}{2} }  \searrow 1 ,\quad \text{as }x^1\rightarrow +\infty , 
		\end{equation*} 
		then there are estimates 
		for $(t,x)\in\mathcal{K}\cap\mathcal{I} $, 
		\begin{align*}
			\mathbf{1}_{-}(x^1) f_0\bigl(	t_0+\mathbf{1}_{-}(x^1) (t-t_0)\bigr)  \leq 	\varrho_0(t,x) & \leq \mathbf{1}_{+}(x^1) f_0\bigl(	t_0+\mathbf{1}_{+}(x^1) (t-t_0) \bigr) , \\
			-C \sigma_0^2 e^{  - \frac{50}{ \delta_0}} e^{ -\frac{x^1}{2} }  (1+f \bigl(	t_0+\mathbf{1}_{-}(x^1) (t-t_0)\bigr)  ) \leq 	\varrho_i (t,x)& \leq  C \sigma_0^2 e^{  - \frac{50}{ \delta_0}} e^{ -\frac{x^1}{2} }  \bigl(1+f \bigl(	t_0+\mathbf{1}_{+}(x^1) (t-t_0) \bigr)\bigr)  ,  \\
			\mathbf{1}_{-}(x^1) f \bigl(	t_0+\mathbf{1}_{-}(x^1) (t-t_0)\bigr)  \leq 	\varrho (t,x)& \leq \mathbf{1}_{+}(x^1) f \bigl(	t_0+\mathbf{1}_{+}(x^1) (t-t_0) \bigr)    . 
		\end{align*}
		Moreover, both $\varrho_0$ and $\varrho$ reach the self-increasing singularities at the point $p_m:=(t_m,+\infty,0,\cdots,0)$: 
		\begin{gather*}
			\lim_{\mathcal{K}\ni(t,x)\rightarrow p_m} \varrho=\lim_{\mathcal{K}\ni(t,x)\rightarrow p_m} f=+\infty  ,  \\
			\lim_{\mathcal{K}\ni(t,x)\rightarrow p_m} \varrho_0=\lim_{\mathcal{K}\ni(t,x)\rightarrow p_m} f_0=+\infty 
			\AND 
			\lim_{\mathcal{K}\ni(t,x)\rightarrow p_m} \varrho_i=0. 
		\end{gather*}    
		\item \label{b5}  
		if the parameter $\delta_0 \to 0^+$, which corresponds to the case where the initial inhomogeneities becomes sufficiently small. Then, along the boundary $\Gamma_{\delta_0}$ separating the charted and uncharted regions, the quantities $\varrho$ and $\varrho_0$ become arbitrarily large. More precisely, 
		\begin{gather*}
			\lim_{\delta_0\rightarrow 0+} \varrho|_{\Gamma_{\delta_0}}=f(t_m)=+\infty  \AND 
			\lim_{\delta_0\rightarrow 0+} \varrho_0|_{\Gamma_{\delta_0}} =f_0(t_m)=+\infty  .
		\end{gather*}   
		This phenomenon is referred to as \emph{almost blowups}.
		\item\label{b4}   $\varrho\equiv f$ for  $(t,x) \in \mathcal{H}$ where $\mathcal{H}$ is defined by \eqref{e:cah}. 
		\item\label{b2}  the growth rate of $\varrho$ can be estimated using visualizable functions
		\begin{align*}
			\varrho(t,x)  \geq  & 
			\mathbf{1}_{-}(x^1) f \bigl(	t_0+\mathbf{1}_{-}(x^1) (t-t_0)\bigr) > 
			\mathbf{1}_{-}(x^1) \bigl( e^ { \mathtt{C}  (	t_0+\mathbf{1}_{-}(x^1) (t-t_0))^{\frac{2}{3}}  }  -1\bigr)    
			\intertext{and}
			\varrho(t,x )   \leq &\mathbf{1}_{+}(x^1) f \bigl(	t_0+\mathbf{1}_{+}(x^1) (t-t_0) \bigr)   <\frac{3}{2} \left( \frac{1}{ 1+ \frac{\mathtt{A}}{	t_0+\mathbf{1}_{+}(x^1) (t-t_0)}  + \mathtt{B}  (	t_0+\mathbf{1}_{+}(x^1) (t-t_0))^{\frac{2}{3}}   }   -1\right) 
		\end{align*} 
		for all $(t,x)\in \mathcal{K}\cap \mathcal{I}$. 
		\item\label{b3}  if the initial data satisfy
		$ \breve{\beta} :=  \frac{t_0 \mf_0 }{1+\mf}-1>0  $, then $\varrho$ has an improved lower bound, indicating finite-time blowups.
		\begin{equation*}
			\varrho(t,x)  \geq 
			\mathbf{1}_{-}(x^1) f \bigl(	t_0+\mathbf{1}_{-}(x^1) (t-t_0)\bigr)  > 
			\mathbf{1}_{-}(x^1)  \left( \frac{1+\mf}{\Bigl(  \frac{\beta_0 t_0^{\frac{4}{3}}}{1+\beta}    \bigl(	t_0+\mathbf{1}_{-}(x^1) (t-t_0)\bigr)^{-\frac{1}{3}} -\breve{\beta} \Bigr)^{3}}   -	1 \right)  
		\end{equation*}  
		for all  $(t,x^k)\in \mathcal{K}\cap \mathcal{I}$.     
	\end{enumerate} 
\end{theorem} 
\begin{remark}[Almost blowups]
The conclusion \ref{b5.1} suggests an almost blowup in the long-wavelength region. This is consistent with both linear physical analysis and observations. Outside the long-wave region $\mathcal{K} \cup \mathcal{H}$, where the wavelengths become shorter, other types of singularities may also occur. However, the smaller the initial inhomogeneity is, the larger the long-wavelength domain can be ensured, thereby guaranteeing that the solution can grow arbitrarily large, provided the initial inhomogeneity is sufficiently small.
\end{remark}
 \begin{remark}[The direction bias and generalization]
	The directional bias (i.e., the singularity at $p_m$) in the result arises from the chosen vector field $q^i$ in the convection term (see Assumption \ref{Asp2}). If $q^i$ is proportional to $x^i/|x|$, the same method can be used to demonstrate that, by rotating the coordinate system, the solution experiences self-increasing blowups at all future endpoints of null geodesics originating from $B_1(0)$ (see Fig. \ref{f:fig0b}).    
\end{remark}
 \begin{remark}[Main nonlinear mechanisms]\label{t:nlnrmech}
	Although the right-hand side of \ref{Eq1} contains four types of nonlinear terms (recall remark \ref{t:nlterms}), our proofs will demonstrate that the dominant terms in the domains $\mathcal{K}\cup\mathcal{H}$ are (i) the self-increasing term, (iii) the Riccati term, and (iv) the convection term. The damping effects are overwhelmed by these dominant terms. This is evident from the reference ODEs \eqref{e:feq0b}--\eqref{e:feq1b} and Lemma \ref{t:iden1}.\eqref{e:keyid3} (i.e. $f_0\sim f^{\frac{1}{2}}(1+f)/\mft $,  as will be proved later)
	\begin{equation*}
		\frac{4}{3}\frac{(\del{\mft} f )^2}{1+f }  \sim  	\frac{2}{3 \mft^2}  f (1+  f ) \quad \text{but}\quad -\frac{4}{3 \mft} \del{\mft} f \sim \frac{1}{\mft^2} f^{\frac{1}{2}}(1+f) . 
	\end{equation*}  
\end{remark}  
\begin{remark}[Uncharted domain $\mathcal{I} \setminus \mathcal{K}$ and future directions]\label{t:Ptdevp}
	As mentioned in \S\ref{s:mainthm}, the solution in the upper dark-shaded domain, $\mathcal{I} \setminus \mathcal{K}$, remains unknown. However, this region can be made arbitrarily small by choosing $\delta_0$ sufficiently small, since $\lim_{\delta_0 \rightarrow 0^+} \mathcal{T}(x,\delta_0) = t_m$ as shown in Theorems \ref{t:mainthm1} and \ref{t:mainthm2}. 
	Understanding the solution in this  domain $\mathcal{I} \setminus \mathcal{K}$ is both interesting and necessary. Additional conditions might be required to fully characterize this domain, given the interplay, competitions and dominance of the nonlinear terms. Intuitively and based on numerical tests, gradients in $\mathcal{I} \setminus \mathcal{K}$  may increase, breaking the long-wavelength profiles and potentially leading to shock formation or Rendall's instability (see \cite{Oliynyk2024}). 
	As inhomogeneities grow, the spacetime gradients of $\varrho$  could become significantly larger than $\varrho$  itself, which might desynchronize the blowup times. This could cause the Riccati term to dominate the nonlinear effects in \ref{Eq1}, possibly resulting in shock formation. The shock formation has been extensively studied in the context of quasilinear systems \cite{Speck2016b, Speck2016, Holzegel2016, Speck2016a, Miao2016, Miao2018} and (relativistic) Euler equations \cite{Christodoulou2007, Christodoulou2014, Speck2017a, Luk2021, Luk2018, Abbrescia2022, Abbrescia2023, Ginsberg2024, Shkoller2024, Neal2023, Neal2023a, Buckmaster2021, Buckmaster2022, Buckmaster2022a, Buckmaster2020}. 
	Given the fundamentally different mechanisms and required methods, the short-wavelength domain will be treated separately. This work is devoted to the long-wavelength regime and the emergence of a self-increasing singularity, associated with the nonlinear Jeans instability in astrophysics. 
\end{remark}

\subsection{Related works}\label{s:rewks} 
We have already mentioned many related works in the previous remarks. Therefore, this section focuses on additional aspects that extend beyond those works.

In our previous article \cite{Liu2022b}, we investigated a wave equation similar to \ref{Eq2}, where the specific functions $\cg $ and $-K^{ij}\del{i}\varrho\del{j}\varrho$ were replaced by 
\begin{equation*}
	\cg (t) := \frac{\cm^2 (\del{t} f(t))^2}{(1+f(t))^2} \quad \text{and} \quad F(t) := \ck \frac{(\del{t} f(t))^2}{1+f(t)} . 
\end{equation*}
This replacement  neutralizes some effects of the Riccati term and thus synchronized the blowup times and further simplified the proofs. In our subsequent article \cite{Liu2023}, with the help of our other papers \cite{Liu2022,Liu2023a}, we utilized this idea to demonstrate the Newtonian nonlinear Jeans instability for the Euler--Poisson system with specific sources that provided synchronizing effects.
In the current article, however, $\cg $ is given by \eqref{e:Fdef}, making equation \ref{Eq2} quasilinear, which complicates the problem. Additionally, by setting $F=0$, the synchronizing effect is absent. This fundamentally alters the core problem addressed in \cite{Liu2022b} and introduces significant difficulties. Due to the \textit{absence of a synchronizing effect}, certain nonlinear terms may dominate and potentially alter the behavior in the uncharted domain. This is one reason why our analysis in this article does not extend to that domain.

On the other hand, the blowup problem has been extensively studied for various hyperbolic equations, particularly those of the wave type. For an introduction to blowups in various hyperbolic systems, we refer readers to \cite{Alinhac1995,Kichenassamy2021}. For a comprehensive review of various blowup results and their fundamental proof techniques, see Speck \cite{Speck2020} and the references therein. In addition to the previously mentioned shock problems, there are related works on self-increasing solutions for Euler equations. Notable examples include studies on spherically symmetric Euler flows with physical vacuum boundaries \cite{Guo2022,Guo2018} and investigations into the implosion of a compressible fluid \cite{Merle2022,Merle2022a,Buckmaster2022b}. Another closely related work is \cite{Oliynyk2024}, which, under certain assumptions, concludes the blow-up of the fractional density gradient for relativistic fluids in exponentially expanding FLRW spacetimes, as conjectured by Rendall.

\subsection{Overviews, ideas and outlines}\label{s:overview}
We focus on \ref{Eq2}, and \ref{Eq1} can subsequently be concluded using an exponential-logarithmic time transform (see Appendix \ref{s:pfclm}).

\subsubsection{Reviews on fundamental tools in our previous article \cite{Liu2022b}}\label{s:review}
Before proceeding, to better understand the method used in this article, it is helpful to first revisit some key features from \cite{Liu2022b} (see \S\ref{s:rewks}). In \cite{Liu2022b}, as mentioned earlier,\textit{ the presence of the source term synchronizes the blowup time}. To begin, we introduce a \textit{reference ODE} \eqref{e:feq0}--\eqref{e:feq1}, which is detailed in Appendix \ref{s:ODE0} (and also in \cite[\S$2$]{Liu2022b}). We anticipate that the solution to this wave equation will asymptotically approach the reference solution of the reference ODE \eqref{e:feq0}--\eqref{e:feq1}. This approach allows the problem to be transformed into a global existence and stability problem by analyzing the equation for the difference between $\varrho$ and the reference solution $f$. It is important to note that the solution $f$ to the reference ODE \eqref{e:feq0}--\eqref{e:feq1} is a self-increasing solution that blows up at time $t_m$, which can be either finite or infinite depending on the initial data (see Appendix \ref{t:refsol} for details and estimates on the solution $f$). Therefore, once it is established that $\varrho$ is close to $f$ on $(t,x)\in[t_0,t_m)\times \Rbb^n$, we obtain self-increasing estimates for $\varrho$ that are controlled by the reference solution $f$. To proceed, we must address the following questions.

\underline{Question $1$:   How can we prove that $\varrho$ is close to $f$ on $(t,x)\in[t_0,t_m)\times \Rbb^n$?}   
Given that this is a global existence and stability problem, various tools could be employed. Our approach utilizes the \textit{Fuchsian global initial value problem (Fuchsian GIVP)}, originally introduced by Oliynyk \cite[Appendix B]{Oliynyk2016a}, which is equivalent to energy methods. We provide a precise statement of the Fuchsian GIVP in Appendix \ref{s:fuc}, a specialized version of the theorem established in \cite{Beyer2020}. This method is particularly effective, as evidenced by its broader generalizations and applications in works such as \cite{Liu2018,Liu2028c, Liu2018a, Beyer2020b, Fajman2021, LeFloch2021, Ames2022, Ames2022a, Marshall2023, Liu2022, Liu2022a, Liu2023, Beyer2023a, Beyer2023b, Fajman2023, Oliynyk2024}. A Fuchsian system is a singular symmetric hyperbolic system of the form: 
\begin{equation}\label{e:Fucmodel} 
	B^{\mu}(t,x,u)\partial_{\mu}u = \frac{1}{\tau}\textbf{B}(t,x,u)\textbf{P}u+H(t,x,u) . 
\end{equation} 
There are structural assumptions on the coefficients $B^\mu$, $\mathbf{B}$ and $H$ (detailed in Conditions \ref{c:2}--\ref{c:7} in Appendix \ref{s:fuc}), and these assumptions ensure that the asymptotics of this PDE are determined by a simple ODE (see \cite[\S$3.5$]{Beyer2020} for further details). We emphasize the following key point: 
\begin{enumerate}[leftmargin=*,label={(\arabic*)}]
	\item $B^i$ and $H$ can also include singular terms involving $1/\tau$, but in a specific manner (see Conditions \ref{c:3} and \ref{c:4}). 
	\item one of the most important assumption \ref{c:5} is 
	\begin{equation}\label{e:Bineq1}
		\frac{1}{\gamma_{1}}\mathds{1}\leq B^{0}\leq \frac{1}{\kappa} \textbf{B} \leq\gamma_{2}\mathds{1} 
	\end{equation}
	where $\kappa,\,\gamma_{1},\,\gamma_{2}$ are positive constants. We emphasize this assumption because it is one of the most difficult to satisfy when transforming a hyperbolic equation into a Fuchsian formulation. This condition involves the decay rates of the solution, making it particularly difficult to construct. In the current work, we will demonstrate the multiple transformations necessary to achieve this assumption. 	 
\end{enumerate}

The conclusion of the Fuchsian formulation can be roughly expressed (see Theorem \ref{t:fuc} for the precise statement) as follows: the solution to the Fuchsian IVP can be extended to $\tau \in [-1,0)$, meaning the solution exists globally up to the singular time  $\tau=0$. In most applications, a time transformation is required to convert $t=+\infty$ to $\tau=0$. Therefore, this result implies the existence of a global solution.

\underline{Question $2$:   How to rewrite the wave equation as a Fuchsian system?} To convert the wave equation into a Fuchsian system, we first need to compactify the time coordinate $t\in[t_0,t_m)$ to $\tau\in[-1,0)$, mapping $t_m$ to $0$. Next, we select variables that are weighted by appropriate decaying rates to serve as the Fuchsian variables. The ``correct'' choice of Fuchsian variables, meaning those with the appropriate decay rates, facilitates obtaining the Fuchsian formulation. Additionally, in \cite{Liu2022b}, we employed certain hidden relations to distinguish between the singular term $\frac{1}{\tau}\mathbf{B} \mathbf{P} u$ and the regular term $H$ in the Fuchsian system \eqref{e:Fucmodel}. Below, we provide a detailed explanation. 
\begin{enumerate}[leftmargin=*,label={(\arabic*)}]
	\item  In our article \cite{Liu2022b}, we perform time compactification using the transformation (since,  by Lemma \ref{t:f0fg}, $f$ is strictly increasing)  
	\begin{equation}\label{e:ctm2.a}
		\tau=\mathfrak{g}(\mft) =-\exp\Bigl(-A\int^{\mft}_{t_0} \frac{f(s)(f(s)+1)}{s^2 f_0(s)} ds \Bigr)=- \Bigl(1+ \frac{2}{3} B  \int^{\mft}_{t_0} s^{-\frac{2}{3}} f(s)(1+f(s))^{-\frac{1}{3}}  ds \Bigr)^{-\frac{3A}{2}}  \in[-1,0)  . 
	\end{equation}  
	This transformation is derived by assuming a transformation $\mathfrak{g}(\mft)$ to be determined and ensuring it matches the required form for achieving the Fuchsian formulation. 
	\item The Fuchsian variables are the differences between $\varrho$ and $f$, scaled by $f$, along with their derivatives (see \cite{Liu2022b} for details); 
	\item Key quantities such as $\chi_\uparrow$, $1/(f\mfg)$, and $1/(\mfg f^{\frac{1}{2}})$,  along with fundamental identities involving $\chi$, $\tau$, $f$ and $f_0$ from our previous article \cite{Liu2022b}, are now listed in Appendix \ref{t:refsol}. For example,  
	\begin{equation*} 
		\chi_\uparrow(\mft):=\frac{\mft^{\frac{2}{3}} f_0(\mft)}{(1+f(\mft))^{\frac{2}{3}} f(\mft) (-\mfg(\mft))^{\frac{2}{3A}}} \overset{\eqref{e:f0aa}}{=} \frac{  (-\mfg(\mft) )^{-\frac{4}{3A}}\mft^{-\frac{2}{3}}}{B f(\mft) (1+f(\mft))^{-\frac{2}{3}}}  ,
	\end{equation*}
	and $\chi_\uparrow $ satisfies (see Proposition \ref{t:limG})
	\begin{equation*} 
		\chi_\uparrow(\mft)=\frac{2\cb B}{3-2\cc}+\mathfrak{G}(\mft)
	\end{equation*}
	where $\lim_{\mft\rightarrow t_m}\mathfrak{G}(\mft)=0$. The key point is that $\chi_\uparrow(t)$, $1/(f\mfg)$ and $1/(\mfg f^{\frac{1}{2}})$ help  distinguish between terms that belong to the singular term $\frac{1}{\tau}\mathbf{B} \mathbf{P} u$ and those that belong to the regular term $H$ in the Fuchsian system \eqref{e:Fucmodel}. 
\end{enumerate}
Using the above facts, one can transform the wave equation into a Fuchsian system and further conclude the existence of a global solution for $\tau\in[-1,0)$, which implies $\varrho$ is close to $f$ for $t\in[t_0,t_m)$. This completes the proof.

Summarizing the methods introduced above in \cite{Liu2022b}, we identify four fundamental tools that also prove to be crucial in the current article.
\begin{enumerate}[label={(S\arabic*)}]
	\item\label{S1} Fuchsian GIVP; 
	\item\label{S2} The fundamental properties of the reference ODE \eqref{e:feq0b}--\eqref{e:feq1b}. 
	\item\label{S3} The time compactification. 
	\item\label{S4} Key quantities $\chi_\uparrow$, $1/(f\mfg)$, $1/(\mfg f^{\frac{1}{2}})$ and basic identities. 
\end{enumerate}

\subsubsection{Overviews of the methods}\label{s:oview} 
Let us return to equation \ref{Eq2} in this section. 
First, a simple intuitive observation is that if the initial inhomogeneities $\|(\psi,\psi_0)\|_{\mathtt{X}}$ tend to vanish (where $\|\cdot\|_{\mathtt{X}}$ denotes a suitable norm), then equation \ref{Eq2} reduces to \eqref{e:feq0b}--\eqref{e:feq1b}. In other words, the independent variable $\mft$ of $f$ aligns with the time coordinate $t$, and the solution $\varrho(t,x^k)$ to \ref{Eq2} becomes the solution $f(\mft)=f(t)$ to the reference ODEs \eqref{e:feq0b}--\eqref{e:feq1b}. The term ``almost vanishing initial inhomogeneities'' refers to very long wavelengths or very low frequencies, which align with the classical Jeans instability requiring the \textit{Jeans scale}. 
Based on this observation, we have the following \textbf{intuitions}: 
\begin{intuition*}\label{t:int1}
	If the initial inhomogeneities $\|(\psi,\psi_0)\|_{\mathtt{X}} $ are sufficiently small, then $\mft$, which can be regarded as $\mft = \mft(t, x^k)$ with $\mft(t_0, x^k) = t_0$, remains close to $t$, and the solution $\varrho(t, x^k)$ to \ref{Eq2} is sufficiently close to the solution $f(\mft)$ of the reference ODEs \eqref{e:feq0b}--\eqref{e:feq1b}, at least within a sufficiently large domain $\mathcal{D}$ close to $ [t_0, t_m)\times \Rbb^n$ (recalling that $f$ exists for $t\in[t_0,t_m)$).  
\end{intuition*}
Our main Theorem \ref{t:mainthm2} not only confirms this intuition but also demonstrates that $\mathcal{D}=\mathcal{K}\cup \mathcal{H}$, the long-wavelength domain, reaches a self-increasing singularity at  $p_m:=(t_m,+\infty,0,\cdots,0)$. In fact, despite considerable effort, Theorem \ref{t:mainthm2}  demonstrates that \textit{only one self-increasing singularity, $p_m$, can be reached} within the long-wavelength domain. As noted in Remark \ref{t:Ptdevp}, in the uncharted domain $\mathcal{I} \setminus \mathcal{K}$, different behaviors and mechanisms may \textit{prevent} the formation of self-increasing singularities,  requiring further investigation using different techniques.

As mentioned in  \S\ref{s:review}, Fuchsian GIVP is a useful tool for  addressing the global existence problem. However, it can \textit{not} be applied \textit{directly} to our current goal. To \textit{employ the Fuchsian-based method} and utilize the tools prepared in \S\ref{s:review} for proving Theorem \ref{t:mainthm2}, we must overcome the following main difficulties (only the primary challenges that impact the core proofs are listed here; other difficulties will be addressed in appropriate sections of the article).

\underline{Difficulty $1$:  The current problem is a partially global existence problem.} Unlike the scenarios discussed in  \S\ref{s:review}, where $\varrho$ remains close to $f$ over the entire $[t_0,t_m)\times \Rbb^n$,  in this case,  $\varrho$ remains close to $f$ only within  the long-wavelength domain  $\mathcal{K}\cup\mathcal{H}$. 
This domain, $\mathcal{K}\cup\mathcal{H}$, extends to the infinity point  $p_m$, where a self-increasing singularity occurs.  
To apply the Fuchsian method, we need to compactify the blowup time $t=t_m$ to $\tau=-1$ (see Question $1$ in \S\ref{s:review}). However, in this situation, we only reach one blowup point,  $p_m$; the other points at $t=t_m$ are hidden within the uncharted domain  $\mathcal{I}\setminus \mathcal{K}$, which we cannot access at this stage. This phenomenon is largely due to the absence of the synchronizing effect, as discussed in \S\ref{s:review}. Consequently, we can not expect  $\varrho$ to remain close to $f$ throughout the entire domain  $[t_0,t_m)\times \Rbb^n$. This limitation prevents us from extending the problem to a fully global existence problem, as in \S\ref{s:review}, and confines us to a global existence only at one self-increasing singularity. 

An intuitive idea to overcoming this difficulty is to \textit{solve a global existence problem for a revised hyperbolic system} and ensure that this revised system is consistent with \ref{Eq2} (or its variant) within a sufficiently large lens-shaped domain  (see Theorem \cite[Theorem $4.5$]{Lax2006}). Outside the lens-shaped domain, we have considerable \textit{flexibility to modify} the hyperbolic equation to align with the Fuchsian formulations, allowing us to use the Fuchsian GIVP for the revised system to achieve a global solution. 

However, rewriting \ref{Eq2} as a singular symmetric hyperbolic  system and modifying the system in the domain outside the lens-shaped domain bring new difficulties.  
As outlined in \S\ref{s:review}, obtaining a Fuchsian system first requires a suitable time compactification, which we will address in Difficulty $2$. Additionally, modifying the domain outside the lens-shaped region to achieve a Fuchsian system presents further difficulties, which we will explore in Difficulty $3$.

\underline{Difficulty $2$: How can time be compactified due to the absence of the synchronizing effect?}  
In the previous \S\ref{s:review}, the presence of the synchronizing source ensures that the blowup times of $\varrho$ and $f$ coincide (i.e., they both blow up at $t=t_m$  and are defined on $[t_0,t_m)\times \Rbb^n$). As a result, the time transformation \eqref{e:ctm2.a} compactifies the blowup time for both $\varrho$ and $f$, mapping $t=t_m$  to $\tau=0$. However, the absence of the synchronizing effect in the current settings disrupts this structure. Consequently, we require two separate time transformations to compactify the time coordinates $t$ and $\mft$ for  $\varrho(t,x)$ and $f(\mft)$, respectively.

Since our focus is on the long-wavelength domain $\mathcal{K}\cup\mathcal{H}$, where we expect $\varrho$ and its derivatives to closely approximate  $f$, we anticipate only a slight modification to the structure described in  \S\ref{s:review}.   Based on this observation, we can still define $\tau=\mfg(\mft)$ using \eqref{e:ctm2.a} and define $\tau=g(t,x)$ by imitating \eqref{e:ctm2.a},
\begin{equation}\label{e:ctm2.b}
	\tau=	g(t,x)= - \Bigl(1+ \frac{2}{3} B  \int^t_{t_0} s^{-\frac{2}{3}} \varrho(s,x)(1+\varrho(s,x))^{-\frac{1}{3}}  ds \Bigr)^{-\frac{3A}{2}} \in[-1,0)   .  
\end{equation}
This transformation is appropriate because: $(1)$ when $\varrho$ is homogeneous  and equal to $f$, \eqref{e:ctm2.b} reduces to \eqref{e:ctm2.a};  and $(2)$ if $\varrho$ blows up at some point, the blowup time in terms of $\tau$ becomes   $\tau=0$. Thus, in the $\tau$ coordinate, we can synchronize the blowup time to  $\tau=0$.

However, unlike in \eqref{e:ctm2.a}, the definition in \eqref{e:ctm2.b} involves the unknown variable $\varrho$ within the transformation itself. This means that the time transformation is defined through an evolution equation. In other words, we must \textit{solve \ref{Eq2} concurrently with the coordinate equation \eqref{e:ctm2.b}}. To facilitate this, it is preferable to convert the integral equation \eqref{e:ctm2.b} into a partial differential equation (PDE) so that it can be solved alongside the wave equation. By differentiating \eqref{e:ctm2.b}, we obtain the following (see \S\ref{s:2} for more details and the ODE expression for $\mfg(\mft)$, which is useful in calculations):
\begin{equation}\label{e:dtgtrs}
	\del{t}g(t,x^i)= 	 	\frac{A B   \varrho(t,x^i) \left(-g(t,x^i)\right){}^{\frac{2}{3 A}+1}}{t^{\frac{2}{3}}   (  \varrho(t,x^i)+1)^{\frac{1}{3}}}  \quad \text{with} \quad 
	g(t_0,x^i) =   -1 .   
\end{equation}

In fact,  \eqref{e:dtgtrs} provides crucial information about the Jacobian of the transformation $\tau=g(t,x)$. However, when converting the wave equation into a singular hyperbolic system, it becomes more practical to use the inverse of the Jacobian. Therefore, we introduce the inverse of $g$, denoted as $\mathsf{b}$, and use $\mathsf{b}$ and its derivative $\mathsf{b}_\zeta:=\del{\zeta}\mathsf{b}$ to characterize the coordinate transformation from the original coordinates $(t,x)$ to the time-compactified coordinates $(\tau,\zeta)$ (see \S \ref{s:2} and Lemma \ref{t:gb1} for details). Moreover, by \eqref{e:dtgtrs}, we derive the evolution equation for $\mathsf{b}_\zeta$, which describes the evolution of the Jacobian. We solve this equation in conjunction with the singular hyperbolic system derived from the wave equation in the $(\tau, \zeta)$ coordinates.
The same discussions applies to the coordinate transformation $\tau=\mfg(\mft)$ as well.

Furthermore, it is preferable to compare $\varrho$ and $f$ in terms of the $\tau$ coordinate rather than $t$, as they share a common blowup time at $\tau = 0$. Thus, when transforming \ref{Eq2} into a singular hyperbolic system, we define the variables for the system based on the differences, such as $u(\tau,\zeta):=(\underline{\varrho}(\tau,\zeta) - \uf(\tau))/\uf(\tau)$ (notations defined in \S\ref{iandc}). 
Further details on the variables can be found in \S\ref{s:sngexp}.

Following a similar spirit to \cite{Liu2022b}, as discussed in \S\ref{s:review}, we are able to derive a singular symmetric hyperbolic system in the $(\tau, \zeta)$ coordinates from the wave equation \ref{Eq2} and the developing equations for the coordinate transform and its Jacobian. It is important to note that these derivations are neither trivial nor straightforward, as they rely on hidden identities established in \S\ref{s:2} and Appendix \ref{t:refsol}. These identities arise from the analysis of $f$, $f_0$, and key quantities such as $\chi_\uparrow$, $1/(f\mfg)$, and $1/(\mfg f^{\frac{1}{2}})$ (as discussed in \S\ref{s:review}.\ref{S4}), as well as from the evolution of the Jacobian.

\S\ref{s:3} provides the derivations, leading to the singular symmetric hyperbolic system given by (simplified form, see Lemma \ref{t:mainsys1} for details)
\begin{equation}\label{e:mainsys1.a}
	\mathbf{A}^0\del{\tau}U+\frac{1}{A\tau} \mathbf{A}^i \del{\zeta^i} U =\frac{1}{A\tau} \mathbf{A}   U +\mathbf{F}(U) . 
\end{equation}
Compared to  \cite{Liu2022b}, the presence of the source term here implies that, at this stage, the system is a Fuchsian system. Therefore, using results for the Fuchsian GIVP, we can conclude that $\varrho$ is close to $f$ on $[t_0,t_m)\times \Rbb^n$. However, in this circumstance, \eqref{e:mainsys1.a} fails to be a Fuchsian system because $\mathbf{A}$ in the singular term $\frac{1}{A\tau} \mathbf{A} U$ is \textit{not} a \textit{positive definite} matrix. 
Recalling the key condition for a Fuchsian system, as discussed in  \S\ref{s:review},   $\mathbf{A}$ must be positive definite (see \eqref{e:Bineq1}, and this condition is closely related to the decay rates). This issue introduces a third difficulty in our analysis. 
Based on the ideas discussed in Difficulty 1, there is currently no obvious way to modify the system outside the lens-shaped domain while still ensuring that this domain includes $p_m$. 

\underline{Difficulty $3$:  Positive definiteness of $\mathbf{B}$ in Fuchsian system \eqref{e:Fucmodel}.}  
The \textit{central idea} for overcoming this difficulty is to introduce a \textit{zoom-in (or blowup) coordinate transformation} and \textit{corresponding zoom-in variables} that effectively ``zooms in'' on the region where $\varrho$ exhibits blow-up behavior.

To introduce the specific forms of these transformations more naturally and clearly, let us first state an idea from the \textit{algebraic perspective} to overcome this difficulty. To convey the basic ideas and provide an intuitive hint, we simplify \eqref{e:mainsys1.a} by setting $n=1$ and adjusting the coefficient matrices\footnote{See Lemma  \ref{t:mainsys1} for the precise coefficient matrices. } (noting that $\ck=1/4$) to: 
\begin{equation*}\label{e:coef1}
	\mathbf{A}^0=\p{1&U\\U&\frac{1}{4}}, \quad \mathbf{A}^1=\p{0& 1\\1 & 0}\AND \mathbf{A}=\p{-\frac{14}{3} & - q  \\
		0 & 1}  . 
\end{equation*}
The same ideas and spirits can be applied to the original system \eqref{e:mainsys1} in Lemma  \ref{t:mainsys1} and after applying the following procedures \ref{F1} and \ref{F2}, the exact form derived from \eqref{e:mainsys1.a} is provided in Lemma \ref{t:sigsys}. 

Firstly, note two useful facts. 
\begin{enumerate}[leftmargin=*,label={(F\arabic*)}]
	\item\label{F1} By introducing a \textit{zoom-in (or blowup) coordinate transformation}\footnote{Intuitively, as shown in  Fig.  \ref{f:fig3a} and Fig. \ref{f:fig3b}, this coordinate transformation helps  to amplify and expand the corner toward   $p_m$ in $\mathcal{K}\cup\mathcal{H}$, that is, it effectively ``zooms in'' on the region where $\varrho$ is blowing up.} defined by $\txi^1=\frac{\alpha}{A} \ln(-\tau) +\zeta^1$ (where $\alpha>0$) and $\ttau=\tau$, we obtain the following transformations:  $\del{\tau}=\del{\ttau}+\frac{\alpha}{A\ttau}\del{\txi^1}$ and $\del{\zeta^1}=\del{\txi^1}$ (see \S\ref{s:riv1} for details). 
	\item\label{F2} By introducing a \textit{variable transformation}  $\mathfrak{U}=e^{\theta \txi^1} U$ ( where $\theta>0$), the derivative transforms as follows:  $\frac{1}{A\ttau}   \del{\txi^1}U=e^{-\theta \txi^1} \frac{1}{A\ttau}   \del{\txi^1}\mathfrak{U}-\theta e^{-\theta \txi^1} \frac{1}{A\ttau}   \mathfrak{U}$ (see \S\ref{s:riv2} for details). 
\end{enumerate}

Combining these two facts, \ref{F1} contributes an additional term $\frac{\alpha}{A\ttau}\mathbf{A}^0 \del{\txi^i}U$ from $\mathbf{A}^0\del{\tau}U$. This term effectively changes the zero diagonal elements in $\mathbf{A}^1$ into nonzero values. Consequently,  \eqref{e:mainsys1.a} becomes 
\begin{align*}
	\mathbf{A}^0\del{\ttau}U+\frac{1}{A\ttau} \underbrace{\bigl(\alpha \mathbf{A}^0 +\mathbf{A}^1\bigr)}_{=\p{\alpha & 1+\alpha U \\1+\alpha U &\frac{1}{4}\alpha}} \del{\txi^1} U =\frac{1}{A\ttau} \mathbf{A}   U +\mathbf{F}(U) . 
\end{align*}
Then \ref{F2} introduces an additional term,  $\frac{1}{A\ttau} \mathfrak{U}$, which moves to the RHS to complement   $\frac{1}{A\ttau} \mathbf{A}   U$.  This adjustment can lead to that the Fuchsian $\mathbf{B}$ is positive definite.  That is, using \ref{F2} in the above equation, we obtain
\begin{align*}
	&\underbrace{\p{1&  e^{-\theta \txi^1}   \mathfrak{U}\\e^{-\theta \txi^1}   \mathfrak{U}&\frac{1}{4}} }_{=:\mathbf{B}^0} \del{\ttau} \mathfrak{U} +  \frac{1}{A\ttau}  \underbrace{\p{\alpha & 1 +\alpha e^{-\theta \txi^1}   \mathfrak{U} \\ 1+\alpha e^{-\theta \txi^1}   \mathfrak{U} &\frac{1}{4}\alpha} }_{=:\mathbf{B}^1} \del{\txi^1}\mathfrak{U}  \notag  \\
	=&\frac{1}{A\ttau} \underbrace{\p{\alpha \theta-\frac{14}{3} & \theta  +\alpha \theta e^{-\theta \txi^1}   \mathfrak{U}- q  \\
		\theta  +\alpha \theta e^{-\theta \txi^1}   \mathfrak{U}& 1+\frac{1}{4}\alpha \theta }   }_{=:\mathbf{B}} \mathfrak{U}  + 	e^{\theta \txi^1} \mathbf{F}(e^{-\theta \txi^1} \mathfrak{U}) . 
\end{align*}
Since $\mathbf{F}(0)=0$, the Taylor expansion of $e^{\theta \txi^1} \mathbf{F}(e^{-\theta \txi^1} \mathfrak{U})$ is given by $a_1 \mathfrak{U}+a_2 e^{-\theta \txi^1}  \mathfrak{U}^2+a_3 e^{-2\theta \txi^1}  \mathfrak{U}^3+\cdots$.
We verify the positivity of  
\begin{equation*}
	\eta^T \mathbf{B}\eta :=\eta^T\p{\alpha \theta-\frac{14}{3} & \theta  +\alpha \theta e^{-\theta \txi^1}   \mathfrak{U}- q  \\
		\theta  +\alpha \theta e^{-\theta \txi^1}   \mathfrak{U}& 1+\frac{1}{4}\alpha \theta }    \eta =\eta^T\p{\alpha \theta-\frac{14}{3} & \theta  +\alpha \theta e^{-\theta \txi^1}   \mathfrak{U}- \frac{q }{2} \\
		\theta  +\alpha \theta e^{-\theta \txi^1}   \mathfrak{U} - \frac{q }{2}  & 1+\frac{1}{4}\alpha \theta }    \eta =: \eta^T \mathbb{B}   \eta . 
\end{equation*}

Neglecting $\alpha \theta e^{-\theta \txi^1}   \mathfrak{U}$ for the moment, for $q\in\Rbb$, $\theta>0$, choosing $\alpha>(\sqrt{36 \theta ^2+9 q^2-36 \theta  q+169}+1)/3 \theta $ ensures that both eigenvalues of  $\mathbb{B}$ are positive, and hence	$\eta^T \mathbf{B}\eta>0$, achieving our goal.  However, the presence of $e^{-\theta \txi^1}$ not only disrupts the positive definiteness of $\mathbf{B}$ but also causes all coefficients and remainder terms to blow up as $\txi^1\rightarrow-\infty$ (which is not allowed for the Fuchsian GIVP), leading to the next difficulty. 

\begin{remark}
	If \ref{F1} is not used, meaning  $\txi^1=\zeta^1$,  the transformation \ref{F2} introduces an additional term $\frac{1}{A\ttau} \mathfrak{U}$, which moves to the RHS to complement $\frac{1}{A\tau} \mathbf{A}   U$. However, the diagonal elements in $\mathbf{A}^1$ are all zero, which can not compensate for the  $-\frac{14}{3}$ term in $\mathbf{A}$. Therefore,  \ref{F1} is necessary to ensure that the diagonal elements are nonzero. After this adjustment, \ref{F2} can be applied as described above. 
\end{remark} 
\begin{remark}
	 There is an additional constraint on the constant $\alpha$ in  \ref{F1}: the new $\ttau$ axis, defined by $\txi^1 = \frac{\alpha}{A} \ln(-\tau) + \zeta^1 = 0$, must be timelike  with respect to the metric $\mathtt{g}_{\alpha\beta}$ (otherwise, Fig. \ref{f:fig3b} fails). 
	 According to the theory of characteristic curves and surfaces (see, for example, \cite[\S$3.4$ and \S$4.2$]{Lax2006}), the normalized matrix $\mathbf{B}^1$—obtained by normalizing $\mathbf{B}^0$ to $\mathds{1}$ and calculating the corresponding $\mathbf{B}^1$, while neglecting the term $\alpha e^{-\theta \txi^1} \mathfrak{U}$ for the moment—must have both positive and negative eigenvalues\footnote{In brief, since the $\ttau$ axis is timelike and enclosed by the characteristic conoid, the normal vectors $(1,\pm y)$ (with $y\geq0$) of the characteristic conoid satisfy that $\mathds{1}\pm y \mathbf{B}^1$ is not invertible (see  \cite[\S$3.4$ and \S$4.2$]{Lax2006}). }. Specifically, $
	\mathbf{B}^1= \frac{1}{A\ttau} \p{\alpha & 2 \\ 2 & \alpha}$
has eigenvalues  $\frac{1}{A\ttau}(\alpha - 2 ) >0$ and $\frac{1}{A\ttau}(\alpha + 2 ) < 0$.  This leads to the condition $0 < \alpha < 2$, which imposes further requirements on $q$ and $\theta$ (we omit the details).   
\end{remark}

\underline{Difficulty $4$:  Singularity of $e^{-\theta \txi^1}$ as $\txi^1 \rightarrow -\infty$.} 
To address this issue, we note that the singularity of $e^{-\theta \txi^1}$   occurs as $\txi^1 = -\infty$ , which is outside the long wavelength domain of interest. Therefore, we can apply the approach from Difficulty $1$ by introducing a cutoff function $\phi(\txi^1)$ to exclude and revise the system near $\txi^1 = -\infty$ (see  \eqref{e:phi1} and \S\ref{s:revFuc} for details). That is, we  revise the system for $\txi^1\leq -\frac{1}{\delta_0}$ (near $\txi^1=-\infty$), where $\delta_0>0$ is sufficiently small. This adjustment transforms the singular symmetric hyperbolic system \eqref{e:mainsys1.a}   into \eqref{e:mainsys5} , which closely resembles the Fuchsian system, except that the domain is not a closed manifold. This introduces a fifth difficulty. Additionally, the revised system does not exactly match the original wave equation \ref{Eq2} due to these modifications.

\underline{Difficulty $5$:  Compactifying the Domain.} 
To address this difficulty, we introduce new coordinates:  $
\hat{\tau}=\ttau\in[-1,0)  $ and $\hat{\zeta}^i= \arctan (\gamma\txi^i)\in\left(-\frac{\pi}{2} ,\frac{\pi}{2} \right)$.  This transformation compactifies the domain (further obtain a torus $\Tbb^n$) and leads to a revised and extended Fuchsian system, as detailed in  \eqref{e:mainsys6} (see \S\ref{s:revFuc1}). 

Once we have the revised and extended Fuchsian system, we can apply the global existence theorem for Fuchsian GIVP (see Theorem \ref{t:fuc} for details). It allows us to conclude that, after transforming back to the $(t,x)$ coordinates, the solution to the revised system $\varrho$, along with their derivatives, remain close to $f$ within $[t_0,t_m)\times \Rbb^n$. However, the revised system is not always consistent with the original wave equation \ref{Eq2}, but they do agree within a lens-shaped domain  (see Fig. \ref{f:fig1}), which corresponds to the long-wavelength domain described in Theorem \ref{t:mainthm2}. This provides a rough outline of the ideas behind the proof of Theorem  \ref{t:mainthm2}.

\subsection{Summaries and outlines}
In this article, we employ four coordinates and three coordinate transformations to convert the wave equation into a Fuchsian system. To clarify, let's summarize the role of each coordinate transformation.
\begin{enumerate}[leftmargin=*,label={(\arabic*)}]
	\item The transformation $(t,x) \rightarrow (\tau,\zeta)$ compactifies the time coordinate, aligning the blow-up time with $\tau=0$ and making the Fuchsian system feasible. 
	\item  The transformations $(\tau,\zeta) \rightarrow (\ttau,\txi)$  and $\mathfrak{U}=e^{\theta \txi^1} U$ ensure that the Fuchsian $\mathbf{B}$ in \eqref{e:Fucmodel} becomes positive definite. 
	\item  The transformation  $(\ttau,\txi) \rightarrow (\htau,\hat{\zeta})$ compactifies the spatial coordinates, obtaining a torus, which is necessary because the Fuchsian GIVP requires the domain to be a closed manifold. 
\end{enumerate}

Let us now \textit{outline the structure} of this article. In \S\ref{s:2}, we introduce a compactified time coordinate transformation $(t,x) \rightarrow (\tau,\zeta)$ and $\mft\rightarrow \tau$  in \S\ref{s:2.1} and \S\ref{s:2.2}, respectively. Additionally, we present some hidden but useful quantities and identities in \S\ref{s:iden1}, derived through these coordinate transformations. These identities play a crucial role in deriving the Fuchsian formulations of \ref{Eq2}, as highlighted in Difficulty $2$. 

In \S\ref{s:3}, beginning with the wave equation \ref{Eq2} and aiming to derive a Fuchsian system, we present the nontrivial derivations that lead to a singular symmetric hyperbolic system \eqref{e:mainsys1.a} (see Difficulty $2$). However, this system falls short of being a true Fuchsian system. We also give the the domain of influence of the inhomogeneous data at \S\ref{s:DoI}. 

In \S\ref{s:4}, we provide a detailed exposition and derivation of the revised and extended Fuchsian system. \S\ref{s:riv1} and \S\ref{s:riv2} focus on the derivations related to Difficulty $3$. \S\ref{s:revFuc}  addresses Difficulty $4$ by detailing the revised Fuchsian system, while \S\ref{s:revFuc1} covers the extended Fuchsian system in relation to Difficulty $5$. \S\ref{s:reorg} reconstructs the solution to the original system by constructing the lens-shaped domain, addressing ideas from Difficulty $1$. Finally, \S\ref{s:verfuc} provides a comprehensive proof for verification of Fuchsian system.

In \S\ref{s:pfmthm}, we focus on the proofs of the main Theorems \ref{t:mainthm1} and \ref{t:mainthm2}.

Appendix \ref{s:pfclm}  contains proofs of propositions from this article, ensuring the continuity of the statements in the main text. Appendix \ref{s:ODE0} compiles results from our previous works \cite{Liu2022b, Liu2023}, which serve as tools for this article. Appendix \ref{s:fuc} provides the foundational statements of the Fuchsian system, a tailored version of the theorem established in \cite{Beyer2020} and originally established by \cite{Oliynyk2016a}.

\subsection{Notations}\label{s:AIN}
Unless stated otherwise, the following notational conventions will be used throughout this article without further clarification in subsequent sections.

\subsubsection{Indices and coordinates}\label{iandc}
Unless stated otherwise, our indexing convention will be as follows: lower case Latin letters (e.g. $i, j,k$) denote spatial indices running from $1$ to $n$,  while lower case Greek letters (e.g. $\alpha, \beta, \gamma$) represent spacetime indices running from $0$ to $n$.   We will adhere to the \textit{Einstein summation convention}, meaning that repeated lower and upper indices are implicitly summed over. The standard coordinates on $\Rbb^n$ or $\mathbb{T}^n$ are denoted by $x^{i}$ (or $\zeta^i$, etc.) for $i=1, \cdots, n$,  and the time coordinate on the interval $[t_0,\infty)$ is denoted by
$t = x^{0}$ (or $\tau=\zeta^0$, etc.). 
We use $\zeta^i$ with a \textit{running index} $i$  (or simply $\zeta$ ) to represent the coordinates of a point $\zeta=(\zeta^1,\cdots,\zeta^n)$, and $|\zeta|$ to denote its length, defined by  $	|\zeta|^2:=\delta_{jk}\zeta^j \zeta^k  =\zeta_k\zeta^k$ where $\zeta_i:=\zeta^j\delta_{ij}$.

This article introduces four coordinate systems: $(t,x)$, $(\tau,\zeta)$, $(\ttau,\txi)$ and $(\hat{\tau},\hat{\zeta})$. Various variables, objects, and maps are defined within each of these coordinate systems, with specific notations used accordingly. Unless otherwise specified, we denote an object or map $Q$ in the $(\tau,\zeta)$ coordinates as  $\underline{Q}$,  in the $(\ttau,\tilde{\zeta})$ coordinates as $\widetilde{Q}$,  and in the $(\htau,\hat{\zeta})$ coordinates as $\widehat{Q}$. For example, we use $\varrho$ in $(\tau,\zeta)$, $\widetilde{\mathbf{A}^0}$ in $(\ttau,\txi)$ and $\widehat{\mathfrak{U}}$ in $(\htau,\hat{\zeta})$. 
Additionally, for instance, the relationships between these variables  are expressed as follows: 
\begin{equation*} 
	\underline{\varrho}(\tau,\zeta):=  \varrho\bigl(t(\tau,\zeta), x(\tau,\zeta)\bigr), \quad 
	\widetilde{\mathbf{A}^0}\bigl(\ttau,\txi\bigr)=\mathbf{A}^0\bigl(\tau(\ttau,\txi),\zeta(\ttau,\txi)\bigr)   \AND  \widehat{\mathfrak{U}}\bigl(\htau,\hat{\zeta}\bigr)=\mathfrak{U}\bigl(\ttau(\htau,\hat{\zeta}),\txi(\htau,\hat{\zeta})\bigr)  . 
\end{equation*}

\subsubsection{Remainder terms}\label{s:rmdrs}
Firstly, we say that a function $f(x,y)$ \textit{vanishes to the $n^{\text{th}}$ order in $y$} if it satisfies $f(x,y)\sim \mathrm{O}(y^n)$ as $y\rightarrow 0$. In other words, there exists a positive constant $C$ such that $|f(x,y)|\leq C|y|^n$ as $y \rightarrow 0$.

To simplify the treatment of remainder terms where the exact form is not important, we will, unless otherwise stated, use uppercase script letters, such as  $\mathscr{S}(\tau, U;V)$, $\mathscr{Z}(\tau, U;V)$, $\mathscr{H}(\tau, U;V)$,  to denote
analytic maps of the variable
$(\tau, U, V)$. These maps vanish to the  $1^{\text{st}}$ order in $V$;. The domain of analyticity for these maps will be clear from the context.

\subsubsection{Derivatives}\label{s:der}
Partial derivatives with respect to the  coordinates $(x^\mu)=(t,x^i)$ are denoted by   $\partial_\mu = \partial/\partial x^\mu$.  We use 
$Du=(\partial_j u)$  to represent the spatial gradient and $\partial u = (\partial_\mu u)$ for the spacetime gradient. 
Greek letters are also used to denote multi-indices, e.g., 
$\alpha = (\alpha_1,\alpha_2,\ldots,\alpha_n)\in \mathbb{Z}_{\geq 0}^n$, with the standard notation  $D^\alpha = \partial_{1}^{\alpha_1} \partial_{2}^{\alpha_2}\cdots
\partial_{n}^{\alpha_n}$ for spatial partial derivatives. The context will clarify whether a Greek letter represents a spacetime coordinate index or a multi-index.

\subsubsection{Function spaces, inner-products and matrix inequalities}\label{s:funsp}
Given a finite dimensional vector space $V$, we denote by
$H^s(\mathbb{R}^n,V)$
the space of maps from $\mathbb{R}^n$ to $V$ with $s$ derivatives belongs to $L^2(\Rbb^n)$, where $s\in \mathbb{Z}_{\geq 0}$. When the
vector space $V$  is understood from the context--such as when  $V=\Rbb^N$--we simplify the notation to  $H^s(\mathbb{R}^n)$ instead of $H^s(\mathbb{R},V)$.
We define the standard $L^2$ inner product by
\begin{equation*}
	\langle{u,v\rangle} = \int_{\mathbb{R}^n} (u(x),v(x))\, d^n x,
\end{equation*}
where $(\cdot,\cdot)$
denotes the Euclidean inner product on $\Rbb^N$ (i.e., $(\xi,\zeta)=\xi^T\zeta$ for any $\xi, \zeta\in \Rbb^N$).  The $H^s$ norm is then defined by
\begin{equation*}
	\|u\|_{H^s}^2 = \sum_{0\leq |\alpha|\leq s} \langle D^\alpha u, D^\alpha u \rangle.
\end{equation*}

For matrices $A,B\in \mathbb{M}_{N\times N}$, we define
\begin{equation*}
	A\leq B \quad \Leftrightarrow \quad  (\zeta,A\zeta)\leq (\zeta,B\zeta),  \quad \forall \zeta\in \Rbb^N.
\end{equation*}


\section{Compactified time transformations and analysis of the reference equation}
\label{s:2} 
\subsection{Compactified time transformations}\label{s:2.1}
As discussed in \S\ref{s:oview} (particularly,  Difficulty $2$), our goal is to derive a Fuchsian system. To achieve this, we first introduce a compactified time coordinate system $(\tau, \zeta^i)$ defined by: 
\begin{equation}\label{e:coord2}
\tau =g(t,x^i) \AND 	\zeta^i=x^i
\end{equation}
where the function $g(t, x^i)$ is determined by the solution of the following Cauchy problem: 
\begin{align}
	\del{t}g(t,x^i)= 	&	\frac{A B   \varrho(t,x^i) \left(-g(t,x^i)\right){}^{\frac{2}{3 A}+1}}{t^{\frac{2}{3}}   (  \varrho(t,x^i)+1)^{\frac{1}{3}}} ,  \label{e:tmeq1}   \\
	g(t_0,x^i) = & -1 ,  \label{e:tmeq2} 
\end{align}
and $0<A<2$ and $B:= (1+\mf)^{\frac{4}{3}}/( t_0^{\frac{4}{3}} \mf_0)>0 $ are constants. 
Note the compactified time $\tau=g(t,x)$ is defined in conjunction with solving for   $\varrho$. Therefore, the coordinates $(\tau,\zeta)$ and $\varrho$ are defined simultaneously by solving the coupled  system \ref{Eq2} and \eqref{e:tmeq1}--\eqref{e:tmeq2}.

\begin{lemma}\label{t:gb1}
	If the coordinate  $(\tau,\zeta^i)$ is given by \eqref{e:coord2}, $g$ is defined by \eqref{e:tmeq1}--\eqref{e:tmeq2} and $0<\varrho \in C^2([t_0,t_\star]\times \Rbb)$ solves the main equation \ref{Eq2}, then 
	\begin{enumerate}[leftmargin=*,label={(\arabic*)}]
		\item\label{l:1.1} $\tau=g(t,x^i)$ can be expressed as
		\begin{equation*}\label{e:tmdef}
			\tau=	g(t,x^i)= - \Bigl(1+ \frac{2}{3} B  \int^t_{t_0} s^{-\frac{2}{3}} \varrho(s,x^i)(1+\varrho(s,x^i))^{-\frac{1}{3}}  ds \Bigr)^{-\frac{3A}{2}} \in[-1,0)   .  
		\end{equation*}
	\item\label{l:1.2} the inverse transformation of \eqref{e:coord2} is
	\begin{equation}\label{e:coordi2}
		  t=\mathsf{b}(\tau,\zeta^i) \AND x^i=\zeta^i
	\end{equation}
	where $\mathsf{b}(\tau,\zeta^i)$ satisfies the following ODE:
	\begin{align}\label{e:tmeqi1}
		\del{\tau} \mathsf{b} (\tau, \zeta^i) = & \frac{\mathsf{b}^{\frac{2}{3}} (\tau, \zeta^i)  ( 1 + \underline{\varrho}(\tau,\zeta^i))^{\frac{1}{3}}}{A B  \underline{\varrho}(\tau,\zeta^i) \left(-\tau\right){}^{\frac{2}{3 A}+1}}   ,   \\
		\mathsf{b}(-1,\zeta^i)= & t_0 . 
	\end{align} 
\item\label{l:1.3} denoting $\mathsf{b}_i:=\del{\zeta^i}\mathsf{b}$, $\mathsf{b}_i$ satisfies
	\begin{equation}\label{e:bzeq1}
	\del{\tau} \mathsf{b}_{i}	
	=   \frac{2 (-\tau )^{-\frac{2}{3 A}-1} \mathsf{b}_i \left(\underline{\varrho } +1\right)^{\frac{1}{3}}}{3 A B  \mathsf{b}^{\frac{1}{3}} \underline{\varrho } }       -\frac{(-\tau )^{-\frac{2}{3 A}-1} \mathsf{b}^{\frac{2}{3}} \del{\zeta^i} \underline{\varrho }   \left(\underline{\varrho } +1\right)^{\frac{1}{3}}}{A B  \underline{\varrho }^2}   +\frac{(-\tau )^{-\frac{2}{3 A}-1} \mathsf{b}^{\frac{2}{3}} \del{\zeta^i} \underline{\varrho }  }{3 A B  \underline{\varrho }  \left(\underline{\varrho } +1\right)^{\frac{2}{3}}}   . 
\end{equation}  
\item\label{l:1.4} for any function $F(t,x^i)$, there are relations
\begin{equation*}
	\underline{\del{t}F}=    \frac{A B  \underline{\varrho}  \left(-\tau\right){}^{\frac{2}{3 A}+1}}{\mathsf{b}^{\frac{2}{3}}   ( \underline{\varrho} +1)^{\frac{1}{3}}}\del{\tau} \underline{F}   \AND \underline{\del{x^i}F} =  -  \frac{A B  \underline{\varrho}  \left(-\tau\right){}^{\frac{2}{3 A}+1} \mathsf{b}_i  } {\mathsf{b}^{\frac{2}{3}}   ( \underline{\varrho} +1)^{\frac{1}{3}}} \del{\tau} \underline{F}     + \del{\zeta^i} \underline{F}  .
\end{equation*}
	\end{enumerate}	 
\end{lemma}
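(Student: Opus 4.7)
\medskip

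\noindent\textbf{Proof plan for Lemma \ref{t:gb1}.} The four statements are of different flavors: (1) is a separation-of-variables integration, (2) is an application of the inverse function theorem, (3) is a differentiation of an ODE with respect to a parameter, and (4) is a chain rule computation. The plan is to treat them in this order, since each statement is used in the verification of the next.

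For part \ref{l:1.1}, I would treat the spatial coordinate $x^i$ in \eqref{e:tmeq1} as a parameter and view the equation as the ODE
\[
\frac{\del{t} g}{(-g)^{\frac{2}{3A}+1}} = \frac{AB\,\varrho(t,x^i)}{t^{\frac{2}{3}}(1+\varrho(t,x^i))^{\frac{1}{3}}}
\]
in $t$ alone. Since the RHS is positive (as $\varrho>0$), $g$ is strictly increasing in $t$ and remains negative (starting from $g(t_0,x^i)=-1$); in particular $g\in[-1,0)$. Substituting $u=-g$ and integrating from $t_0$ to $t$ yields
\[
\frac{3A}{2}\bigl[(-g)^{-\frac{2}{3A}} - 1\bigr] = AB\int_{t_0}^{t}\frac{\varrho(s,x^i)}{s^{\frac{2}{3}}(1+\varrho(s,x^i))^{\frac{1}{3}}}\,ds,
\]
which solved for $g$ gives the stated formula. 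A subtlety I will need to check is that the integrand stays finite on the interval $[t_0,t]$, which follows from $\varrho\in C^2$ and $\varrho>0$, guaranteeing that the implicit definition of $g$ is consistent.

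For part \ref{l:1.2}, monotonicity of $g$ in $t$ (established in the previous step) ensures that $t\mapsto g(t,x^i)$ is invertible for each fixed $x^i$, so the inverse transform \eqref{e:coordi2} exists with $\mathsf{b}\in C^2$. Differentiating the identity $g(\mathsf{b}(\tau,\zeta^i),\zeta^i)=\tau$ in $\tau$ gives $\del{\tau}\mathsf{b}=1/(\del{t}g)|_{t=\mathsf{b}}$, and plugging in \eqref{e:tmeq1} with $\varrho(\mathsf{b},\zeta^i)=\underline{\varrho}(\tau,\zeta^i)$ and $-g=-\tau$ produces \eqref{e:tmeqi1}; the initial condition follows from $g(t_0,\zeta^i)=-1$. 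For part \ref{l:1.3}, I would simply apply $\del{\zeta^i}$ to \eqref{e:tmeqi1}, using the product and chain rules on $h(\mathsf{b},\underline{\varrho}):=\mathsf{b}^{\frac{2}{3}}(1+\underline{\varrho})^{\frac{1}{3}}/\underline{\varrho}$. The three terms in the formula for $\del{\tau}\mathsf{b}_i$ correspond to $\del_{\mathsf{b}} h\cdot \mathsf{b}_i$ and the two pieces of $\del_{\underline{\varrho}} h\cdot \del{\zeta^i}\underline{\varrho}$ (one from differentiating $(1+\underline{\varrho})^{1/3}$ and one from differentiating $1/\underline{\varrho}$).

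For part \ref{l:1.4}, I would apply the chain rule to $F(t,x^i)=\underline{F}(g(t,x^i),x^i)$, giving
\[
\del{t}F=(\del{t}g)\,\del{\tau}\underline{F}, \qquad \del{x^i}F=(\del{x^i}g)\,\del{\tau}\underline{F}+\del{\zeta^i}\underline{F}.
\]
The coefficient $\del{t}g$ is read off from \eqref{e:tmeq1} evaluated in the new coordinates, giving the first identity. For the second, I would derive $\del{x^i}g=-\mathsf{b}_i/\del{\tau}\mathsf{b}$ by differentiating $\mathsf{b}(g(t,x^i),x^i)=t$ in $x^i$, and then substitute the expression for $\del{\tau}\mathsf{b}$ from part \ref{l:1.2}. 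The main obstacle in this lemma is largely bookkeeping: verifying that the powers of $(-\tau)$, $\mathsf{b}$, and $(1+\underline{\varrho})$ all combine correctly through the successive substitutions between $(t,x)$ and $(\tau,\zeta)$ coordinates, especially in part \ref{l:1.3}, where three competing terms with nearby exponents must be tracked.
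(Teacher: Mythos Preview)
Your proposal is correct and follows essentially the same approach as the paper: direct integration for \ref{l:1.1}, the inverse function theorem for \ref{l:1.2}, differentiation of \eqref{e:tmeqi1} in $\zeta^i$ for \ref{l:1.3}, and the chain rule for \ref{l:1.4}. The only cosmetic difference is that the paper invokes the full $(n{+}1)$-dimensional inverse mapping theorem via the Jacobian matrix of \eqref{e:coord2} (whose determinant is $\del{t}g$) and reads off the relations $\del{\tau}\mathsf{b}=1/\underline{\del{t}g}$ and $\underline{\del{x^i}g}=-\mathsf{b}_i/\del{\tau}\mathsf{b}$ directly from the inverse Jacobian, whereas you obtain the same identities by differentiating the scalar relations $g(\mathsf{b},\zeta)=\tau$ and $\mathsf{b}(g,x)=t$; both routes are equivalent.
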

\begin{proof}
	Firstly, solving the ODE system \eqref{e:tmeq1}--\eqref{e:tmeq2} directly yields the result in \ref{l:1.1}. 
	
	To prove \ref{l:1.2}, we apply the inverse mapping theorem. We begin by calculating the Jacobian of the transformation  \eqref{e:coord2} (slightly abusing  notation),
	\begin{equation*}
		\p{  \partial_t \tau   &  \partial_{x^i} \tau  \\
	\partial_t \zeta^j  &  \partial_{x^i} \zeta^j   }= \p{ \partial_t g   &  \partial_{x^i} g    \\ 0 & \delta^j_i}   . 
	\end{equation*}
Then the determinant of this Jacobian matrix is $\del{t}g  $. By using \eqref{e:tmeq1} and \ref{l:1.1}, we obtain $\del{t}g>0$ for any $(t,r)\in [t_0,t_\star]\times \Rbb$ and $g\in C^1$. By the inverse mapping theorem, there is an inverse transformation given by \eqref{e:coordi2}, where $\mathsf{b}\in C^1$, and the Jacobian of the inverse transformation \eqref{e:coordi2} is 
\begin{equation}\label{e:Joc}
	\p{\del{\tau} t  &   \del{\zeta^i} t \\ \del{\tau} x^j &   \del{\zeta^i} x^j}=\p{\del{\tau} \mathsf{b}  &   \del{\zeta^i} \mathsf{b}  \\ 0 &   \delta^j_i}=\underline{ \p{ \partial_t g   &  \partial_{x^j} g    \\ 0 & \delta^l_j}^{-1}} =\left(
	\begin{array}{cc}
		\frac{1}{\underline{\partial_t g }} & -\frac{\underline{\partial_{x^i}  g} }{\underline{\partial_t g} } \\
		0 & \delta^j_i \\
	\end{array}
	\right)
\end{equation}
and 
\begin{align*}
	\p{ \underline{\partial_t g}   &  \underline{\partial_{x^i} g}    \\ 0 & \delta^j_i}= \p{\del{\tau} \mathsf{b}  &   \del{\zeta^j} \mathsf{b}  \\ 0 &   \delta^l_j }^{-1}  = \left(
	\begin{array}{cc}
		\frac{1}{\del{\tau} \mathsf{b}  } & -\frac{\del{\zeta^i} \mathsf{b} }{\del{\tau} \mathsf{b}  } \\
		0 & \delta^j_i \\
	\end{array}
	\right)  .  
\end{align*}
Then
\begin{equation}\label{e:dtb}
\del{\tau} \mathsf{b}=\frac{1}{\underline{\del{t}g}}, \quad \del{\zeta^i} \mathsf{b}=-\frac{\underline{\partial_{x^i} g} }{\underline{\partial_t g }}, \quad  \underline{\partial_t g }=	\frac{1}{\del{\tau} \mathsf{b}  }  \AND   \underline{\partial_{x^i} g  }= -\frac{\del{\zeta^i} \mathsf{b} }{\del{\tau} \mathsf{b}  } . 
\end{equation}
By \eqref{e:tmeq1}, we arrive at \eqref{e:tmeqi1}.  Then by using \eqref{e:tmeq2},   $t_0=\mathsf{b}(g(t_0,x^j),\zeta^i)=\mathsf{b}(-1,\zeta^i)$.

	\ref{l:1.3} is obtained by directly differentiating \eqref{e:tmeqi1} with respect to $\zeta^i$. 
	
	Let us prove \ref{l:1.4} at the end. By the chain rule, along with \eqref{e:tmeqi1} and \eqref{e:dtb}, we arrive at
	\begin{align*}
		\underline{\del{t}F} = &    \frac{1}{\del{\tau}\mathsf{b} } \del{\tau} \underline{F}   =      \frac{A B  \underline{\varrho}  \left(-\tau\right){}^{\frac{2}{3 A}+1}}{\mathsf{b}^{\frac{2}{3}}   ( \underline{\varrho} +1)^{\frac{1}{3}}}\del{\tau} \underline{F}  ,   \\
	\underline{	\del{x^i}F } = &    - \frac{\del{\zeta^i}\mathsf{b}}{\del{\tau}\mathsf{b} } \del{\tau} \underline{F}     + \del{\zeta^i} \underline{F}  =-  \frac{A B  \underline{\varrho}  \left(-\tau\right){}^{\frac{2}{3 A}+1} \mathsf{b}_i  } {\mathsf{b}^{\frac{2}{3}}   ( \underline{\varrho} +1)^{\frac{1}{3}}} \del{\tau} \underline{F}     + \del{\zeta^i} \underline{F}   .
	\end{align*}
	This completes the proof.   
\end{proof}

\subsection{Time transformations for  reference solutions}\label{s:2.2}
For the reference equation \eqref{e:feq0b}--\eqref{e:feq1b}, we introduce a compactified time transformation which was presented in our previous work \cite[eq. $(1.13)$]{Liu2022b}. 
\begin{equation}\label{e:ctm2}
	\tau=\mathfrak{g}(\mft)=- \Bigl(1+ \frac{2}{3} B  \int^{\mft}_{t_0} s^{-\frac{2}{3}} f(s)(1+f(s))^{-\frac{1}{3}}  ds \Bigr)^{-\frac{3A}{2}}  \overset{\text{Lemma \ref{t:gmap}}}{\in}[-1,0)  . 
\end{equation}

\begin{lemma}\label{t:gb2}
	Suppose $f\in C^2([t_0,t_1))$ ($t_1>t_0$) solves the reference ODE \eqref{e:feq0b}--\eqref{e:feq1b}, $\mfg(t)$ is defined by \eqref{e:ctm2}, and  denote $f_0(\mft):=\del{\mft} f(\mft)$, then 
	\begin{enumerate}[leftmargin=*,label={(\arabic*)}]
		\item\label{l:2.1} $	\tau=\mfg(\mft)$ can be expressed by 
		\begin{equation*}
			\tau=\mfg(\mft)=-\exp\Bigl(-A\int^{\mft}_{t_0} \frac{f(s)(f(s)+1)}{s^2 f_0(s)} ds \Bigr)<0. 
		\end{equation*}
	\item\label{l:2.2} $\mfg(\mft)$ solves 
		\begin{align*}
		\del{\mft}\mfg(\mft)= &-A \mfg(\mft)   \frac{f(\mft)(f(\mft)+1)}{\mft^2f_0(\mft)}= \frac{A B f(\mft) (-  \mfg(\mft) )^{1+\frac{2}{3A}}  }{  \mft^{\frac{2}{3}}  (1+f(\mft))^{\frac{1}{3}}} , \\
		\mfg(t_0) = &   -1 .
	\end{align*}
\item\label{l:2.3} the inverse transformation of \eqref{e:ctm2} is
\begin{equation*}
 \mft=\mathsf{b}_\uparrow(\tau) ,  
\end{equation*}
where $\mathsf{b}_\uparrow(\tau) $ satisfies an ODE
\begin{align}\label{e:dtbup1} 
		\del{\tau}\mathsf{b}_\uparrow (\tau) = &  \frac{\mathsf{b}_\uparrow^{\frac{2}{3}}(\tau) (1+\underline{f}(\tau) )^{\frac{1}{3}}}{AB  \underline{f}(\tau)  (-\tau)^{\frac{2}{3A}+1}} , \\
		\mathsf{b}_\uparrow(-1)= & t_0  . 
\end{align}  
\item\label{l:2.4} in terms of the compactified time $\tau$,   the reference ODE \eqref{e:feq0b}--\eqref{e:feq1b} becomes  
\begin{equation*}\label{e:dtf0eq1} 	\del{\tau}\underline{f_0} + \frac{4}{3}  \frac{\mathsf{b}_\uparrow^{-\frac{1}{3}} (1+ \uf)^{\frac{1}{3}}}{A B  \uf (-\tau)^{\frac{2}{3A}+1}} \underline{f_0}  -\frac{2}{3  }  \frac{\mathsf{b}_\uparrow^{-\frac{4}{3}} (1+ \uf )^{\frac{4}{3}} }{A B  (-\tau)^{\frac{2}{3A}+1}} -\frac{4}{3 } \frac{\mathsf{b}_\uparrow^{\frac{2}{3}}  \underline{f_0}^2 (1+ \uf )^{-\frac{2}{3}}}{A B  \uf (-\tau)^{\frac{2}{3A}+1}}= 0   
\end{equation*} 
where we denote $\uf(\tau):=f\circ\mathsf{b}_\uparrow(\tau)$ and $\ufo(\tau):=f_0\circ\mathsf{b}_\uparrow(\tau)$. 
\item\label{l:2.5} $\del{\tau}\uf $ and $\underline{f_0}$ have the relation
\begin{equation*}
	\del{\tau} \uf  = \frac{\mathsf{b}_\uparrow^{\frac{2}{3}} (1+ \uf )^{\frac{1}{3}}}{A B  \uf   (-\tau)^{\frac{2}{3A}+1}}	\underline{f_0}  . 
\end{equation*}
\item\label{l:2.6} in terms of the compactified time $\tau$, 
\begin{equation*}
	\underline{f_0}(\tau) =B^{-1} (\mathsf{b}_{\uparrow}(\tau))^{-\frac{4}{3}} (-\tau)^{-\frac{2}{3A}} (1+ \uf(\tau))^{\frac{4}{3} } >0  . 
\end{equation*} 
	\end{enumerate}
\end{lemma}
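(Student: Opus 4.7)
The plan is to build the six assertions sequentially, starting from the integral representation of $\mfg$ in \eqref{e:ctm2} and relying on the key first-integral-type identity \eqref{e:f0aa} from Appendix \ref{s:ODE0}, namely $(-\mfg(\mft))^{2/(3A)} = (1+f(\mft))^{4/3}/(B\mft^{4/3}f_0(\mft))$, which encodes a conserved structure of the reference ODE \eqref{e:feq0b}--\eqref{e:feq1b}. For Part (2), I would first differentiate \eqref{e:ctm2} directly using the fundamental theorem of calculus to obtain the second equality $\del{\mft}\mfg = ABf(-\mfg)^{1+2/(3A)}/(\mft^{2/3}(1+f)^{1/3})$. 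Then invoking \eqref{e:f0aa} to replace $(-\mfg)^{2/(3A)}$ by $(1+f)^{4/3}/(B\mft^{4/3}f_0)$ immediately converts this into the first equality $\del{\mft}\mfg = -A\mfg\, f(f+1)/(\mft^2 f_0)$.

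Given Part (2), Part (1) follows by separation of variables: writing $d\mfg/\mfg = -A f(f+1)/(\mft^2 f_0)\,d\mft$ and integrating from $t_0$ to $\mft$ with $\mfg(t_0)=-1$ yields the claimed exponential representation. Part (3) then follows from the inverse function theorem: since $\del{\mft}\mfg > 0$ on $[t_0,t_1)$ by the formula in Part (2) together with $f>0$, the map $\mfg$ is a $C^1$ diffeomorphism onto its image $[-1,0)$, and its inverse $\mathsf{b}_\uparrow := \mfg^{-1}$ satisfies $\del{\tau}\mathsf{b}_\uparrow = 1/(\del{\mft}\mfg \circ \mathsf{b}_\uparrow)$. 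Substituting the second formula from Part (2) and setting $\uf := f\circ \mathsf{b}_\uparrow$ produces \eqref{e:dtbup1}, while $\mathsf{b}_\uparrow(-1)=t_0$ is immediate from $\mfg(t_0)=-1$.

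Parts (5) and (6) are short computations built on these. Part (5) is the chain rule, $\del{\tau}\uf = (f_0 \circ \mathsf{b}_\uparrow)\,\del{\tau}\mathsf{b}_\uparrow = \ufo\,\del{\tau}\mathsf{b}_\uparrow$, followed by substitution of \eqref{e:dtbup1}. Part (6) is obtained by solving \eqref{e:f0aa} for $f_0$ and composing with $\mathsf{b}_\uparrow$. For Part (4), I would use the reference ODE \eqref{e:feq0b} to solve directly for $\del{\mft}f_0 = -\tfrac{4}{3\mft}f_0 + \tfrac{2}{3\mft^2}f(1+f) + \tfrac{4}{3}f_0^2/(1+f)$, compose with $\mathsf{b}_\uparrow$, and then apply $\del{\tau}\ufo = (\del{\mft}f_0)\circ\mathsf{b}_\uparrow \cdot \del{\tau}\mathsf{b}_\uparrow$ together with \eqref{e:dtbup1} to multiply each of the three resulting terms by the common factor $\mathsf{b}_\uparrow^{2/3}(1+\uf)^{1/3}/(AB\uf(-\tau)^{2/(3A)+1})$. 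Grouping produces exactly the four-term equation stated in Part (4).

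The only nontrivial aspect of the proof is algebraic bookkeeping: tracking the powers of $(-\tau)$, $\mathsf{b}_\uparrow$, and $(1+\uf)$ when assembling Part (4) requires care, and the common factor $(-\tau)^{-2/(3A)-1}$ coming from \eqref{e:dtbup1} must be carried consistently across all four terms. However, the essential analytic content has already been extracted into the identity \eqref{e:f0aa}, the inverse function theorem, and elementary separation of variables, so the remainder is a routine, if somewhat lengthy, symbolic computation.
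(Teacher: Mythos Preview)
Your proposal is correct and follows essentially the same route as the paper: parts (1) and (2) come from differentiating \eqref{e:ctm2} combined with the first-integral identity \eqref{e:f0aa} (the paper simply cites Lemma \ref{t:f0fg} and Remark \ref{t:dtg1} for this), part (3) is the inverse function theorem, parts (4) and (5) are the chain rule combined with \eqref{e:dtbup1} and the reference ODE, and part (6) is \eqref{e:f0aa} composed with $\mathsf{b}_\uparrow$. The only difference is that you spell out the derivation of (1) and (2) explicitly rather than deferring to the appendix lemma, which is a presentational choice, not a substantive one.
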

\begin{proof}
	Firstly, \ref{l:2.1}, \ref{l:2.2} are directly from Lemma \ref{t:f0fg} and Remark \ref{t:dtg1}. 
	
	To prove \ref{l:2.3},  we  use the \textit{inverse function theorem}.  Since $\mathfrak{g}\in C^1$ and $\del{\mft}\mfg>0$ by \eqref{e:ctm2} and \ref{l:2.2}.  The inverse function theorem implies there is a inverse function $\mft=\mathsf{b}_\uparrow(\tau)$ such that $\tau= \mfg\circ\mathsf{b}_\uparrow(\tau)$, $\mathsf{b}_\uparrow \in C^1$ and $\del{\tau} \mathsf{b}_\uparrow =(\underline{\del{\mft}\mfg })^{-1}$. Then with the help of \ref{l:2.2}, we conclude \eqref{e:dtbup1}.  Then by \eqref{e:ctm2} (i.e., $\tau=\mfg(t_0)=-1$), we obtain $t_0=\mathsf{b}_\uparrow(\mfg(t_0))=\mathsf{b}_\uparrow(-1)$.

\ref{l:2.4} can be derived by the chain rule $\del{\tau} \underline{f_0} = \del{\tau} \mathsf{b}_\uparrow \underline{\partial_{\mft}^2f}$ and \eqref{e:feq0b}.

\ref{l:2.5} can be reached by the chain rule $\del{\tau} \uf = \del{\tau} \mathsf{b}_\uparrow \underline{\del{\mft}f}$ and \eqref{e:dtbup1}.

\ref{l:2.6} can be obtained by rewriting Lemma \ref{t:f0fg}.$(1)$ in terms of $\tau$.  It completes the proof. 
\end{proof}

\subsection{Useful identities}\label{s:iden1}
Let us first define an important quantity $\chi(\mft)$ which was  introduced in our previous work \cite{Liu2022b} (see \eqref{e:Gdef0} in Appendix \ref{t:refsol} for its general form), 
\begin{equation*}\label{e:Gdef1}
	\chi_\uparrow(\mft):=\frac{\mft^{\frac{2}{3}} f_0(\mft)}{(1+f(\mft))^{\frac{2}{3}} f(\mft) (-\mfg(\mft))^{\frac{2}{3A}}} \overset{\eqref{e:f0aa}}{=} \frac{  (-\mfg(\mft) )^{-\frac{4}{3A}}\mft^{-\frac{2}{3}}}{B f(\mft) (1+f(\mft))^{-\frac{2}{3}}}  .
\end{equation*}
In terms of the compactified time $\tau$, it becomes
\begin{equation}\label{e:Gdef2}
	\underline{\chi_\uparrow}(\tau):=\frac{\mathsf{b}_\uparrow^{\frac{2}{3}} \underline{f_0}(\tau)}{(1+\uf(\tau))^{\frac{2}{3}} \uf(\tau) (-\tau)^{\frac{2}{3A}}} = \frac{  (-\tau )^{-\frac{4}{3A}}\mathsf{b}_\uparrow^{-\frac{2}{3}}}{B \uf(\tau) (1+\uf(\tau))^{-\frac{2}{3}}}  .
\end{equation}
We highlight an important and frequently used property of $\chi$ is given by Proposition \ref{t:limG}.

\begin{lemma}\label{t:iden1}
		Suppose $f\in C^2([t_0,t_1))$ (with $t_1>t_0$) solves the reference ODE \eqref{e:feq0b}--\eqref{e:feq1b}, let $\mfg(t)$ be defined by \eqref{e:ctm2}, and denote $f_0(\mft):=\del{\mft} f(\mft)$, then  the following identities hold: 
	\begin{gather}
		\frac{ \mathsf{b}_\uparrow^{-\frac{1}{3}} (1+\uf)^{\frac{1}{3}}   }{A B \uf  (-\tau)^{\frac{2}{3A}+1}  }    =  \frac{ \underline{\chi_\uparrow}^{\frac{1}{2}} }{A (-\tau) B^{\frac{1}{2}} \uf^{\frac{1}{2}}    }  ,     \label{e:iden3} \\
		\frac{1}{\underline{f_0}}   \frac{ \mathsf{b}_\uparrow^{-\frac{4}{3}} (1+\uf)^{\frac{4}{3}}   }{A B  (-\tau)^{\frac{2}{3A}+1}  }=       \frac{    1   }{A   (-\tau)   } , \label{e:iden1} \\
		\frac{ \mathsf{b}_\uparrow^{\frac{2}{3}} (1+\uf)^{-\frac{2}{3}}   }{A B \uf  (-\tau)^{\frac{2}{3A}+1}  }\underline{f_0} =  
		\frac{  \underline{\chi_\uparrow}(\tau)     }{A B    (-\tau)  }  ,  \label{e:iden2}  \\
		\frac{ \mathsf{b}_\uparrow  \underline{f_0} }{(1+\uf) }=\frac{\underline{\chi_\uparrow}^{\frac{1}{2}}}{B^{\frac{1}{2}} } \uf^{\frac{1}{2}}  . \label{e:keyid3}
	\end{gather}
\end{lemma}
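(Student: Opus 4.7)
The plan is to treat these four identities as purely algebraic consequences of the explicit formula for $\underline{f_0}$ given in Lemma \ref{t:gb2}.\ref{l:2.6} and the two equivalent expressions for $\underline{\chi_\uparrow}$ displayed in \eqref{e:Gdef2}. No new analysis of the reference ODE is needed; the only content is tracking the exponents of $\mathsf{b}_\uparrow$, $(-\tau)$, $\uf$ and $(1+\uf)$.

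First I would record the single key substitution that drives all four identities, namely
\begin{equation*}
\underline{f_0}(\tau) = B^{-1}\mathsf{b}_\uparrow^{-\frac{4}{3}}(-\tau)^{-\frac{2}{3A}}(1+\uf)^{\frac{4}{3}},
\end{equation*}
together with the derived half-power expression
\begin{equation*}
\underline{\chi_\uparrow}^{\frac{1}{2}} = \frac{(-\tau)^{-\frac{2}{3A}}\mathsf{b}_\uparrow^{-\frac{1}{3}}(1+\uf)^{\frac{1}{3}}}{B^{\frac{1}{2}}\,\uf^{\frac{1}{2}}},
\end{equation*}
obtained by taking the square root of the second representation of $\underline{\chi_\uparrow}$ in \eqref{e:Gdef2} (both sides are positive since $\mathsf{b}_\uparrow>0$, $\uf>0$, and $-\tau\in(0,1]$).

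Then each identity is verified directly. For \eqref{e:iden1}, multiply $1/\underline{f_0}=B\mathsf{b}_\uparrow^{\frac{4}{3}}(-\tau)^{\frac{2}{3A}}(1+\uf)^{-\frac{4}{3}}$ into the left-hand side; the powers of $\mathsf{b}_\uparrow$, $(-\tau)^{\frac{2}{3A}}$, $(1+\uf)^{\frac{4}{3}}$ and $B$ cancel in pairs, leaving $1/(A(-\tau))$. For \eqref{e:iden2}, the left-hand side is literally $\underline{\chi_\uparrow}/(AB(-\tau))$ once one recognizes the defining expression \eqref{e:Gdef2} inside the product. For \eqref{e:keyid3}, substitute the formula for $\underline{f_0}$ into $\mathsf{b}_\uparrow\underline{f_0}/(1+\uf)$ to obtain $B^{-1}\mathsf{b}_\uparrow^{-\frac{1}{3}}(-\tau)^{-\frac{2}{3A}}(1+\uf)^{\frac{1}{3}}$, then compare with $\underline{\chi_\uparrow}^{\frac{1}{2}}\uf^{\frac{1}{2}}/B^{\frac{1}{2}}$ using the half-power formula. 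For \eqref{e:iden3}, multiply the explicit expression for $\underline{\chi_\uparrow}^{\frac{1}{2}}$ by $1/(A(-\tau)B^{\frac{1}{2}}\uf^{\frac{1}{2}})$ and observe the result matches the left-hand side.

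There is no genuine obstacle here; the statement is bookkeeping. The only mild pitfall is the sign/positivity check needed before taking the square root of $\underline{\chi_\uparrow}$, which is handled once by invoking $\uf,\mathsf{b}_\uparrow>0$ from Lemma \ref{t:f0fg} and Lemma \ref{t:gb2}. I would present the half-power identity first as a standalone computation and then deduce \eqref{e:iden1}--\eqref{e:keyid3} in the order $\eqref{e:iden1}\Rightarrow\eqref{e:iden2}\Rightarrow\eqref{e:keyid3}\Rightarrow\eqref{e:iden3}$, since each subsequent identity becomes a one-line substitution of the previous ones.
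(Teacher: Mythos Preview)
Your proposal is correct and follows exactly the approach the paper takes: the paper's own proof simply says the identities follow by direct calculation from \eqref{e:Gdef2} and Lemma \ref{t:gb2}.\ref{l:2.6} and omits the details. You have filled in those details accurately, including the useful intermediate step of isolating $\underline{\chi_\uparrow}^{1/2}$ from the second expression in \eqref{e:Gdef2}.
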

\begin{proof}
	These identities can be obtained by direct calculations with the help of \eqref{e:Gdef2} and Lemma \ref{t:gb2}\ref{l:2.6}. We omit the details. 
\end{proof}


\section{Singular symmetric hyperbolic formulations}\label{s:3}
In this section, we begin by rewriting the main equation \ref{Eq2} in terms of the compactified coordinates $(\tau,\zeta)$ to obtain a first-order \textit{singular symmetric hyperbolic system} (singular at $\tau=0$). This transformation is a crucial step toward deriving the Fuchsian formulations of the system. Although we omit details, this section still involves complex calculations due to the nontrivial transformations and the use of hidden relations.

\subsection{Derivation of the singular expression for  \ref{Eq2}}\label{s:sngexp}
We start with introducing the following variables:
\begin{align}
	u(\tau,\zeta^k) = 	\frac{\underline{\varrho}(\tau,\zeta^k)- \uf(\tau) }{\uf(\tau )}  \quad \Leftrightarrow & \quad   	\underline{\varrho}(\tau,\zeta^k)= 
	\uf(\tau) + \uf(\tau )u(\tau,\zeta^k)    ,\label{e:v1}  
	\\
	u_0(\tau,\zeta^k)   =  	\frac{ \underline{\varrho_0}(\tau,\zeta^k)  - \underline{f_0}(\tau) }{ \underline{f_0}(\tau) }   \quad \Leftrightarrow & \quad  	\underline{\varrho_0}(\tau,\zeta^k)  =    
	\underline{f_0}(\tau)  +\underline{f_0}(\tau)   u_0(\tau,\zeta^k)     
	, \label{e:v2}  \\
	u_i(\tau,\zeta^k)  =  \frac{ \underline{\varrho_i} (\tau,\zeta^k) }{1+\underline{f}(\tau)}   \quad \Leftrightarrow & \quad  
	\underline{\varrho_i} (\tau,\zeta^k) =     (1+\underline{f}(\tau)) u_i(\tau,\zeta^k)   
	, \label{e:v3}   \\
	z(\tau,\zeta^k) =      \biggl(\frac{\mathsf{b}(\tau,\zeta^k)}{\mathsf{b}_\uparrow(\tau)}\biggr)^{\frac{1}{3}}-1 \quad \Leftrightarrow & \quad 	\mathsf{b}(\tau,\zeta^k)  =   \bigl(1+ z (\tau,\zeta^k) \bigr)^3 \mathsf{b}_\uparrow(\tau) 
	, \label{e:v4}   \\
	\mathcal{B}_{j} (\tau,\zeta^k)   =   	\frac{(\mathsf{b}_{\uparrow}(\tau))^{-\frac{2}{3}}(1+\uf(\tau))^{\frac{2}{3}}}{  B^{-1}  (-\tau)^{-\frac{2}{3A}}  }     \mathsf{b}_j(\tau,\zeta^k)  \quad \Leftrightarrow &  \quad 	\mathsf{b}_j(\tau,\zeta^k)= \frac{  B^{-1}  (-\tau)^{-\frac{2}{3A}}  }  {(\mathsf{b}_{\uparrow}(\tau))^{-\frac{2}{3}}(1+\uf(\tau))^{\frac{2}{3}}}	\mathcal{B}_j (\tau,\zeta^k)  . \label{e:v5}  
\end{align}
where $\varrho_\mu:=\del{x^\mu}\varrho$.  
The following lemma provides a relation that will be frequently used later.

\begin{lemma}\label{t:id0}
	Let $\ell_0,\ell_1,\ell_2,\ell_3$ be constants, then
\begin{equation}\label{e:id1}
	\frac{    \mathsf{b}^{\frac{\ell_1}{3}} (1+\uf+\uf u)^{\ell_2}}{A B  (\uf+\uf u)^{\ell_0} (-\tau)^{\frac{\ell_3}{3A}+1}}
	=      \frac{   \mathsf{b}_\uparrow^{\frac{\ell_1}{3}}   (1+\uf)^{\ell_2}   }{A B \uf^{\ell_0} (-\tau)^{\frac{\ell_3}{3A}+1}}  \frac{ (   1+ z  )^{\ell_1} (1+\frac{\uf}{1+\uf} u)^{\ell_2}  }{ (1+ u)^{\ell_0}  }   . 
\end{equation}	
Moreover, there is an identify
\begin{align}\label{e:id3}
		\frac{  B^{-1}  (-\tau)^{-\frac{2}{3A}}  }  {(\mathsf{b}_{\uparrow}(\tau))^{-\frac{2}{3}}(1+\uf(\tau))^{\frac{2}{3}}}=\frac{\mathsf{b}_\uparrow^2(\tau) \underline{f_0}(\tau)}{(1+\uf(\tau))^2}	 = \frac{\underline{\chi_\uparrow}(\tau) }{B  } \frac{ \uf (\tau) }{\ufo (\tau )} . 
\end{align}
Furthermore, \eqref{e:id3} implies
\begin{equation}\label{e:id2}
	\mathsf{b}_j(\tau,\zeta^k)= 	\frac{\mathsf{b}_\uparrow^2(\tau) \underline{f_0}(\tau)}{(1+\uf(\tau))^2}	 \mathcal{B}_j (\tau,\zeta^k) = \frac{\underline{\chi_\uparrow}(\tau) }{B  } \frac{ \uf (\tau) }{\ufo (\tau )}\mathcal{B}_j (\tau,\zeta^k). 
\end{equation}
\end{lemma}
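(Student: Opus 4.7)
The plan is to verify both identities by direct substitution using the variable changes \eqref{e:v4}--\eqref{e:v5} together with the explicit formula for $\underline{f_0}$ provided by Lemma \ref{t:gb2}\ref{l:2.6} and the definition \eqref{e:Gdef2} of $\underline{\chi_\uparrow}$. No new analytic input is needed; the work is purely algebraic book-keeping, so I will lay out the substitutions in a tidy order so that nothing is miscomputed.

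For \eqref{e:id1}, I would first rewrite the three $\tau$-dependent factors on the left using \eqref{e:v4} and the elementary factorizations
\als{
\mathsf{b}^{\frac{\ell_1}{3}} &= \mathsf{b}_\uparrow^{\frac{\ell_1}{3}}(1+z)^{\ell_1},\\
(\uf + \uf u)^{\ell_0} &= \uf^{\ell_0}(1+u)^{\ell_0},\\
(1+\uf+\uf u)^{\ell_2} &= (1+\uf)^{\ell_2}\Bigl(1+\tfrac{\uf}{1+\uf}u\Bigr)^{\ell_2}.
}
Plugging these three factorizations into the left-hand side and collecting the $\mathsf{b}_\uparrow$, $\uf$, $(1+\uf)$ factors with the prefactor $(AB(-\tau)^{\frac{\ell_3}{3A}+1})^{-1}$ (which is $u$- and $z$-independent) immediately produces the right-hand side of \eqref{e:id1}.

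For the first equality in \eqref{e:id3}, the key input is Lemma \ref{t:gb2}\ref{l:2.6}, which gives $\underline{f_0}(\tau) = B^{-1}\mathsf{b}_\uparrow^{-\frac{4}{3}}(-\tau)^{-\frac{2}{3A}}(1+\uf)^{\frac{4}{3}}$. Substituting this into the middle expression collapses it to $B^{-1}\mathsf{b}_\uparrow^{\frac{2}{3}}(-\tau)^{-\frac{2}{3A}}(1+\uf)^{-\frac{2}{3}}$, which is precisely the left-hand side. For the second equality, I plug definition \eqref{e:Gdef2} of $\underline{\chi_\uparrow}$ into $\frac{\underline{\chi_\uparrow}}{B}\frac{\uf}{\ufo}$; the $\ufo$ and $\uf$ factors cancel and what remains is $B^{-1}\mathsf{b}_\uparrow^{\frac{2}{3}}(1+\uf)^{-\frac{2}{3}}(-\tau)^{-\frac{2}{3A}}$, matching the other two expressions. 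Identity \eqref{e:id2} is then just the defining relation \eqref{e:v5} for $\mathcal{B}_j$ rewritten by substituting \eqref{e:id3} for the prefactor.

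I do not expect any genuine obstacle here, since the statement is an algebraic identity rather than an estimate; the only potential pitfall is keeping track of signs in $(-\tau)^{-\frac{2}{3A}}$ and of the exponents $\tfrac{\ell_1}{3},\tfrac{\ell_3}{3A}$ when pulling $(1+z)^{\ell_1}$ out of $\mathsf{b}^{\frac{\ell_1}{3}}$. I would therefore write the proof as a two-line computation for each identity and explicitly state the omitted ``direct calculation'' as above, matching the author's terse style.
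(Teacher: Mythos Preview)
Your proposal is correct and matches the paper's approach: the paper also proves \eqref{e:id1} by direct calculation and proves \eqref{e:id3}--\eqref{e:id2} via Lemma \ref{t:gb2}\ref{l:2.6}. The only cosmetic difference is that the paper cites \eqref{e:keyid3} from Lemma \ref{t:iden1} for the second equality in \eqref{e:id3}, whereas you invoke the definition \eqref{e:Gdef2} of $\underline{\chi_\uparrow}$ directly; since \eqref{e:keyid3} is itself derived from \eqref{e:Gdef2} and Lemma \ref{t:gb2}\ref{l:2.6}, your route is equivalent and arguably slightly more self-contained.
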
 
\begin{proof}
	 Identity \eqref{e:id1} can be verified by direct calculations. Identities \eqref{e:id3} and \eqref{e:id2} can be verified by Lemma \ref{t:gb2}.\ref{l:2.6} and \eqref{e:keyid3} from Lemma \ref{t:iden1}. 
\end{proof}

To present the derivations of the singular system concisely, we first collect several important quantities that will arise in subsequent calculations and provide their identities in Lemma \ref{t:coef1}. We define the following frequently used quantities:
\begin{gather}
	\mathscr{R}^j :=	 \underline{	\cg} \delta^{ij} \mathsf{b}_{i} \frac{(1+\underline{f} ) }{\underline{f_0} } \overset{\eqref{e:id2}}{=} \underline{	\cg} \delta^{ij}  \frac{\uf(1+\underline{f} ) }{\underline{f_0}^2 } \frac{\underline{\chi_\uparrow}  }{B  } \mathcal{B}_i , \quad 	H^{ij} :=  \underline{\cg} \delta^{ij}    \frac{\mathsf{b}_\uparrow^{2} }{\uf}       \frac{ \bigl( 1 +     z \bigr)^{2} \Bigl(1+\frac{\uf}{1+\uf} u \Bigr)^{\frac{1}{3}}  }{ (1+ u) } ,   \label{e:Rdef}  
	\\
	\mathscr{S}=\mathscr{S}(\tau):=   \frac{ (\ck-\cm^2) }{\tau}  \frac{B}{\underline{\chi_\uparrow}} \biggl(  \frac{4}{ \uf}     -  \frac{\underline{\mathfrak{G}}}{ B} \biggr)  , \quad  S=S(\tau):=  \ck  +  \tau \mathscr{S}  , \label{e:S1} \\  
	\mathscr{L}=	\mathscr{L}(\tau;u_0,u,z)  :=   \cm^2   \left(  \frac{  (1 +   u_0  )^2}{ \bigl(1+ \frac{\uf}{1+\uf } u  \bigr)^2} -1\right)  + 4(\ck-\cm^2)  \frac{B}{\underline{\chi_\uparrow}} \biggl(1+\frac{1}{\uf}\biggr) \left(\frac{\bigl(1+\frac{\uf}{1+\uf}  u \bigr)  }{(1+ z  )^6}-1\right)  .   \label{e:L1}   
\end{gather}

\begin{lemma}\label{t:coef1} 
	Under the definitions \eqref{e:Rdef}--\eqref{e:L1}, there are expansion in terms of the variables \eqref{e:v1}--\eqref{e:v5}, 
	\begin{align*}
		\mathscr{R}^j=&\mathscr{R}^j(\tau,u_0,u,z;\mathcal{B}_k ):= R \mathcal{B}_i \delta^{ij} = (S+\mathscr{L}) \frac{\uf}{1+\uf} \frac{\underline{\chi_\uparrow}}{B} \mathcal{B}_i \delta^{ij}  ,  \\
		H^{ij}=&H^{ij}(\tau,u_0,u,z) :=   S \delta^{ij} \frac{\underline{\chi_\uparrow}}{B}  + \mathscr{H} \delta^{ij} ,  
	\end{align*}
	where
	\begin{align}		
		R :=& R(\tau,u_0,u,z)= (S+\mathscr{L} ) \frac{\uf}{1+\uf} \frac{\underline{\chi_\uparrow}}{B},  \label{e:R1}\\
		\mathscr{H} :=	&\mathscr{H}(\tau; u_0,u, z)= \mathscr{L}     \frac{\underline{\chi_\uparrow}}{B} + (S+\mathscr{L}  )  \frac{\underline{\chi_\uparrow}}{B} \left( \frac{ \bigl( 1 +     z \bigr)^{2} (1+\frac{\uf}{1+\uf} u)^{\frac{1}{3}}  }{ (1+ u) }-1\right), 
	\end{align}
	and $\mathscr{S}(\tau)$ and $S(\tau)$ satisfies $\mathscr{S}(\tau)<-\frac{(\ck-\cm^2)}{\tau}$ and $\cm^2<S(\tau)\leq \ck\bigl(1+ \frac{1}{\beta}\bigr)$. 
\end{lemma}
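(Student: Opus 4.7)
The plan is to treat the two claimed decompositions as purely algebraic identities obtained from \eqref{e:Fdef} after the change of variables \eqref{e:v1}--\eqref{e:v5}, then read off the bounds on $\mathscr{S}$ and $S$ from the known asymptotics of $\underline{\chi_\uparrow}$ and $\uf$.

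First, I would substitute $1+\underline{\varrho}=(1+\uf)\bigl(1+\tfrac{\uf}{1+\uf}u\bigr)$, $\underline{\varrho_0}=\underline{f_0}(1+u_0)$ and $\mathsf{b}=(1+z)^3\mathsf{b}_\uparrow$ into
\[
\underline{\cg}=\cm^{2}\frac{\underline{\varrho_0}^{2}}{(1+\underline{\varrho})^{2}}+4(\ck-\cm^{2})\frac{1+\underline{\varrho}}{\mathsf{b}^{2}},
\]
so that the first term acquires the prefactor $\cm^{2}\underline{f_0}^{2}/(1+\uf)^{2}$ and the second acquires $4(\ck-\cm^{2})(1+\uf)/\mathsf{b}_\uparrow^{2}$. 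Using the key identity \eqref{e:keyid3} from Lemma \ref{t:iden1} in the form $\mathsf{b}_\uparrow^{2}\underline{f_0}^{2}/(1+\uf)^{2}=\underline{\chi_\uparrow}\uf/B$ and the equivalent formulation \eqref{e:id3}, both prefactors can be rewritten in terms of $\underline{\chi_\uparrow}/B$ and $\uf$. Multiplying $\underline{\cg}$ by $(1+\uf)^{2}/\underline{f_0}^{2}$ then produces an expression of the form $\cm^{2}(1+u_{0})^{2}/(1+\tfrac{\uf}{1+\uf}u)^{2}+4(\ck-\cm^{2})(B/\underline{\chi_\uparrow})(1+1/\uf)(1+\tfrac{\uf}{1+\uf}u)/(1+z)^{6}$. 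I would verify directly that this matches $S+\mathscr{L}$ as defined in \eqref{e:S1}--\eqref{e:L1}, by separating the additive constants $\cm^{2}$ and $4(\ck-\cm^{2})B/(\underline{\chi_\uparrow}\uf)\cdot(1+\uf)$ into $\ck+\tau\mathscr{S}$ using Proposition \ref{t:limG} (which supplies the relation $\underline{\chi_\uparrow}-\underline{\mathfrak{G}}=2\cb B/(3-2\cc)$), leaving the $u_0,u,z$-dependent remainder as $\mathscr{L}$.

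With $S+\mathscr{L}$ identified, the claimed form for $\mathscr{R}^{j}$ follows by inserting the factor $\mathsf{b}_{i}(1+\uf)/\underline{f_0}=\mathcal{B}_{i}\uf(1+\uf)\underline{\chi_\uparrow}/(B\underline{f_0}^{2})$ from \eqref{e:id2} into $\underline{\cg}\delta^{ij}\mathsf{b}_{i}(1+\uf)/\underline{f_0}$, yielding the factor $\uf/(1+\uf)\cdot\underline{\chi_\uparrow}/B\cdot\mathcal{B}_{i}$ times $S+\mathscr{L}$. For $H^{ij}$, one substitutes the same rewriting of $\underline{\cg}$ together with the algebraic identity \eqref{e:id1} (with appropriate exponents $\ell_{j}$) and extracts the piece $S\delta^{ij}\underline{\chi_\uparrow}/B$ (coming from the ``frozen'' part of $\underline{\cg}\mathsf{b}_\uparrow^{2}/\uf$), leaving the remainder as $\mathscr{H}$; the claimed formula for $\mathscr{H}$ is then obtained by factoring $S+\mathscr{L}$ out of the $(1+z)^{2}(1+\tfrac{\uf}{1+\uf}u)^{1/3}/(1+u)$-weighted piece and recombining with the $\mathscr{L}\underline{\chi_\uparrow}/B$ correction.

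The final step is the two quantitative bounds on $\mathscr{S}$ and $S$. The strict inequality $\mathscr{S}(\tau)<-(\ck-\cm^{2})/\tau$ reduces, after dividing by the positive factor $(\ck-\cm^{2})B/(\tau\underline{\chi_\uparrow})$ (recalling $\tau<0$), to $4/\uf-\underline{\mathfrak{G}}/B>\underline{\chi_\uparrow}/B$, which follows from Proposition \ref{t:limG} together with the positivity and monotonicity of $\uf$ guaranteed by Lemma \ref{t:f0fg}. The bound $\cm^{2}<S(\tau)\le\ck(1+1/\beta)$ is then the combination of this inequality (giving the lower bound $S>\cm^{2}$) and the initial value $\uf(-1)=\beta$ (giving the upper bound at $\tau=-1$), with monotonicity in $\tau$ extending the upper bound to $[-1,0)$. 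The main obstacle I anticipate is bookkeeping in the first step: the decomposition into $S+\mathscr{L}$ is uniquely determined by the requirement that $\mathscr{L}$ vanish at $(u_0,u,z)=0$, so the constants $\ck$ and $4(\ck-\cm^{2})B/(\underline{\chi_\uparrow}\uf)(1+\uf)$ must be extracted cleanly, and this relies sensitively on the precise form of $\underline{\chi_\uparrow}$ supplied by Proposition \ref{t:limG}.
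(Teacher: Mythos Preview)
Your approach is essentially the same as the paper's: rewrite $\underline{\cg}$ via the substitutions \eqref{e:v1}--\eqref{e:v5} and \eqref{e:keyid3}, identify $\underline{\cg}(1+\uf)^2/\underline{f_0}^2 = S+\mathscr{L}$ (this is the paper's key intermediate identity \eqref{e:tm1}), and then read off $\mathscr{R}^j$ and $H^{ij}$.

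The one concrete slip is in your last paragraph. The factor $(\ck-\cm^2)B/(\tau\underline{\chi_\uparrow})$ is \emph{negative} (since $\tau<0$), so after dividing, the target inequality should be $4/\uf-\underline{\mathfrak{G}}/B>-\underline{\chi_\uparrow}/B$, not $>\underline{\chi_\uparrow}/B$. Your stated inequality is actually false in general (using $\underline{\chi_\uparrow}/B=4+\underline{\mathfrak{G}}/B$ it would force $1/\uf>1+\underline{\mathfrak{G}}/(2B)$, which fails once $\uf>1$), whereas the corrected version reduces to $4/\uf>-4$, trivially true since $\uf>0$; this is exactly the paper's argument. For the upper bound $S\le\ck(1+1/\beta)$, the paper explicitly invokes Lemma \ref{t:Thpst}.\eqref{r:1} (hence Assumption \ref{Asp1}) to guarantee $\underline{\mathfrak{G}}\ge0$; you should cite this rather than an unspecified ``monotonicity in $\tau$''.
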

\begin{proof}
	Using \eqref{e:Fdef}, \eqref{e:v1}--\eqref{e:v5} and \eqref{e:keyid3} from Lemma \ref{t:iden1}, we calculate $\underline{\cg}  $, 
	\begin{align}\label{e:grrexp}
		\underline{	\cg} 
		= &  \frac{\cm^2 \underline{\varrho_0}^2}{(1+\underline{\varrho} )^2}  + 4(\ck-\cm^2) \frac{1+\underline{\varrho}  }{\mathsf{b}^2}  =   \frac{  \cm^2\underline{f_0}^2 (1 +   u_0  )^2}{(1+\uf )^2 \bigl(1+ \frac{\uf}{1+\uf } u  \bigr)^2}  + 4(\ck-\cm^2) \frac{1+\uf+\uf u  }{\mathsf{b}^2}   \notag  \\
		= & \frac{ \cm^2 \underline{\chi_\uparrow} \uf }{B \mathsf{b}_\uparrow^2}  \frac{  (1 +   u_0  )^2}{ \bigl(1+ \frac{\uf}{1+\uf } u  \bigr)^2}  + 4(\ck-\cm^2) \frac{(1+\uf)}{ \mathsf{b}_\uparrow^2}  \frac{\bigl(1+\frac{\uf}{1+\uf}  u \bigr)  }{(1+ z  )^6}    . 
	\end{align}
	For later use, we note an expansion based on Proposition \ref{t:limG}, 
	\begin{align}\label{e:chig}
		\frac{\underline{\chi_\uparrow}}{B}  =4+\frac{\underline{\mathfrak{G}}}{B}  . 
	\end{align}	
Before proceeding,  note that by substituting the expression \eqref{e:grrexp} into the following terms, and using \eqref{e:keyid3} and \eqref{e:chig}, we obtain
\begin{align}\label{e:tm1}
	\delta_{jk}\underline{\cg} \delta^{ij}\Bigl(\frac{1+\uf}{\ufo}\Bigr)^2  
	= & \delta_{jk}   \biggl(  \cm^2    \frac{  (1 +   u_0  )^2}{ \bigl(1+ \frac{\uf}{1+\uf } u  \bigr)^2}  + 4(\ck-\cm^2) \frac{B}{\underline{\chi_\uparrow }} \frac{(1+\uf)}{\uf} \frac{\bigl(1+\frac{\uf}{1+\uf}  u \bigr)  }{(1+ z  )^6} \biggr) \delta^{ij}  \notag  \\
	= &	 \delta^{i}_k \biggl(\ck+ \cm^2   \biggl(  \frac{  (1 +   u_0  )^2}{ \bigl(1+ \frac{\uf}{1+\uf } u  \bigr)^2} -1\biggr)  + 4(\ck-\cm^2)  \frac{B}{\underline{\chi_\uparrow}} \biggl(1+\frac{1}{\uf}\biggr) \biggl(\frac{\bigl(1+\frac{\uf}{1+\uf}  u \bigr)  }{(1+ z  )^6}-1\biggr)  \notag  \\
	&   +  (\ck-\cm^2)    \biggl(\Bigl(1+\frac{\underline{\mathfrak{G}}}{4B}\Bigr)^{-1} -1 \biggr)+ 4(\ck-\cm^2)  \frac{B}{\underline{\chi_\uparrow}}   \frac{1}{\uf}  \biggr)  \notag  \\ 
	= &	  (\ck  +\mathscr{L}+\tau\mathscr{S})   \delta^{i}_k   . 
\end{align}	
Directly using this \eqref{e:tm1} yields
\begin{equation*} 
	\mathscr{R}^j=\underline{\cg} \delta^{ij}\Bigl(\frac{1+\uf}{\ufo}\Bigr)^2  \frac{\uf}{1+\uf} \frac{\underline{\chi_\uparrow}}{B} \mathcal{B}_j=(\ck+\mathscr{L}+\tau\mathscr{S}) \delta^{ij} \frac{\uf}{1+\uf} \frac{\underline{\chi_\uparrow}}{B} \mathcal{B}_j. 
\end{equation*}

Next, we calculate $H^{ij}$. 
Using \eqref{e:keyid3}, \eqref{e:Rdef} and \eqref{e:tm1}, we obtain
\begin{align*}
	H^{ij} = &   \underline{\cg} \delta^{ij} \frac{\mathsf{b}_\uparrow^{2} }{\uf}     \frac{ \bigl( 1 +     z \bigr)^{2} (1+\frac{\uf}{1+\uf} u)^{\frac{1}{3}}  }{ (1+ u) } = \underline{\cg} \delta^{ij}  \biggl(\frac{1+\uf}{\ufo}\biggr)^2 \frac{\mathsf{b}_\uparrow^{2} }{\uf}  \biggl(\frac{\ufo}{1+\uf}\biggr)^2     \frac{ \bigl( 1 +     z \bigr)^{2} (1+\frac{\uf}{1+\uf} u)^{\frac{1}{3}}  }{ (1+ u) } \notag  \\ 
	= & (\ck+\mathscr{L}    + \tau\mathscr{S}  )\delta^{ij} \frac{\underline{\chi_\uparrow}}{B}  \frac{ \bigl( 1 +     z \bigr)^{2} (1+\frac{\uf}{1+\uf} u)^{\frac{1}{3}}  }{ (1+ u) } 
	=   (\ck  + \tau\mathscr{S}  )\delta^{ij} \frac{\underline{\chi_\uparrow}}{B}  + \mathscr{H} \delta^{ij}  . 
\end{align*}
 In the end, noting $1/\uf>0>-1$, we verify the inequality 
\begin{equation*}
	\tau	\mathscr{S}=   (\ck-\cm^2)  \frac{B}{\underline{\chi_\uparrow}} \biggl(  \frac{4}{ \uf}     -  \frac{\underline{\mathfrak{G}}}{ B} \biggr)  
	=  (\ck-\cm^2)   \biggl(  \frac{1}{ \uf}     -  \frac{\underline{\mathfrak{G}}}{ 4 B} \biggr) /\biggl( 1+ \frac{\underline{\mathfrak{G}}}{4B}\biggr)   
	>  -(k-\cm^2)    . 
\end{equation*}
In addition, by lemma \ref{t:Thpst}.\eqref{r:1}, we estimate $
	S\leq \ck+(\ck-\cm^2)\frac{1}{f}\leq \ck\bigl(1+ \frac{1}{\beta}\bigr)$. 
It completes the proof. 
\end{proof}

 We now begin deriving the main singular symmetric hyperbolic system. Lemmas \ref{t:Seq1} through \ref{e:Seq6} provide each first-order equation in the complete singular system. Since the derivations and calculations are not always straightforward, we outline the key steps for each lemma below.

\begin{lemma}\label{t:Seq1}
	In terms of the variables \eqref{e:v1}--\eqref{e:v5}, the main equation \ref{Eq2} becomes 
	\begin{align*}
		&      \del{\tau}  u_0   +  \mathscr{R}^j \del{\tau} u_j  +		\frac{1}{A\tau}  H^{ij}   \del{\zeta^i}  u_j  \notag  \\  
		= &  \biggl( -\frac{14  }{3    }+\mathscr{Z}_{11}(\tau; u_0,u,z)  \biggr)  \frac{ 1 }{A \tau }    u_0   -\frac{ 4\ck q^j    }{A  \tau}   u_j      +\frac{1}{A\tau} \mathscr{Z}^j_{12}(\tau; u_0,u,z; u_i, \mathcal{B}_i) u_j +  \frac{1}{A\tau} \biggl(8+\mathscr{Z}_{13}(\tau;  u_0,u,z) \biggr)  u       \notag  \\
		& +   \frac{     1     }{A    \tau  }  (-8 +\mathscr{Z}_{15}(\tau;  u_0,u,z) ) z     +\mathfrak{F}_{u_0}    . 
	\end{align*}   
	where  
	\begin{equation*}
		\mathfrak{F}_{u_0}= 	\mathfrak{F}_{u_0}(\tau, u_0,u,  z):=\frac{\underline{\mathfrak{G}}}{\tau} \mathscr{S}_{11}( \tau;  u_0,u,  z )  +\frac{1}{\tau \uf^{\frac{1}{2}}} \mathscr{S}_{12}(\tau,u_0; u, z )   
	\end{equation*}
and  $\mathscr{Z}^j_{12}(\tau; u_0,u,z; 0,0) =0$, $\mathscr{Z}_{1\ell}(\tau;  0,0,0)=0$ ($\ell=1,3,5$) conform to the conventions outlined in  \S\ref{s:rmdrs}.    
\end{lemma}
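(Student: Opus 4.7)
The plan is to start from the wave equation \ref{Eq2} and carry out the change of coordinates $(t,x)\to(\tau,\zeta)$ coupled with the change of variables \eqref{e:v1}--\eqref{e:v5}, then subtract the reference ODE so that the principal terms cancel and the resulting equation for $u_0$ acquires the desired $\frac{1}{A\tau}$-singular structure. Concretely, I would first apply Lemma \ref{t:gb1}\ref{l:1.4} to translate $\partial_t$ and $\partial_i$ into combinations of $\partial_\tau$ and $\partial_{\zeta^i}$, using \eqref{e:dtb} for the Jacobian. Since $\underline{\varrho_0}=\underline{\partial_t\varrho}$, the chain rule rewrites $\underline{\partial_t^2\varrho}$ in terms of $\partial_\tau\underline{\varrho_0}$ with a prefactor that, by Lemma \ref{t:iden1}, equals $\frac{\uchi^{1/2}}{A(-\tau) B^{1/2}\uf^{1/2}}\cdot(1+u)/(1+\frac{\uf}{1+\uf}u)^{1/3}\cdot(1+z)^{-1}$. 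Multiplying the equation by the inverse of this prefactor produces the normalized evolution equation whose left-hand side will contain $\del\tau u_0$, the terms $\mathscr{R}^j\del\tau u_j$ (coming from the cross-terms $\underline{\partial_{x^i}F}$ acting through $\mathsf{b}_i$ on $\underline{\varrho_0}$), and the spatial derivative piece whose coefficient is identified with $H^{ij}$ via Lemma \ref{t:coef1}.

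Next I would substitute \eqref{e:v1}--\eqref{e:v5} into each nonlinear term on the RHS of \ref{Eq2}. The self-increasing term $\frac{2}{3t^2}\varrho(1+\varrho)$ gives, after using Lemma \ref{t:iden1} and the expansion \eqref{e:chig} $\uchi/B = 4 + \underline{\mathfrak{G}}/B$, a $\frac{1}{A\tau}$-singular contribution with leading constant producing $+8u$ (twice the $4$ coming from $\uchi/B$ times the $\frac{2}{3}$ coefficient after normalization), plus a remainder absorbed into $\mathscr{Z}_{13}$ and $\frac{\underline{\mathfrak{G}}}{\tau}\mathscr{S}_{11}$. The damping $-\frac{1}{3}\del t\varrho/t$ (in the form from \ref{Eq2}) scales like $\mathsf{b}_\uparrow^{-2/3}\uf^{-1/2}/\tau$ after using \eqref{e:iden3} and \eqref{e:keyid3}, which is why it contributes the $\frac{1}{\tau\uf^{1/2}}\mathscr{S}_{12}$ part of $\mathfrak{F}_{u_0}$ together with the $-\frac{14}{3}\cdot\frac{1}{A\tau} u_0$ principal piece (the coefficient $-14/3$ emerging from combining the damping coefficient $-4/3$ with the $-10/3$ correction produced by $\del\tau\uf_0$ evaluated from Lemma \ref{t:gb2}\ref{l:2.4}, once the reference ODE is subtracted). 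The Riccati term $\frac{4}{3}(\del t\varrho)^2/(1+\varrho)$ produces, upon writing $(\underline{\varrho_0})^2=\uf_0^2(1+u_0)^2$ and $1+\underline\varrho=(1+\uf)(1+\frac{\uf}{1+\uf}u)$, further $\frac{1}{A\tau}u_0$ and $\frac{1}{A\tau}u$ contributions that combine with the previous ones. The factor $1/\mathsf{b}^2=(1+z)^{-6}\mathsf{b}_\uparrow^{-2}$ (from $\underline{\cg}$ via \eqref{e:grrexp}) when linearized in $z$ yields the $-8\cdot\frac{1}{A\tau}z$ term through the $-6(\ck-\cm^2)\cdot 4/3\cdot\ldots$ combination, again with a higher-order remainder $\mathscr{Z}_{15}$. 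The convection term $\cg q^i\del i\varrho$ becomes $\frac{1}{A\tau}\mathscr{Z}_{12}^j u_j$ because it vanishes when $u_i=0$ and also disappears (via the Jacobian $\mathcal{B}_i$) when $\mathcal{B}_i=0$; the $\mathtt{K}^{ij}$ term is manifestly quadratic in $u_i$ (through \eqref{e:v3}) and similarly absorbed.

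The last step is to collect all contributions: everything linear in $(u_0,u,z)$ with a clean $1/(A\tau)$-prefactor sits in the singular terms with the stated principal constants $-\tfrac{14}{3}$, $+8$, $-8$, and all higher-order and non-matching pieces are gathered into the remainders $\mathscr{Z}_{11},\mathscr{Z}_{12}^j,\mathscr{Z}_{13},\mathscr{Z}_{15}$, which by construction vanish to first order at $(u_0,u,z)=(0,0,0)$ (respectively at $(u_i,\mathcal{B}_i)=(0,0)$ for $\mathscr{Z}_{12}^j$), in accordance with the convention of \S\ref{s:rmdrs}. The forcing $\mathfrak{F}_{u_0}$ aggregates precisely those terms that survive at the linearization $u_0=u=z=0$: they split into a piece proportional to $\underline{\mathfrak{G}}/\tau$ (measuring the deviation of $\uchi/B$ from its limit $4$, as in Proposition \ref{t:limG}) and a piece proportional to $1/(\tau\uf^{1/2})$ (capturing the subdominant scaling of the damping term relative to the Jeans term identified in Remark \ref{t:nlnrmech}).

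The main obstacle is the bookkeeping of the principal constants. Several of the nonlinear terms (self-increasing, Riccati, damping, and the second-derivative term through $\del\tau\uf_0$) all contribute singular linear pieces proportional to $u_0$, $u$, and $z$, and one must verify that after subtracting the reference ODE—rewritten via Lemma \ref{t:gb2}\ref{l:2.4} and simplified by the identities \eqref{e:iden3}--\eqref{e:keyid3}—the leading coefficients collapse exactly to $-\tfrac{14}{3}$, $+8$, and $-8$. Equally delicate is the separation of $\mathfrak{F}_{u_0}$ into the two distinct scales $\underline{\mathfrak{G}}/\tau$ and $1/(\tau\uf^{1/2})$: one must not conflate the error coming from $\uchi/B-4$ with the error coming from the damping term, because these two scales will play different roles later in verifying the positive-definite Fuchsian structure.
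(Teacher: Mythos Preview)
Your overall plan coincides with the paper's: change to $(\tau,\zeta)$ via Lemma~\ref{t:gb1}\ref{l:1.4}, substitute \eqref{e:v1}--\eqref{e:v5}, subtract the reference ODE through Lemma~\ref{t:gb2}\ref{l:2.4}, convert all prefactors with \eqref{e:iden3}--\eqref{e:keyid3} and the expansion $\underline{\chi_\uparrow}/B = 4 + \underline{\mathfrak{G}}/B$, then collect.

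Your accounting of the principal constants, however, is wrong in a way that would mislead you. The damping term does \emph{not} contribute to the $-\tfrac{14}{3}$: after normalization and \eqref{e:iden3} it scales like $\tfrac{1}{\tau\uf^{1/2}}$ and lands entirely in the $\mathscr{S}_{12}$ part of $\mathfrak{F}_{u_0}$. The $-\tfrac{14}{3}$ comes from (i) the three terms in $\del\tau\underline{f_0}$ via Lemma~\ref{t:gb2}\ref{l:2.4}, which after \eqref{e:iden1}--\eqref{e:iden2} give $+6\cdot\tfrac{1}{A\tau}u_0$, and (ii) the Riccati linearization $(1+u_0)^2 \approx 1+2u_0$, whose $2u_0$ contributes $-\tfrac{4}{3}\cdot\tfrac{\underline{\chi_\uparrow}}{B}\cdot 2 \approx -\tfrac{32}{3}$ in units of $\tfrac{1}{A\tau}$. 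Likewise the $+8u$ and $-8z$ do not come from the sources you name: the $-8z$ arises from the $z$-linearization of the self-increasing bracket $(1+z)^{-4}$ and the Riccati bracket $(1+z)^2$ combined, not from $\underline{\cg}$ (which sits in the LHS coefficients $\mathscr{R}^j$, $H^{ij}$). Finally, the convection term $\cg q^i\del i\varrho$ is not purely remainder: it produces the leading piece $-\tfrac{4\ck q^j}{A\tau}u_j$ visible in the $\mathbf{A}$ matrix of Lemma~\ref{t:mainsys1}; the lemma statement's presentation of $\mathscr{Z}_{12}^j$ alone is a notational inconsistency you should not build on.
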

\begin{proof}
	The proof involves  lengthy and tedious  calculations utilizing the variables \eqref{e:v1}--\eqref{e:v5},  \eqref{e:grrexp},  and results from Lemmas  \ref{t:gb1}.\ref{l:1.4},  \ref{t:gb2}.\ref{l:2.4}, \ref{l:2.5}, \ref{l:2.6} and \ref{t:iden1}. Below, we outline only a few key steps. By substituting  \eqref{e:v1}--\eqref{e:v5} into \ref{Eq2} and applying Lemma \ref{t:gb1}.\ref{l:1.4} to switch the coordinates to   $(\tau,\zeta)$, we obtain
	\begin{align*}
		&  \frac{A B  \uf (1+u) \left(-\tau\right){}^{\frac{2}{3 A}+1}}{\mathsf{b}^{\frac{2}{3}}   ( \uf+\uf u +1)^{\frac{1}{3}}}\del{\tau} (\underline{f_0}  +\underline{f_0}  u_0 )  + \underline{\cg} \delta^{ij}\frac{A B  \uf (1+  u)  \left(-\tau\right){}^{\frac{2}{3 A}+1} \mathsf{b}_i } {\mathsf{b}^{\frac{2}{3}}   ( \uf (1+  u)  +1)^{\frac{1}{3}}} \del{\tau} \bigl( (1+\underline{f} )  u_j\bigr)  -\underline{\cg} \delta^{ij}\del{\zeta^i}   \bigl(  (1+\underline{f} ) u_j \bigr)  \notag  \\
		&  \hspace{0.5cm} +\frac{4}{3 \mathsf{b}} \underline{f_0}  (1+ u_0 ) -
		\frac{2}{3\mathsf{b}^2} \uf ( 1 +  u) (1+  \uf + \uf u  )   -\frac{4}{3} \frac{(\underline{f_0}  +\underline{f_0}  u_0 )^2}{1+\uf + \uf u  }    +\frac{(1+\uf)^2}{\mathsf{b}^2} K^{ij}  u_i u_j\notag  \\
		&\hspace{0.5cm}  -\biggl( \frac{ \cm^2 \underline{\chi_\uparrow} \uf }{B \mathsf{b}_\uparrow^2}  \frac{  (1 +   u_0  )^2}{ \bigl(1+ \frac{\uf}{1+\uf } u  \bigr)^2}  + 4(\ck-\cm^2) \frac{(1+\uf)}{ \mathsf{b}_\uparrow^2}  \frac{\bigl(1+\frac{\uf}{1+\uf}  u \bigr)  }{(1+ z  )^6} \biggr)  (1+\uf) q^i u_i   =  0   , 
	\end{align*} 
	where
	\begin{equation*}
		K^{ij}=K^{ij}(\tau,u, u_\mu,z):=\mathtt{K}^{ij}\bigl( (1+ z   )^3 \mathsf{b}_\uparrow,  \uf + \uf u,  \underline{f_0}   +\underline{f_0}  u_0, (1+\underline{f} ) u_k\bigr) .
	\end{equation*}

Next, expanding the above equation and applying Lemmas \ref{t:gb2}.\ref{l:2.4} and \ref{l:2.5} to replace $\del{\tau}\underline{f_0}$ and $\del{\tau}\uf$ yields,  
\begin{align*} 
	&     \del{\tau}  u_0   + \underline{\cg} \delta^{ij} \mathsf{b}_i   \frac{(1+\underline{f} ) }{\underline{f_0} }  \del{\tau} u_j  -  \underline{\cg} \delta^{ij} \frac{\mathsf{b}^{\frac{2}{3}}   ( \uf + \uf u  +1)^{\frac{1}{3}} (1+\underline{f} )  }{A B (\uf + \uf u ) \left(-\tau\right){}^{\frac{2}{3 A}+1}  \underline{f_0}  } \del{\zeta^i}  u_j   +  \underline{\cg} \delta^{ij} \mathsf{b}_i  \frac{\mathsf{b}_\uparrow^{\frac{2}{3}} (1+ \uf )^{\frac{1}{3}}}{A B  \uf   (-\tau)^{\frac{2}{3A}+1}}	   u_j  \notag  \\
	& \hspace{1cm} +\biggl(  -\frac{4}{3}  \frac{\mathsf{b}_\uparrow^{-\frac{1}{3}} (1+ \uf)^{\frac{1}{3}}}{A B  \uf (-\tau)^{\frac{2}{3A}+1}}  + \frac{2}{3  }  \frac{\mathsf{b}_\uparrow^{-\frac{4}{3}} (1+ \uf )^{\frac{4}{3}} }{A B  (-\tau)^{\frac{2}{3A}+1} \underline{f_0} } + \frac{4}{3 } \frac{\mathsf{b}_\uparrow^{\frac{2}{3}}   \underline{f_0}     (1+ \uf )^{-\frac{2}{3}}}{A B  \uf (-\tau)^{\frac{2}{3A}+1}}\biggr)  u_0  \notag  \\
	&  \hspace{1cm} +\frac{4}{3  }\frac{\mathsf{b}^{-\frac{1}{3}}   ( \uf + \uf u  +1)^{\frac{1}{3}}}{A B  (\uf + \uf u ) \left(-\tau\right){}^{\frac{2}{3 A}+1}}    u_0       +\biggl(\frac{4}{3  }\frac{\mathsf{b}^{-\frac{1}{3}}   ( \uf+\uf u +1)^{\frac{1}{3}}}{A B  \underline{\varrho}  \left(-\tau\right){}^{\frac{2}{3 A}+1}}  -\frac{4}{3}  \frac{\mathsf{b}_\uparrow^{-\frac{1}{3}} (1+ \uf)^{\frac{1}{3}}}{A B  \uf (-\tau)^{\frac{2}{3A}+1}}  \biggr)  \notag  \\ 
	& \hspace{1cm} + \frac{2}{3  }  \frac{\mathsf{b}_\uparrow^{-\frac{4}{3}} (1+ \uf )^{\frac{4}{3}} }{A B  (-\tau)^{\frac{2}{3A}+1} \underline{f_0} }-
	\frac{2}{3}  \frac{\mathsf{b}^{-\frac{4}{3}}   ( \uf + \uf u  +1)^{\frac{1}{3}}}{A B  (\uf + \uf u )  \left(-\tau\right){}^{\frac{2}{3 A}+1}} \frac{ \uf  ( 1+  \uf )}{\underline{f_0} }     \notag  \\
	& \hspace{1cm} -
	\frac{2}{3}  \frac{\mathsf{b}^{-\frac{4}{3}}   ( \uf + \uf u  +1)^{\frac{1}{3}}}{A B  (\uf + \uf u ) \left(-\tau\right){}^{\frac{2}{3 A}+1}} \frac{ \uf  ( 1+  \uf )}{\underline{f_0} } \biggl[ (1+    u) \biggl(1 + \frac{\uf}{1+  \uf} u  \biggr) -1\biggr] \notag\\
	&\hspace{1cm} -\frac{4}{3}\frac{\mathsf{b}^{\frac{2}{3}}   ( \uf + \uf u  +1)^{\frac{1}{3}}}{A B  (\uf + \uf u ) \left(-\tau\right){}^{\frac{2}{3 A}+1}} \frac{\underline{f_0} }{(1+\uf ) } \biggl( \frac{(1 +  u_0 )^2}{(1+ \frac{\uf}{1+\uf} u )} -1\biggr)  + \frac{4}{3 } \frac{\mathsf{b}_\uparrow^{\frac{2}{3}}   \underline{f_0}   (1+ \uf )^{-\frac{2}{3}}}{A B  \uf (-\tau)^{\frac{2}{3A}+1}} \notag  \\
	&\hspace{1cm}  -\frac{4}{3}\frac{\mathsf{b}^{\frac{2}{3}}   ( \uf + \uf u  +1)^{\frac{1}{3}}}{A B  (\uf + \uf u ) \left(-\tau\right){}^{\frac{2}{3 A}+1}} \frac{\underline{f_0} }{(1+\uf ) }    +\frac{\mathsf{b}^{-\frac{4}{3}}   ( \uf+\uf u +1)^{\frac{1}{3}}}{A B  \uf (1+u) \left(-\tau\right){}^{\frac{2}{3 A}+1}}  \frac{(1+\uf)^2}{\underline{f_0}} K^{ij}  u_i u_j \notag  \\
	& - \frac{\mathsf{b}^{\frac{2}{3}}(\uf+\uf u+1)^{\frac{1}{3}}}{AB\uf(1+u)(-\tau)^{\frac{2}{3A}+1}} \frac{1}{\ufo} \biggl( \frac{ \cm^2 \underline{\chi_\uparrow} \uf (1+\uf)}{B \mathsf{b}_\uparrow^2}  \frac{  (1 +   u_0  )^2}{ \bigl(1+ \frac{\uf}{1+\uf } u  \bigr)^2}  + 4(\ck-\cm^2) \frac{(1+\uf)^2}{ \mathsf{b}_\uparrow^2}  \frac{\bigl(1+\frac{\uf}{1+\uf}  u \bigr)  }{(1+ z  )^6} \biggr)   q^i u_i   
	=    0 . 
\end{align*}

Then by applying Lemmas \ref{t:id0} and \ref{t:iden1}, we obtain  
\begin{align}\label{e:u0eq0b}
	&     \del{\tau}  u_0   +  \underline{\cg} \delta^{ij}\mathsf{b}_i    \frac{(1+\underline{f} ) }{\underline{f_0} }  \del{\tau} u_j  -   \underline{\cg} \delta^{ij}\frac{\mathsf{b}_\uparrow^{2} }{\uf}    \frac{    1   }{A   (-\tau)   }    \frac{ \bigl( 1 +     z \bigr)^{2} (1+\frac{\uf}{1+\uf} u)^{\frac{1}{3}}  }{ (1+ u) }   \del{\zeta^i}  u_j   \notag  \\   
	= & -\biggl(  -\frac{4}{3}  \frac{ \underline{\chi_\uparrow}^{\frac{1}{2}} }{A (-\tau) B^{\frac{1}{2}} \uf^{\frac{1}{2}}    }  + \frac{2}{3  }  \frac{    1   }{A   (-\tau)   }  + \frac{4}{3 } \frac{  \underline{\chi_\uparrow}   }{A B    (-\tau)  }  \biggr)  u_0   -  \underline{\cg} \delta^{ij} \mathsf{b}_i	\frac{  \underline{\chi_\uparrow}    }{A B    (-\tau)  }   \frac{(1+ \uf ) }{\underline{f_0}}	   u_j  \notag  \\
	& - \frac{4}{3  }  \frac{ \underline{\chi_\uparrow}^{\frac{1}{2}} }{A (-\tau) B^{\frac{1}{2}} \uf^{\frac{1}{2}}    }   \frac{  \bigl( 1+  z  \bigr)^{-1} (1+\frac{\uf}{1+\uf} u)^{\frac{1}{3}}  }{ (1+ u)  }      u_0  -\frac{4}{3  }  \frac{ \underline{\chi_\uparrow}^{\frac{1}{2}} }{A (-\tau) B^{\frac{1}{2}} \uf^{\frac{1}{2}}    }   \biggl[ \frac{  \bigl( 1+  z  \bigr)^{-1} (1+\frac{\uf}{1+\uf} u)^{\frac{1}{3}}  }{ (1+ u)  }  - 1 \biggr]  \notag  \\  
	&+\frac{2}{3}      \frac{    1   }{A   (-\tau)   }    \Biggl( \frac{(1+\frac{\uf}{1+\uf} u)^{\frac{4}{3}}  }{  ( 1+   z  )^{4}  }    -1  \Biggr)   +\frac{4}{3}  	\frac{  \underline{\chi_\uparrow}  }{A B    (-\tau)  }    \biggl(  \frac{  \bigl( 1 +  z  \bigr)^{2}  (1 +  u_0 )^2 }{ (1+ u) (1+ \frac{\uf}{1+\uf} u )^{\frac{2}{3}}}     - 1   \biggr)     \notag  \\
	&     -\frac{ 1 }{A  \tau}   q^i u_i   \biggl(  \cm^2\frac{\underline{\mathfrak{G}}}{B}  +4(\ck-\cm^2)    \frac{1}{ \uf }      \biggr)      -\frac{ 4\ck q^i    }{A  \tau}   u_i       +\frac{1}{A\tau} \frac{1+\uf}{\uf} \frac{ \bigl(1+\frac{\uf}{1+\uf} u\bigr)^{\frac{1}{3}}}{(1+z)^{4}(1+u) } K^{ij}  u_i u_j \notag  \\
	&      -\frac{ 1 }{A  \tau}   q^i u_i     \frac{ \cm^2 \underline{\chi_\uparrow} }{B  }  \biggl(\frac{  (1 +   u_0  )^2}{ \bigl(1+ \frac{\uf}{1+\uf } u  \bigr)^{\frac{5}{3}}} \frac{ (   1+ z  )^{2}    }{ (1+ u)   } -1\biggr) -\frac{ 4(\ck-\cm^2) }{A  \tau}        \frac{(1+\uf) }{ \uf }   \biggl(\frac{   (1+\frac{\uf}{1+\uf} u)^{\frac{4}{3}}  }{ (1+ z  )^4(1+ u)   } -1\biggr) q^i u_i        .  
\end{align}

Then, by applying Lemma \ref{t:coef1} and substituting \eqref{e:id2} (for replacing $\mathsf{b}_i$), along with \eqref{e:Rdef},   \eqref{e:grrexp} and \eqref{e:chig} into \eqref{e:u0eq0b}, and expanding \eqref{e:u0eq0b},
we arrive at 
\begin{align*}
	&      \del{\tau}  u_0   +  \mathscr{R}^j \del{\tau} u_j  +		\frac{1}{A\tau}  H^{ij}   \del{\zeta^i}  u_j  \notag  \\   
	= &  -  \frac{14}{3}  	\frac{   1     }{A     \tau  }  u_0       -\frac{ 4\ck q^i    }{A  \tau}   u_i        	+\frac{1}{A\tau} \frac{1+\uf}{\uf} \frac{ \bigl(1+\frac{\uf}{1+\uf} u\bigr)^{\frac{1}{3}}}{(1+z)^{4}(1+u) } K^{ij}  u_i u_j + 	\frac{  8 }{A     \tau  }  u   -     \frac{   8      }{A    \tau  }    z       +  	\frac{  \underline{\chi_\uparrow}    }{A B    \tau   }    \mathscr{R}^i    u_j  \notag  \\
	&  -  \frac{8}{3}  	\frac{  1 }{A   \tau  }  \frac{\underline{\mathfrak{G}}}{B}   z  -  \frac{4}{3}  	\frac{   1     }{A   \tau  } \frac{\underline{\mathfrak{G}}}{B}  u_0 + \frac{20}{9}  	\frac{  1 }{A \tau  }\frac{\underline{\mathfrak{G}}}{B}u      -\frac{4}{3  }  \frac{ \underline{\chi_\uparrow}^{\frac{1}{2}} }{A (-\tau)  \uf^{\frac{1}{2}}   B^{\frac{1}{2}}   }   (1 + u_0) \Biggl[\frac{    \bigl(1+\frac{\uf}{1+\uf} u\bigr)^{\frac{1}{3}}   }{ (1+ u) \bigl( 1+ z  \bigr) }       -  1 \Biggr]      \notag  \\ 
	&    - \frac{2}{3}      \frac{    1   }{A  \tau   }  \Biggl[         \frac{ \bigl(1 + \frac{\uf}{1+  \uf} u  \bigr)^{\frac{4}{3}}}{\bigl( 1+  z \bigr)^{4} } -1-\frac{4}{3}u +4z\Biggr]       -   \frac{4}{3}  	\frac{  \underline{\chi_\uparrow}  }{A B   \tau  }  \Biggl[ \frac{  \bigl( 1 + z  \bigr)^{2} (1 +  u_0 )^2 }{ (1+ u) \bigl(1+ \frac{\uf}{1+\uf} u \bigr)^{\frac{2}{3} } }         -1-2u_0+\frac{5}{3}u  -2z \Biggr]  \notag  \\
	&      -\frac{ 1 }{A  \tau}   q^i u_i     \frac{ \cm^2 \underline{\chi_\uparrow} }{B  }  \biggl(\frac{  (1 +   u_0  )^2}{ \bigl(1+ \frac{\uf}{1+\uf } u  \bigr)^{\frac{5}{3}}} \frac{ (   1+ z  )^{2}    }{ (1+ u)   } -1\biggr) -\frac{ 4(\ck-\cm^2) }{A  \tau}        \frac{(1+\uf) }{ \uf }   \biggl(\frac{   (1+\frac{\uf}{1+\uf} u)^{\frac{4}{3}}  }{ (1+ z  )^4(1+ u)   } -1\biggr) q^i u_i    \notag  \\
	&       -\frac{ 1 }{A  \tau}   q^i u_i   \biggl(  \cm^2\frac{\underline{\mathfrak{G}}}{B}  +4(\ck-\cm^2)    \frac{1}{ \uf }      \biggr)      . 
\end{align*}   
This equation completes the proof of the lemma.  
\end{proof}

To transform the second-order equation \ref{Eq2} into a first-order hyperbolic system, additional equations for the variables $(u_i,u,z,\mathcal{B}_k)$ are required. We will derive these equations in the following lemmas.

\begin{lemma}\label{t:Seq2}
		In terms of variables \eqref{e:v1}--\eqref{e:v5}, there is an equation,  
		\begin{align*}
			& ( S+ \mathscr{L}  )  \delta^{i}_k \del{\tau}u_i 	 +  \mathscr{R}_k \del{\tau}u_0   +  \frac{ 1}{A   \tau    } H^i_k \del{\zeta^i}   u_0    \notag  \\
			= &  \frac{1}{A\tau }  \bigl(4 \ck +\mathscr{Z}_{22}(\tau; u_0,u,z)\bigr)  \delta^{i}_k  u_i          +        \frac{  1    }{A   \tau   }    \bigl(24 \ck   + \mathscr{Z}_{24}(\tau;u_0,u,z)\bigr)   \delta^{i}_k \mathcal{B}_i   +	\mathfrak{F}_{u_k}   , 
		\end{align*}  
	where $\mathscr{R}_k:=\delta_{jk}\mathscr{R}^j $,  $H^i_k:=\delta_{jk} H^{ij} $, 
	and
	\begin{align*}
		\mathfrak{F}_{u_k}=\frac{\underline{\mathfrak{G}}}{\tau} \mathscr{S}_{k;21}(\tau; u_k,\mathcal{B}_k)+\frac{1}{\tau \uf^{\frac{1}{2}}} \mathscr{S}_{k;22}(\tau,u_0,u,z; u_k,\mathcal{B}_k) . 
	\end{align*}
\end{lemma}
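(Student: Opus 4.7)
The plan is to derive Lemma \ref{t:Seq2} from the trivial commutation identity $\del{t}\varrho_k = \del{x^k}\varrho_0$, which furnishes the ``second row'' of the first-order singular symmetric hyperbolic system paired with Lemma \ref{t:Seq1}. First, I would transform this identity into the compactified $(\tau,\zeta)$-coordinates via Lemma \ref{t:gb1}.\ref{l:1.4}, substitute the definitions $\underline{\varrho_k}=(1+\uf)u_k$ and $\underline{\varrho_0}=\ufo(1+u_0)$ from \eqref{e:v1}--\eqref{e:v3}, expand by the product rule, and eliminate $\del{\tau}\uf$ and $\del{\tau}\ufo$ using Lemma \ref{t:gb2}.\ref{l:2.4}--\ref{l:2.5}.

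Next, I would multiply the resulting identity by the symmetrizing factor $(S+\mathscr{L})\del{\tau}\mathsf{b}/(1+\uf)$. On the left-hand side, this immediately produces the principal term $(S+\mathscr{L})\delta^i_k\del{\tau}u_i$; using identity \eqref{e:id2} to replace $\mathsf{b}_k$ by $\mathcal{B}_k$ and then Lemma \ref{t:coef1}, the coefficient of $\del{\tau}u_0$ collapses exactly to $\mathscr{R}_k=R\mathcal{B}_k$. Combining Lemma \ref{t:gb1}.\ref{l:1.2} with identities \eqref{e:id3} and \eqref{e:iden2} then shows that the coefficient of $\del{\zeta^i}u_0$ becomes precisely $\tfrac{1}{A\tau}H^i_k$, which is the symmetric partner of the $H^{ij}\del{\zeta^i}u_j$ term appearing in Lemma \ref{t:Seq1}.

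On the right-hand side, identity \eqref{e:iden2} yields the compact formula $\del{\tau}\uf/(1+\uf)=-\underline{\chi_\uparrow}/(AB\tau)$, while Lemma \ref{t:gb2}.\ref{l:2.4} combined with \eqref{e:iden3}--\eqref{e:iden2} gives $\del{\tau}\ufo/\ufo=-6/(A\tau)$ modulo contributions of order $\underline{\mathfrak{G}}/\tau$ and $1/(\tau\uf^{\frac{1}{2}})$. Plugging these into the two source terms produced by the product rule, the leading contributions at $u_0=u=z=0$ reduce to $\tfrac{(S+\mathscr{L})\underline{\chi_\uparrow}}{AB\tau}u_k$ and $\tfrac{6(S+\mathscr{L})\underline{\chi_\uparrow}\uf}{AB(1+\uf)\tau}\mathcal{B}_k$. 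Invoking Proposition \ref{t:limG} in the form $\underline{\chi_\uparrow}/B=4+\underline{\mathfrak{G}}/B$, together with $S\to \ck$ at leading order, extracts precisely the stated main coefficients $4\ck$ (in front of $\delta^i_k u_i$) and $24\ck=6\cdot 4\cdot \ck$ (in front of $\delta^i_k\mathcal{B}_i$).

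The main obstacle will be the careful bookkeeping of the many subdominant terms: each residual piece must be classified either as a $\mathscr{Z}$-type remainder (vanishing in $(u_0,u,z)$ and absorbed into $\mathscr{Z}_{22}$ or $\mathscr{Z}_{24}$) or as a contribution to $\mathfrak{F}_{u_k}$ carrying an explicit $\underline{\mathfrak{G}}/\tau$ factor (from the $\mathscr{L}$, $\tau\mathscr{S}$ and $\underline{\mathfrak{G}}$ parts) or a $1/(\tau\uf^{\frac{1}{2}})$ factor (from the subleading term of $\del{\tau}\ufo$ via \eqref{e:iden3}). These algebraic simplifications demand repeated use of the hidden identities in Lemma \ref{t:iden1}, but structurally the derivation parallels that of Lemma \ref{t:Seq1}, with the roles of the ``temporal'' variable $u_0$ and the ``spatial'' variables $u_k$ interchanged.
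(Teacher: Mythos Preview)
Your proposal is correct and follows essentially the same route as the paper. The only cosmetic difference is the starting point: you invoke the commutation relation $\del{t}\varrho_k=\del{x^k}\varrho_0$ directly, whereas the paper obtains the equivalent identity $\del{\tau}\underline{\varrho_i}=(\del{\tau}\mathsf{b})\del{\zeta^i}\underline{\varrho_0}-\mathsf{b}_i\del{\tau}\underline{\varrho_0}$ by differentiating \eqref{e:dzrho0} in $\tau$ and using $\del{\tau}\underline{\varrho}=(\del{\tau}\mathsf{b})\underline{\varrho_0}$; after that the two computations coincide, with the paper's symmetrizer $\delta_{jk}\underline{\cg}\delta^{ij}\bigl(\tfrac{1+\uf}{\ufo}\bigr)^2=(S+\mathscr{L})\delta^i_k$ matching your multiplier and the leading coefficients $4\ck$, $24\ck$ emerging exactly as you describe.
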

\begin{proof}
	Differentiating \eqref{e:v3} with respect to  $\tau$, and utilizing  \eqref{e:v3} along with  Lemma \ref{t:gb2},\ref{l:2.5},  we obtain
\begin{equation}\label{e:dtur}
	\del{\tau}u_i=   \del{\tau}\biggl(\frac{1}{1+\underline{f}} \underline{\varrho_i} \biggr)   
	=   -  \frac{\mathsf{b}_\uparrow^{\frac{2}{3}} (1+ \uf )^{-\frac{2}{3}}}{A B  \uf   (-\tau)^{\frac{2}{3A}+1}}	\underline{f_0}   u_i +\frac{1  }{\uf+1}\del{\tau}\underline{\varrho _i}  . 
\end{equation}

Next, Next, we  establish the relationship between $\del{\tau} \underline{\varrho_i}$ and $\del{\zeta^i} \underline{\varrho_0}= \underline{f_0} \del{\zeta^i}   u_0  $ (by \eqref{e:v2}).  First, observe that by applying Lemma \ref{t:gb1}.\ref{l:1.4},  
\begin{equation}\label{e:dzrho0}
		\underline{\varrho_i}  =      \del{\zeta^i} \underline{\varrho}   -     \mathsf{b}_i \underline{\varrho_0} \quad \Leftrightarrow  \quad        \del{\zeta^i} \underline{\varrho} =	\underline{\varrho_i} +\mathsf{b}_{i} \underline{\varrho_0}    .
\end{equation}
Further differentiating it with respect to $\tau$ implies
	\begin{equation}\label{e:dtrr}
		\del{\tau}\underline{\varrho_i}  =    \del{\zeta^i} \del{\tau}  \underline{\varrho}   -     (  \del{\tau}\mathsf{b}_i ) \underline{\varrho_0}  -     \mathsf{b}_i \del{\tau} \underline{\varrho_0} . 
	\end{equation}
Substituting  \eqref{e:dtrr} into \eqref{e:dtur} yields
\begin{equation}\label{e:dtur2}
	\del{\tau}u_i 
	=   -  \frac{\mathsf{b}_\uparrow^{\frac{2}{3}} (1+ \uf )^{-\frac{2}{3}}}{A B  \uf   (-\tau)^{\frac{2}{3A}+1}}	\underline{f_0}   u_i +\frac{1  }{\uf+1} (  \del{\zeta^i} \del{\tau}  \underline{\varrho}   -     (  \del{\tau}\mathsf{b}_i ) \underline{\varrho_0}  -     \mathsf{b}_i \del{\tau} \underline{\varrho_0} ) . 
\end{equation}

To calculate  \eqref{e:dtur2}, we require expressions for  $\del{\tau} \underline{\varrho_0}$, and $ 	\del{\zeta^i}\del{\tau} \underline{\varrho} - (	\del{\tau} \mathsf{b}_i ) \underline{\varrho_0}$. First, note the relationship between  $\del{\tau}\underline{\varrho}$ and $\underline{\varrho_0}$ as provided by Lemma  \ref{t:gb1}.\ref{l:1.4} and its proof: 
\begin{equation}\label{e:dtrho}
\del{\tau} \underline{\varrho}  =  (\del{\tau} \mathsf{b} ) \underline{\varrho_0}   =  \frac{\mathsf{b}^{\frac{2}{3}}   ( \underline{\varrho} +1)^{\frac{1}{3}}}{A B  \underline{\varrho}  \left(-\tau\right){}^{\frac{2}{3 A}+1}} \underline{\varrho_0}   	 .      
\end{equation}
Then applying Lemma \ref{t:gb1}.\ref{l:1.2} to replace $\del{\tau}\mathsf{b}$ and using the variables \eqref{e:v1}--\eqref{e:v5}, differentiating \eqref{e:dtrho} with respect to $\zeta^i$ results in
\begin{align}\label{e:dzdtr1}
		\del{\zeta^i}\del{\tau} \underline{\varrho} - (	\del{\tau} \mathsf{b}_i ) \underline{\varrho_0} = (	\del{\tau} \mathsf{b} )\del{\zeta^i} \underline{\varrho_0}   
		 =\frac{ (1+ z  )^2 \mathsf{b}_\uparrow^{\frac{2}{3}}   ( 1 + \uf+\uf u )^{\frac{1}{3}}}{A B  \uf (1+  u)  \left(-\tau\right){}^{\frac{2}{3 A}+1}}  \underline{f_0}  \del{\zeta^i}   u_0 . 
\end{align}

Now let us calculate $\del{\tau} \underline{\varrho_0}$ in \eqref{e:dtrr}. By using  \eqref{e:v2} and applying Lemma \ref{t:gb2}.\ref{l:2.4} to  substitute $\del{\tau} \underline{f_0}$, we arrive at
\begin{align}\label{e:dtrho0}
	\del{\tau}  \underline{\varrho_0} 
	=  & -\frac{4}{3}  \frac{\mathsf{b}_\uparrow^{-\frac{1}{3}} (1+ \uf)^{\frac{1}{3}}}{A B  \uf (-\tau)^{\frac{2}{3A}+1}} \underline{f_0}  +\frac{2}{3  }  \frac{\mathsf{b}_\uparrow^{-\frac{4}{3}} (1+ \uf )^{\frac{4}{3}} }{A B  (-\tau)^{\frac{2}{3A}+1}}+\frac{4}{3 } \frac{\mathsf{b}_\uparrow^{\frac{2}{3}} (  \underline{f_0}   )^2 (1+ \uf )^{-\frac{2}{3}}}{A B  \uf (-\tau)^{\frac{2}{3A}+1}}  \notag \\
	& + \biggl(-\frac{4}{3}  \frac{\mathsf{b}_\uparrow^{-\frac{1}{3}} (1+ \uf)^{\frac{1}{3}}}{A B  \uf (-\tau)^{\frac{2}{3A}+1}} \underline{f_0}  +\frac{2}{3  }  \frac{\mathsf{b}_\uparrow^{-\frac{4}{3}} (1+ \uf )^{\frac{4}{3}} }{A B  (-\tau)^{\frac{2}{3A}+1}}+\frac{4}{3 } \frac{\mathsf{b}_\uparrow^{\frac{2}{3}} (  \underline{f_0}   )^2 (1+ \uf )^{-\frac{2}{3}}}{A B  \uf (-\tau)^{\frac{2}{3A}+1}}  \biggr)  u_0  +\underline{f_0}   \del{\tau}u_0    .
\end{align}

Substituting $	\del{\zeta}\del{\tau} \underline{\varrho} - (	\del{\tau} \mathsf{b}_\zeta ) \underline{\varrho_0}$ from \eqref{e:dzdtr1}  and $\del{\tau}  \underline{\varrho_0}$ from \eqref{e:dtrho0} into \eqref{e:dtur2},  and using the variables \eqref{e:v1}--\eqref{e:v5},  direct calculations imply that
\begin{align}\label{e:dtur3} 
	& \del{\tau}u_i 	 + \frac{   \underline{f_0}   }{\uf+1}  \mathsf{b}_i   \del{\tau}u_0   - \frac{   (1+ z )^2 \mathsf{b}_\uparrow^{\frac{2}{3}}   ( 1 + \uf+\uf u )^{\frac{1}{3}}}{A B  \uf (\uf+1) (1+  u)  \left(-\tau\right){}^{\frac{2}{3 A}+1}}  \underline{f_0}  \del{\zeta^i}   u_0    \notag  \\
	= &  -  \frac{\mathsf{b}_\uparrow^{\frac{2}{3}} (1+ \uf )^{-\frac{2}{3}}}{A B  \uf   (-\tau)^{\frac{2}{3A}+1}}	\underline{f_0}   u_i         +    \frac{4  }{3}  \frac{\mathsf{b}_\uparrow^{-\frac{1}{3}} (1+ \uf)^{-\frac{2}{3}}}{A B  \uf (-\tau)^{\frac{2}{3A}+1}} \underline{f_0}  \mathsf{b}_i   -      \frac{2    }{3  }  \frac{\mathsf{b}_\uparrow^{-\frac{4}{3}} (1+ \uf )^{\frac{1}{3}} }{A B  (-\tau)^{\frac{2}{3A}+1}}  \mathsf{b}_i  -     \frac{4  }{3 } \frac{\mathsf{b}_\uparrow^{\frac{2}{3}}  \underline{f_0}^2 (1+ \uf )^{-\frac{5}{3}}}{A B  \uf (-\tau)^{\frac{2}{3A}+1}} \mathsf{b}_i \notag \\
	&  + \biggl(   \frac{4  }{3}  \frac{\mathsf{b}_\uparrow^{-\frac{1}{3}} (1+ \uf)^{-\frac{2}{3}}}{A B  \uf (-\tau)^{\frac{2}{3A}+1}} \underline{f_0}  \mathsf{b}_i -       \frac{2  }{3  }  \frac{\mathsf{b}_\uparrow^{-\frac{4}{3}} (1+ \uf )^{\frac{1}{3}} }{A B  (-\tau)^{\frac{2}{3A}+1}} \mathsf{b}_i  -     \frac{4  }{3 } \frac{\mathsf{b}_\uparrow^{\frac{2}{3}}   \underline{f_0}^2 (1+ \uf )^{-\frac{5}{3}}}{A B  \uf (-\tau)^{\frac{2}{3A}+1}} \mathsf{b}_i \biggr)  u_0    . 
\end{align}
Utilizing the definition of $	\underline{\chi_\uparrow}$ from \eqref{e:Gdef2} and the expression for $\mathcal{B}_i$ from \eqref{e:id2} in Lemma \ref{t:id0}, \eqref{e:dtur3} becomes   
\begin{align}\label{e:dtur5} 
	& \del{\tau}u_i	 + \frac{   \underline{f}   }{\uf+1}  \frac{\underline{\chi_\uparrow} }{B  } \mathcal{B}_i   \del{\tau}u_0   -  \frac{    \underline{\chi_\uparrow}}{A B (-\tau)   } \frac{   (1+ z  )^2   \bigl( 1  + \frac{\uf}{1+\uf} u \bigr)^{\frac{1}{3}}}{ (1+  u)  }   \del{\zeta^i}   u_0    \notag  \\
	= &  -  \frac{\mathsf{b}_\uparrow^{\frac{2}{3}} (1+ \uf )^{-\frac{2}{3}}  \underline{f_0}   }{A B  \uf   (-\tau)^{\frac{2}{3A}+1}}	u_i        +   \biggl(  \frac{4  }{3}  \frac{\mathsf{b}_\uparrow^{-\frac{1}{3}} (1+ \uf)^{-\frac{2}{3}}}{A B    (-\tau)^{\frac{2}{3A}+1}}      -      \frac{2    }{3  }  \frac{\mathsf{b}_\uparrow^{-\frac{4}{3}} (1+ \uf )^{\frac{1}{3}} }{A B  (-\tau)^{\frac{2}{3A}+1}}   \frac{ \uf   }{\ufo }    -     \frac{4  }{3 } \frac{\mathsf{b}_\uparrow^{\frac{2}{3}}  \underline{f_0}  (1+ \uf )^{-\frac{5}{3}}}{A B    (-\tau)^{\frac{2}{3A}+1}} \biggr)  \frac{\underline{\chi_\uparrow} }{B  }  \mathcal{B}_i  \notag  \\
	&     + \biggl(   \frac{4  }{3}  \frac{\mathsf{b}_\uparrow^{-\frac{1}{3}} (1+ \uf)^{-\frac{2}{3}}}{A B    (-\tau)^{\frac{2}{3A}+1}}   \frac{\underline{\chi_\uparrow} }{B  }  \mathcal{B}_i        -       \frac{2  }{3  }  \frac{\mathsf{b}_\uparrow^{-\frac{4}{3}} (1+ \uf )^{\frac{1}{3}} }{A B  (-\tau)^{\frac{2}{3A}+1}} \frac{ \uf  }{\ufo  } \frac{\underline{\chi_\uparrow} }{B  } \mathcal{B}_i   -     \frac{4  }{3 } \frac{\mathsf{b}_\uparrow^{\frac{2}{3}}   \underline{f_0}  (1+ \uf )^{-\frac{5}{3}}}{A B   (-\tau)^{\frac{2}{3A}+1}} \frac{\underline{\chi_\uparrow} }{B  }  \mathcal{B}_i  \biggr)  u_0   . 
\end{align} 
Next, applying Lemma \ref{t:iden1},  \eqref{e:dtur5} further becomes
\begin{align}\label{e:dtur6} 
	& \del{\tau}u_i 	 + \frac{   \underline{f}   }{\uf+1}  \frac{\underline{\chi_\uparrow} }{B  } \mathcal{B}_i   \del{\tau}u_0   + \frac{1}{A  \tau  }   \frac{    \underline{\chi_\uparrow}}{ B  } \frac{   (1+ z )^2   \bigl( 1  + \frac{\uf}{1+\uf} u \bigr)^{\frac{1}{3}}}{ (1+  u)  }   \del{\zeta^i}   u_0    \notag  \\
	= &  \frac{1}{A\tau } \frac{  \underline{\chi_\uparrow}     }{ B   }     u_i          +      \frac{2    }{3  }    \frac{    1   }{A   \tau   }   \frac{\underline{\chi_\uparrow} }{B  }  \mathcal{B}_i +     \frac{4  }{3 } 
	\frac{  1  }{A \tau  }  \biggl(  \frac{\underline{\chi_\uparrow} }{B  }  \biggr)^2 \mathcal{B}_i   +    \frac{2    }{3  }     \frac{    1   }{A    \tau  }     \frac{\underline{\chi_\uparrow} }{B  } \biggl(1+2\frac{\underline{\chi_\uparrow} }{B  }\biggr) u_0  \mathcal{B}_i      \notag  \\
	&  -  \frac{2    }{3  }    \frac{    1   }{A   \tau  (1+\uf)  }    \frac{\underline{\chi_\uparrow} }{B  }  \mathcal{B}_i   -   \frac{4  }{3}  \frac{1}{A  \tau    \uf^{\frac{1}{2}}    }   \frac{\uf}{1+\uf}    \biggl( \frac{\underline{\chi_\uparrow} }{B  } \biggr)^{\frac{3}{2}} \mathcal{B}_i     -    \frac{4  }{3 } 
	\frac{  1 }{A  \tau (1+\uf) }    \biggl( \frac{\underline{\chi_\uparrow} }{B  } \biggr)^2 \mathcal{B}_i   \notag  \\
	&  -    \frac{4  }{3} \frac{ 1 }{A  \tau    \uf^{\frac{1}{2}}    }   \frac{\uf}{1+\uf}    \biggl( \frac{\underline{\chi_\uparrow} }{B  } \biggr)^{\frac{3}{2}} \mathcal{B}_i    u_0     -   \frac{2    }{3  }     \frac{    1   }{A   \tau  (1+\uf)  }     \frac{\underline{\chi_\uparrow} }{B  }  \mathcal{B}_i   u_0   -  \frac{4  }{3 }
	\frac{  1 }{A   \tau (1+\uf) }   \biggl( \frac{\underline{\chi_\uparrow} }{B  } \biggr)^2 \mathcal{B}_i     u_0    . 
\end{align} 

Note that, with the help of \eqref{e:keyid3} (from Lemma \ref{t:iden1}) and \eqref{e:Rdef},  the following term can be simplified, 
\begin{equation}\label{e:tm2}
	 \delta_{jk}\underline{\cg} \delta^{ij}\Bigl(\frac{1+\uf}{\ufo}\Bigr)^2 \frac{    \underline{\chi_\uparrow}}{ B     } \frac{   (1+ z )^2   \bigl( 1  + \frac{\uf}{1+\uf} u \bigr)^{\frac{1}{3}}}{ (1+  u)  }  =  \delta_{jk}\underline{\cg} \delta^{ij}\frac{\mathsf{b}_\uparrow^2}{\uf}  \frac{   (1+ z )^2   \bigl( 1  + \frac{\uf}{1+\uf} u \bigr)^{\frac{1}{3}}}{ (1+  u)  }  =\delta_{jk} H^{ij} . 
\end{equation}	

Next, by multiplying  $ \delta_{jk}\underline{\cg} \delta^{ij} \Bigl(\frac{1+\uf}{\ufo}\Bigr)^2$ on both sides of  \eqref{e:dtur6},  applying  Lemma \ref{t:coef1}, \eqref{e:Rdef} and \eqref{e:chig}, and substituting  \eqref{e:tm1} and \eqref{e:tm2}, we expand \eqref{e:dtur6}as follows,  
   \begin{align}\label{e:dtur6} 
 	&( \ck + \mathscr{L}+\tau\mathscr{S}   )  \delta^{i}_k \del{\tau}u_i 	 +   \delta_{jk} \mathscr{R}^j  \del{\tau}u_0   + \frac{1}{A\tau}  \delta_{jk} H^{ij}    \del{\zeta^i}   u_0    \notag  \\
 	= &  \frac{4 \ck }{A\tau }   \delta^{i}_k      u_i          +      \frac{     24   \ck    }{A   \tau   }   \delta^{i}_k    \mathcal{B}_i  +\frac{1}{A\tau }   \ck   \frac{\underline{\mathfrak{G}}}{B}     u_k  +      \frac{2    }{3  }    \frac{    1   }{A   \tau   }   \ck   \frac{\underline{\mathfrak{G}}}{B}   \mathcal{B}_k    +     \frac{ 64  }{3 } 
 	\frac{  1  }{A \tau  }  \frac{\underline{\mathfrak{G}}}{4B} \biggl(  2+\frac{\underline{\mathfrak{G}}}{4B} \biggr)   \ck \delta^{i}_k  \mathcal{B}_i  +\frac{1}{A\tau }   ( \mathscr{L}   + \tau\mathscr{S} )   \delta^{i}_k    \frac{  \underline{\chi_\uparrow}     }{ B   }     u_i      \notag  \\
 	& +      \frac{2    }{3  }    \frac{    1   }{A   \tau   }    ( \mathscr{L}   + \tau\mathscr{S} )   \delta^{i}_k    \frac{\underline{\chi_\uparrow} }{B  }  \mathcal{B}_i  +     \frac{4  }{3 } 
 	\frac{  1  }{A \tau  }  \biggl(  \frac{\underline{\chi_\uparrow} }{B  }  \biggr)^2  ( \mathscr{L}   + \tau\mathscr{S} )     \delta^{i}_k    \mathcal{B}_i    +    \frac{2    }{3  }     \frac{    1   }{A    \tau  }     \frac{\underline{\chi_\uparrow} }{B  } \biggl(1+2\frac{\underline{\chi_\uparrow} }{B  }\biggr) ( \ck  +  \mathscr{L}   + \tau\mathscr{S}      ) u_0   \mathcal{B}_k      \notag  \\
 	&  -  \frac{2    }{3  }    \frac{    ( \ck  +  \mathscr{L}   + \tau\mathscr{S}      )   }{A   \tau  (1+\uf)  }    \frac{\underline{\chi_\uparrow} }{B  }  \mathcal{B}_k   -   \frac{4  }{3}  \frac{( \ck  + \mathscr{L}   + \tau\mathscr{S}      ) }{A  \tau    \uf^{\frac{1}{2}}    }   \frac{\uf}{1+\uf}    \biggl( \frac{\underline{\chi_\uparrow} }{B  } \biggr)^{\frac{3}{2}} \mathcal{B}_k     -    \frac{4  }{3 } 
 	\frac{ ( \ck +  \mathscr{L}   + \tau\mathscr{S}    ) }{A  \tau (1+\uf) }    \biggl( \frac{\underline{\chi_\uparrow} }{B  } \biggr)^2  \mathcal{B}_k   \notag  \\
 	&  -    \frac{4  }{3} \frac{  ( \ck  +  \mathscr{L}   + \tau\mathscr{S}       ) }{A  \tau    \uf^{\frac{1}{2}}    }   \frac{\uf}{1+\uf}    \biggl( \frac{\underline{\chi_\uparrow} }{B  } \biggr)^{\frac{3}{2}}\mathcal{B}_k    u_0     -   \frac{2    }{3  }     \frac{      ( \ck  +  \mathscr{L}   + \tau\mathscr{S}       )   }{A   \tau  (1+\uf)  }     \frac{\underline{\chi_\uparrow} }{B  }\mathcal{B}_k   u_0   -  \frac{4  }{3 }
 	\frac{ ( \ck  +  \mathscr{L}   + \tau\mathscr{S}     ) }{A   \tau (1+\uf) }   \biggl( \frac{\underline{\chi_\uparrow} }{B  } \biggr)^2 \mathcal{B}_k    u_0    . 
 \end{align} 
This equation verifies the lemma, thereby completing the proof.
\end{proof}

\begin{lemma}\label{t:Seq3}
	In terms of variables \eqref{e:v1}--\eqref{e:v5}, there is an equation, 
	\begin{align*}
			2 \del{\tau}  u
		=    &       \frac{1}{A\tau}   (	- 8   +\mathscr{Z}_{31}(\tau; u) ) u_0        +   \frac{1}{  A \tau } \biggl( \frac{ 40 }{3 } +\mathscr{Z}_{33}(\tau;u) \biggr) u   + \frac{ 1 }{A    \tau    } (-16+\mathscr{Z}_{35}(\tau; u_0,u,z))     z    +\mathfrak{F}_{u} , 
	\end{align*} 
	where
\begin{align*}
	\mathfrak{F}_{u}=\frac{\underline{\mathfrak{G}}}{\tau} \mathscr{S}_{31}(\tau;u_0,u,z)  +\frac{1}{\tau \uf^{\frac{1}{2}}} \mathscr{S}_{32}(\tau; u_0, u, z)   . 
\end{align*}
\end{lemma}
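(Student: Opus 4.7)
The plan is to differentiate the definition \eqref{e:v1} directly, substitute the previously derived evolution equations for $\underline{\varrho}$ and $\uf$, and then extract the constant-coefficient linear part prescribed by the lemma. From $u = \underline{\varrho}/\uf - 1$ one has $\del{\tau} u = \uf^{-1}\del{\tau}\underline{\varrho} - (\underline{\varrho}/\uf^{2})\del{\tau}\uf$; using \eqref{e:dtrho} for $\del{\tau}\underline{\varrho}$, Lemma \ref{t:gb2}.\ref{l:2.5} for $\del{\tau}\uf$, and the substitutions \eqref{e:v1}--\eqref{e:v5} (so $\underline{\varrho}=\uf(1+u)$, $\underline{\varrho_{0}}=\ufo(1+u_{0})$, $\mathsf{b}=(1+z)^{3}\mathsf{b}_\uparrow$), both pieces share a common prefactor and one obtains
\begin{equation*}
\del{\tau} u = \frac{\mathsf{b}_\uparrow^{2/3}(1+\uf)^{1/3}\ufo}{AB\uf^{2}(-\tau)^{2/(3A)+1}}\left[\frac{(1+z)^{2}\bigl(1+\tfrac{\uf}{1+\uf}u\bigr)^{1/3}(1+u_{0})}{1+u} - (1+u)\right] .
\end{equation*}

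Next I would invoke identity \eqref{e:iden2} to rewrite the prefactor as $\bigl(1+\tfrac{1}{\uf}\bigr)\cdot\frac{1}{A(-\tau)}\cdot\frac{\underline{\chi_\uparrow}}{B}$, then apply Proposition \ref{t:limG} in the form $\underline{\chi_\uparrow}/B = 4 + \underline{\mathfrak{G}}/B$. The $\underline{\mathfrak{G}}/B$ contribution multiplied by the bracket is absorbed into $\mathscr{S}_{31}$; the $\tfrac{1}{\uf}$ contribution---written as $\uf^{-1/2}\cdot\uf^{-1/2}$ with the second factor globally bounded since $\uf\ge\beta>0$---is absorbed into $\mathscr{S}_{32}$. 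What remains is $\frac{4}{A(-\tau)}$ times the bracket. A Taylor expansion of the bracket at $(u_{0},u,z)=(0,0,0)$ yields the linear part $2z + u_{0} + \bigl(\tfrac{1}{3}\tfrac{\uf}{1+\uf}-2\bigr)u$, and writing $\tfrac{\uf}{1+\uf} = 1 - \tfrac{1}{1+\uf}$ separates the constant coefficient $-\tfrac{5}{3}$ on $u$ from a $(1+\uf)^{-1}$ correction that is again routed into $\mathscr{S}_{32}$.

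Multiplying this linear part by $-\tfrac{4}{A\tau}$ produces the leading contributions $-\tfrac{4}{A\tau}u_{0}$, $\tfrac{20}{3A\tau}u$, and $-\tfrac{8}{A\tau}z$; doubling to match the factor of $2$ on the left-hand side reproduces the lemma's coefficients $-8$, $\tfrac{40}{3}$, $-16$. All quadratic-and-higher residuals from the Taylor expansion are analytic in $(u_{0},u,z)$ and vanish at the origin, so they may be repackaged---according to which variable they multiply---as $\mathscr{Z}_{31}(\tau;u)$, $\mathscr{Z}_{33}(\tau;u)$, $\mathscr{Z}_{35}(\tau;u_{0},u,z)$ following the convention of \S\ref{s:rmdrs}.

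The main obstacle is purely organisational: one must track every residual so that $\mathscr{S}_{31}$ collects exactly the $\underline{\mathfrak{G}}/\tau$-weighted pieces (controlled by Proposition \ref{t:limG}) and $\mathscr{S}_{32}$ collects exactly the $1/(\tau\uf^{1/2})$-weighted pieces (bounded uniformly via $\uf\ge\beta$), while the genuinely polynomial-vanishing leftovers enter the $\mathscr{Z}_{3\ell}$ with the correct dependence on the listed variables. The underlying manipulations are straightforward applications of Lemmas \ref{t:gb2}, \ref{t:id0}, and \ref{t:iden1}, running in close parallel to the arguments already executed for Lemmas \ref{t:Seq1} and \ref{t:Seq2}.
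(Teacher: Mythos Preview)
Your proposal is correct and follows essentially the same route as the paper: differentiate the relation $\underline{\varrho}=\uf(1+u)$, insert \eqref{e:dtrho} and Lemma~\ref{t:gb2}\ref{l:2.5}, then use \eqref{e:iden2} together with the expansion $\underline{\chi_\uparrow}/B = 4 + \underline{\mathfrak{G}}/B$ to isolate the constant-coefficient linear part and route the $\underline{\mathfrak{G}}$- and $\uf^{-1}$-weighted corrections into $\mathscr{S}_{31}$ and $\mathscr{S}_{32}$. Your packaging of the bracket into a single factor before Taylor-expanding is a slightly cleaner presentation of exactly the same computation the paper performs term-by-term.
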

\begin{proof}
	By substituting  \eqref{e:v1}--\eqref{e:v2} into \eqref{e:dtrho}, we obtain
	\begin{equation*}
		\del{\tau} (\uf + \uf u )  =\del{\tau} \uf +u  \del{\tau} \uf   +  \uf \del{\tau} u    
		=     
		\frac {\mathsf{b}^{\frac{2}{3}}   ( \uf + \uf u +1)^{\frac{1}{3}} \underline{f_0} }{A B  \uf (1 +   u ) \left(-\tau\right){}^{\frac{2}{3 A}+1}}  + 	\frac {\mathsf{b}^{\frac{2}{3}}   ( \uf + \uf u +1)^{\frac{1}{3}}  \underline{f_0}  u_0   }{A B  \uf (1 +   u ) \left(-\tau\right){}^{\frac{2}{3 A}+1}}    .
	\end{equation*}
According to Lemma \ref{t:gb2}.\ref{l:2.5}, substituting $\del{\tau} \uf$ yields
\begin{align}\label{e:dtu.a}
	\del{\tau} u    
	=    & 
	\frac {\mathsf{b}^{\frac{2}{3}}   ( \uf + \uf u +1)^{\frac{1}{3}} \underline{f_0} }{A B  \uf (1 +   u ) \left(-\tau\right){}^{\frac{2}{3 A}+1}}   \frac{1}{\uf }+ 	\frac {\mathsf{b}^{\frac{2}{3}}   ( \uf + \uf u +1)^{\frac{1}{3}}  \underline{f_0}  }{A B  \uf (1 +   u ) \left(-\tau\right){}^{\frac{2}{3 A}+1}}  \frac{1}{\uf }  u_0    -	\frac{\mathsf{b}_\uparrow^{\frac{2}{3}} (1+ \uf )^{\frac{1}{3}}\underline{f_0} }{A B  \uf   (-\tau)^{\frac{2}{3A}+1}} \frac{1}{\uf } -\frac{\mathsf{b}_\uparrow^{\frac{2}{3}} (1+ \uf )^{\frac{1}{3}}\underline{f_0} }{A B  \uf   (-\tau)^{\frac{2}{3A}+1}} \frac{1}{\uf }	u    . 
\end{align}
By Lemma \ref{t:id0},  \eqref{e:dtu.a} becomes
\begin{align}\label{e:dtu.b}
	\del{\tau} u     
	=    & 
   \frac{   \mathsf{b}_\uparrow^{\frac{2}{3}}   (1+\uf)^{-\frac{2}{3}}   }{A B \uf  (-\tau)^{\frac{2}{3A}+1}}  \frac{ \bigl( 1 +   z   \bigr)^{2} (1+\frac{\uf}{1+\uf} u)^{\frac{1}{3}}  }{ (1+ u) }   \underline{f_0} 	\frac{1+\uf}{\uf} -\frac{\mathsf{b}_\uparrow^{\frac{2}{3}} (1+ \uf )^{-\frac{2}{3}}}{A B  \uf   (-\tau)^{\frac{2}{3A}+1}}	\underline{f_0} 	\frac{1+\uf}{\uf}   \notag  \\
	& + 	    \frac{   \mathsf{b}_\uparrow^{\frac{2}{3}}   (1+\uf)^{-\frac{2}{3}}   }{A B \uf  (-\tau)^{\frac{2}{3A}+1}}  \frac{ \bigl( 1 +    z  \bigr)^{2} (1+\frac{\uf}{1+\uf} u)^{\frac{1}{3}}  }{ (1+ u) }  \underline{f_0}  u_0  	\frac{1+\uf}{\uf} -\frac{\mathsf{b}_\uparrow^{\frac{2}{3}} (1+ \uf )^{-\frac{2}{3}}}{A B  \uf   (-\tau)^{\frac{2}{3A}+1}}	\underline{f_0}  u  	\frac{1+\uf}{\uf} .
\end{align}

Using Lemma \ref{t:iden1} and  \eqref{e:chig} to expand \eqref{e:dtu.b}, and after lengthy but direct calculations, we arrive at
\begin{align*}
	\del{\tau} u     
	=    & 
	- \frac{  4   }{A    \tau  }    u_0  		+\frac{20}{3} \frac{ 1   }{A    \tau  }  u  	- \frac{ 8 }{A    \tau    }      z    	  	- \frac{  \underline{\chi_\uparrow}   }{A B  \tau   }   \biggl[ \frac{    (1+\frac{\uf}{1+\uf} u)^{\frac{1}{3}}  }{ (1+ u) } -1+\frac{2}{3}u \Biggr]       - 	\frac{  \underline{\chi_\uparrow}     }{A B  \tau }   \biggl[\frac{   (1+\frac{\uf}{1+\uf} u)^{\frac{1}{3}}  }{ (1+ u) } -1\biggr]   u_0  
	\notag \\	
	& 	- \frac{  \underline{\chi_\uparrow}   }{A B    \tau    }    2z   \Biggl( \frac{   \bigl(    1+  \frac{1}{2} z  \bigr)  (1+\frac{\uf}{1+\uf} u)^{\frac{1}{3}} (1+  u_0  ) }{ (1+ u) }  -1  \Biggr) - \frac{  1  }{A    \tau  }   \frac{\underline{\mathfrak{G}}}{B}  u_0   +\frac{5}{3} \frac{ 1   }{A    \tau  }   \frac{\underline{\mathfrak{G}}}{B}    u   	 	- \frac{  1  }{A    \tau    }    \frac{\underline{\mathfrak{G}}}{B}   2 z    \notag  \\
	&    -
	\frac{  \underline{\chi_\uparrow}    }{A B   \tau \uf }   \biggl(  \frac{ \bigl( 1 +   z   \bigr)^{2} (1+\frac{\uf}{1+\uf} u)^{\frac{1}{3}}  (1+ u_0 ) }{ (1+ u) }  -1 - u \biggr)  . 
\end{align*} 
This equation can be reformulated using this lemma, thereby completing the proof.
\end{proof}

\begin{lemma}\label{e:Seq5}
In terms of variables \eqref{e:v1}--\eqref{e:v5}, there is an equation, 
	\begin{equation*}
	\del{\tau}	\mathcal{B}_i 
=     \frac{1}{A\tau} \biggl( \frac{2  }{3       }  +\mathscr{Z}_{42}(\tau;  u, z)\biggr) u_i  +  \frac{1}{A\tau}   \biggl(  \frac{2}{3   }  +\mathscr{Z}_{44}(\tau; u_0,u,z)  \biggr) \mathcal{B}_i          +\mathfrak{F}_{\mathcal{B}_i} , 
	\end{equation*} 
	where
	\begin{equation*}
		\mathfrak{F}_{\mathcal{B}_i}= \frac{1}{\tau \uf^{\frac{1}{2}}} \mathscr{S}_{i;42}(\tau, u_0,u, z ; u_i,\mathcal{B}_k) . 
	\end{equation*}
\end{lemma}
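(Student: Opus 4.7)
The plan is to differentiate the defining relation \eqref{e:v5} for $\mathcal{B}_i$ with respect to $\tau$, substitute the available ODEs for $\mathsf{b}_\uparrow$, $\uf$ and $\mathsf{b}_i$, re-express everything in the variables $(u_0,u,z,u_i,\mathcal{B}_i)$, and apply the identities of Lemmas~\ref{t:iden1} and~\ref{t:id0} together with the expansion $\underline{\chi_\uparrow}/B=4+\underline{\mathfrak{G}}/B$ from \eqref{e:chig} to extract the leading coefficients and sort the remainders. Writing $\mathcal{B}_i = B\mathsf{b}_\uparrow^{-2/3}(-\tau)^{2/(3A)}(1+\uf)^{2/3}\mathsf{b}_i$, the product rule produces four contributions: from $\del{\tau}(-\tau)^{2/(3A)}$, from $\del{\tau}\mathsf{b}_\uparrow^{-2/3}$, from $\del{\tau}(1+\uf)^{2/3}$, and from $\del{\tau}\mathsf{b}_i$. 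The first of these is immediate: it yields $-\frac{2}{3A(-\tau)}\mathcal{B}_i=\frac{1}{A\tau}\cdot\frac{2}{3}\mathcal{B}_i$, providing the first of the two leading $\frac{2}{3}$ coefficients in the statement.

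Next, I would treat the second and third contributions using Lemma~\ref{t:gb2}.\ref{l:2.3} and Lemma~\ref{t:gb2}.\ref{l:2.5} together with the identities \eqref{e:iden3} and \eqref{e:iden2}; after simplification they produce additional $\mathcal{B}_i$-terms carrying the factors $\underline{\chi_\uparrow}^{1/2}/(B^{1/2}\uf^{1/2})$ and $\underline{\chi_\uparrow}/B$, respectively. For the fourth contribution I substitute \eqref{e:bzeq1}, convert $\del{\zeta^i}\underline{\varrho}$ into $\underline{\varrho_i}+\mathsf{b}_i\underline{\varrho_0}$ via \eqref{e:dzrho0}, and translate every appearance of $\underline{\varrho_i},\underline{\varrho_0},\underline{\varrho},\mathsf{b},\mathsf{b}_i$ back into $(u_0,u,z,u_i,\mathcal{B}_i)$ via \eqref{e:v1}--\eqref{e:v5} and Lemma~\ref{t:gb2}.\ref{l:2.6}. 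The $\mathsf{b}_i$ piece in \eqref{e:bzeq1} and the $\mathsf{b}_i\underline{\varrho_0}$ part of $\del{\zeta^i}\underline{\varrho}$ contribute further $\mathcal{B}_i$-terms, while the $\underline{\varrho_i}=(1+\uf)u_i$ part produces the $u_i$-contribution.

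Evaluating all four contributions at $u_0=u=z=u_i=0$ and using \eqref{e:chig}, the constant-in-variables coefficient of $\mathcal{B}_i$ sums to $\frac{2}{3A\tau}-\frac{8}{3A\tau}+\frac{12}{3A\tau}-\frac{4}{3A\tau}=\frac{2}{3A\tau}$, confirming the $\frac{2}{3}$ coefficient in front of $\mathcal{B}_i$; similarly the two $u_i$-pieces from \eqref{e:bzeq1} combine as $\bigl(-1+\tfrac{1}{3}\bigr)\cdot\frac{1}{A(-\tau)}u_i=\frac{1}{A\tau}\cdot\frac{2}{3}u_i$, giving the $\frac{2}{3}$ in front of $u_i$. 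Taylor-expanding the remaining analytic factors $(1+u)^{-\alpha}$, $(1+z)^{\beta}$, $(1+\tfrac{\uf}{1+\uf}u)^{\gamma}$, $(1+u_0)^{\delta}$ about zero then supplies the remainders $\mathscr{Z}_{42}(\tau;u,z)u_i/(A\tau)$ and $\mathscr{Z}_{44}(\tau;u_0,u,z)\mathcal{B}_i/(A\tau)$, while the leftover pieces carrying $\underline{\chi_\uparrow}^{1/2}/(B^{1/2}\uf^{1/2})$---equivalently the $\uf^{-1/2}$ decay from \eqref{e:iden3}---and linear in $(u_i,\mathcal{B}_k)$ assemble into $\mathfrak{F}_{\mathcal{B}_i}=\frac{1}{\tau\uf^{1/2}}\mathscr{S}_{i;42}(\tau,u_0,u,z;u_i,\mathcal{B}_k)$.

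The hard part is bookkeeping rather than conceptual difficulty: each of the four contributions spawns several $\uf$-dependent products such as $(1+\uf)^{\alpha}/\uf^{\beta}$ and $\mathsf{b}^{\gamma}/\mathsf{b}_\uparrow^{\gamma}=(1+z)^{3\gamma}$, and one must systematically apply Lemma~\ref{t:iden1} to convert all dimensionally singular factors into $\underline{\chi_\uparrow}$-multiples, then separate the leading $4$ from the $\underline{\mathfrak{G}}/B$ via \eqref{e:chig} before classifying each residual as a $\mathscr{Z}$ (analytic, vanishing at the relevant origin of variables) or as an $\mathfrak{F}$ (carrying a genuine $\uf^{-1/2}$ decay and vanishing to first order in $(u_i,\mathcal{B}_k)$). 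Once this sorting is performed consistently---exactly as in the proofs of Lemmas~\ref{t:Seq1}--\ref{t:Seq3}---the claimed expression falls out.
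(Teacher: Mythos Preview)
Your proposal is correct and follows essentially the same route as the paper: differentiate the defining relation for $\mathcal{B}_i$, substitute the evolution equations for $\mathsf{b}_\uparrow$, $\uf$, and $\mathsf{b}_i$ (with $\del{\zeta^i}\underline{\varrho}=\underline{\varrho_i}+\mathsf{b}_i\underline{\varrho_0}$), rewrite in the variables \eqref{e:v1}--\eqref{e:v5}, and use Lemmas~\ref{t:iden1}, \ref{t:id0} and the expansion \eqref{e:chig} to extract the leading $\tfrac{2}{3}$ coefficients and sort the remainders. One small clarification: the four summands $\tfrac{2}{3},-\tfrac{8}{3},+\tfrac{12}{3},-\tfrac{4}{3}$ do not align one-to-one with your four product-rule contributions---the $\mathsf{b}_\uparrow^{-2/3}$ contribution carries only a $\uf^{-1/2}$ factor (hence lands in $\mathfrak{F}_{\mathcal{B}_i}$), while the $\del{\tau}\mathsf{b}_i$ contribution produces \emph{two} leading $\mathcal{B}_i$ pieces (the $+\tfrac{12}{3}$ and $-\tfrac{4}{3}$); your total is nonetheless correct. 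A point you should make explicit when carrying out the sorting: the three $\underline{\chi_\uparrow}/B$ prefactors multiplying $\mathcal{B}_i$ carry coefficients $-\tfrac{2}{3}$, $+1$, $-\tfrac{1}{3}$, so the $\underline{\mathfrak{G}}/B$ residuals from \eqref{e:chig} cancel exactly---this is why the stated $\mathfrak{F}_{\mathcal{B}_i}$ contains only the $\tfrac{1}{\tau\uf^{1/2}}$ piece and no separate $\underline{\mathfrak{G}}/\tau$ term, unlike in Lemmas~\ref{t:Seq1}--\ref{t:Seq3}.
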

\begin{proof}
Starting with the equation \eqref{e:bzeq1} from Lemma \ref{t:gb1}.\ref{l:1.3}, and utilizing \eqref{e:dzrho0} (i.e., $\del{\zeta^i} \underline{\varrho} =	\underline{\varrho_i} +\mathsf{b}_i \underline{\varrho_0} $), we obtain 
		\begin{equation}\label{e:dtbz1}
		\del{\tau} \mathsf{b}_{i}	
		=   \frac{2  \mathsf{b}_i \left(\underline{\varrho } +1\right)^{\frac{1}{3}}}{3 A B (-\tau )^{\frac{2}{3 A}+1} \mathsf{b}^{\frac{1}{3}} \underline{\varrho } }       -\frac{  \mathsf{b}^{\frac{2}{3}} (\underline{\varrho_i} +\mathsf{b}_i \underline{\varrho_0} ) \left(\underline{\varrho } +1\right)^{\frac{1}{3}}}{A B  (-\tau )^{\frac{2}{3 A}+1} \underline{\varrho }^2}   +\frac{(-\tau )^{-\frac{2}{3 A}-1} \mathsf{b}^{\frac{2}{3}} (\underline{\varrho_i} +\mathsf{b}_i \underline{\varrho_0} ) }{3 A B  \underline{\varrho }  \left(\underline{\varrho } +1\right)^{\frac{2}{3}}}   . 
	\end{equation} 
First, we calculate the relationship between  $\del{\tau}\mathcal{B}_i$ and $	\del{\tau} \mathsf{b}_{i}	$.  By differentiating \eqref{e:v5} with respect to $\tau$ and applying \eqref{e:id3}, we derive the first equality in the following calculations
\begin{align}\label{e:dtbz2}
	&	\del{\tau}\mathcal{B}_i=  \del{\tau }\biggl(\frac{\mathsf{b}_{\uparrow}^{-\frac{2}{3}}(1+\uf)^{\frac{2}{3}}}{  B^{-1}  (-\tau)^{-\frac{2}{3A}}  }  \biggr) \mathsf{b}_i +\frac{(1+\uf )^2}{\mathsf{b}_\uparrow^2  \underline{f_0} } \del{\tau } \mathsf{b}_i  \notag  \\
		= & -\frac{2 B (-\tau )^{\frac{2}{3 A}} \left(\underline{f}+1\right)^{2/3} \del{\tau} \mathsf{b}_\uparrow }{3 \mathsf{b}_\uparrow^{5/3}}   \mathsf{b}_i +\frac{2 B (-\tau )^{\frac{2}{3 A}} \del{\tau} \underline{f} }{3 \mathsf{b}_\uparrow^{2/3} (\underline{f} +1)^{\frac{1}{3}}}   \mathsf{b}_i  -\frac{2 B (-\tau )^{\frac{2}{3 A}-1} \left(\underline{f} +1\right)^{2/3}}{3 A \mathsf{b}_\uparrow^{2/3}}   \mathsf{b}_i  +\frac{(1+\uf )^2}{\mathsf{b}_\uparrow^2  \underline{f_0} } \del{\tau } \mathsf{b}_i  .   
\end{align} 
In terms of  \eqref{e:v1}--\eqref{e:v4},  and  replacing $\del{\tau}\mathsf{b}_\uparrow$ and $\del{\tau}\uf$ with Lemmas \ref{t:gb2}.\ref{l:2.3} and \ref{t:gb2}.\ref{l:2.5}, respectively, and substituting $\del{\tau}\mathsf{b}_i$ with \eqref{e:dtbz1}, we
apply \eqref{e:id1} from  Lemma \ref{t:id0}. After  lengthy but direct calculations,  \eqref{e:dtbz2} becomes   
\begin{align*}
	\del{\tau}\mathcal{B}_i
	= & -\frac{2     \left(\underline{f}+1\right)  }{3 A  \underline{f}  (-\tau)  \mathsf{b}_\uparrow }      \mathsf{b}_i +\frac{2  \underline{f_0}    }{3   A   \uf   (-\tau) }    	  \mathsf{b}_i -\frac{2 B  \left(\underline{f} +1\right)^{\frac{2}{3}  }}{3 A \mathsf{b}_\uparrow^{\frac{2}{3} } (-\tau )^{-\frac{2}{3 A}+1} }   \mathsf{b}_i  + \frac{2}{3}  \frac{(1+\uf )^2}{\mathsf{b}_\uparrow^2  \underline{f_0} }   
	\frac{   \mathsf{b}_\uparrow^{-\frac{1}{3}}   (1+\uf)^{\frac{1}{3}}   }{A B \uf  (-\tau)^{\frac{2}{3A}+1}}  \frac{  (1+\frac{\uf}{1+\uf} u)^{\frac{1}{3}}  }{ (1+z) (1+ u)   }  
	\mathsf{b}_i  \notag  \\
	& \hspace{-1cm} - \frac{(1+\uf )^3}{\mathsf{b}_\uparrow^2  \underline{f_0} }
	\frac{   \mathsf{b}_\uparrow^{\frac{2}{3}}   (1+\uf)^{\frac{1}{3}}   }{A B \uf^{2} (-\tau)^{\frac{2}{3A}+1}}  \frac{\bigl(   1+ z \bigr)^{2} (1+\frac{\uf}{1+\uf} u)^{\frac{1}{3} }  }{ (1+ u)^{2}  }    u_i      - \frac{(1+\uf )^2}{\mathsf{b}_\uparrow^2    } 
	\frac{   \mathsf{b}_\uparrow^{\frac{2}{3}}   (1+\uf)^{\frac{1}{3}}   }{A B \uf^{2} (-\tau)^{\frac{2}{3A}+1}}  \frac{\bigl(   1+  z \bigr)^{2} (1+\frac{\uf}{1+\uf} u)^{\frac{1}{3} } 	  (1 +  u_0 )  }{ (1+ u)^{2}  }  
\mathsf{b}_i  \notag   \\
	&  \hspace{-1cm} +  \frac{1}{3} \frac{(1+\uf )^3 }{\mathsf{b}_\uparrow^2  \underline{f_0} }  
	\frac{   \mathsf{b}_\uparrow^{\frac{2}{3}}   (1+\uf)^{-\frac{2}{3}}   }{A B \uf  (-\tau)^{\frac{2}{3A}+1}}  \frac{\bigl(   1+ z \bigr)^{2}   }{ (1+ u)   \bigl(1+\frac{\uf}{1+\uf} u\bigr)^{\frac{2}{3}}}  
	  u_i       +  \frac{1}{3} \frac{(1+\uf )^2}{\mathsf{b}_\uparrow^2   }    \frac{   \mathsf{b}_\uparrow^{\frac{2}{3}}   (1+\uf)^{-\frac{2}{3} }   }{A B \uf  (-\tau)^{\frac{2}{3A}+1}}  \frac{\bigl(   1+  z \bigr)^{2} (1 +  u_0 ) }{ (1+\frac{\uf}{1+\uf} u)^{ \frac{2}{3}  }  (1+ u)   }      \mathsf{b}_i   .
\end{align*} 
By appropriately substituting  $\mathsf{b}_i$ using \eqref{e:v5} and \eqref{e:id2} as follows, we obtain
\begin{align*}
	\del{\tau}\mathcal{B}_i
	= & -\frac{2     }{3 A    (-\tau)   \uf^{\frac{1}{2}}}      \biggl(\frac{\underline{\chi_\uparrow} }{B  }\biggr)^{\frac{1}{2}}    \mathcal{B}_i +\frac{2  \underline{f_0}    }{3   A   \uf   (-\tau) }    	  \frac{\underline{\chi_\uparrow} }{B  } \frac{ \uf   }{\ufo  }\mathcal{B}_i -\frac{2 B \left(\underline{f} +1\right)^{\frac{2}{3}  }}{3 A \mathsf{b}_\uparrow^{\frac{2}{3} }(-\tau )^{-\frac{2}{3 A}+1}  }   \frac{  B^{-1}  (-\tau)^{-\frac{2}{3A}}  }  {(\mathsf{b}_{\uparrow} )^{-\frac{2}{3}}(1+\uf )^{\frac{2}{3}}}	\mathcal{B}_i \notag  \\
	& + \frac{2}{3}  \frac{(1+\uf )^2}{\mathsf{b}_\uparrow^2  \underline{f_0} }   
	\frac{   \mathsf{b}_\uparrow^{-\frac{1}{3}}   (1+\uf)^{\frac{1}{3}}   }{A B \uf  (-\tau)^{\frac{2}{3A}+1}}  \frac{\bigl(   1+  z \bigr)^{-1} (1+\frac{\uf}{1+\uf} u)^{\frac{1}{3}}  }{ (1+ u)   }  
		\frac{\mathsf{b}_\uparrow^2  \underline{f_0} }{(1+\uf )^2}	 \mathcal{B}_i  \notag  \\
	&  - \frac{(1+\uf )^3}{\mathsf{b}_\uparrow^2  \underline{f_0} }
	\frac{   \mathsf{b}_\uparrow^{\frac{2}{3}}   (1+\uf)^{\frac{1}{3}}   }{A B \uf^{2} (-\tau)^{\frac{2}{3A}+1}}  \frac{\bigl(   1+ z \bigr)^{2} (1+\frac{\uf}{1+\uf} u)^{\frac{1}{3} }  }{ (1+ u)^{2}  }    u_i   \notag   \\
	&  - \frac{(1+\uf )^2}{\mathsf{b}_\uparrow^2    } 
	\frac{   \mathsf{b}_\uparrow^{\frac{2}{3}}   (1+\uf)^{\frac{1}{3}}   }{A B \uf^{2} (-\tau)^{\frac{2}{3A}+1}}  \frac{\bigl(   1+  z \bigr)^{2} (1+\frac{\uf}{1+\uf} u)^{\frac{1}{3} } 	(1 +  u_0 )  }{ (1+ u)^{2}  }  
	\frac{\mathsf{b}_\uparrow^2 \underline{f_0} }{(1+\uf )^2}	 \mathcal{B}_i  \notag   \\
	&   +  \frac{1}{3} \frac{(1+\uf )^3 }{\mathsf{b}_\uparrow^2  \underline{f_0} }  
	\frac{   \mathsf{b}_\uparrow^{\frac{2}{3}}   (1+\uf)^{-\frac{2}{3}}   }{A B \uf  (-\tau)^{\frac{2}{3A}+1}}  \frac{\bigl(   1+ z \bigr)^{2} (1+\frac{\uf}{1+\uf} u)^{-\frac{2}{3}}  }{ (1+ u)   }  
	u_i    \notag   \\
	&   +  \frac{1}{3} \frac{(1+\uf )^2}{\mathsf{b}_\uparrow^2   }    \frac{   \mathsf{b}_\uparrow^{\frac{2}{3}}   (1+\uf)^{-\frac{2}{3} }   }{A B \uf  (-\tau)^{\frac{2}{3A}+1}}  \frac{\bigl(   1+  z \bigr)^{2} (1 +  u_0 ) }{ (1+\frac{\uf}{1+\uf} u)^{ \frac{2}{3}  }  (1+ u)   }      	\frac{\mathsf{b}_\uparrow^2 \underline{f_0} }{(1+\uf )^2}	 \mathcal{B}_i    .
\end{align*}
Then applying Lemma \ref{t:iden1} and utilizing \eqref{e:chig}, we perform lengthy but direct calculations and expansions to arrive at 
\begin{align*}
	\del{\tau}\mathcal{B}_i
	= &  \frac{2}{3}  \frac{ 1 }{A  \tau   }    u_i       + \frac{2}{3} \frac{1 }{ A  \tau   }    	\mathcal{B}_i  +\frac{2     }{3 A     \tau   \uf^{\frac{1}{2}}}      \biggl(\frac{\underline{\chi_\uparrow} }{B  }\biggr)^{\frac{1}{2}}   \Biggl(1- \frac{  \bigl(1+\frac{\uf}{1+\uf} u\bigr)^{\frac{1}{3}}  }{ ( 1+  z) (1+ u)   }  
	\Biggr) \mathcal{B}_i    +    \frac{  \underline{\chi_\uparrow}    }{A B   \tau \uf }   
	\mathcal{B}_i  \notag  \\
	&  + \biggl(   \frac{5}{3}   +\frac{1}{\uf}\biggr)  \frac{ 1 }{A  \tau   \uf }    u_i    + \biggl(1+\frac{1}{\uf}\biggr)^2  \frac{ 1 }{A  \tau   }  \biggl( \frac{\bigl(   1+ z \bigr)^{2} \bigl(1+\frac{\uf}{1+\uf} u\bigr)^{\frac{1}{3} }  }{ (1+ u)^{2}  }   -1  \biggr) u_i   \notag   \\
	&  + \biggl(1+ \frac{1 }{\uf}\biggr) \frac{  \underline{\chi_\uparrow}    }{A B   \tau  }  \Biggl( \frac{ (   1+  z  )^{2} \bigl(1+\frac{\uf}{1+\uf} u \bigr)^{\frac{1}{3} } 	(1 +  u_0 )   }{ (1+ u)^{2}  }  -1 \Biggr)
	\mathcal{B}_i  \notag   \\
	&   -  \frac{1}{3} \biggl(1+ \frac{1 }{\uf}\biggr)  \frac{  1  }{A \tau  }  \biggl( \frac{ (   1+ z  )^{2}   }{ (1+ u) \bigl(1+\frac{\uf}{1+\uf} u\bigr)^{\frac{2}{3}}   }  -1 \biggr)
	u_i        -  \frac{1}{3}    	  \frac{  \underline{\chi_\uparrow}   }{A B \tau   }  \biggl( \frac{\bigl(   1+  z \bigr)^{2} (1 +  u_0 ) }{ (1+\frac{\uf}{1+\uf} u)^{ \frac{2}{3}  }  (1+ u)   }  -1 \biggr)    \mathcal{B}_i  . 
\end{align*}
This equation verifies this lemma and we thereby finish the proof.   
\end{proof}

\begin{lemma}\label{e:Seq6}
		In terms of variables \eqref{e:v1}--\eqref{e:v5}, there is an equation,  
	\begin{equation*}
		\del{\tau} z+	\frac{\mathtt{d}^i}{A\tau} \del{\zeta^i}  z=   \mathfrak{F}_z  , 
	\end{equation*}
where $\mathtt{d}^i$ are any given constants and
\begin{align*}
	\mathfrak{F}_z= \frac{1}{\tau \uf^{\frac{1}{2}}} \mathscr{S}_{52}(\tau; u,  \mathcal{B}_i, z) :=  - \frac{1}{3} 	\frac{1 }{A \tau    \uf^{\frac{1}{2}}    }    \biggl( \frac{\underline{\chi_\uparrow} }{B}\biggr)^{\frac{1}{2}}   \Biggl(    \frac{  (1+\frac{\uf}{1+\uf} u)^{\frac{1}{3}}  }{ (1+ u)   }    -  1 - z -  \frac{  \uf }{(1+\uf) } \frac{ \mathtt{d}^i     	\mathcal{B}_i }{(1+z)^2} \Biggr)  . 
\end{align*}    
\end{lemma}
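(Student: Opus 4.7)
The plan is to exploit the purely algebraic definition of $z$ in \eqref{e:v4} and reduce the derivation to the ODEs for $\mathsf{b}$ and $\mathsf{b}_\uparrow$ together with the identities already assembled in Lemma \ref{t:id0} and Lemma \ref{t:iden1}. First, starting from $\mathsf{b}=(1+z)^3\mathsf{b}_\uparrow$ and differentiating in $\tau$, I would write
\begin{equation*}
3(1+z)^2\mathsf{b}_\uparrow\,\del{\tau}z=\del{\tau}\mathsf{b}-(1+z)^3\del{\tau}\mathsf{b}_\uparrow,
\end{equation*}
then substitute $\del{\tau}\mathsf{b}$ from \eqref{e:tmeqi1} (with $\underline{\varrho}=\uf(1+u)$ from \eqref{e:v1}) and $\del{\tau}\mathsf{b}_\uparrow$ from \eqref{e:dtbup1}. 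Applying identity \eqref{e:id1} from Lemma \ref{t:id0} with $(\ell_0,\ell_1,\ell_2,\ell_3)=(1,2,\tfrac{1}{3},2)$ collapses both pieces onto a common prefactor and yields
\begin{equation*}
\del{\tau}z=\frac{\mathsf{b}_\uparrow^{-\frac{1}{3}}(1+\uf)^{\frac{1}{3}}}{3AB\,\uf\,(-\tau)^{\frac{2}{3A}+1}}\left[\frac{\bigl(1+\tfrac{\uf}{1+\uf}u\bigr)^{\frac{1}{3}}}{1+u}-(1+z)\right].
\end{equation*}
Invoking \eqref{e:iden3} from Lemma \ref{t:iden1} rewrites the prefactor as $-\tfrac{1}{3}\tfrac{1}{A\tau\uf^{1/2}}(\underline{\chi_\uparrow}/B)^{1/2}$, which is already the common factor appearing in $\mathfrak{F}_z$ and produces the first three terms $\tfrac{(1+\frac{\uf}{1+\uf}u)^{1/3}}{1+u}-1-z$ inside its bracket.

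Second, to accommodate the artificial advection $\tfrac{\mathtt{d}^i}{A\tau}\del{\zeta^i}z$ with arbitrary constants $\mathtt{d}^i$, I would differentiate $z=(\mathsf{b}/\mathsf{b}_\uparrow)^{1/3}-1$ directly in $\zeta^i$, obtaining
\begin{equation*}
\del{\zeta^i}z=\frac{\mathsf{b}_i}{3\mathsf{b}_\uparrow(1+z)^2}.
\end{equation*}
Substituting $\mathsf{b}_i$ from \eqref{e:id2} and simplifying the factor $\mathsf{b}_\uparrow\ufo/(1+\uf)$ by \eqref{e:keyid3} of Lemma \ref{t:iden1} gives exactly
\begin{equation*}
\frac{\mathtt{d}^i}{A\tau}\del{\zeta^i}z=\frac{1}{3A\tau\uf^{\frac{1}{2}}}\left(\frac{\underline{\chi_\uparrow}}{B}\right)^{\frac{1}{2}}\frac{\uf}{1+\uf}\frac{\mathtt{d}^i\mathcal{B}_i}{(1+z)^2},
\end{equation*}
and moving this term to the right-hand side supplies the remaining $-\tfrac{\uf}{1+\uf}\tfrac{\mathtt{d}^i\mathcal{B}_i}{(1+z)^2}$ piece inside the bracket defining $\mathscr{S}_{52}$.

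The derivation is entirely algebraic, involves no estimate, and all the nontrivial content has been pre-packaged in the earlier identity lemmas. The only point demanding care is consistency of the overall prefactor: I have to verify that \eqref{e:iden3} (used on the $\del{\tau}z$ side) and \eqref{e:keyid3} (used on the $\del{\zeta^i}z$ side) produce the \emph{same} common coefficient $-\tfrac{1}{3A\tau\uf^{1/2}}(\underline{\chi_\uparrow}/B)^{1/2}$, so that the two contributions combine into a single bracket. This is the step I would double-check, but it is dictated by the definition \eqref{e:Gdef2} of $\underline{\chi_\uparrow}$ together with Lemma \ref{t:gb2}.\ref{l:2.6}, so no obstacle arises and the proof concludes by identifying the resulting expression with $\mathfrak{F}_z$.
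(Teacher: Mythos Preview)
Your proposal is correct and follows essentially the same route as the paper's proof: both compute $\del{\tau}z$ by differentiating $\mathsf{b}=(1+z)^3\mathsf{b}_\uparrow$, insert the ODEs \eqref{e:tmeqi1} and \eqref{e:dtbup1}, apply \eqref{e:id1} and \eqref{e:iden3} to extract the common prefactor, and then handle $\del{\zeta^i}z$ by direct differentiation followed by the identities of Lemma~\ref{t:id0} and Lemma~\ref{t:iden1}. The only cosmetic difference is that the paper rewrites $\mathsf{b}_\uparrow^{-1/3}$ via \eqref{e:fb1} (a consequence of \eqref{e:iden3}) before substituting \eqref{e:v5} for $\mathsf{b}_i$, whereas you go through \eqref{e:id2} and \eqref{e:keyid3}; these are interchangeable manipulations of the same identities and lead to the identical expression for $\tfrac{\mathtt{d}^i}{A\tau}\del{\zeta^i}z$.
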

\begin{proof}
On one hand,  using \eqref{e:v4} 
and differentiating $\bigl( \mathsf{b} /\mathsf{b}_\uparrow  \bigr)^{\frac{1}{3}}$ with respect to $\tau$ yield,  
\begin{equation}\label{e:dtbb1}
	\del{\tau }	\biggl(\frac{\mathsf{b} }{\mathsf{b}_\uparrow }\biggr)^{\frac{1}{3}}  =   \del{\tau } z  .   
\end{equation}
On the other hand, note
\begin{equation}\label{e:dtbb2}
	\del{\tau }	\biggl(\frac{\mathsf{b} }{\mathsf{b}_\uparrow }\biggr)^{\frac{1}{3}}   
	= \frac{1}{3} \biggl(\frac{\mathsf{b} }{\mathsf{b}_\uparrow }\biggr)^{-\frac{2}{3}}  		\biggl(\frac{\del{\tau }\mathsf{b} }{\mathsf{b}_\uparrow }- \frac{\mathsf{b}}{\mathsf{b}_\uparrow^2}\del{\tau} \mathsf{b}_\uparrow\biggr)  . 
\end{equation} 
By utilizing  \eqref{e:dtbup1} from Lemma \ref{t:gb2}.\ref{l:2.3} and \eqref{e:tmeqi1} from Lemma \ref{t:gb1}.\ref{l:1.2}, and applying  \eqref{e:v4} once more,  \eqref{e:dtbb2} becomes 
	\begin{equation}\label{e:dtbb3}
		\del{\tau }	\biggl(\frac{\mathsf{b} }{\mathsf{b}_\uparrow }\biggr)^{\frac{1}{3}} 
 	=   \frac{1}{3} \biggl(\frac{\mathsf{b} }{\mathsf{b}_\uparrow }\biggr)^{\frac{1}{3}}  	  \biggl(  \frac{\mathsf{b}^{-\frac{1}{3}}   ( 1 + \underline{\varrho} )^{\frac{1}{3}}}{A B  \underline{\varrho}  \left(-\tau\right){}^{\frac{2}{3 A}+1}}   - \frac{\mathsf{b}_\uparrow^{-\frac{1}{3}}  (1+\underline{f}  )^{\frac{1}{3}}}{AB  \underline{f}  (-\tau)^{\frac{2}{3A}+1}}\biggr)     
		=    \frac{1}{3} \bigl(1+ z \bigr)	  \biggl(  \frac{\mathsf{b}^{-\frac{1}{3}}   ( 1 + \underline{\varrho} )^{\frac{1}{3}}}{A B  \underline{\varrho}  \left(-\tau\right){}^{\frac{2}{3 A}+1}}   - \frac{\mathsf{b}_\uparrow^{-\frac{1}{3}}  (1+\underline{f}  )^{\frac{1}{3}}}{AB  \underline{f}  (-\tau)^{\frac{2}{3A}+1}}\biggr)   . 
	\end{equation}
Inserting \eqref{e:v1} into \eqref{e:dtbb3} and applying \eqref{e:id1} from Lemma \ref{t:id0} imply
\begin{align}\label{e:dtbb5}
		\del{\tau }	\biggl(\frac{\mathsf{b} }{\mathsf{b}_\uparrow }\biggr)^{\frac{1}{3}}   
	= &   \frac{1}{3} \bigl(1+z \bigr)	 \frac{   \mathsf{b}_\uparrow^{-\frac{1}{3}}   (1+\uf)^{\frac{1}{3}}   }{A B \uf  (-\tau)^{\frac{2}{3A}+1}}   \Biggl(    \frac{\bigl(   1+ z \bigr)^{-1} (1+\frac{\uf}{1+\uf} u)^{\frac{1}{3}}  }{ (1+ u)   }    - 1 \Biggr)  .
\end{align}
Combining \eqref{e:dtbb1} and \eqref{e:dtbb5} together, with the help of Lemma \ref{t:iden1}, we arrive at
\begin{equation}\label{e:dtz1}
	\del{\tau} z =     - \frac{1}{3} 	\frac{1 }{A \tau    \uf^{\frac{1}{2}}    }    \biggl( \frac{\underline{\chi_\uparrow} }{B}\biggr)^{\frac{1}{2}}   \Biggl(    \frac{  (1+\frac{\uf}{1+\uf} u)^{\frac{1}{3}}  }{ (1+ u)   }    -  1 - z  \Biggr) . 
\end{equation}

Next let us find an expression for $\del{\zeta^i} z$. 
First by \eqref{e:iden3}, note
\begin{equation}\label{e:fb1}
 \mathsf{b}_{\uparrow}^{-\frac{1}{3}}  = \frac{\underline{\chi_\uparrow}^{\frac{1}{2}}}{B^{\frac{1}{2}}	\uf^{\frac{1}{2}}}  \frac{B\uf(-\tau)^{\frac{2}{3A}}}{(1+\uf)^{\frac{1}{3}}}   . 
\end{equation}
By differentiating \eqref{e:v4} with respect to $\zeta^i$ and substituting $\mathsf{b}$ and  $\mathsf{b}_i$ using \eqref{e:v4} and \eqref{e:v5}, along with \eqref{e:fb1}, we obtain
\begin{equation}\label{e:dzz1}
	\del{\zeta^i}  z=    \frac{1}{3}  \mathsf{b}_\uparrow^{-\frac{1}{3}}  \mathsf{b}^{-\frac{2}{3}}    \mathsf{b}_i = \frac{1}{3}  \mathsf{b}_\uparrow^{-\frac{1}{3}} (1+z)^{-2} \mathsf{b}_\uparrow^{-\frac{2}{3}}    \frac{  B^{-1}  (-\tau)^{-\frac{2}{3A}}  }  {\mathsf{b}_{\uparrow}^{-\frac{2}{3}}(1+\uf )^{\frac{2}{3}}}	\mathcal{B}_i  
	=   \frac{1}{3}  \biggl(\frac{\underline{\chi_\uparrow}}{B}\biggr)^{\frac{1}{2}} \frac{ 1}{  \uf^{\frac{1}{2}}}  \frac{ \uf }{(1+\uf) }      	\frac{\mathcal{B}_i }{  (1+  z  )^{2}  }   . 
\end{equation}
Next, introduce an arbitrary vector $q^i$ to be determined.  With this,  \eqref{e:dzz1} becomes
\begin{equation}\label{e:dzz2}
	\frac{\mathtt{d}^i}{A\tau} \del{\zeta^i}  z	=  	\frac{1}{A\tau  \uf^{\frac{1}{2}}} \frac{1}{3}  \biggl(\frac{\underline{\chi_\uparrow}}{B}\biggr)^{\frac{1}{2}}    \frac{ \uf }{(1+\uf) } \frac{ \mathtt{d}^i  	\mathcal{B}_i }{(1+z)^2}   . 
\end{equation}

Combine \eqref{e:dtz1} and \eqref{e:dzz2}, we obtain 
\begin{equation*}
	\del{\tau} z+	\frac{\mathtt{d}^i}{A\tau} \del{\zeta^i}  z=     - \frac{1}{3} 	\frac{1 }{A \tau    \uf^{\frac{1}{2}}    }    \biggl( \frac{\underline{\chi_\uparrow} }{B}\biggr)^{\frac{1}{2}}   \Biggl(    \frac{  (1+\frac{\uf}{1+\uf} u)^{\frac{1}{3}}  }{ (1+ u)   }    -  1 - z -  \frac{  \uf }{(1+\uf) } \frac{ \mathtt{d}^i     	\mathcal{B}_i }{(1+z)^2} \Biggr)  . 
\end{equation*}
This is consistent with the lemma and completes the proof. 
\end{proof}

Gathering Lemma \ref{t:Seq1}--\ref{e:Seq6} together, we arrive at the following \textit{singular} system. 
\begin{lemma}\label{t:mainsys1}
	The main equation  \ref{Eq2} can be reformulated as a first order singular symmetric hyperbolic system as follows, 
	\begin{align}\label{e:mainsys1}
		\mathbf{A}^0\del{\tau}U+\frac{1}{A\tau} \mathbf{A}^i \del{\zeta^i} U =\frac{1}{A\tau} \mathbf{A}   U +\mathbf{F}  , 
	\end{align}
where $U:=\p{u_0,u_j,u,\mathcal{B}_l,z}^T$,  $\mathbf{F}=\p{\mathfrak{F}_{u_0}, \mathfrak{F}_{u_j},  \mathfrak{F}_{u}, \mathfrak{F}_{\mathcal{B}_i}, \mathfrak{F}_{z}  }^T$, 
\begin{align*}
	\mathbf{A}^0=\p{1 & \mathscr{R}^j  & 0 & 0 & 0 \\
	\mathscr{R}_k      &  (S+\mathscr{L}) \delta^j_k  & 0 & 0 & 0 \\
  0&0&2&0&0\\
0&0&0&\delta^l_s&0\\
0&0&0&0&1 } , \qquad
	\mathbf{A}^i=\p{0 & H^{ij} & 0 & 0 & 0 \\
	H^i_k   & 0 & 0 & 0 & 0\\
0 & 0 &0&0& 0 \\
0&0&0&0&0\\
0&0&0&0&\mathtt{d}^i }  , 
\end{align*} 
\begin{align*}
	\mathbf{A}=\p{-\frac{14  }{3    }+\mathscr{Z}_{11}&   -  4\ck q^j             + \mathscr{Z}^j_{12}  &8+\mathscr{Z}_{13}  & 0 & -8+\mathscr{Z}_{15}   \\
	0 &(4\ck+\mathscr{Z}_{22})\delta^j_k &0&(24   \ck + \mathscr{Z}_{24}) \delta^l_k & 0 \\
	- 8   +\mathscr{Z}_{31} & 0 & \frac{ 40 }{3 } +\mathscr{Z}_{33} & 0 & -16+\mathscr{Z}_{35} \\0&(\frac{2  }{3       }  +\mathscr{Z}_{42})\delta^j_{s} &0& (\frac{2}{3   }  +\mathscr{Z}_{44} ) \delta_{s}^l  & 0 \\ 0 & 0 &0 &0 &0}     .  
\end{align*}   
\end{lemma}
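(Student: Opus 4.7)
The plan is to assemble the five scalar/vector evolution equations already derived in Lemmas \ref{t:Seq1}--\ref{e:Seq6} into a single first-order matrix system, and then to read off the matrices $\mathbf{A}^0$, $\mathbf{A}^i$, $\mathbf{A}$ and the source $\mathbf{F}$ by direct comparison with \eqref{e:mainsys1}. Concretely, I would stack the unknowns in the order $U=(u_0, u_j, u, \mathcal{B}_l, z)^T$ and then write, one block-row at a time: the $u_0$-row from Lemma \ref{t:Seq1}, the $u_k$-row from Lemma \ref{t:Seq2}, the $u$-row from Lemma \ref{t:Seq3}, the $\mathcal{B}_i$-row from Lemma \ref{e:Seq5}, and the $z$-row from Lemma \ref{e:Seq6}. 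Each of these lemmas was written in a form that already isolates (a) the $\del{\tau}$ terms on the left, (b) the singular $\frac{1}{A\tau}\del{\zeta^i}$ transport-type terms, (c) the singular $\frac{1}{A\tau}$ algebraic terms in $U$, and (d) the regular remainder $\mathfrak{F}_\bullet$. Reading off these four groups row-by-row yields exactly the four matrices in the statement.

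The first substantive step is to verify the $\mathbf{A}^0$ block. The $u_0$-row contributes the first two entries $(1,\mathscr{R}^j)$ on the left-hand side of Lemma \ref{t:Seq1} (the two coefficients on $\del{\tau}u_0$ and $\del{\tau}u_j$), while the $u_k$-row of Lemma \ref{t:Seq2} contributes $(\mathscr{R}_k,(S+\mathscr{L})\delta^j_k)$ as the coefficients on $\del{\tau}u_0$ and $\del{\tau}u_i$. Symmetry of the off-diagonal block is then the identity $\mathscr{R}_k=\delta_{jk}\mathscr{R}^j$, which is just the definition given in Lemma \ref{t:Seq2}. The remaining diagonal entries $2$, $\delta^l_s$, $1$ are read straight off Lemmas \ref{t:Seq3}, \ref{e:Seq5}, \ref{e:Seq6}. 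For $\mathbf{A}^i$, the only nontrivial entries come from the $\del{\zeta^i}u_j$ term in the $u_0$-equation and the $\del{\zeta^i}u_0$ term in the $u_k$-equation; comparing the coefficients gives $H^{ij}$ and $H^i_k=\delta_{jk}H^{ij}$, which are symmetric in the block sense by Lemma \ref{t:coef1}. The final row of $\mathbf{A}^i$ is the scalar transport $\mathtt{d}^i$ from Lemma \ref{e:Seq6}, and all other entries are $0$ because no other equation contains $\zeta$-derivatives of the unknowns.

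Next, the singular matrix $\mathbf{A}$ and the source $\mathbf{F}$ are obtained by moving the $\frac{1}{A\tau}$-algebraic terms of Lemmas \ref{t:Seq1}--\ref{e:Seq6} to the right-hand side and gathering the residual $\mathfrak{F}_\bullet$ as the components of $\mathbf{F}$. Here one just has to match, for each row, the coefficients of $u_0, u_j, u, \mathcal{B}_l, z$ with the explicit expressions displayed in the corresponding lemma. For instance, the $u_0$-row gives the first row of $\mathbf{A}$ with entries $-\frac{14}{3}+\mathscr{Z}_{11}$, $-4\ck q^j+\mathscr{Z}^j_{12}$, $8+\mathscr{Z}_{13}$, $0$, $-8+\mathscr{Z}_{15}$, exactly as in Lemma \ref{t:Seq1}; the $\mathcal{B}_l$-column is zero for the $u_0$-row because Lemma \ref{t:Seq1} contains no $\mathcal{B}_l$ contribution in its singular algebraic part. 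The same bookkeeping for the other four rows reproduces the stated $\mathbf{A}$ verbatim.

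The main (and essentially only) point to watch is the symmetry of $\mathbf{A}^0$ and of each $\mathbf{A}^i$, which is what upgrades the assembled system from merely ``first-order'' to ``symmetric hyperbolic''. This symmetry is ensured by how the variables $(u_0,u_j)$ were paired and by the fact that the coefficient $\mathscr{R}^j$ of $\del{\tau}u_j$ in the $u_0$-equation is the same (after index lowering by $\delta_{jk}$) as the coefficient of $\del{\tau}u_0$ in the $u_k$-equation, and similarly $H^{ij}$ appears symmetrically in the principal $\zeta$-derivative part. Both facts are immediate from the definitions $\mathscr{R}_k=\delta_{jk}\mathscr{R}^j$ and $H^i_k=\delta_{jk}H^{ij}$ introduced in Lemmas \ref{t:coef1} and \ref{t:Seq2}. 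Once this check is recorded, the matrix identity \eqref{e:mainsys1} holds component-wise, and the lemma is proved.
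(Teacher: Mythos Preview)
Your proposal is correct and follows exactly the paper's approach: the paper's entire argument is the single sentence ``Gathering Lemma \ref{t:Seq1}--\ref{e:Seq6} together, we arrive at the following \textit{singular} system,'' and you have spelled out the bookkeeping (and the symmetry check for $\mathbf{A}^0$, $\mathbf{A}^i$) that this sentence leaves implicit.
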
 
\begin{remark}
	We note that the system \eqref{e:mainsys1}, as presented in Lemma \ref{t:mainsys1}, is not a Fuchsian system as defined in Appendix \ref{s:fuc} because it does not satisfy Condition \ref{c:5} from Appendix \ref{s:fuc}. Consequently, we cannot apply Theorem \ref{t:fuc} to this system to derive global results. To utilize Theorem \ref{t:fuc}, we need to address two modifications of the system, which will be discussed in the next section.
\end{remark}

\subsection{Domain of influence of the inhomogeneous data}\label{s:DoI}
We will use the domain of influence of the inhomogeneous data, also referred to as the \textit{inhomogeneous domain} in this article, in various coordinate systems. To facilitate this, we first calculate it in the  $(\tau,\zeta)$ coordinate system, which can be  transformed to subsequent coordinate systems such as $(\ttau,\txi)$ and $(\htau,\hat{\zeta})$. For simplicity, we set  $\ck=\frac{1}{4}$ and $\mathtt{d}^i=0$ based on our requirements. 

\begin{lemma}\label{t:char1}
	Suppose there is a classical solution $U$ to the system \eqref{e:mainsys1}  within some spacetime domain, if the initial data satisfies $\supp U(\zeta)|_{\tau=-1}=B_1(0)$, then the domain of influence of the initial support $B_1(0)$ is enclosed within or on the characteristic conoid $\underline{\mathcal{C}} $ which is given by a hypersurface
	\begin{equation*}
			|\zeta|= 1 +\frac{1}{A} \int_{-1}^{\tau} \frac{1}{-s} \sqrt{S} \frac{\underline{\chi_\uparrow}}{B}ds=1 -\frac{2}{A} \ln(-\tau)+\frac{1}{A} \int_{-1}^{\tau} \frac{\Xi(s)}{-s} ds ,
	\end{equation*}
	where 	  
	\begin{equation*}
		\Xi 
		:=   2\sqrt{\biggl(1+\frac{\underline{\mathfrak{G}}}{4B}\biggr)\biggl(1+\cm^2\frac{\underline{\mathfrak{G}}}{B} +\frac{1-4\cm^2}{\uf} \biggr)}-2  \geq   0   \AND \lim_{\tau\rightarrow 0}\Xi(\tau)=0 . 
	\end{equation*} 
	and the integral $\frac{1}{A} \int_{-1}^{\tau} \frac{\Xi(s)}{-s} ds$ is nonnegative and bounded for $\tau\in[-1,0)$. We denote the maximum value of this integral as $\theta:=\max_{\tau\in[-1,0]} \frac{1}{A} \int_{-1}^{\tau} \frac{\Xi(s)}{-s} ds$. 
\end{lemma}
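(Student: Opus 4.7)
The plan is to derive the characteristic conoid $\underline{\mathcal{C}}$ from the principal symbol of the symmetric hyperbolic system \eqref{e:mainsys1}, exploiting the fact that outside the domain of influence the solution must reduce to the homogeneous reference configuration, and then simplify the resulting integral using the identities of \S\ref{s:iden1} and Lemma \ref{t:coef1}.

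First, by finite propagation speed for symmetric hyperbolic systems (standard $L^2$ energy estimates on backward lens domains), the support of $U(\tau,\cdot)$ is contained in the domain of influence of $\supp U|_{\tau=-1}=B_1(0)$. Since the initial data of the wave equation \ref{Eq2} equal the homogeneous data $(\beta,\beta_0)$ outside $B_1(0)$, the definitions \eqref{e:v1}--\eqref{e:v5} force $u_0=u_i=u=z=0$, $\mathcal{B}_i=0$, and $\mathsf{b}=\mathsf{b}_\uparrow$ in the exterior. Thus the outermost characteristic surface is determined by the principal part of \eqref{e:mainsys1} evaluated at this background, which by Lemma \ref{t:coef1} gives $\mathscr{R}^j=0$, $\mathscr{L}=0$, and $H^{ij}=S(\underline{\chi_\uparrow}/B)\delta^{ij}$. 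With $\mathtt{d}^i=0$, the $(u,\mathcal{B}_i,z)$ block of $\mathbf{A}^i$ vanishes and contributes zero characteristic speed, so the outermost speed comes from the $(u_0,u_j)$ block.

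For a spherically symmetric conoid $|\zeta|=R(\tau)$, a Schur-complement computation of
\begin{equation*}
\det\left(\lambda\,\mathbf{A}^0+\frac{\xi_i}{A\tau}\mathbf{A}^i\right)=0
\end{equation*}
in the $(u_0,u_j)$ block yields $\lambda^{n-1}S^{n}\bigl(\lambda^2-|\xi|^2 S(\underline{\chi_\uparrow}/B)^2/(A\tau)^2\bigr)=0$, whose outgoing root $\lambda=|\xi|\sqrt{S}(\underline{\chi_\uparrow}/B)/(-A\tau)$ yields the radial speed
\begin{equation*}
\frac{d|\zeta|}{d\tau}=\frac{\sqrt{S}}{-A\tau}\,\frac{\underline{\chi_\uparrow}}{B}.
\end{equation*}
Integrating from $\tau=-1$ with $|\zeta|(-1)=1$ gives the first equality in the statement. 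Next, setting $\ck=1/4$ in \eqref{e:S1} and using $\underline{\chi_\uparrow}/B=4+\underline{\mathfrak{G}}/B$ from \eqref{e:chig}, a direct algebraic manipulation produces the factorization
\begin{equation*}
S\Bigl(\frac{\underline{\chi_\uparrow}}{B}\Bigr)^{2}=\Bigl(4+\frac{\underline{\mathfrak{G}}}{B}\Bigr)\Bigl(1+\frac{1-4\cm^2}{\uf}+\cm^2\frac{\underline{\mathfrak{G}}}{B}\Bigr),
\end{equation*}
so that $\sqrt{S}(\underline{\chi_\uparrow}/B)=2+\Xi$ with $\Xi$ exactly as in the statement. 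Splitting the integrand and evaluating $\int_{-1}^\tau 2/(-s)\,ds=-2\ln(-\tau)$ produces the second equality.

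Nonnegativity $\Xi\geq 0$ and the limit $\lim_{\tau\to 0}\Xi(\tau)=0$ reduce to controlling the two factors $(1+\underline{\mathfrak{G}}/(4B))$ and $(1+(1-4\cm^2)/\uf+\cm^2\underline{\mathfrak{G}}/B)$: both are at least $1$ and both tend to $1$ as $\tau\to 0$, using $\uf\to+\infty$ at the blowup together with $\mathfrak{G}\to 0$ and the sign information on $\underline{\mathfrak{G}}$ furnished by Proposition \ref{t:limG}, plus $1-4\cm^2\geq 0$ since $\cm^2\leq \ck=1/4$. The main obstacle is the last claim, that $\int_{-1}^\tau \Xi(s)/(-s)\,ds$ remains bounded up to $\tau=0$: the factor $1/(-s)$ is not integrable at $s=0$, so the argument requires quantitative polynomial decay rates of $\underline{\mathfrak{G}}(s)$ and $1/\uf(s)$ in $(-s)$ as $s\to 0$, extracted from the reference-ODE analysis collected in Appendix \ref{s:ODE0} and the expansion in Proposition \ref{t:limG}. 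These give $\Xi(s)=\mathrm{O}((-s)^\alpha)$ for some $\alpha>0$, which absorbs the singular factor and yields the finite constant $\theta$.
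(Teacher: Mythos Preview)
Your approach is essentially the paper's: evaluate the principal symbol at the background $U=0$, read off the outgoing radial speed $\sqrt{S}\,\underline{\chi_\uparrow}/(-AB\tau)$, integrate, factor as $2+\Xi$, and then control $\int\Xi/(-s)\,ds$ via decay of $\underline{\mathfrak{G}}$ and $1/\uf$. Two citation issues are worth flagging. First, the sign $\underline{\mathfrak{G}}\geq 0$ (needed for $\Xi\geq 0$) does \emph{not} come from Proposition~\ref{t:limG}, which only gives $\chi_\uparrow>0$ and $\mathfrak{G}\to 0$; it requires Lemma~\ref{t:Thpst}, and that in turn rests on Assumption~\ref{Asp1} on the initial data. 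Without that assumption $\mathfrak{G}$ can be negative and the inequality $\Xi\geq 0$ fails. Second, for the boundedness of the integral the paper is more explicit than your generic pointer to Appendix~\ref{s:ODE0}: it splits $\Xi\leq C_1\underline{\mathfrak{G}}+C_2/\uf$, then uses Lemma~\ref{t:Gest2} for $\underline{\mathfrak{G}}\lesssim(-s)^{1/2}$ and Corollary~\ref{s:gf1/2} to get $1/((-s)\uf)=\bigl(1/((-s)\uf^{1/2})\bigr)^{2}(-s)\lesssim(-s)$, so both pieces are integrable against $1/(-s)$. Your claim $\Xi=\mathrm{O}((-s)^\alpha)$ is correct with $\alpha=1/2$ once you cite these two results.
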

\begin{proof}
	On the boundary of the domain of influence, $U=0$ because $U$ is the classical solution and the boundary is where the perturbation $U$ ends.  Consequently,  $\mathbf{A}^0=\mathbf{A}^0|_{U=0}$ and $\mathbf{A}^i=\mathbf{A}^i|_{U=0}$ can be simplified along this boundary. By applying standard normalization techniques, we can normalize $\mathbf{A}^0$  to the identity matrix $\mathds{1}$ (see, e.g., \cite[Ch. VI. \S$3.8$]{Courant2008}). According to \cite[\S $4.2$--$4.3$ and Theorem $4.6$]{Lax2006}, this enables us to construct the boundary of the domain of influence for the inhomogeneous data. 
	\begin{equation}\label{e:chareq1}
		\frac{d\zeta^i}{d\tau}=-\frac{1}{A\tau} \sqrt{S} \frac{\chi_\uparrow}{B} \frac{\zeta^i}{|\zeta|}=-\frac{1}{A\tau} \bigl(2+\Xi(\tau)\bigr) \frac{\zeta^i}{|\zeta|} \quad \overset{\text{multiplying } \zeta_i/|\zeta|}{\Rightarrow} \quad \frac{d|\zeta|}{d\tau}=-\frac{1}{A\tau} \sqrt{S} \frac{\chi_\uparrow}{B}  =-\frac{1}{A\tau} \bigl(2+\Xi(\tau)\bigr) . 
	\end{equation}

Integrating equation  \eqref{e:chareq1} concludes the expression for the hypersurface $\underline{\mathcal{C}}$. 
Lemma \ref{t:Thpst} ($\underline{\mathfrak{G}}>0$) implies $\frac{1}{A} \int_{-1}^{\tau} \frac{\Xi(s)}{-s} ds\geq0$. We next point out $\frac{1}{A} \int_{-1}^{\tau} \frac{\Xi(s)}{-s} ds$ is bounded. Specifically, for  $\tau\in[-1,0]$, we have $
	0\leq	\Xi(\tau) \leq C_1 \underline{\mathfrak{G}}+C_2 \uf^{-1}$. 
Applying  Lemma \ref{t:Gest2} and Proposition \ref{s:gf1/2}, we obtain
		\begin{equation*}
	0\leq 	\frac{1}{A} \int_{-1}^{\tau} \frac{\Xi(s)}{-s} ds \leq 	\frac{C_1}{A} \int_{-1}^{\tau} \frac{ \underline{\mathfrak{G}}}{-s} ds+	\frac{C_2 }{A} \int_{-1}^{\tau} \frac{1}{(-s) \uf(s)} ds\leq 	\frac{C_1}{A} \int_{-1}^{\tau} \frac{1}{(-s)^{\frac{1}{2}}} ds+	\frac{C_2 }{A} \int_{-1}^{\tau} (-s)   ds 
		\leq C .
	\end{equation*}
This completes the proof. 
\end{proof}

\begin{corollary}
In terms of the coordinate $(t,x^i)$, the hypersurface $\underline{\mathcal{C}}$ becomes $\mathcal{C}$ defined by \eqref{e:char1}.  
\end{corollary}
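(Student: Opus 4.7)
The plan is to interpret the hypersurface $\underline{\mathcal{C}}$ from Lemma \ref{t:char1} under the coordinate change $(\tau,\zeta)\mapsto(t,x)$ given by \eqref{e:coordi2} and then match the resulting formula with \eqref{e:char1}. Since the spatial coordinates coincide ($\zeta^i=x^i$), one has $|\zeta|=|x|$ automatically, so the entire content is to convert the $ds$-integral into a $dy$-integral. The crucial observation is that on the characteristic conoid the inhomogeneous perturbation vanishes: the boundary of the domain of influence corresponds to $U=0$, which through \eqref{e:v1}--\eqref{e:v5} forces $\underline{\varrho}=\uf$ and $\mathsf{b}(\tau,\zeta)=\mathsf{b}_\uparrow(\tau)$. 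Consequently the time transformation $\tau=g(t,x)$ degenerates on $\underline{\mathcal{C}}$ to the reference time transformation $\tau=\mfg(t)$, so along the conoid $t=\mathsf{b}_\uparrow(\tau)$ depends only on $\tau$, and the change of variables $y=\mathsf{b}_\uparrow(s)$ is well defined.

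Next I would perform the substitution in
\begin{equation*}
\frac{1}{A}\int_{-1}^{\tau}\frac{1}{-s}\sqrt{S(s)}\,\frac{\underline{\chi_\uparrow}(s)}{B}\,ds,
\end{equation*}
using Lemma \ref{t:gb2}.\ref{l:2.3}, i.e.\ $dy=\del{\tau}\mathsf{b}_\uparrow\,ds=\mathsf{b}_\uparrow^{2/3}(1+\uf)^{1/3}/(AB\uf(-s)^{2/(3A)+1})\,ds$, together with the explicit form of $\underline{\chi_\uparrow}$ in \eqref{e:Gdef2} and the closed expression for $\underline{f_0}$ in Lemma \ref{t:gb2}.\ref{l:2.6}. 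After straightforward algebraic simplification one expects to obtain
\begin{equation*}
\frac{1}{A(-s)}\sqrt{S}\,\frac{\underline{\chi_\uparrow}}{B}\,ds = \sqrt{S(s)\cdot(1+f(y))^{-2/3}y^{-8/3}B^{-2}(-s)^{-4/(3A)}}\,dy,
\end{equation*}
so everything reduces to showing that the quantity appearing under the square root equals $\cg(y,f(y),f_0(y))$ as defined in \eqref{e:Fdef}.

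The main obstacle will be proving the algebraic identity
\begin{equation*}
S(s) = \cm^{2} + 4(\ck-\cm^{2})\,B^{2}y^{2/3}(-s)^{4/(3A)}(1+f(y))^{1/3}\quad\text{on } \underline{\mathcal{C}}.
\end{equation*}
To establish it, I would set $U=0$ in Lemma \ref{t:coef1}, which immediately kills $\mathscr{L}$ and reduces $S$ to $\ck+\tau\mathscr{S}$; then apply \eqref{e:chig} to rewrite $\underline{\mathfrak{G}}/B=\underline{\chi_\uparrow}/B-4$, giving
\begin{equation*}
\tau\mathscr{S}=4(\ck-\cm^{2})\frac{B(1+\uf)}{\uf\,\underline{\chi_\uparrow}}-(\ck-\cm^{2}).
\end{equation*}
Inserting the explicit form of $\underline{\chi_\uparrow}$ from \eqref{e:Gdef2}, written in the $\mft$-variable via $y=\mathsf{b}_\uparrow(s)$, collapses $B(1+\uf)/(\uf\,\underline{\chi_\uparrow})$ to $B^{2}y^{2/3}(-s)^{4/(3A)}(1+f)^{1/3}$, yielding the desired identity and hence $S(1+f)^{-2/3}y^{-8/3}B^{-2}(-s)^{-4/(3A)}=\cg(y,f(y),f_0(y))$. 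Substituting back shows that the defining relation of $\underline{\mathcal{C}}$ transforms exactly into \eqref{e:char1}, which is the content of the corollary.
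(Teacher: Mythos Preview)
Your strategy is sound and mirrors the paper's: reduce to the reference time transformation on the conoid (where $U=0$ forces $g=\mfg$, $\mathsf{b}=\mathsf{b}_\uparrow$), change variables $y=\mathsf{b}_\uparrow(s)$, and verify an algebraic identity for $S$. The paper routes the computation through the pre-packaged identity of Lemma~\ref{t:Thpst}.\eqref{r:2} together with \eqref{e:keyid3} and Lemma~\ref{t:gb2}.\ref{l:2.2}, whereas you expand directly via \eqref{e:Gdef2} and Lemma~\ref{t:gb2}.\ref{l:2.6}; both arrive at the same place.

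There is, however, a sign slip in your intermediate display. Using $dy=\del{\tau}\mathsf{b}_\uparrow\,ds$ and Lemma~\ref{t:gb2}.\ref{l:2.6} one finds
\[
\frac{1}{A(-s)}\,\frac{\underline{\chi_\uparrow}}{B}\,ds=\frac{\underline{f_0}}{1+\uf}\,dy
= B^{-1}y^{-4/3}(-s)^{-2/(3A)}(1+f)^{1/3}\,dy,
\]
so the expression under the square root should carry $(1+f)^{+2/3}$, not $(1+f)^{-2/3}$. With your exponent the second term of $S\cdot(\dots)$ comes out as $4(\ck-\cm^2)(1+f)^{-1/3}y^{-2}$, which does not match $\cg$. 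With the corrected exponent $(1+f)^{2/3}$, your identity $S=\cm^2+4(\ck-\cm^2)B^2y^{2/3}(-s)^{4/(3A)}(1+f)^{1/3}$ (which is correct) gives exactly
\[
S\,(1+f)^{2/3}y^{-8/3}B^{-2}(-s)^{-4/(3A)}=\cm^2\frac{f_0^2}{(1+f)^2}+4(\ck-\cm^2)\frac{1+f}{y^2}=\cg(y,f,f_0),
\]
and the argument closes as you intended.
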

\begin{proof}
Applying the coordinate transformation given in   \eqref{e:coord2},  along with the results from \eqref{e:keyid3},  Lemma \ref{t:Thpst} and Lemma \ref{t:gb2}.\ref{l:2.2},  we derive
\begin{align*}
	|x|= 	|\zeta| =&1 +\frac{1}{A} \int_{-1}^{\tau} \frac{1}{-s} \sqrt{S} \frac{\underline{\chi_\uparrow}}{B}ds   
	\overset{\text{Lem.}\ref{t:Thpst}}{=}  1 +\frac{1}{A} \int_{-1}^{\tau} \frac{2}{-s}  \sqrt{\frac{\underline{\chi_\uparrow}}{B}\left(\frac{\cm^2}{4}\frac{\underline{\chi_\uparrow}}{B}+\left(\frac{1}{4}-\cm^2\right)\frac{1+\uf}{\uf}\right)} ds\notag  \\ 
\overset{\eqref{e:keyid3}}{=}& 1 +\frac{1}{A} \int_{t_0}^{t} \frac{1}{-\mfg(y)}  \sqrt{\frac{y^2f_0^2(y)}{(1+f(y))^2f(y)}\left( \cm^2 \frac{y^2f_0^2(y)}{(1+f(y))^2f(y)}+\left(1-4\cm^2\right)\frac{1+f(y)}{f(y)}\right)} d\mfg(y)\notag  \\ 
\overset{\text{Lem.} \ref{t:gb2}.\eqref{l:2.2}}{=} & 1 +  \int_{t_0}^{t}   \sqrt{\frac{y^2f_0^2(y)}{(1+f(y))^2f(y)}\left( \cm^2 \frac{y^2f_0^2(y)}{(1+f(y))^2f(y)}+\left(1-4\cm^2\right)\frac{1+f(y)}{f(y)}\right)}  \frac{f(y)(f(y)+1)}{y^2f_0(y)}dy \notag  \\ 
	=& 1 +  \int_{t_0}^{t}   \sqrt{   \frac{\cm^2 f_0^2(y)}{(1+f(y))^2 } +\left(1-4\cm^2\right)\frac{1+f(y)}{y^2 }   } dy 	 
	=  1 +  \int_{t_0}^{t}   \sqrt{  \cg(y,f(y),f_0(y)) } dy ,  
\end{align*}
where we recall the definition  \eqref{e:Fdef},  $\cg(y,\varrho,\varrho_0)=\cm^2\frac{ (\varrho_0 )^2}{(1+\varrho )^2}  + 4(\ck-\cm^2) \frac{1+\varrho  }{y^2} $. This completes the proof.
\end{proof}

\begin{remark}
The hypersurface $\mathcal{C}$ can also be directly derived from the wave operator $\partial^2_t  -  	\cg \delta^{ij}  \del{i}\del{j} $ where $\cg $ is defined by \eqref{e:Fdef}. This is because the optical function for $|x_0|\geq 1$ is given by $-|x|+|x_0|+\int^t_{t_0}\sqrt{g}dy$. Moreover, $x^1=x^1_0+\int^t_{t_0}\sqrt{g}dy$ are null geodesics. 
\end{remark}

\begin{corollary}\label{t:homdom}
	Suppose $\mathcal{H}$ is defined by \eqref{e:cah}, then $\varrho=f$ on $\mathcal{H} $. 
\end{corollary}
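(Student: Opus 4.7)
The plan is to invoke finite propagation speed for the quasilinear wave equation \ref{Eq2}, using the fact that $\mathcal{H}$ is precisely the region not reached by the inhomogeneous part of the initial data. First I would observe that the reference solution $f(t)$, viewed as a spatially constant function $\tilde{f}(t,x):=f(t)$, is itself a classical solution of \ref{Eq2}: since $\partial_i \tilde{f}=0$, the spatial terms $\cg\,\delta^{ij}\partial_i\partial_j\tilde{f}$, $\cg q^i\partial_i \tilde{f}$, and $\mathtt{K}^{ij}\partial_i\tilde{f}\partial_j\tilde{f}$ all vanish, and the remaining equation reduces precisely to the reference ODE \eqref{e:feq0b}--\eqref{e:feq1b} (after accounting for the exponential-logarithmic time change via Claim \ref{t:clm2}). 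Moreover, its Cauchy data $\tilde{f}(t_0,x)=\beta$ and $\partial_t\tilde{f}(t_0,x)=\beta_0$ coincide with those of $\varrho$ at every point $x\notin B_1(0)$, since $\supp\psi=\supp\psi_0=B_1(0)$.

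Next, both $\varrho$ and $\tilde{f}$ are classical solutions of \ref{Eq2} that agree with all their Cauchy data on $\{t=t_0\}\setminus B_1(0)$. The characteristic conoid $\mathcal{C}$ defined by \eqref{e:char1} is, by the preceding Lemma~\ref{t:char1} and its corollary, precisely the forward characteristic hypersurface of the Lorentzian metric $\mathtt{g}^{\alpha\beta}$ emanating from $\partial B_1(0)$ (computed along $\varrho=f$, i.e., along the common background). The domain $\mathcal{H}$ of \eqref{e:cah} is the exterior of this conoid, i.e., the complement of the domain of influence of $B_1(0)$.

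The rigorous argument is then standard finite-propagation-speed for quasilinear symmetric hyperbolic systems: the difference $w:=\varrho-\tilde{f}$ satisfies a quasilinear wave equation whose principal part is governed by $\mathtt{g}^{\alpha\beta}(\varrho,\partial\varrho)$ and whose zeroth-order data $w|_{t=t_0}$, $\partial_t w|_{t=t_0}$ vanish outside $B_1(0)$. By the standard domain-of-dependence theorem for quasilinear symmetric hyperbolic systems (see e.g.\ \cite[Theorem $4.6$]{Lax2006}), $w\equiv 0$ in the exterior of the forward $\mathtt{g}$-characteristic conoid issuing from $\supp w|_{t=t_0}\cup\supp\partial_t w|_{t=t_0}\subset \overline{B_1(0)}$. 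Since $\mathcal{H}$ lies in this exterior, we conclude $\varrho\equiv f$ on $\mathcal{H}$.

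I do not anticipate a serious obstacle: the only mildly delicate point is that the characteristic conoid whose exterior one excludes must be constructed using the coefficient $\cg$ evaluated along the actual solution $\varrho$ rather than along $f$. This is resolved by noting that inside $\mathcal{H}$ the solution is $f$ by construction, so the two conoids agree there, making the definition \eqref{e:cah} self-consistent; equivalently, one can run a standard continuity/open--closed argument on the set $\{(t,x)\in\mathcal{H}: \varrho(t,x)=f(t)\}$, which is nonempty (it contains $\{t_0\}\times(\Rbb^n\setminus\overline{B_1(0)})$), closed by continuity of $\varrho$, and open by the local domain-of-dependence statement applied at each interior point.
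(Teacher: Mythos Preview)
Your proposal is correct and takes essentially the same approach as the paper: both invoke finite propagation speed via \cite[Theorem~$4.6$]{Lax2006}, exploiting that the inhomogeneous data are supported in $B_1(0)$ and that the characteristic conoid $\mathcal{C}$ bounds their domain of influence. The only minor presentational difference is that the paper applies Lax's theorem directly to the first-order symmetric hyperbolic system \eqref{e:mainsys1} of Lemma~\ref{t:mainsys1} (whose variables $U$ already encode the deviation from $f$, so vanishing of $U$ outside the conoid is immediate), whereas you work at the level of the second-order wave equation for $w=\varrho-\tilde f$; your observation that the conoid computed along $\varrho$ agrees with the one computed along $f$ on $\mathcal{H}$ is exactly what the paper's Lemma~\ref{t:char1} encodes by setting $U=0$ on the boundary.
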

\begin{proof}
	This can be proven by directly using the symmetric hyperbolic formulation \eqref{e:mainsys1} from Lemma \ref{t:mainsys1} and  \cite[Theorem $4.6$]{Lax2006}. 
	 We omit the details. 
\end{proof}


\section{Fuchsian Formulations for  revised and extended systems}\label{s:4}
As remarked,  \eqref{e:mainsys1} is not a Fuchsian system, To transform it into a Fuchsian system, as described in Appendix \ref{s:fuc}, we will apply  Theorem \ref{t:fuc} to achieve global results for the Fuchsian fields. This approach will also provide estimates for the behavior of the variables $\varrho$ and its derivatives. In this section, let us revise \eqref{e:mainsys1} in two modifications to derive a Fuchsian system.  \S\ref{s:riv1} and \S\ref{s:riv2} address the modifications related to Difficulty $3$ outlined in \S\ref{s:oview}.

\subsection{Revision $1$: The zoom-in (or blowup) coordinate system}\label{s:riv1} 
Recalling \ref{F1} in \S\ref{s:oview}, the \textit{first revision} to achieve the Fuchsian system involves introducing a \textit{zoom-in (or blowup) coordinate system} $(\ttau,\txi)$ which is defined by 
\begin{equation}\label{e:coord5}
	\ttau=\ttau(\tau,\zeta )=\tau  
\AND
	\txi^i=\txi^i(\tau,\zeta  )=     \frac{\ta \tc^i }{A} \ln(- \tau)+  \zeta^i  , 
\end{equation}
where $\tc^i$ and $\ta$ are constants to be determined, and $\txi^i\in(-\infty,\infty)$. 
Its inverse transformation is
\begin{equation}\label{e:coordi5}
	\tau=	\tau(\ttau,\txi )=\ttau \AND \zeta^i= \zeta^i(\ttau,\txi )  =  \txi^i -    \frac{\ta \tc^i }{A}\ln(-\ttau)  . 
\end{equation}
The Jacobian of this transformation is 
\begin{equation}\label{e:Jb2}
	\p{\widetilde{\frac{\partial \ttau}{\partial \tau}} &  \widetilde{\frac{\partial \ttau}{\partial \zeta^j} }  \\  \widetilde{ \frac{\partial \txi^i}{\partial \tau} } & \widetilde{\frac{\partial \txi^i}{\partial  \zeta^j} }  }= \p{1 & 0 \\
		\frac{\ta \tc^i  }{ A \ttau }  &  \delta^i_j  }  \AND \det	\p{\widetilde{\frac{\partial \ttau}{\partial \tau}} &  \widetilde{\frac{\partial \ttau}{\partial \zeta^j} }  \\  \widetilde{ \frac{\partial \txi^i}{\partial \tau} } & \widetilde{\frac{\partial \txi^i}{\partial  \zeta^j} }  }= 1 > 0. 
\end{equation}  
For a function $z(\tau,\zeta )$,  let $\tilde{z}(\ttau,\txi )$ be defined as $\tilde{z}(\ttau,\txi ):=z\bigl(\tau(\ttau,\txi ),\zeta(\ttau,\txi )\bigr)$. Using the Jacobian matrix given by equation \eqref{e:Jb2}, we obtain  
\begin{align}
	\widetilde{\del{\tau}z}= &   \widetilde{\frac{\partial \ttau}{\partial \tau}} \del{\ttau} \tilde{z}+  \widetilde{ \frac{\partial \txi^i}{\partial \tau} }  \del{\txi^i} \tilde{z} =  \del{\ttau} \tilde{z}   + \frac{\ta \tc^i  }{A \ttau }     \del{\txi^i }  \tilde{z}  , \label{e:transf1} \\
	\widetilde{\del{\zeta^i}z}= &   \widetilde{\frac{\partial \ttau}{\partial \zeta^i} } \del{\ttau} \tilde{z}+  \widetilde{\frac{\partial \txi^j}{\partial  \zeta^i} }  \del{\txi^j} \tilde{z} =      \del{\txi^i}  \tilde{z} . \label{e:transf2}
\end{align}

\begin{figure}[htbp]
    \begin{minipage}[t]{0.5\linewidth}
	\centering
	\includegraphics[width=1\textwidth]{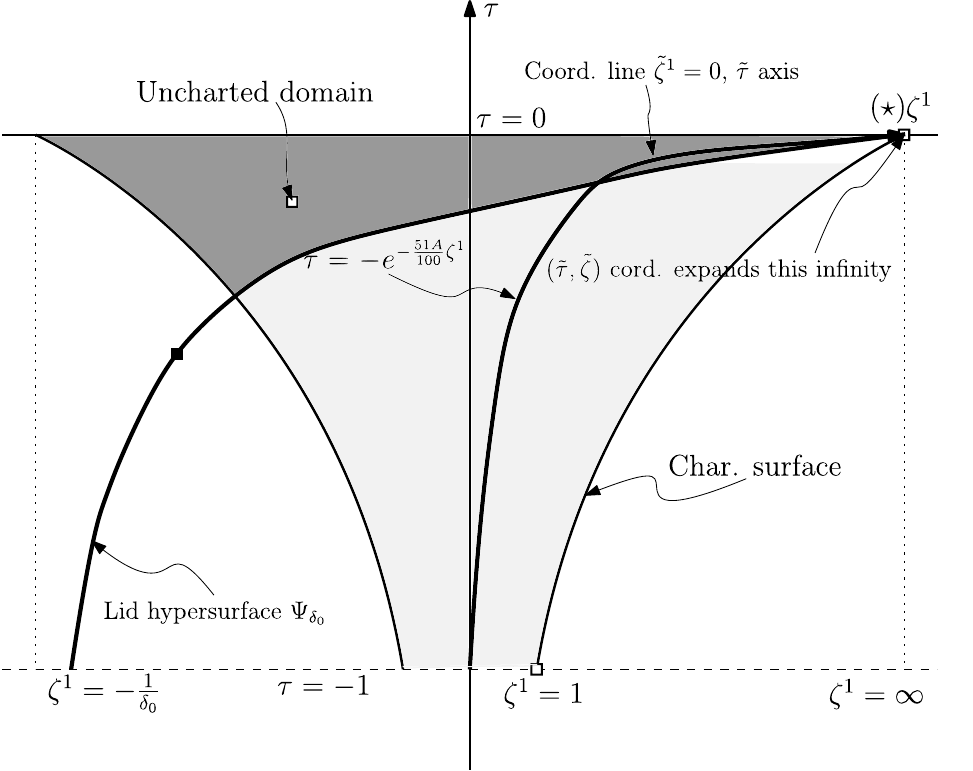}
	\caption{The $t$-compactified  coord.  $(\tau,\zeta)$}
	\label{f:fig3a}
\end{minipage}%
\begin{minipage}[t]{0.5\linewidth}
	\centering
	\includegraphics[width=1\textwidth]{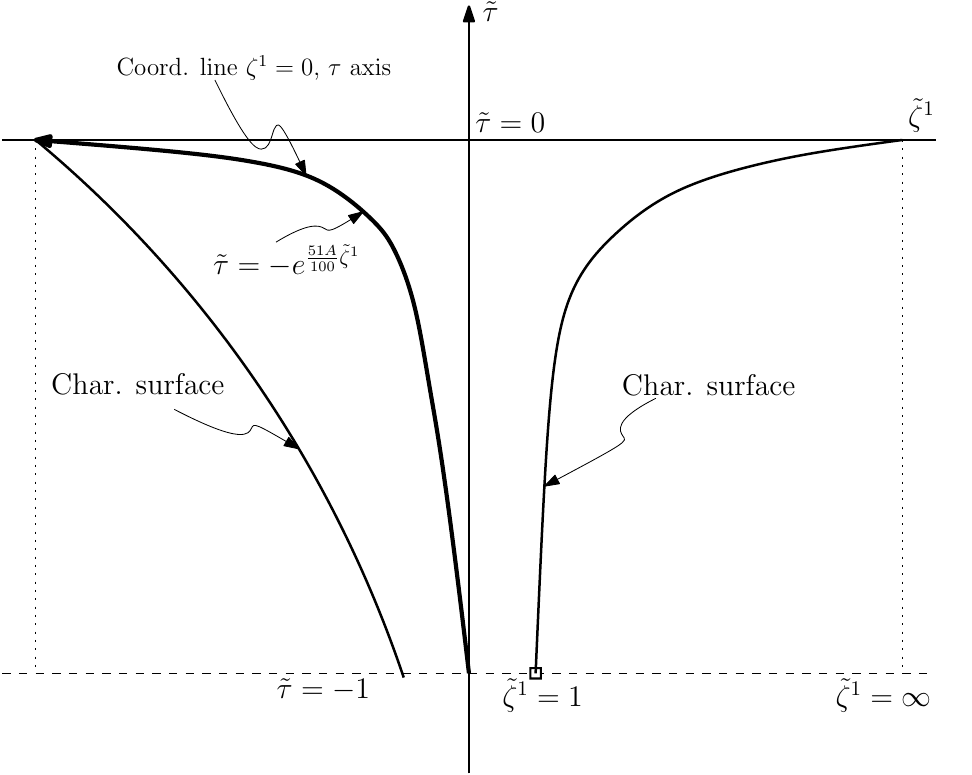}
	\caption{The zoom-in coord. $(\ttau,\txi)$}
	\label{f:fig3b}
\end{minipage}
\end{figure}

\begin{lemma}\label{t:mainsys2}
		In terms of the zoom-in (or blowup) coordinate \eqref{e:coord5}, the singular system \eqref{e:mainsys1} becomes
		\begin{equation}\label{e:mainsys2a}
	\widetilde{\mathbf{A}^0} \del{\ttau} \tilde{U}+\frac{1}{A \ttau } 	\tilde{\mathbf{A}}^{i} \del{\txi^i}  \tilde{U} =\frac{1}{A\ttau}   \widetilde{\mathbf{A}}   \tilde{U} +\tilde{\mathbf{F}} , 
		\end{equation}
	where 
	\begin{align*}
		\tilde{\mathbf{A}}^{i}:=&  
  \ta \tc^i \widetilde{\mathbf{A}^0}   +   \widetilde{\mathbf{A}^i}    
		=      \p{\ta \tc^i  &\ta \tc^i \tilde{\mathscr{R}}^j +  \tilde{H} ^{ij}   & 0 & 0 & 0 \\
\ta \tc^i \tilde{\mathscr{R}}_k   +  \tilde{H}^i_k   & \ta \tc^i (\ck +\mathscr{L}+\tau\mathscr{S}) \delta^j_k   & 0 & 0 & 0 \\
			0&0& 2 \ta \tc^i  &0&0\\
			0&0&0&\ta \tc^i  \delta^l_s &0\\
			0&0&0&0&  \ta \tc^i + \mathtt{d}^i   }  . 
	\end{align*}
\end{lemma}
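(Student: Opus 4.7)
The plan is to directly substitute the transformation formulas \eqref{e:transf1} and \eqref{e:transf2} into the singular symmetric hyperbolic system \eqref{e:mainsys1} and then collect like terms. Since the coordinate change \eqref{e:coord5} only mixes $\tau$ and $\zeta^i$ linearly (through $\ln(-\tau)$), this is essentially a chain-rule computation together with an identification of coefficient matrices.

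First, I would apply \eqref{e:transf1} and \eqref{e:transf2} componentwise to each term in \eqref{e:mainsys1}. Writing $\tilde{U}(\ttau,\txi):=U(\tau(\ttau,\txi),\zeta(\ttau,\txi))$ and using that the coefficient matrices are scalar-valued functions of the variables (so they transform by $\mathbf{A}^0\mapsto \widetilde{\mathbf{A}^0}$, $\mathbf{A}^i\mapsto \widetilde{\mathbf{A}^i}$, $\mathbf{A}\mapsto \widetilde{\mathbf{A}}$, $\mathbf{F}\mapsto \tilde{\mathbf{F}}$), the equation \eqref{e:mainsys1} becomes
\begin{equation*}
\widetilde{\mathbf{A}^0}\Bigl(\del{\ttau}\tilde{U}+\tfrac{\ta\tc^i}{A\ttau}\del{\txi^i}\tilde{U}\Bigr)+\frac{1}{A\ttau}\widetilde{\mathbf{A}^i}\del{\txi^i}\tilde{U}=\frac{1}{A\ttau}\widetilde{\mathbf{A}}\tilde{U}+\tilde{\mathbf{F}}.
\end{equation*}
Collecting the spatial derivative terms yields
\begin{equation*}
\widetilde{\mathbf{A}^0}\del{\ttau}\tilde{U}+\frac{1}{A\ttau}\bigl(\ta\tc^i\widetilde{\mathbf{A}^0}+\widetilde{\mathbf{A}^i}\bigr)\del{\txi^i}\tilde{U}=\frac{1}{A\ttau}\widetilde{\mathbf{A}}\tilde{U}+\tilde{\mathbf{F}},
\end{equation*}
which is precisely \eqref{e:mainsys2a} with $\tilde{\mathbf{A}}^i=\ta\tc^i\widetilde{\mathbf{A}^0}+\widetilde{\mathbf{A}^i}$.

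Next, I would verify the explicit matrix form stated in the lemma by adding $\ta\tc^i$ times the matrix $\widetilde{\mathbf{A}^0}$ (from Lemma \ref{t:mainsys1}) to $\widetilde{\mathbf{A}^i}$ block by block. Using $S=\ck+\tau\mathscr{S}$ from \eqref{e:S1} so that $S+\mathscr{L}=\ck+\mathscr{L}+\tau\mathscr{S}$, and noting that the off-diagonal $(1,2)$ and $(2,1)$ blocks of $\widetilde{\mathbf{A}^i}$ are $\tilde{H}^{ij}$ and $\tilde{H}^i_k$ while the corresponding blocks of $\widetilde{\mathbf{A}^0}$ contribute $\ta\tc^i\tilde{\mathscr{R}}^j$ and $\ta\tc^i\tilde{\mathscr{R}}_k$, each entry matches the claimed expression. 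The diagonal entries $\ta\tc^i$, $2\ta\tc^i$, $\ta\tc^i\delta^l_s$, and $\ta\tc^i+\mathtt{d}^i$ in the remaining blocks come directly from the diagonal of $\ta\tc^i\widetilde{\mathbf{A}^0}$ (with the $\mathtt{d}^i$ summand coming from the $(5,5)$ entry of $\widetilde{\mathbf{A}^i}$).

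There is no substantive obstacle in this lemma: the computation is a bookkeeping exercise once the derivative transformations \eqref{e:transf1}--\eqref{e:transf2} are in hand. The only mild care needed is to keep track of which matrix entries are scalars versus block-matrix-valued, and to remember that the Jacobian determinant in \eqref{e:Jb2} equals $1$, so no additional Jacobian factors appear in the coefficient matrices. The genuine work that motivates this change of coordinates (ensuring that the modified matrix $\ta\tc^i\widetilde{\mathbf{A}^0}+\widetilde{\mathbf{A}^i}$ has the right diagonal structure to enable the subsequent variable change in \ref{F2}, producing a positive definite Fuchsian $\mathbf{B}$) is deferred to the later sections \S\ref{s:riv2} and \S\ref{s:revFuc}, not to the present lemma.
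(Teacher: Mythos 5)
Your proof is correct and follows essentially the same route as the paper, which simply invokes the system \eqref{e:mainsys1} and the derivative transformations \eqref{e:transf1}--\eqref{e:transf2} and calls the verification direct. Your explicit bookkeeping of the block entries (including the use of $S=\ck+\tau\mathscr{S}$ from \eqref{e:S1} to reconcile $S+\mathscr{L}$ with the stated $\ck+\mathscr{L}+\tau\mathscr{S}$, and the observation that the unit Jacobian determinant introduces no extra factors) is a faithful and complete spelling-out of that short argument.
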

\begin{proof}
	This lemma can be proven directly using the system \eqref{e:mainsys1} and the transformations \eqref{e:transf1}--\eqref{e:transf2}. 
\end{proof}

\subsection{Revision $2$: The zoom-in   variables}\label{s:riv2}
As discussed in \ref{F2} in \S\ref{s:oview}, the \textit{second revision} involves introducing a \textit{revised set of variables} based on those defined in  \eqref{e:v1}--\eqref{e:v5}. 
\begin{align}
\mfu_0(\ttau,\txi ) = 	\frac{1}{\mu(\txi )} \widetilde{u_0}(\ttau,\txi )   
\quad \Leftrightarrow&\quad \widetilde{u_0}(\ttau,\txi )    =   	\mu(\txi) \mfu_0(\ttau,\txi ) 	, \label{e:fv1}  \\
\mfu_i(\ttau,\txi )   = 	\frac{1}{\mu(\txi )} \widetilde{u_i}(\ttau,\txi )       
\quad \Leftrightarrow&\quad  \widetilde{u_i}(\ttau,\txi )    = 	  	\mu(\txi )	\mfu_i(\ttau,\txi )  , \label{e:fv2}   \\
	\mfu(\ttau,\txi )  =  \tilde{u}(\ttau,\txi )    \quad \Leftrightarrow&\quad    
	\tilde{u}(\ttau,\txi ) = 	 	 	\mfu(\ttau,\txi ) ,\label{e:fv3}  \\	\mfv(\ttau,\txi )  = 	  \frac{1}{\mu(\txi )} \tilde{u}(\ttau,\txi )    \quad \Leftrightarrow&\quad   		\tilde{u}(\ttau,\txi ) = 	 	 \mu (\txi ) 	\mfv(\ttau,\txi )   ,\label{e:fv7}  \\
	\mathfrak{B}_j(\ttau,\txi^I)  =   \frac{1}{\mu(\txi )}	\tilde{\mathcal{B}}_j (\ttau,\txi ) \quad \Leftrightarrow&\quad  	\tilde{\mathcal{B}}_j (\ttau,\txi )  =   	\mu(\txi ) \mathfrak{B}_j(\ttau,\txi )   ,    \label{e:fv4} 
	\\ 
\mathfrak{z}(\ttau,\txi )  =    \frac{1}{\eta(\txi )} \tilde{z}(\ttau,\txi )    \quad \Leftrightarrow&\quad  \tilde{z}(\ttau,\txi)   =    \eta(\txi )  \mathfrak{z}(\ttau,\txi )  ,  \label{e:fv6}    
\end{align} 
where  
\begin{equation}\label{e:eta1}
 	\mu(\txi ):=  \hat{\mu} e^{\omega \kappa_i  \txi^i } ,  \quad 	\eta(\txi ):=  \hat{ \eta } e^{\theta \kappa_i \txi^i}   \quad \Rightarrow \quad \frac{\del{\txi^i}\mu}{\mu} = \omega \kappa_i <0 ,   \quad \frac{\del{\txi^i}\eta}{\eta} = \theta \kappa_i   .   
\end{equation}  
and $\hat{\mu}$,  $ \hat{ \eta } $, $\omega>0$ and $\kappa_i<0$ are constants to be determined, satisfying
\begin{equation}\label{e:ck}
	\tc^i \kappa_i =1 .
\end{equation}

\begin{lemma}\label{t:sigsys}
In terms of the revised variables defined in  \eqref{e:fv1}--\eqref{e:fv6}, the singular system \eqref{e:mainsys2a} becomes
		\begin{align}\label{e:mainsys3}
		\mathfrak{A}^0\del{\ttau}\mathfrak{U}+\frac{1}{A\ttau} \mathfrak{A}^{i} \del{\txi^i} \mathfrak{U} =\frac{1}{A\ttau} \mathfrak{A}  \mathfrak{U} +\mathfrak{F} , 
	\end{align} 
	where   $\mathfrak{U}:=\p{\mfu_0,\mfu_j,\mfu,\mathfrak{B}_l, \mathfrak{z},\mfv}^T$,  
\begin{align*}
	\mathfrak{A}^0:=	& \p{1 & \mu \tilde{R}  \mathfrak{B}_i \delta^{ij}  & 0 & 0 & 0 & 0 \\
	\mu \tilde{R} \mathfrak{B}_k     &  (\tilde{S} +\widetilde{\mathscr{L}}  ) \delta^j_k  & 0 & 0 & 0 & 0\\
		0&0&2&0&0&0\\
		0&0&0&\delta^l_s&0 & 0 \\
		0&0&0&0&1 & 0\\
		0&0&0&0&0&2}  ,  
\end{align*}  
\begin{align*}
	\mathfrak{A}^{i}:=  \p{\ta \tc^i  & 	   \tilde{S}  \frac{\widetilde{\underline{\chi_\uparrow}}}{B}  \delta^{ij}  + 	\widetilde{\mathscr{Z}}^{ij}   & 0 & 0 & 0 &0\\
	(  \tilde{S} \frac{\widetilde{\underline{\chi_\uparrow}}}{B}  \delta^{ij}  + 	\widetilde{\mathscr{Z}}^{ij}  ) \delta_{jk}   & \ta \tc^i (\tilde{S} +\widetilde{\mathscr{L}} ) \delta^j_k   & 0 & 0 & 0 &0\\
		0&0& 2 \ta \tc^i  &0&0&0\\
		0&0&0&\ta \tc^i  \delta^l_s &0&0\\
		0&0&0&0&  \ta \tc^i + \mathtt{d}^i  & 0 \\0&0&0&0&0& 2 \ta\tc^i }  , 
\end{align*}  
\begin{align*}
	&\mathfrak{A}:=  \notag  \\
&     	  \p{-   \omega  \ta   -\frac{14  }{3    }+	\widetilde{\mathscr{Z}_{11}}&   -4\ck q^j -4     \omega \ck \kappa_i \delta^{ij}     +	\widetilde{\mathscr{Z}^j_{-} }&0  & 0 & \frac{\eta}{\mu}   (-8+	\widetilde{\mathscr{Z}_{15}} ) & 8+	\widetilde{\mathscr{Z}_{13}}  \\
	-  4 	\omega  \ck \kappa_i \delta^{i}_k    +\widetilde{\mathscr{Z}_k^+}   & ( 4 \ck-  \ck \omega  \ta    +\widetilde{\mathscr{Z}}_\star)\delta^j_k  &0&(24  \ck  + 	\widetilde{\mathscr{Z}_{24}} ) \delta^l_k & 0 & 0\\
	\mu(- 8   +	\widetilde{\mathscr{Z}_{31}} ) & 0 & \frac{ 40 }{3 } +	\widetilde{\mathscr{Z}_{33}} & 0 & \eta( -16+	\widetilde{\mathscr{Z}_{35} })  & 0  \\0&(\frac{2  }{3       }  +	\widetilde{\mathscr{Z}_{42}} )\delta^j_{s} &0& (\frac{2}{3   } -   \omega    \ta +	\widetilde{\mathscr{Z}_{44} }      ) \delta^l_s  & 0 & 0 \\ 0 & 0 &0 &0 &    - \theta (\ta   + \kappa_i \mathtt{d}^i)     &0  \\
	- 8   +	\widetilde{\mathscr{Z}_{31} } & 0 & 0 & 0 & 	\frac{\eta}{\mu}  (-16+	\widetilde{\mathscr{Z}_{35}} ) & \frac{ 40 }{3 } -  2 \omega   \ta   +	\widetilde{\mathscr{Z}_{33} }  }       ,
\end{align*}
\begin{equation*}
	\mathfrak{F}:=  \p{ - \frac{ \omega }{A}      \widetilde{\mathscr{S}}^j\mfu_j+	\mu^{-1}	\widetilde{\mathfrak{F}_{u_0}}\\ 	  -    \frac{\omega  \ta }{A }   \widetilde{\mathscr{S}}   \mfu_k - \frac{  \omega  }{A}     \widetilde{\mathscr{S}}^j\delta_{jk} \mfu_0 +	\mu^{-1} \widetilde{\mathfrak{F}_{u_k}} \\  		\widetilde{\mathfrak{F}_{u}}\\ 		\mu^{-1}\widetilde{\mathfrak{F}_{\mathcal{B}_i}} \\ 		\eta^{-1} \widetilde{\mathfrak{F}_{z} } \\
		\mu^{-1}	\widetilde{\mathfrak{F}_u}} , 
\end{equation*}
\begin{equation}
	\widetilde{\mathscr{Z}}^{ij}:=	\widetilde{\mathscr{Z}}^{ij}(\ttau; \mu \mfu_0, \mu \mfv, \eta \mathfrak{z},\mu\mathfrak{B}_i)  =  \frac{\mu \ta \tc^i (\tilde{S}+\widetilde{\mathscr{L}}) \tilde{\uf}}{1+\tilde{\uf}} \frac{\widetilde{\underline{\chi_\uparrow}}}{B}   \mathfrak{B}_k  \delta^{kj} + \widetilde{\mathscr{H}} (\ttau; \mu \mfu_0,\mu \mfv, \eta \mathfrak{z}) \delta^{ij} ,  \label{e:ZIj}  
\end{equation}
\begin{equation}\label{e:Zj}
	\widetilde{\mathscr{Z}}^j:= \kappa_i  	\widetilde{\mathscr{Z}}^{ij}  \AND 
	\widetilde{\mathscr{S}}^j:= 	\ck \kappa_i \delta^{ij}  \frac{\tilde{\underline{\mathfrak{G}}}}{B} + \kappa_i\delta^{ij} \frac{\tilde{\underline{\chi}}_\uparrow}{B} \ttau \widetilde{\mathscr{S}}   , 
\end{equation} 
\begin{equation}\label{e:Z+-}
	\widetilde{\mathscr{Z}_k^+}=	-  \omega    	\widetilde{\mathscr{Z}}^j\delta_{jk} , \quad  \widetilde{\mathscr{Z}^j_{-} } =\widetilde{\mathscr{Z}^j_{12} }  -    \omega 	  \widetilde{\mathscr{Z}}^j \AND \widetilde{\mathscr{Z}}_\star:= -   \omega  \ta       \widetilde{\mathscr{L}}     +	\widetilde{\mathscr{Z}_{22}} , 
\end{equation}
and 
\begin{align*}
	\mu^{-1}	\widetilde{\mathfrak{F}_{u_0}}= & \frac{\tilde{\underline{\mathfrak{G}}}}{\ttau} \widehat{\mathscr{S}_{11}}(\ttau,\mu,\eta,\eta \mu^{-1} ; \mfu_0, \mfv,  \mathfrak{z})  +\frac{1}{\ttau \tilde{\uf}^{\frac{1}{2}}} \widehat{\mathscr{S}_{12}}(\ttau,\mu,\eta,\eta \mu^{-1} ; \mfu_0, \mfv,  \mathfrak{z})  ;\\
	\mu^{-1}	\widetilde{\mathfrak{F}_{u_k}}= & \frac{\tilde{\underline{\mathfrak{G}}}}{\ttau} \widehat{\mathscr{S}_{k;21}}(\ttau,\mu,\eta; \mfu_i,   \mathfrak{B}_k)  +\frac{1}{\ttau \tilde{\uf}^{\frac{1}{2}}} \widehat{\mathscr{S}_{k;22}}(\ttau,\mu,\eta ,\mu \mfu_0,\mfu,\eta \mathfrak{z}; \mfu_i,   \mathfrak{B}_k)  ;\\
	\mu^{-1}\widetilde{\mathfrak{F}_{\mathcal{B}_i}} = & \frac{1}{\ttau \tilde{\uf}^{\frac{1}{2}}} \widehat{\mathscr{S}_{l;42}}(\ttau,\mu,\eta ,\mu \mfu_0,\mfu,\eta \mathfrak{z}; \mfu_i,   \mathfrak{B}_k);\\
	\eta^{-1} \widetilde{\mathfrak{F}_{z} }= &  \frac{1}{\ttau \tilde{\uf}^{\frac{1}{2}}} \widehat{\mathscr{S}_{52}}(\ttau,\mu\eta^{-1}; \mfv,  \mathfrak{B}_i, \mathfrak{z}) ;  \\
		\mu^{-1}	\widetilde{\mathfrak{F}_u} 
	=& \frac{\underline{\widetilde{\mathfrak{G}}}}{\ttau} \widehat{\mathscr{S}_{31}}(\ttau,\mu,\eta,\eta \mu^{-1} ; \mfu_0, \mfv,  \mathfrak{z})  +\frac{1}{\ttau \tilde{\uf}^{\frac{1}{2}}} \widehat{\mathscr{S}_{32}} (\ttau,\mu,\eta,\eta \mu^{-1} ; \mfu_0, \mfv,  \mathfrak{z}) .  
\end{align*} 
\end{lemma}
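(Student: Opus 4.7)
My plan is to prove Lemma \ref{t:sigsys} by direct substitution of the tilted-variable transformations \eqref{e:fv1}--\eqref{e:fv6} into the system \eqref{e:mainsys2a} established in Lemma \ref{t:mainsys2}, and then to carefully track how the spatial weights $\mu(\txi)$ and $\eta(\txi)$ interact with the singular spatial derivatives. Since $\mu$ and $\eta$ are independent of $\ttau$, the time derivatives transform trivially: $\del{\ttau}\widetilde{u_0} = \mu\del{\ttau}\mfu_0$, etc. The nontrivial algebra happens for $\del{\txi^i}$: using \eqref{e:eta1}, each spatial derivative produces a product-rule remainder of the form $\del{\txi^i}(\mu \,\mfu_j) = \mu\del{\txi^i}\mfu_j + \omega\kappa_i\mu\,\mfu_j$ and similarly $\del{\txi^i}(\eta\,\mathfrak{z}) = \eta\del{\txi^i}\mathfrak{z} + \theta\kappa_i\eta\,\mathfrak{z}$. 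After substituting these into \eqref{e:mainsys2a} and dividing through by the appropriate weight ($\mu$ for the $u_\mu$, $\mathcal{B}_i$ rows, $\eta$ for the $z$ row), the product-rule remainders become $\frac{1}{A\ttau}\omega\kappa_i\tilde{\mathbf{A}}^i_{\bullet}$ and $\frac{1}{A\ttau}\theta\kappa_i\tilde{\mathbf{A}}^i_{\bullet}$ terms that migrate to the right-hand side and are absorbed into the new singular matrix $\mathfrak{A}$.

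The decisive simplification exploits the decomposition $\tilde{\mathbf{A}}^i = \ta\tc^i\widetilde{\mathbf{A}^0} + \widetilde{\mathbf{A}^i}$ together with the constraint \eqref{e:ck}, $\tc^i\kappa_i = 1$. Contracting $\kappa_i$ against the $\ta\tc^i$ part of $\tilde{\mathbf{A}}^i$ simply reproduces $\ta$ times the corresponding entry of $\widetilde{\mathbf{A}^0}$, which accounts for the characteristic $\omega\ta$ contributions in the diagonal of $\mathfrak{A}$ (e.g. the $-\omega\ta$ summand in the $(1,1)$ entry, the $-\ck\omega\ta$ in the $(u_j,u_k)$ block, the $-\omega\ta$ in the $(\mathcal{B}_l,\mathcal{B}_s)$ block, the $-\theta(\ta + \kappa_i\mathtt{d}^i)$ in the $\mathfrak{z}$ row). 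The contractions with the non-$\widetilde{\mathbf{A}^0}$ part yield the auxiliary remainder maps $\widetilde{\mathscr{S}}^j$ and $\widetilde{\mathscr{Z}}^j$ defined in \eqref{e:Zj}--\eqref{e:Z+-}, which are pushed into $\mathfrak{F}$ or into the off-diagonal blocks of $\mathfrak{A}$ according to whether they involve $\widetilde{\mathscr{L}}$ (regular) or $\ck$ (singular) contributions; the specific splitting is dictated by the form of $\tilde{\mathbf{A}}^i$ obtained in Lemma \ref{t:mainsys2} via Lemma \ref{t:coef1}. The cross-coupling $\widetilde{\mathscr{Z}}^{ij}$ in \eqref{e:ZIj} is exactly what remains of $\tilde{H}^{ij}$ after extracting the leading $\tilde{S}\,\widetilde{\underline{\chi_\uparrow}}/B\cdot\delta^{ij}$ piece and accounting for the transformation.

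The duplication of the $u$-variable into both $\mfu$ (identity scaling) and $\mfv$ (weighted by $\mu^{-1}$) produces two copies of Lemma \ref{t:Seq3}'s equation: one in $\mfu$ with no product-rule contribution (yielding the third row of $\mathfrak{A}$, which matches the bare coefficients from Lemma \ref{t:Seq3} since $\mu$ does not appear), and a second copy obtained by dividing Lemma \ref{t:Seq3} by $\mu$, in which the cross-terms $-8u_0+(-16)z$ become $-8\mfu_0+(-16)(\eta/\mu)\mathfrak{z}$ (the sixth row). These factors of $\eta/\mu = (\eta_\star/\mu_\star)e^{(\theta-\omega)\kappa_i\txi^i}$ likewise appear in the $(u_0,\mathfrak{z})$ and $(u,\mathfrak{z})$ entries where the source is $z$ but the equation is for an unweighted or $\mu$-weighted variable. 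Finally, the source vector $\mathfrak{F}$ is obtained by dividing the original $\tilde{\mathbf{F}}$ from Lemma \ref{t:mainsys1} by the appropriate weight, producing the factors $\mu^{-1}\widetilde{\mathfrak{F}_{u_0}}$, $\eta^{-1}\widetilde{\mathfrak{F}_z}$, etc., and by relocating the $\omega/A$-multiples of $\widetilde{\mathscr{S}}^j\mfu_j$ (arising from the part of the product-rule remainders that cannot be absorbed into the leading Fuchsian block) to the RHS.

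The main obstacle will be the careful bookkeeping of which off-diagonal remainders belong on the left-hand side inside $\mathfrak{A}^i$ (as cross-couplings in the spatial flux, notably $\widetilde{\mathscr{Z}}^{ij}$), which belong inside $\mathfrak{A}$ as order-one contributions to the singular source (e.g.\ $\widetilde{\mathscr{Z}_k^+}$, $\widetilde{\mathscr{Z}^j_{-}}$, $\widetilde{\mathscr{Z}}_\star$), and which belong in $\mathfrak{F}$ as genuinely regular sources decaying like $\tilde{\underline{\mathfrak{G}}}/\ttau$ or $1/(\ttau\tilde{\uf}^{1/2})$ by Proposition \ref{t:limG} and Lemma \ref{t:Gest2}. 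In particular, the Taylor expansion in $V = (\mu\mfu_0,\mu\mfv,\eta\mathfrak{z},\mu\mathfrak{B}_i)$ of the $\mathscr{S}_{\cdot\cdot}$ remainder maps from Lemmas \ref{t:Seq1}--\ref{e:Seq6} must be re-expressed as analytic maps $\widehat{\mathscr{S}_{\cdot\cdot}}$ in the new arguments $(\mu,\eta,\eta\mu^{-1};\mfu_0,\mfv,\mathfrak{z},\ldots)$, which requires verifying that the first-order vanishing in $V$ is preserved after factoring out $\mu^{-1}$ or $\eta^{-1}$. Once these are bookkept correctly, the matching of matrix entries is a straightforward (though lengthy) algebraic exercise.
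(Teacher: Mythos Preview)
Your proposal is correct and follows essentially the same approach as the paper. The paper packages the same row-by-row weight division you describe via two diagonal matrices $E=\diag(\mu^{-2},\mu^{-2},1,\mu^{-2},\eta^{-2},\mu^{-2})$ and $Q=\diag(\mu,\mu,1,\mu,\eta,\mu)$, first duplicating the $\tilde u$-row (your ``two copies of Lemma \ref{t:Seq3}'s equation'') and then applying $EQ$ on the left and substituting $\mathfrak{U}=Q^{-1}(\tilde U,\tilde u)^T$; the product-rule remainder appears as $-\frac{1}{A\ttau}EQ\,\tilde{\mathbf{A}}^i\,\del{\txi^i}Q\,\mathfrak{U}$, and the key contractions \eqref{e:aRH1}--\eqref{e:aRH2} are exactly your use of $\tilde{\mathbf{A}}^i=\ta\tc^i\widetilde{\mathbf{A}^0}+\widetilde{\mathbf{A}^i}$ together with $\kappa_i\tc^i=1$.
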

\begin{proof} 
	By isolating the third line of \eqref{e:mainsys2a} and then reintegrating it into the system, we arrive at
	\begin{equation} \label{e:mrxeq1}
		\p{\widetilde{\mathbf{A}^0} & \\&2}\del{\ttau} \p{\tilde{U}\\\tilde{u}}+\frac{1}{A \ttau } 	\p{\tilde{\mathbf{A}}^{i} & \\& 2 \ta\tc^i   } \del{\txi^i}  \p{\tilde{U}\\\tilde{u}} =\frac{1}{A\ttau}   \widetilde{\mathbf{A}}^\star  \p{\tilde{U}\\\tilde{u}} +\p{\tilde{\mathbf{F}}\\\widetilde{\mathfrak{F}_u}}  , 
	\end{equation}
	where 
	\begin{align*}
		\widetilde{\mathbf{A}}^\star  := \p{-\frac{14  }{3    }+	\widetilde{\mathscr{Z}_{11}}&  -4\ck q^j+	\widetilde{\mathscr{Z}^j_{12} } &0  & 0 & -8+	\widetilde{\mathscr{Z}_{15}}  & 8+	\widetilde{\mathscr{Z}_{13}}  \\
			0 &(4\ck+	\widetilde{\mathscr{Z}_{22}} )\delta^j_k &0&(24 \ck   + 	\widetilde{\mathscr{Z}_{24}} ) \delta^l_k & 0 & 0\\
			- 8   +	\widetilde{\mathscr{Z}_{31}}  & 0 & \frac{ 40 }{3 } +	\widetilde{\mathscr{Z}_{33}} & 0 & -16+	\widetilde{\mathscr{Z}_{35} }  & 0  \\0&(\frac{2  }{3       }  +	\widetilde{\mathscr{Z}_{42}} )\delta^j_{i} &0& (\frac{2}{3   }  +	\widetilde{\mathscr{Z}_{44} }  ) \delta_{i}^l  & 0 & 0 \\ 0 & 0 &0 &0 &0  &0  \\
			- 8   +	\widetilde{\mathscr{Z}_{31} } & 0 & 0 & 0 & -16+	\widetilde{\mathscr{Z}_{35}} & \frac{ 40 }{3 } +	\widetilde{\mathscr{Z}_{33} }}  . 
	\end{align*}

Let  
\begin{align*}
	E:= & \diag\left\{\frac{1}{\mu^2(\txi^i)},\frac{1}{\mu^2(\txi^i)},1,\frac{1}{\mu^2(\txi^i)},\frac{1}{\eta^2(\txi^i)},\frac{1}{\mu^2(\txi^i)}\right\}  \AND 
	Q:=   \diag\left\{\mu(\txi^i) ,\mu(\txi^i) ,1,\mu(\txi^i) ,\eta(\txi^i),\mu(\txi^i)  \right\}   . 
\end{align*}

Using $EQ$ acting on \eqref{e:mrxeq1} and substituting $\mathfrak{U}:=Q^{-1} \p{\tilde{U},\tilde{u}}^T$ into \eqref{e:mrxeq1}  yields  
		\begin{equation} \label{e:mrxeq1b}
		 	EQ	\p{\widetilde{\mathbf{A}^0} & \\&2} Q \del{\ttau} \mathfrak{U}+\frac{1}{A \ttau } EQ	\p{\tilde{\mathbf{A}}^{i} & \\& 2 \ta\tc^i  } Q  \del{\txi^i}  \mathfrak{U} =  \frac{1}{A\ttau}  EQ \widetilde{\mathbf{A}}^\star Q   \mathfrak{U} -\frac{1}{A \ttau } EQ	\p{\tilde{\mathbf{A}}^{i} & \\& 2 \ta\tc^i  }   \del{\txi^i} Q  \mathfrak{U}  +EQ \p{\tilde{\mathbf{F}}\\\widetilde{\mathfrak{F}_u}}   .
		\end{equation}  
Note that, by using Lemmas \ref{t:coef1} and \ref{t:Seq1}, we have
\begin{equation}\label{e:aRH1}
	\ta \tc^i \tilde{\mathscr{R}}^j +  \tilde{H} ^{ij}	  
	  =  	(\ck  + \tau\mathscr{S}  )\delta^{ij} \frac{\underline{\chi_\uparrow}}{B}   +\widetilde{\mathscr{Z}}^{ij}   ,
\end{equation} 
where $\widetilde{\mathscr{Z}}^{ij}$ is given by \eqref{e:ZIj}.  
Furthermore, by using Lemma \ref{t:coef1}, \eqref{e:aRH1} and \eqref{e:ck} (i.e.,  $\kappa_i \tc^i =1$), we obtain
\begin{equation}\label{e:aRH2}
  \ta  \tilde{\mathscr{R}}^j + \kappa_i \tilde{H} ^{ij} =  \kappa_i (	\ta \tc^i \tilde{\mathscr{R}}^j +  \tilde{H} ^{ij}	)
	=    4 \ck  \kappa_i \delta^{ij}   +    	\ttau \widetilde{\mathscr{S}}^j  +	\widetilde{\mathscr{Z}}^j  , 
\end{equation} 
where $	\widetilde{\mathscr{Z}}^j$ and $	\widetilde{\mathscr{S}}^j$ are given by \eqref{e:Zj}.

After a lengthy but straightforward calculations, and with the help of \eqref{e:ck} (i.e.,  $\kappa_i\tc^i=1$) and  \eqref{e:aRH1}--\eqref{e:aRH2}, by expanding and rearranging the terms in  \eqref{e:mrxeq1b}, we can conclude this lemma. Additionally, since terms like  $\mu^{-1} \widetilde{\mathscr{S}_{32}} (\ttau;\mu\mfu_0,\mu\mfv,\eta \mathfrak{z}) $ vanish to the $1^\text{st}$ order in $(\mu\mfu_0,\mu\mfv,\eta \mathfrak{z})$, we can express
\begin{align*}
	\mu^{-1}	\widetilde{\mathfrak{F}_u}= &	\mu^{-1}	\biggl(\frac{\underline{\widetilde{\mathfrak{G}}}}{\ttau} \widetilde{\mathscr{S}_{31}}(\ttau;\mu\mfu_0,\mu\mfv,\eta \mathfrak{z})  +\frac{1}{\ttau \tilde{\uf}^{\frac{1}{2}}} \widetilde{\mathscr{S}_{32}} (\ttau;\mu\mfu_0,\mu\mfv,\eta \mathfrak{z})  \biggr) \notag  \\
	=& \frac{\underline{\widetilde{\mathfrak{G}}}}{\ttau} \widehat{\mathscr{S}_{31}}(\ttau,\mu,\eta,\eta \mu^{-1} ; \mfu_0, \mfv,  \mathfrak{z})  +\frac{1}{\ttau \tilde{\uf}^{\frac{1}{2}}} \widehat{\mathscr{S}_{32}} (\ttau,\mu,\eta,\eta \mu^{-1} ; \mfu_0, \mfv,  \mathfrak{z}) . 
\end{align*}
Similarly, we obtain $\mu^{-1}	\widetilde{\mathfrak{F}_{u_0}},\mu^{-1}	\widetilde{\mathfrak{F}_{u_k}},\mu^{-1}	\widetilde{\mathfrak{F}_z},\mu^{-1}	\widetilde{\mathfrak{F}_u}$. 
This completes the proof. 
\end{proof}

\subsection{The revised singular system}\label{s:revFuc}
As highlighted in Difficulty $4$ in \S\ref{s:oview},  the issue arises with the singularity of  $\mu$ and $\eta$ as $\txi^1 \rightarrow -\infty$ in the system \eqref{e:mainsys3}. 
To address this, we revise the singular system \eqref{e:mainsys3} by excluding the neighborhood where $\txi^1 \rightarrow -\infty$, using a cutoff function. This modification yields a revised system that can  lead to a Fuchsian system.

We first select the parameters to be determined as below. The basic idea for choosing these parameters is discussed in  Difficulty $3$ of \S\ref{s:oview}.  
\begin{equation}\label{e:para} 
\ck=\frac{1}{4}, \quad 	\kappa_i= - 51  \delta^1_i ,  \quad	\tc^i= -\frac{1}{51}  \delta_1^i  ,   \quad	\omega=\theta=1>0 ,\quad \ta=  - 100 , \quad  \mathtt{d}^i = 0  . 
\end{equation} 
In addition, we set $\hat{\mu}=\sigma_0 e^{- \frac{153}{\delta_0} } $ and $\hat{\eta}=\sigma_0^2 e^{-  \frac{153}{\delta_0} }  $ in \eqref{e:eta1} for  small constants $\sigma_0, \delta_0 \in(0,1)$. That is,
\begin{equation}\label{e:eta2}
	\mu(\txi^1):=    \sigma_0 e^{- \frac{153}{\delta_0} }  e^{-51 \txi^1 }   ,  \quad 	\eta(\txi^1):=    \sigma_0^2 e^{-  \frac{153}{\delta_0} }    e^{-51 \txi^1 }     \quad \Rightarrow \quad \frac{\del{\txi^i}\mu}{\mu} =-51\delta^1_i<0 ,   \quad \frac{\del{\txi^i}\eta}{\eta} =-51\delta^1_i < 0  .   
\end{equation}  
It is clear that for $\txi^1>-\frac{3}{\delta_0}$, both $	\mu(\txi^1)$ and $\eta(\txi^1)$ remain small. 
\begin{equation}\label{e:mueta2}
	\mu(\txi^1) \leq \sigma_0 \AND \eta(\txi^1)   \leq \sigma_0^2 . 
\end{equation}
Additionally, for later reference, we calculate the ratio $\eta/\mu$ as follows, 
\begin{equation*}
	\frac{\eta}{\mu} = \frac{   \sigma_0^2 e^{-  \frac{153}{\delta_0}  }    e^{-51 \txi^1 }  }{   \sigma_0 e^{- \frac{153}{ \delta_0} }  e^{- 51 \txi^1 }}=  \sigma_0  .  
\end{equation*}

Let $\phi(\txi^1)$ be a \textit{smooth cut-off function} satisfying 
\begin{equation}\label{e:phi1}
    \phi\in C^\infty\bigl(\Rbb;[0,1]\bigr),\quad \phi |_{[-\delta_0^{-1} , +\infty)}=1  \AND 
    \supp \phi\subset 	[-2\delta_0^{-1} , +\infty)    \subset \Rbb . 
\end{equation}
Then in this section, we consider the following revised system \eqref{e:mainsys5} given by
	\begin{equation}\label{e:mainsys5}
		 	\mathfrak{A}^0_\phi\del{\ttau}  \mathfrak{U} +\frac{1}{A\ttau}    \mathfrak{A}^{i}_\phi \del{\txi^i}  \mathfrak{U}  =\frac{1}{A\ttau}  	 \mathfrak{A}_\phi   \mathfrak{U}  + 	 \mathfrak{F}_\phi  , 
	\end{equation} 
	where  
		\begin{align}\label{e:A0} 	\mathfrak{A}^0_\phi:=	& \p{1 & \phi\mu \tilde{R}  \mathfrak{B}_i \delta^{ij}  & 0 & 0 & 0 & 0 \\
		\phi	\mu \tilde{R} \mathfrak{B}_k     &  ( \tilde{S}+ \widetilde{\mathscr{L} }_\phi ) \delta^j_k  & 0 & 0 & 0 & 0\\
			0&0&2&0&0&0\\
			0&0&0&\delta^l_s&0 & 0 \\
			0&0&0&0&1 & 0\\
			0&0&0&0&0&2} ,  
	\end{align} 
	\begin{align}
		\mathfrak{A}^{i}_\phi:=  \p{ \frac{100}{51} \delta^i_1 & 	  \tilde{S} \frac{\widetilde{\underline{\chi_\uparrow}}}{B}  \delta^{ij}  + 	\widetilde{\mathscr{Z}}_\phi^{ij}   & 0 & 0 & 0 &0\\
		(  \tilde{S} \frac{\widetilde{\underline{\chi_\uparrow}}}{B}  \delta^{ij}  +  	\widetilde{\mathscr{Z}_\phi^{ij}}  ) \delta_{jk}   & \frac{100}{51} \delta^i_1 (S+ \widetilde{\mathscr{L} }_\phi   ) \delta^j_k   & 0 & 0 & 0 &0\\
		0&0& \frac{200}{51} \delta^i_1  &0&0&0\\
			0&0&0&\frac{100}{51} \delta^i_1  \delta^l_s &0&0\\
			0&0&0&0&  \frac{100}{51} \delta^i_1  & 0 \\0&0&0&0&0& \frac{200}{51} \delta^i_1  }  , 
	\end{align}  
	and  
	\begin{align}\label{e:Aph}
		 \mathfrak{A}_\phi:=   
		&   	  \p{  \frac{286}{3}+ 	\widetilde{\mathscr{Z}_{11,\phi}}&   - q^j+  51   \delta^{1j}  +     	\widetilde{\mathscr{Z}^j_{-,\phi} }&0  & 0 &  \frac{\eta}{\mu}   (-8+ 	\widetilde{\mathscr{Z}_{15,\phi}} ) & 8+ 	\widetilde{\mathscr{Z}_{13,\phi}}  \\
		51  \delta^{1}_k   + \widetilde{\mathscr{Z}_{k,\phi}^+}   & ( 26 + \widetilde{\mathscr{Z}}_{\star,\phi})\delta^j_k  &0&(6    +  	\widetilde{\mathscr{Z}_{24,\phi}} ) \delta^l_k & 0 & 0\\
		\phi\mu(- 8   + 	\widetilde{\mathscr{Z}_{31,\phi}} ) & 0 & \frac{ 40 }{3 } + 	\widetilde{\mathscr{Z}_{33,\phi}} & 0 & \phi\eta( -16+ 	\widetilde{\mathscr{Z}_{35, \phi} })  & 0  \\0&(\frac{2  }{3       }  + 	\widetilde{\mathscr{Z}_{42, \phi}} )\delta^j_{s} &0& (\frac{302}{3}+ 	\widetilde{\mathscr{Z}_{44,\phi} }      ) \delta^l_s  & 0 & 0 \\ 0 & 0 &0 &0 & 100   &0  \\
		- 8   + 	\widetilde{\mathscr{Z}_{31,\phi} } & 0 & 0 & 0 & 	 \frac{\eta}{\mu}  (-16+ 	\widetilde{\mathscr{Z}_{35,\phi}} ) & \frac{640}{3} + 	\widetilde{\mathscr{Z}_{33,\phi} }  }       ,
	\end{align}
	\begin{equation}
		\mathfrak{F}_\phi:= \p{ - \frac{1}{A}      \widetilde{\mathscr{S}}^j\mfu_j+	\mu^{-1}	\widetilde{\mathfrak{F}_{u_0,\phi }}\\ 	  \frac{100}{A } \widetilde{\mathscr{S}} (\ttau)   \mfu_k - \frac{1}{A}    \widetilde{\mathscr{S}}^j\delta_{jk} \mfu_0 +	\mu^{-1} \widetilde{\mathfrak{F}_{u_k,\phi }} \\  		\widetilde{\mathfrak{F}_{u,\phi }}\\ 		\mu^{-1}\widetilde{\mathfrak{F}_{\mathcal{B}_i,\phi }} \\ 		\eta^{-1} \widetilde{\mathfrak{F}_{z,\phi } } \\
			\mu^{-1}	\widetilde{\mathfrak{F}_{u,\phi }}} , 
	\end{equation} 
	and the notation $\widetilde{\mathfrak{F}_{u,\phi }}$ denotes $\widetilde{\mathfrak{F}_{u}}$ with  $\mu$, $\eta$ and $\eta \mu^{-1}$ replaced by $\phi\mu$, $\phi\eta$ and $\phi\eta \mu^{-1}$, respectively.  Similarly,  $\widetilde{\mathfrak{F}_{\mathcal{B}_i,\phi }}$, $\widetilde{\mathfrak{F}_{z,\phi } }$, $\widetilde{\mathfrak{F}_{u_k,\phi }}$,  $\widetilde{\mathfrak{F}_{u_0,\phi }}$, $\widetilde{\mathscr{Z}_\phi}$ and  $\widetilde{\mathscr{L} }_\phi$, etc. are obtained by making analogous substitutions.

\subsection{The revised and extended Fuchsian system on a torus $\Tbb^n$}\label{s:revFuc1}
As discussed in Difficulty $5$ of \S\ref{s:oview}, the singular system \ref{e:mainsys5} closely resembles the Fuchsian system, except that the domain is not a closed manifold.  We have to compactify the domain and further obtain a torus. This involves introducing a new coordinate system, defined as follows\footnote{Note that the notation  $\hat{\zeta}^i=\arctan \txi^i$ should be understood as $\hat{\zeta}^i=h^i( \txi^k)$, where $h^i=\arctan$ for each $i$. }
\begin{equation}\label{e:coord6b}
	\hat{\tau}=\ttau\in[-1,0)  \AND \hat{\zeta}^i= \arctan (\gamma\txi^i)\in\left(-\frac{\pi}{2} ,\frac{\pi}{2} \right)
\end{equation}
for a constant $\gamma>0$ to be determined (see \eqref{e:bt1} in \S\ref{s:verfuc} for  the selection of $\gamma$), 
and its inverse is
\begin{equation}\label{e:coordi6}
	\ttau=\hat{\tau} \AND \txi^i=\frac{1}{\gamma} \tan  \hat{\zeta}^i  . 
\end{equation}
The Jacobian of this transformation is
\begin{equation*}
	\p{\widehat{\frac{\partial \hat{\tau}}{\partial \tilde{\tau}}} &  \widehat{\frac{\partial \hat{\tau}}{\partial \tilde{\zeta}^j} }  \\  \widehat{ \frac{\partial \hat{\zeta}^i}{\partial \tilde{\tau}} } & \widehat{\frac{\partial \hat{\zeta}^i}{\partial  \tilde{\zeta}^j} }  }= \p{1 & 0 \\
	0  & \gamma \cos^2( \hat{\zeta}^i  ) \delta^i_j  }  \AND \det\p{\widehat{\frac{\partial \hat{\tau}}{\partial \tilde{\tau}}} &  \widehat{\frac{\partial \hat{\tau}}{\partial \tilde{\zeta}^j} }  \\  \widehat{ \frac{\partial \hat{\zeta}^i}{\partial \tilde{\tau}} } & \widehat{\frac{\partial \hat{\zeta}^i}{\partial  \tilde{\zeta}^j} }  }=\gamma^n \prod_{i=1}^n \cos^{2}  \hat{\zeta}^i  > 0. 
\end{equation*}  
For a function $\tilde{z}(\ttau,\txi )$, if denoting  $\hat{z}(\hat{\tau},\hat{\zeta} ):=\tilde{z}\bigl(\ttau(\hat{\tau},\hat{\zeta} ),\txi(\hat{\tau},\hat{\zeta} )\bigr)$, then using the above Jocobian matrix described above, we obtain
\begin{equation}
	\widehat{\del{\ttau}\tilde{z}}=   \widehat{\frac{\partial \hat{\tau}}{\partial \ttau}} \del{\hat{\tau}} \hat{z}+  \widehat{ \frac{\partial \hat{\zeta}^i}{\partial \ttau} }  \del{\hat{\zeta}^i} \hat{z} =  \del{\hat{\tau}} \hat{z}    \AND 
	\widehat{\del{\txi^i}\tilde{z}}=   \widehat{\frac{\partial \hat{\tau}}{\partial \txi^i} } \del{\hat{\tau}} \hat{z}+  \widehat{\frac{\partial \hat{\zeta}^j}{\partial  \txi^i} }  \del{\hat{\zeta}^j} \hat{z} = \gamma \cos^2 (\hat{\zeta}^i) \del{\hat{\zeta}^i}  \hat{z} . \label{e:transf3}
\end{equation}

\begin{remark}
	This coordinate system  $(\hat{\tau},\hat{\zeta})$ compacifies the spatial coordinate. After this compactification, we are able to identify $\hat{\zeta}^i=-\frac{\pi}{2}$ and $\frac{\pi}{2}$ to obtain a torus $\Tbb^n_{[-\frac{\pi}{2},\frac{\pi}{2}]}$. The revised singular system \eqref{e:mainsys5} can then be reformulated on this torus, as $\hat{\phi}\hat{\mu}\rightarrow 0$ and $\hat{\phi}\hat{\eta}\rightarrow 0$ when $\hat{\zeta}^1\rightarrow\pm \frac{\pi}{2}$.  Consequently, we obtain a Fuchsian system (see the proof in \S\ref{s:verfuc}). 
\end{remark}

The revised and extended system is 
\begin{equation}\label{e:mainsys6}
	\widehat{\mathfrak{A}}^0_\phi\del{\hat{\tau}}  \widehat{\mathfrak{U}} +\frac{1}{A\hat{\tau}}    \gamma\cos^2 \hat{\zeta}^i  \widehat{\mathfrak{A}}^{i}_\phi   \del{\hat{\zeta}^i}  \widehat{\mathfrak{U} } =\frac{1}{A\hat{\tau} }  	 \widehat{\mathfrak{A}}_\phi  \widehat{ \mathfrak{U}}  + 	 \widehat{\mathfrak{F}}_\phi , 
\end{equation}  
where
\begin{align}\label{e:A0b} 	\widehat{\mathfrak{A}}^0_\phi:=	& \p{1 & \hat{\phi} \hat{\mu} \hat{R}  \hat{\mathfrak{B}}_i \delta^{ij}  & 0 & 0 & 0 & 0 \\
		\hat{\phi}	\hat{\mu} \hat{R} \hat{\mathfrak{B}}_k     &  (\hat{S}+  \widehat{\mathscr{L}_\phi}  ) \delta^j_k  & 0 & 0 & 0 & 0\\
		0&0&2&0&0&0\\
		0&0&0&\delta^l_s&0 & 0 \\
		0&0&0&0&1 & 0\\
		0&0&0&0&0&2} ,  
\end{align} 
\begin{align}
	\widehat{\mathfrak{A}}^{i}_\phi:=  \p{ \frac{100}{51} \delta^i_1 & 	   \hat{S} \frac{\widehat{\underline{\chi_\uparrow}}}{B}  \delta^{ij}  +  	\widehat{\mathscr{Z}_\phi}^{ij}   & 0 & 0 & 0 &0\\
		(  \hat{S} \frac{\widehat{\underline{\chi_\uparrow}}}{B}  \delta^{ij}  +  	\widehat{\mathscr{Z}_\phi}^{ij}  ) \delta_{jk}   & \frac{100}{51} \delta^i_1 (\hat{S}+  \widehat{\mathscr{L}_\phi }   ) \delta^j_k   & 0 & 0 & 0 &0\\
		0&0& \frac{200}{51} \delta^i_1  &0&0&0\\
		0&0&0&\frac{100}{51} \delta^i_1  \delta^l_s &0&0\\
		0&0&0&0&  \frac{100}{51} \delta^i_1  & 0 \\0&0&0&0&0& \frac{200}{51} \delta^i_1  }  , 
\end{align}  
and  
\begin{align}\label{e:Aphb}
	\widehat{\mathfrak{A}}_\phi:=   
	&   	  \p{  \frac{286}{3}+ 	\widehat{\mathscr{Z}_{11,\phi}}&     -q^j+51   \delta^{1j}  +    	\widehat{\mathscr{Z}^j_{-,\phi} }&0  & 0 &   \frac{\hat{\eta}}{\hat{\mu}}   (-8+ 	\widehat{\mathscr{Z}_{15,\phi}} ) & 8+ 	\widehat{\mathscr{Z}_{13,\phi}}  \\
	51  \delta^{1}_k   +  \widehat{\mathscr{Z}_{k,\phi}^+}   & ( 26 +  \widehat{\mathscr{Z}}_{\star,\phi})\delta^j_k  &0&(6    +  	\widehat{\mathscr{Z}_{24,\phi}} ) \delta^l_k & 0 & 0\\
	\hat{\phi} \hat{\mu} (- 8   + 	\widehat{\mathscr{Z}_{31,\phi}} ) & 0 & \frac{ 40 }{3 } + 	\widehat{\mathscr{Z}_{33,\phi}} & 0 & \hat{\phi} \hat{\eta} ( -16+ 	\widehat{\mathscr{Z}_{35,\phi} })  & 0  \\0&(\frac{2  }{3       }  + 	\widehat{\mathscr{Z}_{42,\phi}} )\delta^j_{s} &0& (\frac{302}{3}+ 	\widehat{\mathscr{Z}_{44,\phi} }      ) \delta^l_s  & 0 & 0 \\ 0 & 0 &0 &0 & 100   &0  \\
	- 8   + 	\widehat{\mathscr{Z}_{31,\phi} } & 0 & 0 & 0 & 	  \frac{\hat{\eta}}{\hat{\mu}}  (-16+ 	\widehat{\mathscr{Z}_{35,\phi}} ) & \frac{640}{3} + 	\widehat{\mathscr{Z}_{33,\phi} }  }       ,
\end{align}
\begin{equation}
	\widehat{\mathfrak{F}}_\phi:=  \p{ - \frac{1}{A}      \widehat{\mathscr{S}}^j\hat{\mfu}_j+	\hat{\mu}^{-1}	\widehat{\mathfrak{F}_{u_0,\phi}}\\ 	  \frac{100}{A } \widehat{\mathscr{S}}    \hat{\mfu}_k - \frac{1}{A}    \widehat{\mathscr{S}}^j\delta_{jk} \hat{\mfu}_0 +	\hat{\mu}^{-1} \widehat{\mathfrak{F}_{u_k,\phi}} \\  		\widehat{\mathfrak{F}_{u,\phi}}\\ 		\hat{\mu}^{-1}\widehat{\mathfrak{F}_{\mathcal{B}_i,\phi}} \\ 		\hat{\eta}^{-1} \widehat{\mathfrak{F}_{z,\phi} } \\
	\hat{\mu}^{-1}	\widehat{\mathfrak{F}_{u,\phi}}} , 
\end{equation}

Correspondingly, for later reference, the extended system of the original one \eqref{e:mainsys3} (without using the cutoff function $\phi$) is 
\begin{equation}\label{e:mainsys8}
	\widehat{\mathfrak{A}}^0 \del{\hat{\tau}}  \widehat{\mathfrak{U}} +\frac{1}{A\hat{\tau}}    \gamma\cos^2 \hat{\zeta}^i  \widehat{\mathfrak{A}}^{i}   \del{\hat{\zeta}^i}  \widehat{\mathfrak{U} } =\frac{1}{A\hat{\tau} }  	 \widehat{\mathfrak{A}}   \widehat{ \mathfrak{U}}  + 	 \widehat{\mathfrak{F}} . 
\end{equation}  

\begin{remark}
	Note $\cos^2(\hat{\zeta}^i)$,  $\hat{\phi}\hat{\mu}$ and $\hat{\phi}\hat{\eta}$, and their derivatives of any order approach zero as $\hat{\zeta}^1$ tends to $\pi /2$ and $-\pi /2$.  Therefore, this system is well-defined on the extended spacetime domain $[-1,0)\times \Tbb^n_{[-\frac{\pi}{2} ,\frac{\pi}{2} ]}$ where   $\Tbb^n_{[-\frac{\pi}{2} ,\frac{\pi}{2} ]}$ denotes the $n$-dimensional torus obtained from identifying the end points of the every interval  $
	[-\frac{\pi}{2} ,\frac{\pi}{2} ]\subset \Rbb$. 
\end{remark}

After constructing the singular system \eqref{e:mainsys6}, we claim the following proposition, with the proof deferred to \S\ref{s:verfuc}.  
\begin{proposition}\label{t:verfuc}
	Suppose $k\in\Zbb_{> \frac{n}{2}+3}$, $\widehat{\mathfrak{U}}|_{\hat{\tau}=-1}=\widehat{\mathfrak{U}}_0$,  $\widehat{\mathfrak{U}}_{0}\in H^{k}(\Tbb^n_{[-\frac{\pi}{2} ,\frac{\pi}{2} ]})$, then  
	\begin{enumerate}[leftmargin=*]
		\item the system \eqref{e:mainsys6} is a Fuchsian system given by Appendix \ref{s:fuc}. 
		\item  there exist constants $\sigma_1, \sigma>0$ satisfying $\sigma<\sigma_1$, such that if
		\begin{equation*}
			\|\widehat{\mathfrak{U}}_0\|_{H^k} \leq \sigma,
		\end{equation*}
		then there exists a unique solution
		\begin{equation}\label{e:solreg}
			\widehat{\mathfrak{U}} \in C^0([-1,0),H^k(\Tbb^n_{[-\frac{\pi}{2},\frac{\pi}{2}]}) ) \cap C^1([-1,0),H^{k-1}(\Tbb^n_{[-\frac{\pi}{2},\frac{\pi}{2}]}) ) \cap \Li ([-1,0),H^k(\Tbb^n_{[-\frac{\pi}{2},\frac{\pi}{2}]}))
		\end{equation}
		of the initial value problem \eqref{e:mainsys6} and $\widehat{\mathfrak{U}}|_{\ttau=-1}=\widehat{\mathfrak{U}}_0$. Moreover, for $-1\leq \hat{\tau}<0$, the solution $\widehat{\mathfrak{U}}$ satisfies the energy estimate
		\begin{equation*} 
			\|\widehat{\mathfrak{U}}(\hat{\tau})\|_{H^k(\Tbb^n_{[-\frac{\pi}{2},\frac{\pi}{2}]})}^2  - \int^{\hat{\tau}}_{-1}\frac{1}{s}\| \widehat{\mathfrak{U}}(s)\|^2_{H^k(\Tbb^n_{[-\frac{\pi}{2},\frac{\pi}{2}]})}ds \leq C(\sigma_1,\sigma_1^{-1}) \|\widehat{\mathfrak{U}}_0\|^2_{H^k(\Tbb^n_{[-\frac{\pi}{2},\frac{\pi}{2}]})} .
		\end{equation*}
	\end{enumerate}
\end{proposition}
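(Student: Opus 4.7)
The plan is to verify that the extended system \eqref{e:mainsys6} meets every requirement of the Fuchsian framework (Conditions \ref{c:2}--\ref{c:7} of Appendix \ref{s:fuc}) on the closed manifold $\Tbb^n_{[-\pi/2,\pi/2]}$, and then to invoke Theorem \ref{t:fuc} directly. The specific parameter values in \eqref{e:para} and the exponential weights in \eqref{e:eta2} have been engineered precisely so that this verification succeeds; the most delicate check is the dominance inequality \eqref{e:Bineq1} between $\widehat{\mathfrak{A}}^0_\phi$ and $\widehat{\mathfrak{A}}_\phi$, which is where the entire architecture of \S\ref{s:riv1}--\S\ref{s:revFuc1} pays off.

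I would begin with the structural checks. Symmetry of $\widehat{\mathfrak{A}}^0_\phi$ and of each spatial coefficient $\gamma\cos^2(\hat{\zeta}^i)\widehat{\mathfrak{A}}^i_\phi$ is immediate from \eqref{e:A0b} and the block form of $\widehat{\mathfrak{A}}^i_\phi$. Analyticity of every matrix entry in $\widehat{\mathfrak{U}}$ follows from the remainder conventions of \S\ref{s:rmdrs}, since each $\mathscr{Z},\mathscr{S},\mathscr{L}$ was constructed as an analytic function of its listed arguments. Smoothness on the torus holds because the compactification $\hat{\zeta}^i=\arctan(\gamma\txi^i)$ together with the cutoff $\hat{\phi}$ ensures that all weights $\hat{\phi}\hat{\mu}$, $\hat{\phi}\hat{\eta}$, and $\gamma\cos^2\hat{\zeta}^i$ vanish to infinite order at $\hat{\zeta}^1=\pm\pi/2$. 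The $\hat{\tau}$-dependence of all coefficients is analytic in $\hat{\tau}\in[-1,0)$ and continuous up to $\hat{\tau}=0$ thanks to Proposition \ref{t:limG} (giving $\underline{\mathfrak{G}}\to 0$) and Lemma \ref{t:Thpst} (giving $\uf\to\infty$); this controls the growth of the source $\widehat{\mathfrak{F}}_\phi$, since each term in $\mathfrak{F}$ of Lemma \ref{t:sigsys} carries an explicit factor $\underline{\mathfrak{G}}/\hat{\tau}$ or $1/(\hat{\tau}\,\hat{\uf}^{1/2})$, and both are integrable in $1/\hat{\tau}$-weighted norms on the Fuchsian interval $[-1,0)$.

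The main obstacle is verifying the algebraic inequality \eqref{e:Bineq1} for $\mathbf{B}=\widehat{\mathfrak{A}}_\phi$. I would compute the symmetric part $\widehat{\mathfrak{A}}_\phi^{\mathrm{sym}}=\tfrac{1}{2}(\widehat{\mathfrak{A}}_\phi+\widehat{\mathfrak{A}}_\phi^T)$ and check that it dominates a positive multiple of $\widehat{\mathfrak{A}}^0_\phi$. The diagonal entries of $\widehat{\mathfrak{A}}_\phi$ read $\tfrac{286}{3}$, $26$, $\tfrac{40}{3}$, $\tfrac{302}{3}$, $100$, $\tfrac{640}{3}$, all strictly positive by the design of \eqref{e:para} (for instance, $\tfrac{286}{3}=100-\tfrac{14}{3}=-\omega\ta-\tfrac{14}{3}$ and $26=\ck(4-\omega\ta)$). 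The dominant off-diagonal coupling lives in the $(\mfu_0,\mfu_k)$ block: one entry is $-q^j+51\delta^{1j}$ and the other is $51\delta^1_k$, so the symmetrized entry equals $\tfrac{1}{2}(102\delta^{1j}-q^j)$, which under Assumption \ref{Asp2} has absolute value bounded by $\tfrac{1}{2}(102-|q|)\leq 49.5$ in the $(\mfu_0,\mfu_1)$ slot and is zero elsewhere. A direct $2\times 2$ Schur complement in this block (using $\tfrac{286}{3}\cdot 26-49.5^2 > 0$) yields a strictly positive lower bound, and the remaining couplings involving $\mathfrak{z}$ and $\mfv$ carry the small factor $\hat{\eta}/\hat{\mu}=\sigma_0$, while every remainder $\widehat{\mathscr{Z}}$ vanishes at the origin of $\widehat{\mathfrak{U}}$. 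Choosing $\sigma_0,\delta_0$ small preserves positivity across the whole block; together with \eqref{e:mueta2}, which keeps $\widehat{\mathfrak{A}}^0_\phi$ within $\mathrm{O}(\sigma_0)$ of $\mathrm{diag}(1,\tilde S,2,1,1,2)$, we obtain $\widehat{\mathfrak{A}}_\phi^{\mathrm{sym}}\geq \kappa\,\widehat{\mathfrak{A}}^0_\phi$ with an explicit $\kappa>0$ and a uniform upper bound $\gamma_2\mathds{1}$.

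Once every Fuchsian condition is in place, Theorem \ref{t:fuc} applies verbatim: for any $k>\tfrac{n}{2}+3$ there exists $\sigma_1>0$ and, for each $\sigma\in(0,\sigma_1)$, a $\sigma$-neighborhood of zero in $H^k(\Tbb^n_{[-\pi/2,\pi/2]})$ such that the Cauchy problem for \eqref{e:mainsys6} with $\widehat{\mathfrak{U}}|_{\hat\tau=-1}=\widehat{\mathfrak{U}}_0$ admits a unique solution of the stated regularity \eqref{e:solreg}, extending up to the singular time $\hat{\tau}=0$. The associated Fuchsian energy inequality produces the claimed bound on $\|\widehat{\mathfrak{U}}(\hat{\tau})\|_{H^k}^2$ with the $-\int_{-1}^{\hat\tau}s^{-1}\|\widehat{\mathfrak{U}}\|_{H^k}^2\,ds$ dissipation term, whose positivity on $[-1,0)$ is guaranteed by the lower bound $\kappa>0$ obtained in the previous step. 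The hardest bookkeeping throughout is to simultaneously track the two smallness scales $\hat{\phi}\hat{\mu}\leq\sigma_0$ and $\hat{\phi}\hat{\eta}\leq\sigma_0^2$ inside the various remainder maps $\widehat{\mathscr{Z}}_{\cdot,\phi}$ and $\widehat{\mathscr{S}}_{\cdot,\phi}$, but since each of these remainders was built to vanish to first order in $(\hat{\phi}\hat{\mu}\,\mfu,\hat{\phi}\hat{\eta}\,\mathfrak{z})$ and related arguments, a single application of Taylor's theorem provides the required uniform control on the torus.
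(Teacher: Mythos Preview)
Your overall strategy matches the paper's: verify Conditions \ref{c:2}--\ref{c:7} and invoke Theorem \ref{t:fuc}. The symmetry/regularity checks and your treatment of the source terms via the $\underline{\mathfrak{G}}/\hat{\tau}$ and $1/(\hat{\tau}\hat{\uf}^{1/2})$ factors are essentially what the paper does (using Lemma \ref{t:Gest2} and Corollary \ref{s:gf1/2}). Your positivity argument for \ref{c:5} via the symmetrized $2\times2$ Schur complement is a legitimate alternative to the paper's route, which instead expands $X^T\acute{\mathfrak{A}}_\phi X$ and applies Young's inequality with explicit exponents $p=\tfrac{13}{25}$, $r=\tfrac{1}{20}$ to get $X^T\acute{\mathfrak{A}}_\phi X>\tfrac{1}{50}|X|^2$. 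Note, however, that there is a second non-small off-diagonal coupling you do not mention: the $(\mfu_k,\mathfrak{B}_l)$ block carries a symmetrized entry $\tfrac{10}{3}$ (from $6$ and $\tfrac{2}{3}$), and the $(\mfu_0,\mfv)$ entries $8$ and $-8$ cancel only after symmetrization---neither is intrinsically small.

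There is, however, a genuine gap. You never verify Condition \ref{c:7} (the divergence bound $\mathrm{div}\,\widehat{\mathfrak{A}}_\phi=\mathcal{O}(\theta+|\hat{\tau}|^{-1/2}\beta_0+|\hat{\tau}|^{-1}\beta_1)$), and more importantly you treat ``$\kappa>0$'' as sufficient to apply Theorem \ref{t:fuc}. It is not: the theorem requires the quantitative inequality \eqref{e:kpbt1}, namely $\kappa>\tfrac{1}{2}\gamma_1\bigl(\beta_1+2k(k+1)\mathtt{b}\bigr)$, where $\mathtt{b}$ measures spatial derivatives of the background Fuchsian coefficients. In the paper this is where the free parameter $\gamma$ from the spatial compactification \eqref{e:coord6b} finally earns its keep: one computes $\mathtt{b}\le C_0\gamma$ and shows the $|\hat{\tau}|^{-1}$-part of $\mathrm{div}\,\widehat{\mathfrak{A}}_\phi$ is controlled by $C_1\gamma$ (plus terms absorbable by shrinking $\sigma_0$ and the solution ball), so that choosing $\gamma$ small enough as in \eqref{e:bt1} forces \eqref{e:kpbt1}. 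Without this step---both the \ref{c:7} computation and the $\gamma$-tuning---the hypotheses of Theorem \ref{t:fuc} are not established and the proof does not close.
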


\subsection{Constructing solution to the original system}\label{s:reorg}
This section focuses on reconstructing the solution to the original system by utilizing a lens-shaped domain, addressing the difficulty outlined in Difficulty  $1$ of \S\ref{s:oview}. 
We have previously constructed a solution to the revised and extended equation  \eqref{e:mainsys6} on $[-1,0)\times \Tbb^n_{[-\frac{\pi}{2},\frac{\pi}{2}]}$. To recover the solution to the original system, our idea involves defining a lens-shaped domain within $[-1,0)\times \Tbb^n_{[-\frac{\pi}{2},\frac{\pi}{2}]}$.  In this domain, we aim for the revised system to align with the original equation, ensuring that 
this domain intersects with $\mathcal{I}$, including the point $p_m$ (recall $\mathcal{I}$ and $p_m$ in Theorem \ref{t:mainthm2}). If successful, this implies that within this lens-shaped domain, the solution to the revised system will match the solution to the original system due to \cite[Theorem $4.5$]{Lax2006}. 
 
Next, we will construct that, within a specific domain, the solution to the extended system \eqref{e:mainsys6} coincides with the solution to \eqref{e:mainsys8}. We define a lens-shaped domain (see Fig. \ref{f:fig1}, the shaded region), denoted by $\mathit{\Lambda}_{\delta_0}$. This lens-shaped domain is enclosed by weakly spacelike hypersurfaces, with its top boundary intersecting the characteristic boundary $\mathcal{C}$ (defined by \eqref{e:char1}) at a future endpoint $p_m$ of a null geodesic (see Fig. \ref{f:fig1}). 
We begin by denoting a hypersurface $\widehat{\Gamma}_{\delta_0}$ in $(\htau,\hat{\zeta})$, which serves as the boundary between charted and uncharted domains for the solutions to the original equation \ref{Eq2}. For any given small constant $\delta_0>0$, we define 
\begin{equation}\label{e:surf0}
\widehat{\Gamma}_{\delta_0}:=\left\{(\htau,\hat{\zeta})\in[-1,0 )\times\Tbb^n_{[-\frac{\pi}{2}, \frac{\pi}{2}]} \;\bigg|\; \widehat{\Psi}(\hat{\tau},\hat{\zeta}):= \hat{\tau}-\widehat{\mathfrak{T}}_{\delta_0}( \hat{\zeta}) = 0 \right\}  , 
\end{equation}
where  
\begin{align*}
	\widehat{\mathfrak{T}}_{\delta_0}(\hat{\zeta} ):=
		\begin{cases}
		 -	\exp\left(- \cc \left( \frac{51A}{2}-  \frac{51A}{4\delta_0 \gamma^{-1} \tan  \hat{\zeta}^1+8 }\right) \left(   \frac{1}{\gamma} \tan  \hat{\zeta}^1 + \frac{1}{\delta_0} \right)\right)   ,  \quad &-\arctan\bigl(\frac{\gamma}{\delta_0}\bigr)  \leq    \hat{\zeta}^1 \leq \arctan\bigl( \frac{2-51 \Xi_0 }{102  \Xi_0} \frac{\gamma}{\delta _0} \bigr)  \\
	 -\exp\left(- \left( \frac{51A}{2}-\frac{51A}{ 4\delta_0  \gamma^{-1} \tan  \hat{\zeta}^1  + 8}  \right) \left(   \frac{1}{\gamma} \tan  \hat{\zeta}^1 + \frac{1}{2\delta_0} \right)\right)  ,  \quad  &     \hat{\zeta}^1 >\arctan\bigl( \frac{2-51 \Xi_0 }{102  \Xi_0} \frac{\gamma}{\delta _0} \bigr) 
	\end{cases} , 
\end{align*}
for a constant $\cc \in (0, \bigl(1+\frac{51}{2} \Xi_0\bigr)^{-1})$ and $\Xi_0:=\Xi(t_0)=\sup_{t\in[t_0,t_m)} \Xi(t)$ (Lemma \ref{t:Thpst} ensures this).  We point out $\frac{2-51 \Xi_0 }{102  \Xi_0 } >-\frac{1}{2  }$.   
Then we define the lens-shaped domain $ \widehat{\mathit{\Lambda}}_{\delta_0}$, 
\begin{equation*}
 \widehat{\mathit{\Lambda}}_{\delta_0} :=\left\{(\htau,\hat{\zeta})\in[-1,0 )\times\Tbb^n_{[-\frac{\pi}{2}, \frac{\pi}{2}]}\;|\; \hat{\tau}\leq  \widehat{\mathfrak{T}}_{\delta_0}( \hat{\zeta})   \right\} , 
\end{equation*}
and the boundary of $ \widehat{\mathit{\Lambda}}_{\delta_0}$ is
\begin{equation*}
	\partial  \widehat{\mathit{\Lambda}}_{\delta_0} =\widehat{\Sigma}_0 \cup \widehat{\Gamma}_{\delta_0}\cup\widehat{\Sigma}_1 \left(\cup_{k=2}^n \widehat{\Sigma}_{\pm k}\right) ,
\end{equation*}
where
\begin{equation*}
	\widehat{\Sigma}_{k}:=  \left\{(\hat{\tau},\hat\zeta^i) \;\Big|\; \hat{\zeta}^k=\frac{\pi}{2}\right\}\cap \widehat{\mathit{\Lambda}}_{\delta_0} , \quad \widehat{\Sigma}_{-k}:=  \left\{(\hat{\tau},\hat\zeta^i) \;\Big|\; \hat{\zeta}^k=-\frac{\pi}{2}\right\}\cap \widehat{\mathit{\Lambda}}_{\delta_0} \AND \widehat{\Sigma}_0:=\{\hat{\tau}=-1\} \cap \widehat{\mathit{\Lambda}}_{\delta_0}  . 
\end{equation*}

\begin{remark} 
	Given the significance of the hypersurface $\widehat{\Gamma}_{\delta_0}$, it is essential to reexpress it in alternative coordinate systems—namely, $(\ttau,\txi)$, $(\tau,\zeta)$, and $(t,x)$—for ease of reference. These reexpressions are provided in Lemmas \ref{t:Tsurf2}--\ref{t:Tsurf5} in Appendix \ref{s:ctbdry}. Additionally, an important result that $p_m \in \overline{\mathit{\Lambda}_{\delta_0} \cap \mathcal{I}}$ is established in Lemma \ref{t:inI} in the same appendix.
\end{remark}

\begin{figure}[h]
	\centering
	\includegraphics[width=10cm]{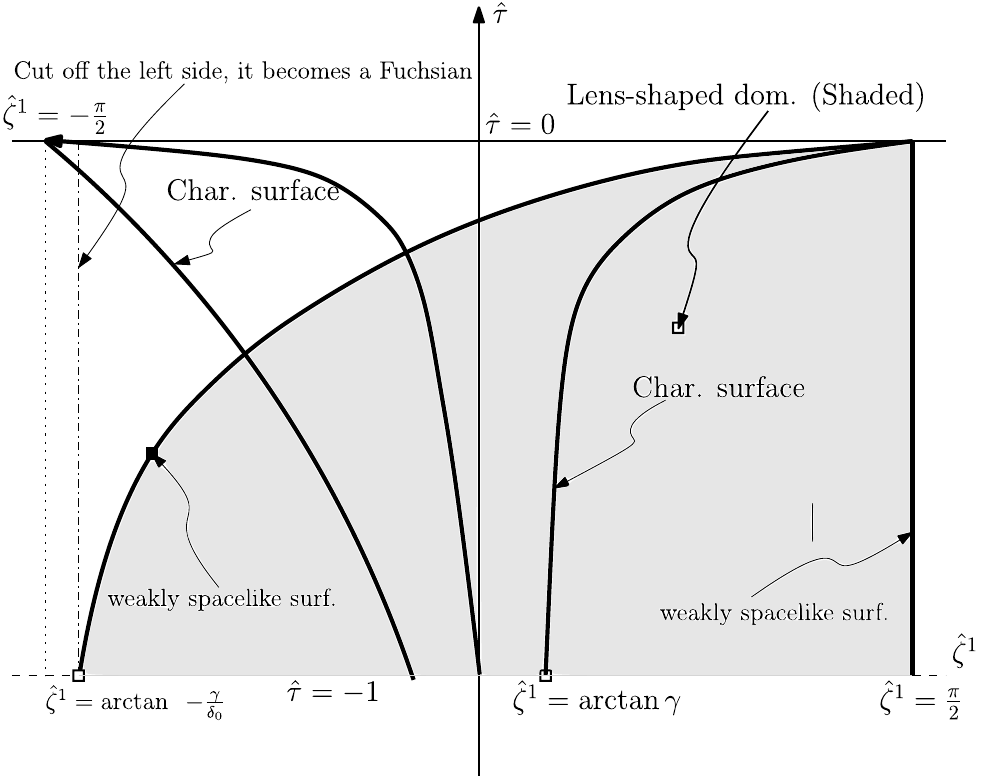}
	\caption{Domains $ \widehat{\mathit{\Lambda}_{\delta_0}}$ and $\widehat{\mathcal{C}}$}
	\label{f:fig1}
\end{figure}

\begin{lemma}\label{t:extori2}
	If $\widehat{\mathfrak{U}}(\htau,\hat{\zeta})$ is a solution to the revised and extended system \eqref{e:mainsys6} on the spacetime $[0,1)\times \Tbb^n_{[-\frac{\pi}{2},\frac{\pi}{2}]}$, then its restriction to the lens-shaped domain $\widehat{\mathit{\Lambda}}_{\delta_0}$, denoted as $\widehat{\mathfrak{U}}(\hat{\tau},\hat{\zeta})|_{\widehat{\mathit{\Lambda}}_{\delta_0}}$, is a solution of the original system \eqref{e:mainsys8} with parameters \eqref{e:para} within the domain $ \widehat{\mathit{\Lambda}}_{\delta_0}$.  This solution is uniquely determined by the initial data restricted to $\widehat{\Sigma}_0$.
\end{lemma}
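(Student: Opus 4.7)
The plan is to prove the lemma in two stages: first show that the revised and extended system \eqref{e:mainsys6} agrees with the unrevised extended system \eqref{e:mainsys8} throughout $\widehat{\mathit{\Lambda}}_{\delta_0}$, and then invoke a domain-of-dependence argument to obtain uniqueness from the initial data on $\widehat{\Sigma}_0$.

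\textbf{Step 1 (agreement of systems on $\widehat{\mathit{\Lambda}}_{\delta_0}$).} The only difference between \eqref{e:mainsys6} and \eqref{e:mainsys8} is the appearance of the cutoff $\hat\phi=\phi(\txi^1)$ inside the matrices $\widehat{\mathfrak{A}}^0_\phi$, $\widehat{\mathfrak{A}}_\phi$, and $\widehat{\mathfrak{F}}_\phi$, specifically multiplying the factors $\hat{\mu}$ and $\hat{\eta}$. By the definition \eqref{e:phi1} of $\phi$, one has $\phi(\txi^1)\equiv 1$ whenever $\txi^1\geq -\delta_0^{-1}$. So it suffices to verify the geometric inclusion
\begin{equation*}
\widehat{\mathit{\Lambda}}_{\delta_0}\subset \Bigl\{(\hat{\tau},\hat{\zeta})\;\Big|\;\tfrac{1}{\gamma}\tan\hat{\zeta}^1\geq -\tfrac{1}{\delta_0}\Bigr\},
\end{equation*}
which reads $\hat{\zeta}^1\geq -\arctan(\gamma/\delta_0)$ on $\widehat{\mathit{\Lambda}}_{\delta_0}$. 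This is exactly the design feature of $\widehat{\mathfrak{T}}_{\delta_0}$: its piecewise definition only makes sense for $\hat{\zeta}^1\geq -\arctan(\gamma/\delta_0)$, and I will use Lemmas \ref{t:Tsurf2}--\ref{t:Tsurf5} (which recast $\widehat{\Gamma}_{\delta_0}$ in each coordinate chart) to confirm this. Hence $\hat{\phi}\equiv 1$ on $\widehat{\mathit{\Lambda}}_{\delta_0}$, and so $\widehat{\mathfrak{U}}|_{\widehat{\mathit{\Lambda}}_{\delta_0}}$ automatically satisfies \eqref{e:mainsys8} there.

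\textbf{Step 2 (uniqueness via domain of dependence).} For uniqueness, I would apply the classical domain-of-dependence theorem for symmetric hyperbolic systems (e.g.\ \cite[Theorem 4.5]{Lax2006}) to the extended system \eqref{e:mainsys8}. What must be checked is that the boundary $\widehat{\Gamma}_{\delta_0}$ of $\widehat{\mathit{\Lambda}}_{\delta_0}$ is \emph{weakly spacelike} relative to the principal symbol of \eqref{e:mainsys8}; the pieces $\widehat{\Sigma}_{\pm k}$ for $k\geq 2$ give no contribution since the coefficients $\gamma\cos^2\hat{\zeta}^k\widehat{\mathfrak{A}}^k_\phi$ degenerate at $\hat{\zeta}^k=\pm\pi/2$. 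The covector normal to $\widehat{\Gamma}_{\delta_0}$ is $d\widehat{\Psi}=d\hat{\tau}-\partial_{\hat{\zeta}^1}\widehat{\mathfrak{T}}_{\delta_0}\,d\hat{\zeta}^1$, so the weak spacelikeness condition reduces to verifying
\begin{equation*}
\widehat{\mathfrak{A}}^0 - \bigl(\partial_{\hat{\zeta}^1}\widehat{\mathfrak{T}}_{\delta_0}\bigr)\frac{\gamma\cos^2\hat{\zeta}^1}{A\hat{\tau}}\widehat{\mathfrak{A}}^1 \;\geq\; 0
\end{equation*}
along $\widehat{\Gamma}_{\delta_0}$. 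Transforming back through \eqref{e:coordi6} and \eqref{e:coordi5} to the $(\tau,\zeta)$ coordinates, this inequality is equivalent to the statement that the pull-back of $\widehat{\Gamma}_{\delta_0}$ lies inside (or on) the characteristic conoid $\underline{\mathcal{C}}$ constructed in Lemma \ref{t:char1}; the parameters $\mathsf{c}\in(0,(1+\tfrac{51}{2}\Xi_0)^{-1})$ and the piecewise structure of $\widehat{\mathfrak{T}}_{\delta_0}$ were chosen precisely so that the tangential speed of $\widehat{\Gamma}_{\delta_0}$ dominates the maximum characteristic speed $A^{-1}\sqrt{S}\underline{\chi_\uparrow}/B=(2+\Xi)/A$.

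\textbf{Step 3 (conclusion).} With Steps 1--2 in place, $\widehat{\mathfrak{U}}|_{\widehat{\mathit{\Lambda}}_{\delta_0}}$ solves \eqref{e:mainsys8} on $\widehat{\mathit{\Lambda}}_{\delta_0}$, and by the domain of dependence any two such solutions agreeing on $\widehat{\Sigma}_0$ must coincide on the entire lens-shaped region. The main technical obstacle is Step 2: establishing weak spacelikeness of $\widehat{\Gamma}_{\delta_0}$ under the tilted, exponential, and compactifying coordinate changes, which requires tracking how $\partial_{\hat{\zeta}^1}\widehat{\mathfrak{T}}_{\delta_0}$ transforms and comparing it, via Lemma \ref{t:char1} and the uniform bound $\Xi(\tau)\leq \Xi_0$, to the propagation speed of \eqref{e:mainsys8}; the two-piece definition of $\widehat{\mathfrak{T}}_{\delta_0}$ is chosen exactly to keep this inequality uniform across the two regimes $\hat{\zeta}^1\lessgtr \arctan\bigl(\tfrac{2-51\Xi_0}{102\Xi_0}\tfrac{\gamma}{\delta_0}\bigr)$. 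Step 1 and the concluding uniqueness statement are then essentially formal consequences.
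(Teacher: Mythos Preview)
Your overall architecture is the same as the paper's---agreement of the two systems on $\widehat{\mathit{\Lambda}}_{\delta_0}$ because $\phi\equiv 1$ there, followed by a domain-of-dependence uniqueness argument requiring weak spacelikeness of $\widehat{\Gamma}_{\delta_0}$ and $\widehat{\Sigma}_{\pm k}$---and Step~1 and the treatment of the lateral pieces $\widehat{\Sigma}_{\pm k}$ are correct.

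The gap is in Step~2. Your reduction of weak spacelikeness of $\widehat{\Gamma}_{\delta_0}$ to ``the tangential speed of $\widehat{\Gamma}_{\delta_0}$ dominates the maximum characteristic speed $(2+\Xi)/A$'' is not sufficient, and on the right branch it is actually false at leading order. In the tilted coordinates the paper computes the conormal of $\Gamma_{\delta_0}$ as $(1,A\ttau\,\mathtt{q}\,\delta^1_i)$ with $\mathtt{q}=\mathtt{q}_r=\tfrac{51}{2}\bigl(1-\tfrac{3}{4(\delta_0\txi^1+2)^2}\bigr)$ on the right piece; as $\txi^1\to\infty$ one has $\mathtt{q}_r\to 51/2$, which coincides with the limiting characteristic slope (since $S\to 1/4$ and $S\chi_\uparrow/B\to 1$ as $\tau\to 0$). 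In other words, $\widehat{\Gamma}_{\delta_0}$ is \emph{asymptotically null} on the right branch; the principal $2\times 2$ minor is
\[
\mathrm{D}_2\;\approx\;\tfrac14\bigl(1+\tfrac{100}{51}\mathtt{q}_r\bigr)^2-\mathtt{q}_r^2\;=\;\frac{(51-2\mathtt{q}_r)(51+202\mathtt{q}_r)}{10404}\;\sim\;\frac{C}{(\delta_0\txi^1+2)^2},
\]
which is positive only because of the subleading correction in $\mathtt{q}_r$. Establishing $\mathrm{D}_2\geq 0$ therefore hinges on showing that every competing error---the deviations $|S-\tfrac14|$, $|S\chi_\uparrow/B-1|$ coming from $\underline{\mathfrak{G}}$ and $1/\uf$, and the solution-dependent perturbations $\mathscr{K}^j$, $\widetilde{\mathscr{L}}$---decays \emph{strictly faster} than $(\delta_0\txi^1+2)^{-2}$ along $\Gamma_{\delta_0}$. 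The paper accomplishes this by explicit principal-minor computations combined with exponential decay estimates (Lemma~\ref{t:Gest2}, Proposition~\ref{t:fginv0}, and the bounds $|\mathscr{K}^j|,|\widetilde{\mathscr{L}}|\lesssim\sigma_0\sigma e^{-51\txi^1}$), treating the left and right branches separately. Your comparison with $\underline{\mathcal{C}}$ from Lemma~\ref{t:char1} is a reasonable heuristic for the left branch but cannot resolve the right branch, where the required margin is of order $\txi^{-2}$ rather than $O(1)$, and where the $\mathfrak{U}$-dependent coefficients must be tracked quantitatively rather than simply bounded by the $U=0$ speeds.
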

\begin{proof}
	Since $\widehat{\mathfrak{U}}$ is a solution to the extended system \eqref{e:mainsys6} on the spacetime $[0,1) \times \mathbb{T}^n_{[-\frac{\pi}{2},\frac{\pi}{2}]}$, and assume $\acute{\mathfrak{U}}$ solves the original system \eqref{e:mainsys8} with parameters \eqref{e:para} in the domain $\widehat{\mathit{\Lambda}}_{\delta_0}$, let us define $\mathfrak{W} := \widehat{\mathfrak{U}} - \acute{\mathfrak{U}}$. If $\widehat{\mathfrak{U}} |_{\ttau = -1} = \acute{\mathfrak{U}}|_{\ttau = -1}$, it follows that $\mathfrak{W}$ vanishes on $\widehat{\Sigma}_0$. Additionally, within the domain $\widehat{\mathit{\Lambda}}_{\delta_0}$, the extended system \eqref{e:mainsys6} coincides with the original system \eqref{e:mainsys8} under the given parameters \eqref{e:para}. Therefore, by subtracting the original system from the extended system within $\widehat{\mathit{\Lambda}}_{\delta_0}$, we obtain a linear system for $\mathfrak{W}$ in this region, as $\widehat{\mathfrak{U}}$ and $\acute{\mathfrak{U}}$ are already known. 
		\begin{equation} 
	\widehat{\mathfrak{A}}^0(\hat{\tau},\hat{\zeta}, \widehat{\mathfrak{U}})\del{\hat{\tau}} \mathfrak{W}+\frac{1}{A\hat{\tau}} \gamma \cos^2(\hat{\zeta}^i) 	\widehat{\mathfrak{A}}^{i}(\hat{\tau},\hat{\zeta}, \widehat{\mathfrak{U}}) \del{\hat{\zeta}^i} \mathfrak{W}  =   \mathfrak{H}(\hat{\tau},\hat{\zeta},\widehat{\mathfrak{U}},\acute{\mathfrak{U}},\del{\hat{\tau}}\acute{\mathfrak{U}},\del{\hat{\zeta}} \acute{\mathfrak{U}})  .  \label{e:diffeq}
\end{equation}  
Next, if we can verify $ \widehat{\Gamma}_{\delta_0}\cup\widehat{\Sigma}_1 \bigl(\cup_{k=2}^n \widehat{\Sigma}_{\pm k}\bigr)$ are weakly spacelike for the system described above, we can apply the well-known theorem \cite[Theorem $4.5$]{Lax2006} (the similar proofs to \cite[Theorem $4.5$]{Lax2006} is applied to the system \eqref{e:diffeq} with the decay estimates provided in  \cite[Theorem $3.8$]{Beyer2020}).  This allows us to conclude that $\mathfrak{W}$ vanishes within $\widehat{\mathit{\Lambda}}_{\delta_0}$. Consequently, we can assert that $\acute{\mathfrak{U}}=\widehat{\mathfrak{U}}$ for all  $(\hat{\tau},\hat{\zeta})\in\widehat{\mathit{\Lambda}}_{\delta_0}$.

\underline{$(1)$ $\widehat{\Gamma}_{\delta_0}$ is weakly spacelike.}
Consider the normal vectors to the hypersurface $\widehat{\Gamma}_{\delta_0}$, expressed in the coordinates $(\hat{\tau},\hat{\zeta})$ and $(\ttau,\txi)$ as $(\hat{\nu}_\Lambda)_\mu=\del{\mu}\widehat{\Psi}(\hat{\tau},\hat{\zeta})$ and $(\nu_\Lambda)_\mu=\del{\mu}\Psi(\tilde{\tau},\tilde{\zeta})$, respectively (refer to \eqref{e:surf0}).  Utilizing the transformation provided in  \eqref{e:transf3},  we derive the relationship between these normal vectors.
\begin{equation*}
(	\hat{\nu}_{\Lambda})_0 =(\nu_{\Lambda})_0=1 \AND 	(\hat{\nu}_{\Lambda})_i =(\nu_{\Lambda})_i \frac{\sec ^2(\hat{\zeta}^1 )}{\gamma }  . 
\end{equation*}
In order to verify $\widehat{\mathit{\Lambda}}_{\delta_0}$ is weakly spacelike,  we  refer to the definition of weakly spacelike surfaces in \cite[\S $4.3$]{Lax2006}. According to this definition, it is necessary to demonstrate that, on these surfaces
\begin{equation*}
	(\hat{\nu}_\Lambda)_0\widehat{\mathfrak{A}}_\phi^0 +	(\hat{\nu}_\Lambda)_i \frac{1}{A\hat{\tau}}    \gamma\cos^2 \hat{\zeta}^i  \widehat{\mathfrak{A}}^{i}_\phi   =\left( (\nu_\Lambda)_0\mathfrak{A}^0_\phi +	(\nu_\Lambda)_i \frac{1}{A\ttau}  \mathfrak{A}^i_\phi \right)^{\widehat{}} \geq 0  . 
\end{equation*}
Next let us verify $(\nu_\Lambda)_0\mathfrak{A}^0_\phi +	(\nu_\Lambda)_i \frac{1}{A\ttau}  \mathfrak{A}^i_\phi\geq 0$ holds on this hypersurface.

Using Lemma \ref{t:Tsurf2},	the outward-pointing conormal of  $\Gamma_{\delta_0}$,  which serves as the top boundary of  $\mathit{\Lambda}_{\delta_0}$ in the $(\ttau,\txi)$ coordinate, is given by
	\begin{align*} 
		 \nu_\Lambda=\left(1,A \ttau \mathtt{q}  \delta^1_i \right) =\begin{cases}
		 	\nu_\Lambda^l  
		 	=\left(1,A \ttau \mathtt{q}_l \delta^1_i \right),  \quad &-\frac{1}{\delta_0}<\txi^1 \leq \frac{2-51 \Xi_0 }{102  \Xi_0} \frac{1}{\delta _0}    \\
		 	\nu_\Lambda^r 
		 	=\left(1,A \ttau \mathtt{q}_r \delta^1_i \right)  ,  \quad  & \txi^1 >\frac{2-51 \Xi_0 }{102  \Xi_0} \frac{1}{\delta _0}
		 \end{cases}  , 
	\end{align*}
where  (recall $\cc \in (0, \bigl(1+\frac{51}{2} \Xi_0\bigr)^{-1})$)   
	\begin{align}\label{e:qlr} 
	\mathtt{q}=\begin{cases}
		\mathtt{q}_l=\frac{51 }{2 } \cc \left( 1- \frac{1}{2(2+\txi^1\delta_0)^2}\right) \in[\frac{51}{4}\cc,\frac{51}{2}\cc),  \quad &-\frac{1}{\delta_0}<\txi^1 \leq \frac{2-51 \Xi_0 }{102  \Xi_0} \frac{1}{\delta _0}    \\
		\mathtt{q}_r=\frac{51}{2}  \left(1-\frac{3}{4 (\delta _0 \txi^1 +2)^2}\right) \in[17 ,\frac{51}{2}) ,  \quad  & \txi^1 >\frac{2-51 \Xi_0 }{102  \Xi_0} \frac{1}{\delta _0}  
	\end{cases}  . 
\end{align}

On the hypersurface $\Gamma_\Lambda$, we calculate
	\begin{align*}
		& (\nu_\Lambda)_0\mathfrak{A}^0 +	(\nu_\Lambda)_i \frac{1}{A\ttau}  \mathfrak{A}^i =	\mathfrak{A}^0 +  A \ttau \mathtt{q}  \delta^1_i \frac{1}{A\ttau}  \mathfrak{A}^i  \notag  \\  
		=  &    \p{1+\mathtt{q}  \frac{100}{51}   &   	\mathtt{q} S \frac{\widetilde{\underline{\chi_\uparrow}}}{B}  \delta^{1j}  + 	\mathscr{K}^j & 0 & 0 & 0 & 0 \\
	  \mathtt{q} S \frac{\widetilde{\underline{\chi_\uparrow}}}{B}  \delta^{1j}  \delta_{jk} + \mathscr{K}^j\delta_{jk}   & (1+ \mathtt{q} \frac{100}{51}    ) ( S+\phi \widetilde{\mathscr{L} } ) \delta^j_k  & 0 & 0 & 0 & 0\\
		0&0&2+ \mathtt{q} \frac{200}{51}   &0&0&0\\
		0&0&0& (1+ \mathtt{q} \frac{100}{51}  )\delta^l_s&0 & 0 \\
		0&0&0&0&1+\mathtt{q} \frac{100}{51}   & 0\\
		0&0&0&0&0&2+ \mathtt{q} \frac{200}{51}  } 
	\end{align*}
	where   $\mathscr{K}^j:=\mathscr{K}^j(\tau; \mu \mfu_0, \mu \mfv, \eta \mathfrak{z},\mu\mathfrak{B}_i)=\mathtt{q} \phi	\widetilde{\mathscr{Z}}^{1j}   + \phi\mu \tilde{R}  \mathfrak{B}_i \delta^{ij}$.  We already know from Proposition \ref{t:verfuc} that the solution  $\|\widehat{\mathfrak{U}}\|_{\Li}\leq C\sigma$. With the help of Lemma \ref{t:Gest2} and Proposition \ref{t:fginv0}, and considering \eqref{e:S1} along with the points on the hypersurface \eqref{e:surf}, 
	it follows that there exists a constant $C>0$ independent of $\delta_0$,  such that, for $\txi^1 >\frac{2-51 \Xi_0 }{102  \Xi_0 \delta _0} (> -\frac{1}{2\delta_0}) $,  we have estimates
	\begin{align}
		\left|S-\frac{1}{4} \right|  
		 \leq & C(-\ttau)^{\frac{1}{2} }   = C\exp\left(-\frac{1}{2} \left( \frac{51A}{2}-\frac{51A}{ 4\delta_0\txi^1 + 8}  \right) \left(   \txi^1 + \frac{1}{2\delta_0} \right)\right) < C\exp\left(-  \frac{51A }{8} \left(   \txi^1 + \frac{1}{2\delta_0} \right)\right) \notag  \\
	    = &  
	     C\exp\left(-\frac{1}{\delta_0}   \frac{A}{ 8 \Xi _0}  \right)  \exp\left(-  \frac{51A }{8} \left( 
	     \txi^1-\frac{2-51 \Xi_0 }{102 \delta _0 \Xi_0 }\right)\right)     
	       ,  \label{e:Sest1a} 
\end{align} 
and similarly, 
\begin{equation} 
	    \left|S  \frac{\widetilde{\underline{\chi_\uparrow}}}{B} -1\right|  
	    \leq   C(-\ttau)^{\frac{1}{2} }    <	     C\exp\left(-\frac{1}{\delta_0}   \frac{A}{ 8 \Xi _0}  \right)  \exp\left(- \frac{51A }{8} \left( 
	    \txi^1-\frac{2-51 \Xi_0 }{102 \delta _0 \Xi_0 }\right)\right)  .   
	       \label{e:Sest1b}
	   \end{equation} 
Additionally, for $\txi^1 >-\frac{1}{  \delta _0 } $,  the following estimates hold: 
	   \begin{equation}\label{e:KLest1} 
		|\mathscr{K}^j|\leq    C\sigma_0 e^{-  \frac{153}{\delta_0} }   \sigma e^{-51 \txi^1 } 
		\AND 	|\widetilde{\mathscr{L}} |\leq     C\sigma_0  e^{-  \frac{153}{\delta_0} }   \sigma e^{-51 \txi^1 } ,   
	\end{equation} 
and  by Lemma \ref{t:Thpst}, we also obtain
	\begin{equation}\label{e:Sest2}
		S\frac{\chi_\uparrow}{B}=(2+\Xi)^2
\frac{B}{\chi_\uparrow}  \AND 		S =(2+\Xi)^2
\left(\frac{B}{\chi_\uparrow}\right)^2  . 
	\end{equation}

  We \textit{claim} all the sequential principal minors of the matrix $(\nu_\Lambda)_0\mathfrak{A}^0 +	(\nu_\Lambda)_i \frac{1}{A\ttau}  \mathfrak{A}^i $ are \textit{nonnegative}, and let us prove this claim as follows. Firstly, observe that by  \eqref{e:cc}, we obtain an estimate for $\txi^1 >-\frac{1}{  \delta _0 } $, 
    \begin{equation}\label{e:Xiest2}
  	1  -  \cc  \left(  1+   \frac{51 }{2 } \Xi \right)  \left( 1- \frac{1}{2(2+\txi^1\delta_0)^2}\right)>1  -  \cc  \left(  1+   \frac{51 }{2 } \Xi \right)   \geq 1  -  \cc  \left(  1+   \frac{51 }{2 } \Xi_0 \right)     >0  . 
  \end{equation}

We will first verify the first $2n+4$ key principal minors, denoted as $\mathrm{D}_\ell$ for $\ell = 1, 2, \cdots, 2n+4$. The remaining principal minors can be calculated directly, given the positive diagonal elements  $2+\mathtt{q}\frac{200}{51}$ and $1+\mathtt{q}\frac{100}{51}$.  By choosing the scale of data  $\sigma$ small enough (as indicated in Proposition   \ref{t:verfuc}.$(2)$ where $\|\mathfrak{U}(\ttau)\|_{H^k} \leq C\sigma$),  and using estimates from  \eqref{e:KLest1}, \eqref{e:Sest2} and \eqref{e:Xiest2}, we obtain the following result for  $-\frac{1}{\delta_0}<\txi^1 \leq \frac{2-51 \Xi_0 }{102 \delta _0 \Xi_0 }  $, 
  	\begin{align*}
  	\mathrm{D}_2= & \left(1+ \mathtt{q}_l \frac{100}{51}   \right)^2 ( S+\phi \widetilde{\mathscr{L} }  ) -	\left( \mathtt{q}_l S \frac{\widetilde{\underline{\chi_\uparrow}}}{B}    + 	\mathscr{K}^1\right)^2 \notag  \\  
  	= & \left(1+ 50 \cc   \left( 1- \frac{1}{2(2+\txi^1\delta_0)^2}\right)    \right)^2    
  	\phi \widetilde{\mathscr{L} }   + \left(  1  -  \cc  \left(  1+   \frac{51 }{2 } \Xi \right)  \left( 1- \frac{1}{2(2+\txi^1\delta_0)^2}\right)       - \frac{\chi_\uparrow}{B}	\frac{\mathscr{K}^1 }{ (2+\Xi) 
  		}    \right)   \notag  \\
  	& \times \left(  1       +  \cc  \left(101   + \frac{51 }{2 }\Xi\right) \left( 1- \frac{1}{2(2+\txi^1\delta_0)^2}\right)    + \frac{\chi_\uparrow}{B}	\frac{\mathscr{K}^1 }{ (2+\Xi) 
  	 }   \right)  (2+\Xi)^2 
  	\left(\frac{B}{\chi_\uparrow}\right)^2        \notag  \\ 
\overset{\eqref{e:Xiest2}}{\geq}  & -C\sigma_0 e^{-  \frac{153}{\delta_0} }   \sigma e^{-51 \txi^1 }+ \left(  1  -  \cc  \left(  1+   \frac{51 }{2 } \Xi \right)  \left( 1- \frac{1}{2(2+\txi^1\delta_0)^2}\right)       -C\sigma_0 e^{-  \frac{153}{\delta_0} }   \sigma e^{-51 \txi^1 }  \right)   \notag  \\
  	& \times \left(  1       +  \cc  \left(101   + \frac{51 }{2 }\Xi \right) \left( 1- \frac{1}{2(2+\txi^1\delta_0)^2}\right)   -C\sigma_0 e^{-  \frac{153}{\delta_0} }   \sigma e^{-51 \txi^1 }  \right)  (2+\Xi)^2 
  	\left(\frac{B}{\chi_\uparrow}\right)^2    >0 .    
  \end{align*}

On the other hand, let us estimate $\mathrm{D}_2$ for $\txi^1 >\frac{2-51 \Xi_0 }{102  \Xi_0} \frac{1}{\delta _0}$. To do this, we first introduce a decreasing function $\mathrm{Q}$,  which is essential for estimating  $\mathrm{D}_2$. Let
\begin{equation*}
	\mathrm{Q}(	\txi^1):=(\delta_0\txi^1+ 2 )^2   \exp\left(-  \frac{51A }{8} \left( 
	\txi^1-\frac{2-51 \Xi_0 }{102 \delta _0 \Xi_0 }\right)\right) . 
\end{equation*}
We can verify $\mathrm{Q}(	\txi^1)$ is decreasing for $\txi^1 >\frac{16 \delta _0-102 A}{51 A \delta _0}$ by direct differentiating it. If taking $\delta_0$ such that $0<\delta_0<\frac{153}{32}A$, then it follows that $\frac{16 \delta _0-102 A}{51 A \delta _0}<-\frac{1}{2 \delta _0}<\frac{2-51 \Xi_0 }{102  \Xi_0} \frac{1}{\delta _0}$. Therefore,   for $\txi^1 >\frac{2-51 \Xi_0 }{102  \Xi_0} \frac{1}{\delta _0}$, $\mathrm{Q}$ is decreasing, and thus
\begin{equation}\label{e:Qest1}
 	\mathrm{Q}(	\txi^1) \leq 	\mathrm{Q}\left(\frac{2-51 \Xi_0 }{102  \Xi_0} \frac{1}{\delta _0}\right) =\frac{\left(153 \Xi _0+2\right){}^2}{10404 \Xi _0^2} .
\end{equation}
A similar approach shows that if $0<\delta _0\leq \frac{153}{4}$, then for $\txi^1 >\frac{2-51 \Xi_0 }{102  \Xi_0} \frac{1}{\delta _0}>-\frac{1}{2\delta_0}$, $\mathrm{P}(\txi^1)=e^{-51 \txi^1 }  (\delta _0 \txi^1 +2 ){}^2$ is decreasing, and thus
\begin{equation}\label{e:Qest2}
	\mathrm{P}(	\txi^1) \leq 	\mathrm{P}\left(\frac{2-51 \Xi_0 }{102  \Xi_0} \frac{1}{\delta _0}\right) =\frac{\left(153 \Xi _0+2\right){}^2 }{10404 \Xi _0^2} e^{\frac{51 \Xi _0-2}{2 \delta _0 \Xi _0}}.
\end{equation} 
Then, using \eqref{e:Sest1a}--\eqref{e:Sest1b}, and taking $\delta_0$ to be sufficiently small, for $\txi^1 >\frac{2-51 \Xi_0 }{102  \Xi_0} \frac{1}{\delta _0}$, we obtain
    	\begin{align*}
  	\mathrm{D}_2= & \left(1+ \mathtt{q}_r \frac{100}{51}   \right)^2 ( S+\phi \widetilde{\mathscr{L} } ) -	\left( \mathtt{q}_r S \frac{\widetilde{\underline{\chi_\uparrow}}}{B}    + 	\mathscr{K}^1 \right)^2 \notag  \\ 
  	\geq &     \frac{1}{4} \left(1+ \mathtt{q}_r \frac{100}{51}   \right)^2    -	  \mathtt{q}_r^2 -  C\exp\left(-\frac{1}{\delta_0}   \frac{A}{ 8 \Xi _0}  -  \frac{51A }{8} \left( 
  	\txi^1-\frac{2-51 \Xi_0 }{102 \delta _0 \Xi_0 }\right)\right)  -  C\sigma_0     \sigma \exp\left(-  \frac{153}{\delta_0}-51 \txi^1 \right)   \notag  \\ 
\overset{\eqref{e:qlr}}{=} & \frac{  (202 \mathtt{q}_r+51)}{10404}  \frac{153}{4(\delta_0\txi^1+2)^2} -  C\exp\left(-\frac{1}{\delta_0}   \frac{A}{ 8 \Xi _0}   -  \frac{51A }{8} \left( 
  	\txi^1-\frac{2-51 \Xi_0 }{102 \delta _0 \Xi_0 }\right)\right)  -  C\sigma_0     \sigma \exp\left(-  \frac{153}{\delta_0}-51 \txi^1 \right)   \notag  \\ 
  \overset{\mathtt{q}_r\geq 17}{\geq} &  \frac{1}{ (\delta_0\txi^1+ 2 )^2} \left( \frac{205}{16}      -  C \exp\left(-\frac{1}{\delta_0}   \frac{A}{ 8 \Xi _0}  \right)  (\delta_0\txi^1+ 2 )^2  \exp\left(-  \frac{51A }{8} \left( 
  	\txi^1-\frac{2-51 \Xi_0 }{102 \delta _0 \Xi_0 }\right)\right)   \right. \notag \\
  	& \left.-  C(\delta_0\txi^1+ 2 )^2 \sigma_0    \sigma \exp\left(-  \frac{153}{\delta_0}-51 \txi^1 \right)   \right)  \notag  \\
  	 \overset{\eqref{e:Qest1}\&\eqref{e:Qest2}}{\geq} &    \frac{1}{ (\delta_0\txi^1+ 2 )^2} \left( \frac{205}{16}      -  C \exp\left(-\frac{1}{\delta_0}   \frac{A}{ 8 \Xi _0}  \right)  \frac{\left(153 \Xi _0+2\right){}^2}{10404 \Xi _0^2}    -  C \sigma \sigma_0     \frac{\left(153 \Xi _0+2\right){}^2 }{10404 \Xi _0^2} \exp\left(-\frac{255 \Xi _0+2}{2 \delta _0 \Xi _0}   \right)\right)  \notag  \\
  	 \geq &0  .   
  \end{align*}   
  
For $\ell=2,\cdots,n$, we calculate for $-\frac{1}{\delta_0}<\txi^1 \leq \frac{2-51 \Xi_0 }{102 \delta _0 \Xi_0 }  $, by taking $\sigma$  to be sufficiently small, 
  \begin{align*}
  	\mathrm{D}_{\ell+1}= & \left(\left(1+ \mathtt{q}_l \frac{100}{51}   \right)^2 ( S+\phi \widetilde{\mathscr{L} } ) -	\left( \mathtt{q}_l S \frac{\widetilde{\underline{\chi_\uparrow}}}{B}    + 	\mathscr{K}^1\right)^2 -\sum_{i=2}^{\ell}(\mathscr{K}^i)^2 \right) \left(1+ \mathtt{q}_l \frac{100}{51}   \right)^{\ell-1} ( S+\phi \widetilde{\mathscr{L} } )^{\ell-1}  \notag  \\ 
  	\geq  & \left( \mathrm{D}_2 -C\sigma_0 e^{-  \frac{153}{\delta_0} }   \sigma e^{-51 \txi^1 } \right) \left(1+ 50 \cc   \left( 1- \frac{1}{2(2+\txi^1\delta_0)^2}\right)  \right)^{\ell-1} \left( (2+\Xi)^2
  	\left(\frac{B}{\chi_\uparrow}\right)^2-C\sigma_0 e^{-  \frac{153}{\delta_0} }   \sigma e^{-51 \txi^1 } \right)^{\ell-1}  \notag  \\
  	> & 0 . 
  \end{align*}

On the other hand,  similarly, for $\txi^1 >\frac{2-51 \Xi_0 }{102 \delta _0 \Xi_0 }$, by taking $\delta_0$ sufficiently small,
    \begin{equation*}
  	\mathrm{D}_{\ell+1}=  \left(\mathrm{D}_2 -\sum_{i=2}^{\ell}(\mathscr{K}^i)^2 \right) \left(1+ \mathtt{q}_r \frac{100}{51}   \right)^{\ell-1} ( S+\phi \widetilde{\mathscr{L} } )^{\ell-1}  \geq 0 . 
  \end{equation*}
Therefore, $(\nu_\Lambda)_0\mathfrak{A}^0 +	(\nu_\Lambda)_i \frac{1}{A\ttau}  \mathfrak{A}^i $ is nonnegative, confirming that this surface is indeed weakly spacelike.

\underline{$(2)$ $\widehat{\Sigma}_{k}$ ($k=1,\pm2,\pm3,\cdots$) is weakly spacelike.}
The normal vector to  $\widehat{\Sigma}_k$ is given by $\nu_\Sigma:=(0,\delta^k_i)  $. 
To verify that $\widehat{\Sigma}_k$ is weakly spacelike, we evaluate:
\begin{equation*}
	(\nu_\Sigma)_0\widehat{\mathfrak{A}}^0_\phi +	(\nu_\Sigma)_i \frac{1}{A\hat{\tau}} \gamma \cos^2(\hat{\zeta}^i)  \widehat{\mathfrak{A}}^i_\phi =  \frac{1}{A\hat{\tau}} \gamma \cos^2(\hat{\zeta}^k)  \widehat{\mathfrak{A}}^k_\phi=  \frac{1}{A\hat{\tau}} \gamma \cos^2\frac{\pi}{2}  \widehat{\mathfrak{A}}^k_\phi =0 . 
\end{equation*}
Therefore, $\Sigma_k$ is weakly spacelike, which completes the proof. 
\end{proof}

\subsection{Verifications of Fuchsian systems}\label{s:verfuc}		
In this section, we provide a comprehensive proof for the verification of Fuchsian systems. Specifically, we aim to prove Proposition \ref{t:verfuc}, which involves verifying that the system \eqref{e:mainsys6} satisfies the conditions \ref{c:2}--\ref{c:7} in Appendix \ref{s:fuc}.

\begin{proof}[The proof of Proposition \ref{t:verfuc}.$(1)$] 
	First, we verify that the system  \eqref{e:mainsys6} satisfies all the conditions \ref{c:2}--\ref{c:7} for a Fuchsian system \eqref{e:model1} as specified in Appendix   \ref{s:fuc}.

	\underline{$(1)$ Verifications of \ref{c:2}, \ref{c:4} and  \ref{c:6}:} 
For the system \eqref{e:mainsys6},  corresponding to \eqref{e:model1}--\eqref{e:model2}, we have
	$\mathbf{P}= \mathds{1}$, $\mathbf{P}^\perp=0$, $ B^{0} =\widehat{\mathfrak{A}}^0_\phi$ and $B_2^i =\frac{1}{A } \gamma \cos^2(\hat{\zeta}^i) \widehat{\mathfrak{A}}^{i}_\phi$, $\mathbf{B} =\frac{1}{A } \widehat{\mathfrak{A}}_\phi $, $H=\widehat{\mathfrak{F}}_\phi$ and $B_0^i=0$, and there is
	\begin{gather*}
		\tilde{\mathbf{B}} =\frac{1}{A} \tilde{\mathfrak{A}}_\phi  
		= \frac{1}{A} 	  \p{  \frac{286}{3} &     -q^j+51   \delta^{1j}      &0  & 0 & -8 \frac{\hat{\eta}}{\hat{\mu}} & 8  \\
		51   \delta^{1}_k      &  26  \delta^j_k  &0& 6   \delta^l_k & 0 & 0\\
		- 8 \hat{\phi} \hat{\mu}  & 0 & \frac{ 40 }{3 }  & 0 &  -16 \hat{\phi} \hat{\eta}   & 0  \\0&  \frac{2  }{3       }     \delta^j_{s} &0&  \frac{302}{3}   \delta^l_s  & 0 & 0 \\ 0 & 0 &0 &0 & 100   &0  \\
		- 8    & 0 & 0 & 0 & 	-16   \frac{\hat{\eta}}{\hat{\mu}}    & \frac{640}{3}   }   \AND	\tilde{B}^0=\tilde{\mathfrak{A}}^0_\phi=  	\p{1 &   0   & 0 & 0 & 0 & 0 \\
				0     &  \hat{S}  \delta^j_k & 0 & 0 & 0 & 0 \\
				0&0& 2 &0&0 & 0 \\
				0&0&0&\delta^l_s &0& 0 \\
				0&0&0&0&1& 0 \\
				0&0&0&0&0& 2 }   ,   \\ 
		\tilde{B}_2^i(\hat{\tau},\hat{\zeta}^i)=\frac{1}{A} \gamma \cos^2(\hat{\zeta}^i )  \tilde{\mathfrak{A}}^{i}_\phi(\hat{\tau} ) = \frac{1}{A}\gamma \cos^2(\hat{\zeta}^i ) \p{ \frac{100}{51} \delta^i_1 & 	   \hat{S} \frac{\widehat{\underline{\chi_\uparrow}}}{B}  \delta^{ij}    & 0 & 0 & 0 &0\\
		\hat{S} \frac{\widehat{\underline{\chi_\uparrow}}}{B}  \delta^i_k   & \frac{100}{51}  \hat{S}  \delta^i_1 \delta^j_k   & 0 & 0 & 0 &0\\
			0&0& \frac{200}{51} \delta^i_1  &0&0&0\\
			0&0&0&\frac{100}{51} \delta^i_1  \delta^l_s &0&0\\
			0&0&0&0&  \frac{100}{51} \delta^i_1  & 0 \\0&0&0&0&0& \frac{200}{51} \delta^i_1  }   , 
	\end{gather*}
	such that
	\begin{gather*}
		\mathbf{B}(\hat{\tau}, \hat{\zeta}^i,  \widehat{\mathfrak{U}})-\tilde{\mathbf{B}} (\hat{\tau}, \hat{\zeta}^i)  = \frac{1}{A} (\widehat{\mathfrak{A}}_\phi(\hat{\tau}, \hat{\zeta}^i,  \widehat{\mathfrak{U}}) -\tilde{\mathfrak{A}}_\phi(\hat{\tau}, \hat{\zeta}^i ))   = \mathrm{O}(\widehat{\mathfrak{U}}) , \quad 	B^0(\hat{\tau}, \hat{\zeta}^i,  \widehat{\mathfrak{U}})-\tilde{B}^0(\hat{\tau}) =	\widehat{\mathfrak{A}}^0_\phi(\hat{\tau}, \hat{\zeta}^i,  \widehat{\mathfrak{U}})-\tilde{\mathfrak{A} }^0_\phi (\hat{\tau})= \mathrm{O}(\widehat{\mathfrak{U}}) ,  \\ [\mathbf{P},\tilde{\mathbf{B}}]=[\mathds{1},\tilde{\mathfrak{A}}_\phi/A]=0   
		\AND  	\mathbf{P}  (B_2^i(\hat{\tau}, \hat{\zeta}^i,  \widehat{\mathfrak{U}})-\tilde{B}_2^i(\hat{\tau})) \mathbf{P}  =  \frac{1}{A} \lambda \cos^2(\hat{\zeta}^i/\lambda) ( \widehat{\mathfrak{A}}^{i}_\phi  (\hat{\tau}, \hat{\zeta}^i,  \widehat{\mathfrak{U}})-\tilde{\mathfrak{A}}^{i}_\phi(\hat{\tau} ) )     =   \mathrm{O}( \widehat{\mathfrak{U}} ), 
	\end{gather*}
	for all $ (\hat{\tau}, \hat{\zeta}^i,  \widehat{\mathfrak{U}}) \in[-1,0]\times \Tbb^n_{[-\frac{\pi}{2} ,\frac{\pi}{2} ]} \times B_R(\Rbb^{4+2n}) $. 
We focus on verifying the key conditions, while the others are straightforward.

	From the expressions of  $B^i=\frac{1}{A\htau}  \gamma \cos^2(\hat{\zeta}^i ) \widehat{\mathfrak{A}}^{i}_\phi$, $\mathbf{B}=\frac{1}{A} \widehat{\mathfrak{A}}_\phi$ and $B^0=\widehat{\mathfrak{A}}^0_\phi$, using Lemma \ref{t:gb2}.\ref{l:2.3},  the revised coordinate transform  \eqref{e:coord2}, Theorem \ref{t:mainthm0} and Proposition \ref{t:limG}, we have
	\begin{align*}
		\tilde{B}^i_2 \in  & C^{0}([-1,0],C^{\infty}(\Tbb^n_{[-\frac{\pi}{2} ,\frac{\pi}{2} ]}   , \mathbb M_{(4+2n)\times (4+2n)})), \\
		B^i_2 \in  & C^{0}([-1,0],C^{\infty}(\Tbb^n_{[-\frac{\pi}{2} ,\frac{\pi}{2} ]} \times B_R(\Rbb^{4+2n}) , \mathbb M_{(4+2n)\times (4+2n)})), \\
		B^i \in  & C^{0}([-1,0),C^{\infty}(\Tbb^n_{[-\frac{\pi}{2} ,\frac{\pi}{2} ]} \times B_R(\Rbb^{4+2n}) , \mathbb M_{(4+2n)\times (4+2n)})), \\
		\mathbf{B} \in  & C^{0}([-1,0],C^{\infty}(\Tbb^n_{[-\frac{\pi}{2} ,\frac{\pi}{2} ]} \times B_R(\Rbb^{4+2n}) , \mathbb M_{(4+2n)\times (4+2n)})), \\
		B^0\in & C^{0}([-1,0],C^{\infty}( \Tbb^n_{[-\frac{\pi}{2} ,\frac{\pi}{2} ]} \times B_R(\Rbb^{4+2n}) , \mathbb M_{(4+2n)\times (4+2n)})). 
	\end{align*}

To verify that $B^0\in C^{1}([-1,0),C^{\infty}( \Tbb^n_{[-\frac{\pi}{2} ,\frac{\pi}{2} ]} \times B_R(\Rbb^{4+2n}) , \mathbb M_{(4+2n)\times (4+2n)})$, we need to compute  $\del{\htau}\widehat{\mathfrak{A}}^0$. Specifically, it suffices to verify that  $\del{\htau}\bigl(1/(1+\hat{\uf})\bigr) \in C^0([-1,0))$, $\del{\htau}\bigl(1/ \hat{\uf}\bigr) \in C^0([-1,0))$,  $\del{\htau}\widehat{\underline{\chi_\uparrow}} \in C^0([-1,0))$ and  $\del{\htau} \hat{S} \in C^0([-1,0)) $. First, using Lemma \ref{t:gb2}.\ref{l:2.5} and \eqref{e:Gdef2}, with the help of Propositions \ref{t:limG} and \ref{t:fginv0}, we obtain\footnote{In  fact, by \eqref{e:transf3} and \eqref{e:transf1}, we have $\del{\htau}\hat{\uf}=\widehat{\del{\ttau}\tilde{\uf}}=\widehat{\widetilde{\del{\tau}\uf}}$, for simplicity, we denote $\widehat{\widetilde{\del{\tau}\uf}}$ simply by $\widehat{\del{\tau}\uf}$. Similarly, we will use $\del{\hat{\tau}}\widehat{\underline{\chi_\uparrow}}$ and $\del{\hat{\tau}}\widehat{\underline{\mathfrak{G}}}$ to represent their respective transformations. }
	\begin{equation}\label{e:dt1/f1}
		\del{\htau} \biggl(\frac{1}{1+\hat{\uf}}\biggr)  =-\frac{\del{\htau}\hat{\uf}}{(1+\hat{\uf})^2}	=-\frac{\widehat{\del{\ttau} \tilde{\uf}}}{(1+\hat{\uf})^2}    =-  \frac{\hat{\mathsf{b}}_\uparrow^{\frac{2}{3}} (1+ \hat{\uf} )^{-\frac{5}{3}}}{A B  \hat{\uf  } (-\htau)^{\frac{2}{3A}+1}}	\hat{\underline{f_0} } =  \frac{ \widehat{\underline{\chi_\uparrow}}}{AB\htau(1+\hat{\uf}) }    \in C^0([-1,0))   . 
	\end{equation}	
Similarly, 
		\begin{equation}\label{e:dt1/f2}
		\del{\htau} \biggl(\frac{1}{\hat{\uf}}\biggr)  =-\frac{\del{\htau}\hat{\uf}}{(1+\hat{\uf})^2} \frac{(1+\hat{\uf})^2}{ \hat{\uf}^2}	  =  \frac{ \widehat{\underline{\chi_\uparrow}}}{AB\htau   \hat{\uf}  } \frac{(1+\hat{\uf}) }{ \hat{\uf} }	   \in C^0([-1,0))   . 
	\end{equation}	
Next, applying Lemmas \ref{t:gb2}.\ref{l:2.3},  \ref{t:iden1},  \ref{t:dtchi}, and identities  \eqref{e:iden3} and \eqref{e:chig}, direct calculations (see \cite[Lemma $3.7$]{Liu2023} for details) imply
	\begin{equation}\label{e:dtchi2}
	\del{\htau}\widehat{\underline{\mathfrak{G}}} =	\del{\htau}\widehat{\underline{\chi_\uparrow}} = \widehat{ \del{\tau} \underline{\chi_\uparrow}  }=\widehat{\del{\tau}\mathsf{b}_\uparrow \underline{\del{\mathfrak{t}} \chi_\uparrow}  }  
		=      \frac{   \hat{ \underline{\mathfrak{G}} }  \widehat{ \underline{\chi_\uparrow}  } }{3 A  B \htau      }  +  \frac{ \widehat{ \underline{\chi_\uparrow}}^{2} }{A B \htau   \hat{\uf}    }  +\frac{2}{3}     \frac{ \widehat{\underline{\chi_\uparrow}}^{\frac{3}{2}} }{A  B^{\frac{1}{2}} \htau  \hat{\uf}^{\frac{1}{2}}    } \in C^0([-1,0))   . 
	\end{equation} 
Combining \eqref{e:R1} \eqref{e:L1},  \eqref{e:dt1/f1}--\eqref{e:dtchi2}, and performing direct calculations, we conclude that $\del{\htau}\hat{S}\in C^0([-1,0)) $. Therefore, we have
	\begin{equation*}  \widehat{\mathscr{L}}\in C^{1}([-1,0),C^{\infty}(\Tbb^n_{[-\frac{\pi}{2} ,\frac{\pi}{2} ]} \times B_R(\Rbb^{4+2n}), \Rbb)   \AND
		 \hat{R} \in C^{1}([-1,0),C^{\infty}(\Tbb^n_{[-\frac{\pi}{2} ,\frac{\pi}{2} ]} \times B_R(\Rbb^{4+2n}), \Rbb) . 
	\end{equation*}    
These results imply that $B^0\in C^{1}([-1,0),C^{\infty}(\Tbb^n_{[-\frac{\pi}{2} ,\frac{\pi}{2} ]} \times B_R(\Rbb^{4+2n}) , \mathbb M_{(4+2n)\times (4+2n)})$.

To estimate the parameter $\mathtt{b}$ defined in Theorem \ref{t:fuc}, where $\tilde{B}^0$ and $\tilde{\mathfrak{A}}^i_\phi$ are both only $\htau$-dependent (thus  $D\tilde{B}^0=0$ and   $D\tilde{\mathfrak{A}}^i_\phi=0$), we have 
		\begin{align}
		\mathtt{b}:= & \sup_{T_0\leq t<0} \bigl(\||  \tilde{\mathbf{B}} D (\tilde{\mathbf{B}}^{-1} )  \tilde{B}_2^i |_{\mathrm{op}}\|_{\Li}+\|| \tilde{\mathbf{B}} D (\tilde{\mathbf{B}}^{-1} \tilde{B}_2^i) |_{\mathrm{op}}\|_{\Li}\bigr)  \notag  \\
		= & \frac{1}{A} \gamma \sup_{T_0\leq t<0} \bigl(\||    \tilde{\mathfrak{A}}_\phi   D (  \tilde{\mathfrak{A}}_\phi  ^{-1} )   \cos^2(\hat{\zeta}^i )  \tilde{\mathfrak{A}}^{i}_\phi |_{\mathrm{op}}\|_{\Li}+\||  \tilde{\mathfrak{A}}_\phi   D (  \tilde{\mathfrak{A}}_\phi  ^{-1}   \cos^2(\hat{\zeta}^i )  )  \tilde{\mathfrak{A}}^{i}_\phi |_{\mathrm{op}}\|_{\Li}\bigr) \notag  \\
		= & \frac{1}{A} \gamma \sup_{T_0\leq t<0} \bigl(\||    \tilde{\mathfrak{A}}_\phi   D (  \tilde{\mathfrak{A}}_\phi  ^{-1} )   \cos^2(\hat{\zeta}^i )  \tilde{\mathfrak{A}}^{i}_\phi |_{\mathrm{op}}\|_{\Li}+\||   \bigl(\tilde{\mathfrak{A}}_\phi   D (  \tilde{\mathfrak{A}}_\phi  ^{-1}  ) \cos^2(\hat{\zeta}^i )   +    D (  \cos^2(\hat{\zeta}^i )  )  \bigr) \tilde{\mathfrak{A}}^{i}_\phi |_{\mathrm{op}}\|_{\Li}\bigr)  \notag  \\
		\overset{(\star)}{\leq} & C_0\gamma. \label{e:ttb}
	\end{align}
To obtain $(\star)$, noting that $\hat{\zeta}\in\Tbb^n_{[-\frac{\pi}{2},\frac{\pi}{2} ]}$, using \eqref{e:eta2}--\eqref{e:phi1}, we find that  $0\leq\hat{\phi}\hat{\mu},\hat{\phi}\hat{\eta} \in C^\infty([-\frac{\pi}{2},\frac{\pi}{2} ])$ and $\lim_{\hat{\zeta}^1\rightarrow \pm\infty} \hat{\phi}\hat{\mu} =0$ as well as $\lim_{\hat{\zeta}^1\rightarrow \pm\infty} \hat{\phi}\hat{\eta} =0$. Additionally, 
\begin{equation*}
	 |\cos^2(\hat{\zeta}^i )  D(\tilde{\mathfrak{A}}_\phi^{-1})|\leq C_1 \del{\hat{\zeta}}(\hat{\phi}\hat{\eta} )  \cos^2(\hat{\zeta}^i )   +C_2\del{\hat{\zeta}}(\hat{\phi}\hat{\eta} )  \cos^2(\hat{\zeta}^i )  = C_1 \del{\tilde{\zeta}}(\tilde{\phi}\tilde{\eta} )   +C_2\del{\tilde{\zeta}}(\tilde{\phi}\tilde{\eta} )     <C\sigma. 
\end{equation*}
Thus, there exists a constant $C_0>0$, such that  \eqref{e:ttb} holds. 
	
With these settings, there exists a constant $R>0$ (shrinking it if necessary), such that conditions \ref{c:2}, \ref{c:4} and \ref{c:6} are satisfied. This is evident from the fact that   $\mathbf{P}=\mathds{1}$,  $\mathbf{P}^\perp =0$ and the symmetries of $\widehat{\mathfrak{A}}^0_\phi$ and $\widehat{\mathfrak{A}}^{i}_\phi$.

	\underline{$(2)$ Verifications of \ref{c:3}:} 
	From Lemma \ref{t:Seq1}--\ref{t:mainsys1}, Lemma \ref{t:sigsys} and \eqref{e:mainsys5}, we note $	\widehat{\mathfrak{F}}_\phi$ can be   expressed as
	\begin{equation*}
		\widehat{\mathfrak{F}}_\phi=\frac{\hat{\underline{\mathfrak{G}}}}{\htau} \widehat{\mathscr{F}}_1(\htau, \hat{\zeta}, \widehat{\mathfrak{U}}) +\frac{1}{\htau \hat{\uf}^{\frac{1}{2}}}\widehat{\mathscr{F}}_2(\htau, \hat{\zeta}, \widehat{\mathfrak{U}}) , 
	\end{equation*}
	where 
	\begin{equation}\label{e:S12}
		\widehat{\mathscr{F}}_\ell(\htau, \hat{\zeta}, \widehat{\mathfrak{U}})  \in C^0([-1,0],C^{\infty}(\Tbb^n_{[-\frac{\pi}{2},\frac{\pi}{2}]} \times B_R(\Rbb^{4+2n}) ,\Rbb^{4+2n}))   \AND 	\widehat{\mathscr{F}}_\ell(\htau, \hat{\zeta}, 0) = 0 , \quad (\ell=1,2). 
	\end{equation}

	By Proposition \ref{s:gf1/2}, we can regard $\frac{1}{\htau \hat{\uf}^{\frac{1}{2}}}\widehat{\mathscr{F}}_2(\htau, \hat{\zeta}, \widehat{\mathfrak{U}})$ as $H_0$ in Condition \ref{c:3} of Appendix \ref{s:fuc}.  It follows that   $H_0\in C^0([-1,0],C^{\infty}(\Tbb^n_{[-\frac{\pi}{2},\frac{\pi}{2}]} \times B_R(\Rbb^{4+2n})  ),\Rbb^{4+2n}) $ and  $H_0(\htau,\hat{\zeta},\widehat{\mathfrak{U}} )=\mathrm{O}(\mathrm{\widehat{\mathfrak{U}}})$ by virtue of \eqref{e:S12}.

According to Lemma \ref{t:Gest2}, $\frac{\hat{\underline{\mathfrak{G}}}}{\htau} \widehat{\mathscr{S}}_1(\htau, \hat{\zeta}, \widehat{\mathfrak{U}} )$ can be regarded as  $|\htau|^{-\frac{1}{2}}H_1$ in Condition \ref{c:3} of Appendix \ref{s:fuc}.   It satisfies  $H_1\in C^0([-1,0],C^{\infty}(\Tbb^n_{[-\frac{\pi}{2},\frac{\pi}{2}]} \times B_R(\Rbb^{4+2n}),\Rbb^{4+2n})) $ and  $H_1(\htau,\hat{\zeta},\widehat{\mathfrak{U}})=\mathrm{O}(\mathrm{\widehat{\mathfrak{U}}})$ as indicated by   \eqref{e:S12}. Therefore, we have completed the verification of Condition \ref{c:3}.

\underline{$(3)$ Verifications of \ref{c:5}:} To verify Condition \ref{c:5},  we need to show that there exist constants  $\acute{\kappa}, \, \gamma_{2}$ and $ \gamma_{1}$ such that
	\begin{equation}\label{e:pstv}
		\frac{1}{\gamma_{1}}\mathds{1}\leq B^{0}\leq \frac{1}{\acute{\kappa}} \textbf{B} \leq\gamma_{2}\mathds{1}   \quad \text{i.e., }\quad \frac{1}{\gamma_{1}} X^T \mathds{1}  X \leq  X^T \widehat{\mathfrak{A}}^{0}_\phi X \leq \frac{1}{\acute{\kappa}} X^T \frac{1}{A} \widehat{\mathfrak{A}}_\phi  X \leq  \gamma_{2} X^T\mathds{1} X
	\end{equation}
	for all $ (\htau,\hat{\zeta},\widehat{\mathfrak{U}}) \in[-1,0] \times \Tbb^n_{[-\frac{\pi}{2},\frac{\pi}{2}]} \times B_R(\Rbb^{4+2n})$ and $X :=(X_1,\cdots,X_{4+2n})^T \in  \Rbb^{4+2n}$.

	Firstly, by \eqref{e:Aph}, we can rewrite $X^T\widehat{\mathfrak{A}}_\phi X$ in a symmetric form
	\begin{equation}\label{e:eBe1}
		X^T\widehat{\mathfrak{A}}_\phi X=   X^T\acute{\mathfrak{A}}_\phi X
	\end{equation}
	where $\acute{\mathfrak{A}}_\phi$ is a symmetric matrix given by  
		\begin{align}\label{e:Aphi2}
		&\acute{\mathfrak{A}}_\phi=    \p{  \frac{286}{3} &  -\frac{q^j}{2}+   51   \delta^{1j}      & - 4\hat{\phi} \hat{\mu}  & 0 & -4 \sigma_0  & 0  \\
		-\frac{q^j}{2}+ 	51   \delta^{1}_k      &  26  \delta^j_k  &0& \frac{10 }{3}  \delta^l_k & 0 & 0\\
			- 4\hat{\phi} \hat{\mu}  & 0 & \frac{ 40 }{3 }  & 0 &  -8 \hat{\phi} \hat{\eta}   & 0  \\0& \frac{10 }{3}    \delta^j_{s} &0&  \frac{302}{3}   \delta^l_s  & 0 & 0 \\ -4 \sigma_0  & 0 & -8 \hat{\phi} \hat{\eta}  &0 & 100   &-8  \sigma_0   \\
			0    & 0 & 0 & 0 & 	-8  \sigma_0   & \frac{640}{3}   }    + \widehat{\mathscr{Z}}_{(4+2n)\times (4+2n)}(\ttau;  \hat{\phi}{\hat{\mu}} \mfu_0,    \mfu, \hat{\phi}{\hat{\mu}} \mfu_i,\hat{\phi}{\hat{\eta}} \mathfrak{z},\hat{\phi}{\hat{\mu}} \mathfrak{B}_\zeta ) 
	\end{align}
and  $\widehat{\mathscr{Z}}_{(4+2n)\times (4+2n)}(\ttau;  \hat{\phi}{\hat{\mu}} \mfu_0,    \mfu, \hat{\phi}{\hat{\mu}} \mfu_i,\hat{\phi}{\hat{\eta}} \mathfrak{z},\hat{\phi}{\hat{\mu}} \mathfrak{B}_\zeta ) $ is a $(4+2n) \times (4+2n)$ symmetric matrix with entries.

Expanding \eqref{e:Aphi2} using assumption \ref{Asp2}   (with $q^i=|q|\delta^i_1$ and $|q|\in(3,100)$), 
	\begin{align}\label{e:eBe2}
	  X^T\acute{\mathfrak{A}}_\phi X  = & \frac{286}{3}X_1^2 +26\sum_{\ell=2}^{n+1} X_\ell^2 +\frac{40}{3} X_{n+2}^2 +\frac{302}{3}\sum_{\ell=n+3}^{2n+2} X_\ell^2 + 100 X_{2n+3}^2+\frac{640}{3} X_{2n+4}^2  \notag  \\
	  & +2\cdot \biggl(-\frac{|q|}{2} + 51\biggr) X_1X_2 -2\cdot 4 \hat{\phi}\hat{\mu} X_{n+2}X_1+2\cdot \frac{10}{3}\sum_{\ell=0}^{n-1} X_{n+3+\ell}X_{2+\ell} -2\cdot 4 \sigma_0 X_{2n+3} X_1 \notag \\
	  &  -2 \cdot 8\hat{\phi} \hat{\eta} X_{2n+3} X_{n+2} -2 \cdot 8 \sigma_0  X_{2n+4} X_{2n+3}+X^T \widehat{\mathscr{Z}}_{(4+2n)\times (4+2n)}   X . 
	\end{align} 
Since $\widehat{\mathscr{Z}}_{(4+2n)\times (4+2n)}(\ttau;0)=0$,  and by continuity, there exists a constant  $\hat{R}>0$, such that for $\mathfrak{U}\in B_{\hat{R}}(\Rbb^{4+2n})$, and for sufficiently small $\sigma_0$,  using \eqref{e:eBe2} and \eqref{e:mueta2}, $|q|\in(3,100)$,  Young's inequality with $p$ (i.e., $2ab\geq -\frac{a^2}{p} - p b^2$) and Cauchy's inequality, we arrive at a lower bound for $X^T\acute{\mathfrak{A}}_\phi X$,  
	\begin{align}\label{e:XAX1}			  X^T\acute{\mathfrak{A}}_\phi X  \geq & \biggl(\frac{286}{3}-\frac{(51-|q|/2)}{p}
	-4\hat{\phi}\hat{\mu}-4\sigma_0 \biggr) X_1^2 +\biggl(26-\Bigl(51-\frac{|q|}{2}\Bigr)p-\frac{10}{3} r \biggr) X_2^2 + \sum_{\ell=3}^{n+1}\biggl(26 -\frac{10}{3}\biggr) X_\ell^2 \notag  \\
	&  +\biggl(\frac{40}{3}-4\hat{\phi}\hat{\mu}-8\hat{\phi}\hat{\eta} \biggr)  X_{n+2}^2 + \biggl(\frac{302}{3}-\frac{10}{3r} \biggr)  X_{n+3}^2  + \biggl(\frac{302}{3}-\frac{10}{3} \biggr)\sum_{\ell=n+4 }^{2n+2} X_\ell^2  \notag  \\
	&  +\biggl(100-4\sigma_0-8\hat{\phi}\hat{\eta}-8 \sigma_0 \biggr) X_{2n+3}^2 +\biggl( \frac{640}{3} - 8 \sigma_0 \biggr)  X_{2n+4}^2 - C\hat{R} \sum_{\ell=1}^{2n+4}X_\ell^2 \notag  \\
\overset{\text{Let } p=\frac{13}{25},r=\frac{1}{20}}{>}&	 \biggl(\frac{11}{78}-8\sigma_0  \biggr) X_1^2 +\frac{7}{75}X_2^2 +\frac{68}{3} \sum_{\ell=3}^{n+1} X_\ell^2   +\biggl(\frac{40}{3}-12\sigma_0 \biggr)  X_{n+2}^2 + 34 X_{n+3}^2 + \frac{292}{3} \sum_{\ell=n+4 }^{2n+2} X_\ell^2  \notag  \\
&   +\bigl(100-20\sigma_0 \bigr) X_{2n+3}^2 +\biggl( \frac{640}{3} - 8 \sigma_0 \biggr)  X_{2n+4}^2 - C\hat{R} \sum_{\ell=1}^{2n+4}X_\ell^2   
>        
 \frac{1}{50}X^T\mathds{1} X. 
	\end{align}

Similarly, to determine the upper bound of  $X^T\acute{\mathfrak{A}}_\phi X$,  we perform the following calculation
\begin{align}\label{e:XAX2}			  
	X^T\acute{\mathfrak{A}}_\phi X   
	<&	  \biggl(8 \sigma _0+\frac{439}{3}   \biggr)   X_1^2 + \frac{241}{3}X_2^2 +\frac{88}{3} \sum_{\ell=3}^{n+1} X_\ell^2 + \biggl(\frac{40}{3}+12\sigma_0 \biggr) X_{n+2}^2  + 104 \sum_{\ell=n+3}^{2n+2} X_\ell^2  \notag  \\
	& + (100+20\sigma_0) X_{2n+3}^2+ \biggl( \frac{640}{3}+8 \sigma_0 \biggr)   X_{2n+4}^2  +C\hat{R} \sum_{\ell=1}^{2n+4}X_\ell^2 <250 X^T\mathds{1} X. 
\end{align} 
By utilizing \eqref{e:A0b} and Lemma \ref{t:coef1},  we can estimate $X^T \widehat{\mathfrak{A}}^{0}_\phi X$ for $\widehat{\mathfrak{U}}\in B_{\hat{R}}(\Rbb^{4+2n})$ as follows
\begin{equation}\label{e:XA0X}
\frac{\cm^2}{2}  X^T\mathds{1} X	\leq X^T \widehat{\mathfrak{A}}^{0}_\phi X \leq \left(2+ \frac{1}{\beta}\right) X^T\mathds{1} X	  .   
\end{equation}
Combining \eqref{e:eBe1},  \eqref{e:XAX1}--\eqref{e:XA0X}  together, we conclude
\begin{equation}\label{e:XAXcomp}
\frac{\cm^2 }{2} X^T\mathds{1} X	\leq X^T \widehat{\mathfrak{A}}^{0}_\phi X   <  \frac{50 (2\beta+1)}{\beta} X^T\widehat{\mathfrak{A}}_\phi X\leq  \frac{12500(2\beta+1)}{\beta} X^T\mathds{1} X   . 
\end{equation}
Thus, we can choose constants $\acute{\kappa}$, $\gamma_1$ and $\gamma_2$ in \eqref{e:pstv} as follows, 
\begin{equation*}
	\acute{\kappa}=\frac{\beta}{50A(2\beta+1) } ,\quad \gamma_1=\frac{2}{\cm^2 } \AND  \gamma_2= \frac{12500(2\beta+1)}{\beta} . 
\end{equation*}
Therefore, the inequality \eqref{e:XAXcomp}  verifies Condition \ref{c:5}.

	\underline{$(4)$ Verifications of \ref{c:7}:} 
Given that  $\mathbf{P}=\mathds{1}$ and $\mathbf{P}^\perp=0$, the verification reduces to ensuring that there exist constants  $\theta$ and $\beta_\ell>0$ ($\ell=0,1$),  such that the following condition, derived from \eqref{e:PhP1}, is satisfied, 
	\begin{equation}\label{e:divB1}
		\mathrm{div} \widehat{\mathfrak{A}}_\phi (\htau,\hat{\zeta}, \widehat{\mathfrak{U}},\widehat{\mathfrak{W}}) =   \;\mathcal{O}\bigl(\theta   +|\htau|^{-\frac{1}{2}}\beta_0+|\htau|^{-1}\beta_1 \bigr) ,
	\end{equation}
	where\footnote{To facilitate readers' understanding, we have labeled the orders of the terms below. The detailed proofs will be provided in subsequent expositions. }
	\begin{align}\label{e:divB2}
		\mathrm{div} \widehat{\mathfrak{A}}_\phi (\htau,\hat{\zeta}, \widehat{\mathfrak{U}},\widehat{\mathfrak{W}}) :=& \underbrace{\del{\htau} \widehat{\mathfrak{A}}^0_\phi(\htau,\hat{\zeta}, \widehat{\mathfrak{U}})}_{(d)\;(-\htau)^{-\frac{1}{2}} -\text{term}}  +	\del{\widehat{\mathfrak{U}}}  \widehat{\mathfrak{A}}^0_\phi(\htau,\hat{\zeta}, \widehat{\mathfrak{U}})  \cdot (\widehat{\mathfrak{A}}^0_\phi(\htau,\hat{\zeta}, \widehat{\mathfrak{U}}) )^{-1} \Bigl[\underbrace{-	 \frac{1}{A\htau} \gamma \cos^2(\hat{\zeta}^i )  \widehat{\mathfrak{A}}^{i}_\phi (\htau,\hat{\zeta}, \widehat{\mathfrak{U}}) \cdot \widehat{\mathfrak{W}}_i }_{(a)\;\htau^{-1}-\text{term}} \notag  \\
		&\hspace{-3cm}  +\underbrace{ \frac{1}{ A\htau}	 \widehat{\mathfrak{A}}_\phi (\htau,\hat{\zeta}, \widehat{\mathfrak{U}})  \widehat{\mathfrak{U}}}_{(b)\; \htau^{-1}-\text{term}}   +  \underbrace{ \widehat{ \mathfrak{F}}_\phi(\htau,\hat{\zeta}, \widehat{\mathfrak{U}})}_{(e)\;(-\htau)^{-\frac{1}{2}} -\text{term}}  \Bigr] +\underbrace{\frac{1}{A\htau} \del{ \hat{\zeta}^i} \left( \gamma  \cos^2(\hat{\zeta}^i ) \widehat{\mathfrak{A}}^{i}_\phi(\htau,\hat{\zeta}, \widehat{\mathfrak{U}})\right)  +  \frac{\gamma  \cos^2(\hat{\zeta}^i ) }{A\htau} 	\del{\widehat{\mathfrak{U}}}    \widehat{\mathfrak{A}}^{i}_\phi(\htau,\hat{\zeta}, \widehat{\mathfrak{U}})  \cdot \widehat{\mathfrak{W}}_i }_{(c)\;\htau^{-1}-\text{term}}
	\end{align}
	for $(\htau, \hat{\zeta}, \widehat{\mathfrak{U}}, \widehat{\mathfrak{W}}_i) \in [-1,0)\times \Tbb^n_{[-\frac{\pi}{2},\frac{\pi}{2}]} \times  B_{\tilde{R}}(\Rbb^{4+2n}) \times  B_{\tilde{R}}(\mathbb{M}_{(4+2n) \times n})$. 
	Since $\mathbf{P}^\perp=0$, the left-hand sides of \eqref{e:PhP2}--\eqref{e:PhP4} are identically zero, which trivially satisfies these conditions.

Next, let us compute  $\del{\htau} \widehat{\mathfrak{A}}^0_\phi(\htau,\hat{\zeta}, \widehat{\mathfrak{U}})$. As described in $(1)$,   $\del{\htau}\hat{R}$  depends on $\del{\htau} \widehat{\mathscr{L}} $, $\del{\htau} \hat{S}$,  $\del{\htau}\bigl(1/(1+\hat{\uf})\bigr) $ and $\del{\htau}\widehat{\underline{\chi_\uparrow}}$ due to \eqref{e:R1}. Moreover,  $\del{\htau} \widehat{\mathscr{L}} $ and $\del{\htau} \hat{S}$ depend on $\del{\htau}\bigl(1/\hat{\uf}\bigr) $ , $\del{\htau}\bigl(1/(1+\hat{\uf})\bigr) $ and $\del{\htau}\widehat{\underline{\chi_\uparrow}}$ due to \eqref{e:L1}.  Therefore, the non-vanishing terms in $\del{\htau} \widehat{\mathfrak{A}}^0_\phi(\htau,\hat{\zeta}, \widehat{\mathfrak{U}})$ depend on $\del{\htau}\bigl(1/\hat{\uf}\bigr) $ , $\del{\htau}\bigl(1/(1+\hat{\uf})\bigr) $ and $\del{\htau}\widehat{\underline{\chi_\uparrow}}$.  Utilizing  \eqref{e:dt1/f1}--\eqref{e:dtchi2}  along with  Lemma \ref{t:Gest2} and Proposition \ref{s:gf1/2}, we find that there exist constants  $\tilde{\theta}$ and $\tilde{\beta}_{0}$ such that
	\begin{equation}\label{e:H6.1}
		\del{\htau} \widehat{\mathfrak{A}}^0_\phi =    \;\mathcal{O}\bigl(\tilde{\theta} +\tilde{\beta}_{0} \ |\htau|^{-\frac{1}{2}}\bigr).  
	\end{equation}

	Next, note the following facts: 
	\begin{enumerate}[leftmargin=*] 
		\item[(a)] The derivative $\del{\widehat{\mathfrak{U}}} \hat{R}$ depends on $\del{\widehat{\mathfrak{U}}} \hat{\mathscr{L}}$, where we have $\del{\widehat{\mathfrak{U}}} \hat{\mathscr{L}}=\;\mathcal{O}\bigl(\hat{\theta}   \bigr)$ and $\del{\widehat{\mathfrak{U}}} \widehat{\mathscr{Z}^{ij}}=\;\mathcal{O}\bigl(\hat{\theta}   \bigr)$ for some constants $\hat{\theta} >0$  based on  \eqref{e:L1} and \eqref{e:ZIj}; 
		\item[(b)] Note the following relation, 
		\begin{equation}\label{e:dzcosA}
			\del{ \hat{\zeta}^i} \left(  \gamma \cos^2(\hat{\zeta}^i ) \widehat{\mathfrak{A}}^{i}_\phi(\htau,\hat{\zeta}, \widehat{\mathfrak{U}})\right)  = -2  \gamma\cos(\hat{\zeta}^i) \sin(\hat{\zeta}^i) \widehat{\mathfrak{A}}^{i}_\phi(\htau,\hat{\zeta}, \widehat{\mathfrak{U}}) +   \gamma \cos^2(\hat{\zeta}^i )\del{ \hat{\zeta}^i}   \widehat{\mathfrak{A}}^{i}_\phi(\htau,\hat{\zeta}, \widehat{\mathfrak{U}})  . 
		\end{equation}
		\item[(c)] 	To estimate \eqref{e:dzcosA}, the first term on the RHS is bounded.  The key task is to verify the second term.  The only non-vanishing terms are $  \gamma \cos^2(\hat{\zeta}^i )\del{\hat{\zeta}^i}\widehat{\mathscr{Z}}^{ij}_\phi$ and $  \gamma \cos^2(\hat{\zeta}^i )\del{\hat{\zeta}^i}\widehat{\mathscr{L}}_\phi$.  		
To estimate these terms, one can directly compute them. We  focus on how to estimate the factors $ \gamma \cos^2(\hat{\zeta}^i )\del{\hat{\zeta}^i}(\hat{\phi}\hat{\mu})$ and $ \gamma \cos^2(\hat{\zeta}^i )\del{\hat{\zeta}^i}(\hat{\phi}\hat{\eta})$ in some terms. As an example, let's consider $ \gamma \cos^2(\hat{\zeta}^i )\del{\hat{\zeta}^i}(\hat{\phi}\hat{\mu})$ to illustrate the estimation process, 
		\begin{align}
		|	\gamma \cos^2(\hat{\zeta}^i )\del{\hat{\zeta}^i}(\hat{\phi}\hat{\mu})|= & |\gamma \cos^2(\hat{\zeta}^i )\hat{\mu}\del{\hat{\zeta}^i}\hat{\phi}+\gamma \cos^2(\hat{\zeta}^i )\hat{\phi}\del{\hat{\zeta}^i}\hat{\mu}|
			\overset{\eqref{e:transf3}}{=}   |\hat{\mu} \widehat{\del{\tilde{\zeta}^i} \phi}+ \hat{\phi}\widehat{\del{\tilde{\zeta}^i} \mu} |  \notag  \\ \overset{\eqref{e:phi1},\eqref{e:eta2}\&\eqref{e:mueta2}}{ \leq } &
			| C\hat{\mu} -51\delta^1_i\hat{\phi}\hat{\mu} |\leq C\hat\mu\leq C\sigma_0.  \label{e:gamcos}
		\end{align}
  Therefore, it follows from \eqref{e:L1} and \eqref{e:ZIj} that  $|\del{\hat{\zeta}^i}\widehat{\mathscr{L}} | \lesssim \sigma_0$  and $|\del{\hat{\zeta}^i}\widehat{\mathscr{Z}^{ij}}| \lesssim \sigma_0$. 
	\end{enumerate}
With the above observations, by directly calculating $\del{\widehat{\mathfrak{U}}}  \widehat{\mathfrak{A}}^0_\phi$, $\del{\widehat{\mathfrak{U}}}  \widehat{\mathfrak{A}}^{i}_\phi$  and $\del{\hat{\zeta}^i} \widehat{\mathfrak{A}}^{i}_\phi$, and employing \cite[Lemmas C.$1$ and C.$2$]{Liu2018} to calculate $(\widehat{\mathfrak{A}}^0_\phi )^{-1}$, we conclude that there exist constants $\hat{\theta}$ and $\tilde{R}$, such that
	\begin{equation}\label{e:H6.2}
		\del{\widehat{\mathfrak{U}}}\widehat{\mathfrak{A}}^0_\phi =\;\mathcal{O}\bigl(\hat{\theta}   \bigr) , \quad  \del{\widehat{\mathfrak{U}}}  \widehat{\mathfrak{A}}^{i}_\phi=\;\mathcal{O}\bigl(\hat{\theta}   \bigr)  , \quad   	\del{ \hat{\zeta}^i} \bigl(  \gamma \cos^2(\hat{\zeta}^i ) \widehat{\mathfrak{A}}^{i}_\phi \bigr)   =\;\mathcal{O}\bigl(\hat{\theta} \sigma_0  \bigr)   \AND (\widehat{\mathfrak{A}}^0 )^{-1} =\;\mathcal{O}\bigl(\hat{\theta}   \bigr)
	\end{equation}
	for $\widehat{\mathfrak{U}}\in B_{\tilde{R}}(\Rbb^{4+2n})$. 

	Additionally, from our previous verification of Condition \ref{c:3} (see $(2)$), there exist constants $\bar{\theta}$, $\hat{\beta}_0$ and $\tilde{R}$, such that
	\begin{equation}\label{e:H6.5}
		\widehat{\mathfrak{F}}_\phi  =   \;\mathcal{O}\bigl( \bar{\theta}+ \hat{\beta}_0  |\htau|^{-\frac{1}{2}} \bigr)
	\end{equation}
	for $\widehat{\mathfrak{U}}\in B_{\tilde{R}}(\Rbb^{4+2n})$.

With the help of \eqref{e:H6.1}--\eqref{e:H6.5}, directly examining \eqref{e:divB2} term by term, we can conclude there are constants $
\theta>0, \;   \beta_0>0 $ and $ \beta_1 >0$,
such that \eqref{e:divB1} holds for $(\htau, \hat{\zeta}, \widehat{\mathfrak{U}}, \widehat{\mathfrak{W}}_i) \in [-1,0)\times \Tbb^n_{[-\frac{\pi}{2},\frac{\pi}{2}]} \times  B_{\tilde{R}}(\Rbb^{4+2n}) \times  B_{\tilde{R}}(\mathbb{M}_{(4+2n) \times n})$.

Moreover,  by properly shrinking the ball $B_{\tilde{R}}(\Rbb^N) \times  B_{\tilde{R}}(\mathbb{M}_{(4+2n) \times n}) \ni (\mathfrak{U},\;\mathfrak{W}_i)$ and taking $\sigma_0$ sufficiently small  (note, with the help of \eqref{e:dzcosA} and \eqref{e:gamcos},  the $\htau^{-1}$-terms $(a)$, $(b)$ and $(c)$ in \eqref{e:divB2} all include a factor $\widehat{\mathfrak{U}}$,  $\widehat{\mathfrak{W}}_i$, or can be bounded by $\sigma_0$ and $\gamma$), we can further select $\gamma$ (which can be freely chosen) and set $\beta_1=C_1\gamma>0$ (absorbing $\sigma$ and $\tilde{R}$ into $C_1\gamma$) in \eqref{e:divB1}  such that $\beta_1$ satisfies
	\begin{equation}\label{e:bt1}
		\beta_1 +2k(k+1)C_0\gamma =(C_1  +2k(k+1)C_0)\gamma < \frac{\cm^2\beta}{50A(1+2\beta)}  ,
	\end{equation} 
	where $k\in \Zbb_{\frac{n}{2}+3}$ is a constant and $C_0>0$ is given by \eqref{e:ttb}. 
	This completes the verification of Condition \ref{c:7}.
	
	Now after verifying all the conditions \ref{c:2}--\ref{c:7}, we conclude that \eqref{e:mainsys3} is a Fuchsian system as described in Appendix \ref{s:fuc}, thereby completing the proof of Proposition  \ref{t:verfuc}.$(1)$. 
\end{proof}
\begin{proof}[The proof of Proposition \ref{t:verfuc}.$(2)$] 	
This can be proven by directly applying Theorem \ref{t:fuc},  as \eqref{e:mainsys5} is indeed a Fuchsian system. The constants $\kappa,\, \gamma_1$ and  $\beta_1$ meet the requirements of \eqref{e:kpbt1} due to the choice of $\beta_1$ as specified in \eqref{e:bt1} (using \eqref{e:ttb} to verify it satisfies \eqref{e:kpbt1}). This completes the proof. 
\end{proof}


\section{Proofs of the Main Theorems}\label{s:pfmthm}
We begin by focusing on the proof of Theorem \ref{t:mainthm2}. Once established, Theorem \ref{t:mainthm1} follows directly from Theorem \ref{t:mainthm2} by an exponential-logarithmic transformation, the details of which are omitted here for brevity.

\subsection{The proof of Theorem \ref{t:mainthm2}}
To begin, we select arbitrary constants  $\sigma_0\in(0,\sigma_\star)$ and $\delta_0\in(0,\delta_\star)$, where $\sigma_\star$ and $\delta_\star$ are some small, positive  constants. 
	
\subsubsection{Step $1$: Variable transformations.} Before delving into the proof, we first recall the complete set of variable transformations frequently utilized in the subsequent analysis. By applying the transformations defined in \eqref{e:v1}--\eqref{e:v5} and  \eqref{e:fv1}--\eqref{e:fv6},   as well as the coordinate transformations \eqref{e:coord6},  \eqref{e:coord5}--\eqref{e:coordi5} combined with \eqref{e:coord2} and \eqref{e:coordi2},  the definitions  \eqref{e:eta2} of $\mu$ and $\eta$, and the relations $	\mft=\mathsf{b}_\uparrow (\tau) =\mathsf{b}_\uparrow \circ g(t,x^i)$ and $f(\mft)=\uf(\tau) =\uf\circ\mfg(\mft)=  f \circ \mathsf{b}_\uparrow \circ g(t,x^i)$, we arrive at  the composition transformations, 
\begin{align}
	\hat{\mfu}_0(\htau,\hat{\zeta} ) = & 	 \mfu_0(\ttau,\txi ) =	\frac{e^{ \frac{153}{\delta_0} }   (- \tau)^{   \frac{100 }{A} }   e^{  51  \zeta^1   }}{  \sigma_0 } 
	\frac{ \underline{\varrho_0}(\tau,\zeta )  - \underline{f_0}(\tau) }{ \underline{f_0}(\tau) }   
	=  \frac{e^{  \frac{153}{\delta_0} }   (-g(t,x))^{  \frac{100 }{A} }   e^{  51 x^1   }}{   \sigma_0 } 
	\frac{ \varrho_0(t,x )  - f_0 \circ \mathsf{b}_\uparrow \circ g(t,x ) }{ f_0 \circ \mathsf{b}_\uparrow \circ g(t,x )}     
	, \label{e:fv1.b}  \\
	\hat{\mfu}_i(\htau,\hat{\zeta} )  =& \mfu_i(\ttau,\txi )   =  \frac{e^{  \frac{153}{\delta_0} }   (- \tau)^{  \frac{100 }{A} }   e^{ 51 \zeta^1   }}{  \sigma_0 }  \frac{ \underline{\varrho_i} (\tau,\zeta ) }{1+\underline{f}(\tau)}     
	=  \frac{e^{  \frac{153}{\delta_0} }   (- g(t,x))^{    \frac{100 }{A} }   e^{  51 x^1   }}{  \sigma_0 }   \frac{  \varrho_i  (t,x ) }{1+f \circ \mathsf{b}_\uparrow \circ g(t,x ) }      
	, \label{e:fv2.b}   \\
	\hat{\mfu}(\htau,\hat{\zeta} )   =& 		\mfu(\ttau,\txi )  = 	  \frac{\underline{\varrho}(\tau,\zeta )- \uf(\tau) }{\uf(\tau )}    
	=  	    \frac{ \varrho(t,x )- f \circ \mathsf{b}_\uparrow \circ g(t,x ) }{f \circ \mathsf{b}_\uparrow \circ g(t,x )}   ,\label{e:fv3.b}  \\
	\hat{\mfv}(\htau,\hat{\zeta} )   = 	& \mfv(\ttau,\txi)  =   \frac{e^{  \frac{153}{\delta_0} }   (- \tau)^{ \frac{100 }{A} }   e^{ 51 \zeta^1   }}{  \sigma_0 }  \frac{\underline{\varrho}(\tau,\zeta )- \uf(\tau) }{\uf(\tau )}  
	=    \frac{e^{  \frac{153}{\delta_0} }   (- g(t,x))^{ \frac{100 }{A} }   e^{ 51 x^1   }}{   \sigma_0 }      \frac{ \varrho(t,x )- f \circ \mathsf{b}_\uparrow \circ g(t,x ) }{f \circ \mathsf{b}_\uparrow \circ g(t,x )}  	, \label{e:fv6.b}      \\
	\hat{\mathfrak{B}}_j(\htau,\hat{\zeta} )  =  &	\mathfrak{B}_j(\ttau,\txi )  =    \frac{e^{  \frac{153}{\delta_0} }   (- \tau)^{ \frac{100 }{A} }   e^{ 51 \zeta^1   }}{  \sigma_0 } 
	\frac{B \ufo (\tau ) }{\underline{\chi_\uparrow}(\tau) \uf (\tau) }  
	\mathsf{b}_j(\tau,\zeta)   
	, \label{e:fv4.b} 
	\\
	\hat{\mathfrak{z}}(\htau,\hat{\zeta})  =&\mathfrak{z}(\ttau,\txi )  =   \frac{e^{   \frac{153}{ \delta_0} } (-\tau)^{\frac{100}{A}}e^{51\zeta^1}  }{  \sigma_0^2    }   \biggl[\biggl(\frac{\mathsf{b}(\tau,\zeta )}{\mathsf{b}_\uparrow(\tau)}\biggr)^{\frac{1}{3}}-1\biggr]    .     
  \label{e:fv5.b}    
\end{align}  
These transformations lead to the following expressions for the initial data (at $\htau=-1$, corresponding to $t=t_0$), 
\begin{align}
		\hat{\mfu}_0(-1,\hat{\zeta} ) = & 	 	\frac{e^{  \frac{153}{\delta_0} }     e^{ 51 \zeta^1   }}{   \sigma_0 } 
	\frac{ \underline{\varrho_0}(-1,\zeta )  - \underline{f_0}(-1) }{ \underline{f_0}(-1) }  
	=  \frac{e^{  \frac{153}{\delta_0} }     e^{51  x^1   }}{  \sigma_0 } 
	\frac{ \varrho_0(t_0,x )  - \beta_0}{ \beta_0}     
	, \label{e:infv1.b}  \\
	\hat{\mfu}_i(-1,\hat{\zeta})   =& 	\frac{e^{  \frac{153}{\delta_0} }    e^{  51  \zeta^1   }}{   \sigma_0 }  \frac{ \underline{\varrho_i} (-1,\zeta ) }{1+\underline{f}(-1)}   
	=  	\frac{e^{  \frac{153}{\delta_0} }    e^{ 51 x^1   }}{  \sigma_0 }   \frac{  \varrho_i  (t_0,x ) }{1+\beta}      
	, \label{e:infv2.b}   \\
	\hat{\mfu}(-1,\hat{\zeta} )  =&   \frac{\underline{\varrho}(-1,\zeta )- \uf(-1) }{\uf(-1 )}   
	=  	    \frac{ \varrho(t_0,x )-\beta}{\beta}   ,\label{e:infv3.b}  \\
	\hat{\mfv}(-1,\hat{\zeta})  = 	&    \frac{e^{  \frac{153}{\delta_0} }    e^{ 51\zeta^1   }}{   \sigma_0 }  \frac{\underline{\varrho}(-1,\zeta )- \uf(-1) }{\uf(-1 )}   
	=   \frac{e^{  \frac{153}{\delta_0} }   e^{  51 x^1   }}{   \sigma_0 }      \frac{ \varrho(t_0,x )- \beta }{\beta}  	, \label{e:infv6.b}      \\
	\hat{\mathfrak{B}}_j(-1,\hat{\zeta})  =  & \frac{e^{  \frac{153}{\delta_0} }    e^{  51 \zeta^1   }}{   \sigma_0 } 
	\frac{B \ufo (-1 ) }{\underline{\chi_\uparrow}(-1) \uf (-1) }  
	\mathsf{b}_j(-1,\zeta )    
	, \label{e:infv4.b} 
	\\
	\hat{\mathfrak{z}}(-1,\hat{\zeta})  =&  \frac{ e^{  \frac{153}{\delta_0} }   e^{51\zeta^1}   }{   \sigma_0^2       }   \biggl[\biggl(\frac{\mathsf{b}(-1,\zeta )}{\mathsf{b}_\uparrow(-1)}\biggr)^{\frac{1}{3}}-1\biggr]   
	.    \label{e:infv5.b}    
\end{align} 
 
Given the initial data in  \ref{Eq2} (recalling $	\varrho|_{t=t_0} =	\beta +  \psi(x)$ and  $\del{t}\varrho|_{t=t_0}=	 \beta_0  +  \psi_0(x)$), and using the expressions from \eqref{e:infv1.b}--\eqref{e:infv6.b}, we obtain
	\begin{equation}\label{e:dt.1} 
 \psi_0(x ) =\beta_0   \sigma_0 e^{- \frac{153}{\delta_0} }     e^{ - 51 x^1   } \hat{\mfu}_0(-1,\hat{\zeta} ), \quad   \psi(x )  =	\beta\hat{\mfu}(-1,\hat{\zeta} ) =  \beta \sigma_0 e^{- \frac{153}{\delta_0} }   e^{ -  51 x^1   }	\hat{\mfv}(-1,\hat{\zeta} ) 
	\end{equation}
	and 
	\begin{equation}\label{e:dt.2} 
	  \del{i}\psi(x ) =  (1+\beta) \sigma_0 e^{- \frac{153}{\delta_0} }    e^{ - 51  x^1   } \hat{\mfu}_i(-1,\hat{\zeta} )   
	\end{equation}  
	
	Next, note that on the initial hypersurface $t=t_0$ (which corresponds to  $\tau=-1$), the coordinate relation $x^i= \gamma^{-1} \tan  \hat{\zeta}^i $ holds. For any function  $F(x^i)=F(\gamma^{-1} \tan  \hat{\zeta}^i)=\hat{F}(\hat{\zeta}^i)$ with a compact domain, and using the definition of Sobolev norms \S\ref{s:funsp}, along with the fact that $D_{x^i}=\gamma \cos^2(\hat{\zeta}^i)D_{\hat{\zeta}^i}$, we obtain
	\begin{equation*}
		\|F\|_{H^k(B_1(0))}^2=\sum_{0\leq \alpha\leq k} \int_{B_1(0)}(D_x^\alpha F)^2 d^n x=\sum_{0\leq \alpha\leq k} \int_{B_r(0)}\bigl((\gamma \cos^2(\hat{\zeta}^i)D_{\hat{\zeta}^i})^\alpha F\bigr)^2  \gamma^{-n} \prod_{i=1}^n \sec^2\hat{\zeta}^i d^n \hat{\zeta}\sim \|\hat{F}\|_{H^k(B_{r}(0))}^2
	\end{equation*}
	where $r=\arctan \gamma\in(-\frac{\pi}{4},\frac{\pi}{4})$ and $A\sim B$ means there exist constants $C_1,C_2>0$ such that $C_1 A<B<C_2A$. The notation $\sim$ holds because the derivatives applied to $\cos^2(\hat{\zeta}^i)$ result in  $\cos^2(\hat{\zeta}^i)$, $\sin^2(\hat{\zeta}^i)$ or $\cos (\hat{\zeta}^i)\sin (\hat{\zeta}^i)$, all of which are bounded.  In addition, $\sec^2(\hat{\zeta}^i)\in[1,2)$ within $B_r(0)$. Consequently, using these facts, we can conclude that  $\|F\|_{H^k(B_0(0))}\sim\|\hat{F}\|_{H^k(B_r(0))}$ by straightforward calculations.

Based on the assumptions of Theorem \ref{t:mainthm2} and using \eqref{e:dt.1}--\eqref{e:dt.2},  the inequality of the data $\|  \psi \|_{H^k(B_1(0))} +	\|\del{i} \psi \|_{H^{k}(B_1(0))} +  \|  \psi_0\|_{H^k(B_1(0))} \leq  e^{  - \frac{153}{\delta_0} }    \sigma_0^2 $ leads to the following bounds
	\begin{align}
		\|\hat{\mfu}_0(-1)\|_{H^k(B_r(0))} \leq &\frac{C \| 
			e^{  51x^1   }\psi_0 \|_{H^k(B_1(0))}}{  \beta_0 \sigma_0 e^{- \frac{153}{\delta_0} }    }  \leq\frac{C \| 
		 \psi_0 \|_{H^k(B_1(0))}}{  \beta_0 \sigma_0 e^{- \frac{153}{\delta_0} }    }    \leq \frac{ C }{      \beta_0 }    \sigma_0   ,  \label{e:uin1} \\
		\|\hat{\mfu}_i(-1)\|_{H^k(B_r(0))} \leq 	& \frac{ C \|  e^{   51 x^1   } \del{i}\psi \|_{H^k(B_1(0))}}{   \sigma_0 e^{- \frac{153}{\delta_0} }  (1+\beta) }   
		\leq \frac{C}{     1+\beta }     \sigma_0    , \label{e:uin2} \\
		\|\hat{\mfu}(-1)\|_{H^k(B_r(0))} \leq &  \frac{C \|\psi\|_{H^k(B_1(0))}}{\beta}  \leq  \frac{C e^{  - \frac{153}{\delta_0} }    }{\beta} \sigma_0^2  \leq  \frac{C   }{\beta} \sigma_0^2    ,  \label{e:uin3} \\
		\|\hat{\mfv}(-1)\|_{H^k(B_r(0))} \leq &   \frac{  C\|    e^{ 51 x^1   }  \psi  \|_{H^k(B_1(0))} }{   \sigma_0 e^{- \frac{153}{\delta_0} } \beta}      \leq \frac{ 	C }{ \beta}     \sigma_0  .  \label{e:uin4}
	\end{align}
	In addition, by Lemmas  \ref{t:gb1}.\ref{l:1.2} and   \ref{t:gb2}.\ref{l:2.3}, we have $\mathsf{b}(-1,\zeta )= t_0$ and $\mathsf{b}_\uparrow(-1)=t_0$, which implies  $\mathsf{b}_j(-1,\zeta )= 0$ and $\bigl(\frac{\mathsf{b}(-1, \zeta)}{\mathsf{b}_\uparrow(-1)}\bigr)^{\frac{1}{3}}-1=0$. Using \eqref{e:infv4.b} and \eqref{e:infv5.b}, it follows that $ \hat{\mathfrak{B}}_j(-1,\hat{\zeta}) =  	\hat{\mathfrak{z}}(-1,\zeta )=0 $. Therefore, we obtain \begin{equation}\label{e:Bzin}
		\|\hat{\mathfrak{B}}_j(-1)\|_{H^k(B_r(0))}= \|	\hat{\mathfrak{z}}(-1 ) \|_{H^k(B_r(0))}=0  . 
	\end{equation}
	By taking sufficiently small constants $\sigma_\star>0$, such that $4C\sigma_\star<\sigma$ for $C$ (where $C$ is the maximum of the coefficients in \eqref{e:uin1}--\eqref{e:uin4}). 
Then since $\sigma_0<\sigma_\star$,  \eqref{e:uin1}--\eqref{e:Bzin} imply the initial data of $\widehat{\mathfrak{U}}$ satisfies
\begin{equation}\label{e:dt3}
	\|\widehat{\mathfrak{U}}_0\|_{C^3} \leq\|\widehat{\mathfrak{U}}_0\|_{H^k(B_r(0))} \leq \sigma .
\end{equation}

\subsubsection{Step $2$: Estimates of extended Fuchsian and original variables.}
By using Lemma \ref{t:mainsys1}, Lemma \ref{t:mainsys2} and Lemma \ref{t:sigsys}, 
we rewrite the main equation \ref{Eq2}  into the singular system  \eqref{e:mainsys3}. By introducing a cutoff function $\phi$, we handle the large values of $\mu$ and $\eta$  in the singular equation \eqref{e:mainsys3},  replacing $\mu$ and $\eta$ with $\phi\mu$ and $\phi\eta$ respectively, leading to the revised system \eqref{e:mainsys5}.  To bring the revised system \eqref{e:mainsys5} into the Fuchsian form described in Appendix \ref{s:fuc},  we compactify the spatial domain into a torus using the coordinate transformations \eqref{e:coord6b} and \eqref{e:coordi6}. Therefore,  Proposition \ref{t:verfuc} confirms that the system takes the Fuchsian form  \eqref{e:mainsys6} (parameterized by $\delta_0$). On the extended spacetime domain $[-1,0)\times \Tbb^n_{[-\frac{\pi}{2},\frac{\pi}{2}]}$, there exists a constant $  \sigma=\sigma(\delta_0)>0$,  such that if $
	\|\widehat{\mathfrak{U}}_0\|_{H^k} \leq \sigma$, 
then there exists a unique solution $\widehat{\mathfrak{U}}$ satisfying the regularity conditions specified in Proposition \ref{t:verfuc} and the energy estimate $\|\widehat{\mathfrak{U}}(\htau)\|_{H^k(\Tbb^n_{[-\frac{\pi}{2},\frac{\pi}{2}]})}  \leq C \|\widehat{\mathfrak{U}}_0 \|_{H^k(\Tbb^n_{[-\frac{\pi}{2},\frac{\pi}{2}]})} $.

By taking  $\sigma_\star$ sufficiently small, there is a constant $\sigma>0$, such that 	$\|\widehat{\mathfrak{U}}_0\|_{H^k} \leq \sigma$ as shown in \eqref{e:dt3}. Therefore, the unique solution $\widehat{\mathfrak{U}}$ with the regularity \eqref{e:solreg} exists in the extended spacetime domain $[-1,0)\times \Tbb^n_{[-\frac{\pi}{2},\frac{\pi}{2}]}$ and for $-1\leq \hat{\tau}<0$, we have
\begin{equation}\label{e:est1}
	\|\widehat{\mathfrak{U}}(\htau)\|_{C^3(\Tbb^n_{[-\frac{\pi}{2},\frac{\pi}{2}]})}  \leq C	\|\widehat{\mathfrak{U}}(\htau)\|_{H^k(\Tbb^n_{[-\frac{\pi}{2},\frac{\pi}{2}]})}  \leq C \|\widehat{\mathfrak{U}}_0 \|_{H^k(\Tbb^n_{[-\frac{\pi}{2},\frac{\pi}{2}]})} \leq C\sigma. 
\end{equation}

Applying  Lemma \ref{t:extori2}, 
we obtain there is a  unique solution 	$\widehat{\mathfrak{U}}:=(\hat{\mfu}_0,\hat{\mfu}_j,\hat{\mfu},\hat{\mathfrak{B}}_l, \hat{\mathfrak{z}},\hat{\mfv})^T$ 
to the original system \eqref{e:mainsys3}  with parameters \eqref{e:para} on the domain $\widehat{\mathit{\Lambda}}_{\delta_0}$. 
Using the Sobolev embedding theorem, the estimate  \eqref{e:est1} can be expressed as
\begin{equation}\label{e:est2}
	\|(\hat{\mfu}_0,\hat{\mfu}_j,\hat{\mfu},\hat{\mathfrak{B}}_l, \hat{\mathfrak{z}},\hat{\mfv})\|_{C^3(\mathit{\Lambda_{\delta_0}})}=\|\widehat{\mathfrak{U}}(\ttau)\|_{C^3(\mathit{\Lambda_{\delta_0}})}  \leq C\sigma . 
\end{equation}

Before proceeding, we control the decay factor by a simple exponential decay in $\zeta$. 
\begin{lemma}\label{t:dcy}
		If $(\tau,\zeta) \in\underline{\mathit{\Lambda}_{\delta_0}}$, then the decay factor 
	\begin{equation*}
		e^{ -  \frac{153}{ \delta_0} }(-g(t,x))^{-\frac{100}{A}}e^{-51 x^1}=	e^{ -  \frac{153}{ \delta_0} }(-\tau)^{-\frac{100}{A}}e^{-51\zeta^1}<e^{  - \frac{50}{ \delta_0}} e^{ -\frac{\zeta^1}{2} }  . 
	\end{equation*} 
\end{lemma}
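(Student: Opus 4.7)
The plan is to move everything into the tilted coordinates $(\ttau,\txi)$ from \S\ref{s:riv1}, where the lens-shaped domain $\widehat{\mathit{\Lambda}}_{\delta_0}$ is described by the explicit bound $\ttau \le \widehat{\mathfrak{T}}_{\delta_0}(\hat{\zeta})$ of \eqref{e:surf0}, and then to reduce the claimed inequality to a one-variable estimate in $\txi^{1}$.

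First, I would substitute $\zeta^{1} = \txi^{1} - \frac{\mathtt{a}\mathtt{c}^{1}}{A}\ln(-\tau) = \txi^{1} - \frac{100}{51A}\ln(-\tau)$ (using the parameters in \eqref{e:para}) into the left-hand side. The factor $(-\tau)^{-100/A}$ exactly cancels the exponential term $e^{-51\cdot \frac{100}{51A}\ln(-\tau)}=(-\tau)^{100/A}$ coming out of $e^{-51\zeta^{1}}$, so the LHS simplifies to $e^{-153/\delta_{0}}e^{-51\txi^{1}}$. Applying the same substitution to the target right-hand side gives $e^{-50/\delta_{0}}e^{-\txi^{1}/2}(-\tau)^{50/(51A)}$. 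Thus the claim is equivalent to
\begin{equation*}
-\frac{103}{\delta_{0}} - \frac{101}{2}\txi^{1} \;<\; \frac{50}{51A}\ln(-\tau),
\end{equation*}
which is the inequality I will establish using membership in $\underline{\mathit{\Lambda}_{\delta_{0}}}$.

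Second, I would invoke the defining relation $\ttau\le \widehat{\mathfrak{T}}_{\delta_{0}}(\hat\zeta)$, translated to the $(\ttau,\txi)$ picture via Lemma \ref{t:Tsurf2} (referenced in the excerpt), to produce the two-sided bound $\txi^{1}\ge -1/\delta_{0}$ together with an explicit lower bound on $\ln(-\ttau)$. On the left branch $-1/\delta_{0}\le\txi^{1}\le \frac{2-51\Xi_{0}}{102\Xi_{0}}\frac{1}{\delta_{0}}$, this bound is
\begin{equation*}
\ln(-\tau)\;\ge\; -\mathsf{c}\Bigl(\tfrac{51A}{2}-\tfrac{51A}{4\delta_{0}\txi^{1}+8}\Bigr)\bigl(\txi^{1}+\tfrac{1}{\delta_{0}}\bigr)\;\ge\;-\tfrac{51A}{2}\mathsf{c}\bigl(\txi^{1}+\tfrac{1}{\delta_{0}}\bigr),
\end{equation*}
and on the right branch $\txi^{1}>\frac{2-51\Xi_{0}}{102\Xi_{0}}\frac{1}{\delta_{0}}$ an analogous bound holds with $\mathsf{c}$ replaced by $1$ and $1/\delta_{0}$ by $1/(2\delta_{0})$.

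Third, on each branch I would plug these lower bounds into the reduced inequality. On the left branch the question becomes whether $(25\mathsf{c}-\tfrac{101}{2})\txi^{1} + (25\mathsf{c}-103)/\delta_{0}<0$; since $\mathsf{c}\in(0,(1+\tfrac{51}{2}\Xi_{0})^{-1})\subset(0,1)$ both coefficients are strictly negative, and using $\txi^{1}\ge -1/\delta_{0}$ the supremum of the left side equals $-105/(2\delta_{0})<0$. On the right branch the reduction is to $\txi^{1}>-181/(51\delta_{0})$, which is automatic since $\frac{2-51\Xi_{0}}{102\Xi_{0}}\frac{1}{\delta_{0}}\gg -\tfrac{181}{51\delta_{0}}$ for the small $\Xi_{0}$ appearing here (cf. Lemma \ref{t:Thpst}).

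The only delicate point will be handling the piecewise boundary $\txi^{1}=\frac{2-51\Xi_{0}}{102\Xi_{0}\delta_{0}}$ and confirming that the crude lower bound $\tfrac{51A}{2}-\tfrac{51A}{4\delta_{0}\txi^{1}+8}\le \tfrac{51A}{2}$ is tight enough to produce a \emph{strict} inequality once $\mathsf{c}<(1+\tfrac{51}{2}\Xi_{0})^{-1}$ is used; this is where the precise choice of $\mathsf{c}$ is decisive, but after the reduction above the remaining arithmetic is linear in $\txi^{1}$ and $1/\delta_{0}$ and so can be verified directly.
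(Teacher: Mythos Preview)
Your argument is correct and reaches the same conclusion as the paper, but by a slightly different route. The paper's proof stays in the $(\tau,\zeta)$ coordinates and invokes Lemma~\ref{t:Tsurf3}: it bounds $(-\tau)^{-100/A}$ from above by its value on the boundary $\underline{\Gamma}_{\delta_0}$, plugs in the explicit formula for $\tau_\Gamma(\zeta;\delta_0)$, and then simply drops the positive $\sqrt{\Diamond_l}$, $\sqrt{\Diamond_r}$ contributions in the exponent. You instead pass to the tilted coordinates $(\ttau,\txi)$ and use Lemma~\ref{t:Tsurf2}. This is arguably cleaner, since the substitution $\zeta^1=\txi^1-\tfrac{100}{51A}\ln(-\tau)$ collapses the left-hand side to $e^{-153/\delta_0}e^{-51\txi^1}$ with no residual $\tau$-dependence, after which the problem is a purely linear inequality in $\txi^1$ and $1/\delta_0$. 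Both approaches rest on the same idea (maximize over $\tau$ using the explicit boundary formula), and the arithmetic that remains is equivalent.

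One small imprecision: on the right branch you justify $\txi^1>-\tfrac{181}{51\delta_0}$ by appealing to ``the small $\Xi_0$ appearing here''. In fact no smallness of $\Xi_0$ is needed: the paper records that $\tfrac{2-51\Xi_0}{102\Xi_0}>-\tfrac{1}{2}$ for every admissible $\Xi_0>0$, and $-\tfrac{1}{2}>-\tfrac{181}{51}$, so the inequality holds unconditionally. Also, your final paragraph about the ``delicate point'' at the piecewise boundary is unnecessary caution; your crude bound $\tfrac{51A}{2}-\tfrac{51A}{4\delta_0\txi^1+8}\le\tfrac{51A}{2}$ already yields a strict inequality because the residual linear expression evaluates to $-105/(2\delta_0)<0$ at the worst case $\txi^1=-1/\delta_0$, independently of the exact value of $\mathsf{c}\le 1$.
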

\begin{proof}
	If $(\tau,\zeta) \in \underline{\mathit{\Lambda}_{\delta_0}}$, noting $(-\tau)^{-\frac{100}{A}}$ achieves the upper bound on the hypersurface $\hat{\Gamma}_{\delta_0}$ for any $\zeta$. Using the results from Lemma \ref{t:Tsurf3} and noting that $\sqrt{\Diamond_r(\zeta^1)}>0
	$ and $\sqrt{\Diamond_l(\zeta^1)}>0
	$, 
we directly obtain the desired estimate. 
\end{proof}

Now let us transform the variables back to $\mathit{\Lambda}_{\delta_0}$. 
By using \eqref{e:fv1.b}--\eqref{e:fv5.b}, with the help of Lemma \ref{t:dcy}, the estimate \eqref{e:est2} becomes
\begin{align}
(1-C \sigma_0^2 e^{  - \frac{50}{ \delta_0}} e^{ -\frac{x^1}{2} }) f_0\circ\mathsf{b}_\uparrow \circ g  \leq 	\varrho_0 & \leq (1+C \sigma_0^2 e^{  - \frac{50}{ \delta_0}} e^{ -\frac{x^1}{2} }) f_0\circ\mathsf{b}_\uparrow \circ g , \\
 -C \sigma_0^2 e^{  - \frac{50}{ \delta_0}} e^{ -\frac{x^1}{2} }  (1+f \circ\mathsf{b}_\uparrow \circ g ) \leq 	\varrho_i & \leq  C \sigma_0^2 e^{  - \frac{50}{ \delta_0}} e^{ -\frac{x^1}{2} }  (1+f \circ\mathsf{b}_\uparrow \circ g)  , \\
 (1-C \sigma_0 ) f \circ\mathsf{b}_\uparrow \circ g  \leq 	\varrho & \leq (1+C \sigma_0 ) f \circ\mathsf{b}_\uparrow \circ g , \label{e:rhoreg0}   \\
(1-C \sigma_0^2 e^{  - \frac{50}{ \delta_0}} e^{ -\frac{x^1}{2} }) f \circ\mathsf{b}_\uparrow \circ g  \leq 	\varrho & \leq (1+C \sigma_0^2 e^{  - \frac{50}{ \delta_0}} e^{ -\frac{x^1}{2} }) f \circ\mathsf{b}_\uparrow \circ g  , \\
-\frac{\chi_\uparrow}{B} \frac{f}{f_0}C\sigma_0^2  e^{  - \frac{50}{ \delta_0}} e^{ -\frac{x^1}{2} }\leq \mathsf{b}_j & \leq  \frac{\chi_\uparrow}{B} \frac{f}{f_0}C\sigma_0^2  e^{  - \frac{50}{ \delta_0}} e^{ -\frac{x^1}{2} } , \\
(1-C\sigma_0^3e^{  - \frac{50}{ \delta_0}} e^{ -\frac{x^1}{2} })^3 \mathsf{b}_\uparrow \leq \mathsf{b} & \leq (1+C\sigma_0^3e^{  - \frac{50}{ \delta_0}} e^{ -\frac{x^1}{2} })^3 \mathsf{b}_\uparrow \label{e:breg0}
\end{align}
for $(t,x)\in\mathit{\Lambda}_{\delta_0}$.   

In addition, by Lemma \ref{t:Tsurf5} and the estimates from \eqref{e:breg0}, we can further estimate the  hypersurface (recalling $\tau_\Gamma$ is defined in  Lemma \ref{t:Tsurf3})
\begin{align}\label{e:lidest}
	&(1-C\sigma_0^3e^{  - \frac{50}{ \delta_0}} e^{ -\frac{x^1}{2} })^3 \mathsf{b}_\uparrow \left(\tau_\Gamma(x;\delta_0)\right)  \leq  \mathsf{b}  \left(\tau_\Gamma(x;\delta_0) ,x\right)   \leq (1+C\sigma_0^3e^{  - \frac{50}{ \delta_0}} e^{ -\frac{x^1}{2} })^3 \mathsf{b}_\uparrow\left(\tau_\Gamma(x;\delta_0)  \right)   . 
\end{align}
Note
\begin{gather*}
	\lim_{x^1\rightarrow +\infty} \tau_\Gamma (x;\delta_0)=0 \AND \lim_{x^1\rightarrow +\infty} e^{-\frac{x^1}{2}}=0; \\ 
	\lim_{\delta_0\rightarrow 0+} \tau_\Gamma (x;\delta_0) =0 \AND \lim_{\delta_0\rightarrow 0+} e^{-\frac{100}{\delta_0}}=0 . 
\end{gather*}
Taking the limit as $x^1\rightarrow +\infty$ in \eqref{e:lidest},  and recalling $\mathfrak{g}(t_m)=0$ in \eqref{e:ctm2} and Lemma \ref{t:gb2}.\ref{l:2.3}, we obtain 
\begin{equation}
	\lim_{a\rightarrow +\infty} \mathfrak{T}(a\delta^i_1,\delta_0)=\mathsf{b}_\uparrow (0)=t_m \AND \lim_{\delta_0\rightarrow 0+} \mathfrak{T}(x,\delta_0)=\mathsf{b}_\uparrow (0)=t_m. \label{e:limT}
\end{equation}

\subsubsection{Step $3$: Proofs of Theorem \ref{t:mainthm2}.\ref{b1}, \ref{b4}, \ref{b2} and \ref{b3}.} In order to prove Theorem \ref{t:mainthm2}.\ref{b1}, we firstly give a lemma on the estimates of $\mathsf{b}_\uparrow\circ g$. 
\begin{lemma}\label{t:bgest}
	The composition  $\mathsf{b}_\uparrow \circ g (t,x)$ satisfies the estimate
	\begin{equation*}
		t_0+(1-C \sigma_0^2 e^{  - \frac{50}{ \delta_0}} e^{ -\frac{x^1}{2} }) (t-t_0) \leq \mathsf{b}_\uparrow \circ g (t,x)\leq 		t_0+(1+C\sigma_0^2 e^{  - \frac{50}{ \delta_0}} e^{ -\frac{x^1}{2} } ) (t-t_0) 
	\end{equation*}
for $(t,x)\in\mathit{\Lambda}_{\delta_0}$.    
\end{lemma}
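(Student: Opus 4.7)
The plan is to prove the two-sided bound by an ODE comparison of $\mathsf{b}$ and $\mathsf{b}_\uparrow$ along $\tau$, rather than by directly exploiting the (already proved) estimate $\mathsf{b}/\mathsf{b}_\uparrow = (1+z)^3$. The reason is that the multiplicative estimate only yields $|\mathsf{b}_\uparrow(g(t,x))-t|\le C\epsilon\,\mathsf{b}_\uparrow(g)$, which is proportional to $t$ and is too weak near $t=t_0$; what is required is a bound proportional to $t-t_0$, so one must exploit the fact that both $\mathsf{b}$ and $\mathsf{b}_\uparrow$ start at the same value $t_0$ at $\tau=-1$.

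Concretely, I would fix $(t,x)\in\mathit{\Lambda}_{\delta_0}$ and use the two identities $\mathsf{b}(g(t,x),x)=t$ (by Lemma~\ref{t:gb1}\ref{l:1.2}) and $\mathsf{b}(-1,x)=\mathsf{b}_\uparrow(-1)=t_0$ (initial conditions) to rewrite
\[
t-t_0=\int_{-1}^{g(t,x)}\partial_\tau\mathsf{b}(s,x)\,ds,\qquad \mathsf{b}_\uparrow(g(t,x))-t_0=\int_{-1}^{g(t,x)}\partial_\tau\mathsf{b}_\uparrow(s)\,ds.
\]
Dividing the ODE \eqref{e:tmeqi1} by the ODE \eqref{e:dtbup1} and using the variables from \eqref{e:v1} and \eqref{e:v4} gives the exact ratio
\[
\frac{\partial_\tau\mathsf{b}}{\partial_\tau\mathsf{b}_\uparrow}=\Bigl(\frac{\mathsf{b}}{\mathsf{b}_\uparrow}\Bigr)^{2/3}\Bigl(\frac{1+\underline{\varrho}}{1+\underline{f}}\Bigr)^{1/3}\frac{\underline{f}}{\underline{\varrho}}=(1+z)^{2}\,\frac{\bigl(1+\tfrac{\underline{f}}{1+\underline{f}}u\bigr)^{1/3}}{1+u}.
\]
A first-order Taylor expansion around $u=z=0$ then shows $\bigl|1-\partial_\tau\mathsf{b}/\partial_\tau\mathsf{b}_\uparrow\bigr|\le C(|u|+|z|)$ provided $\sigma_0$ is small.

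The next step is to upgrade $|u|,|z|$ from the crude $O(\sigma_0)$ bound (coming from $\mathfrak{u}$, $\mathfrak{B}$) to the sharper exponentially-weighted bound. This is done through the companion variables $\mathfrak{v}$ and $\mathfrak{z}$ defined in \eqref{e:fv7} and \eqref{e:fv6}, whose $C^3$-norms are controlled by $C\sigma$ on the extended spacetime by \eqref{e:est2}; translating the weight $e^{153/\delta_0}(-\tau)^{100/A}e^{51\zeta^1}/\sigma_0^2$ back to $(\tau,\zeta)\in\underline{\mathit{\Lambda}}_{\delta_0}$ through Lemma~\ref{t:dcy} (applied on the time slice $\tau=g(t,x)\le\tau_\Gamma$) produces $|u(\tau,\zeta)|,|z(\tau,\zeta)|\le C\sigma_0^{2}e^{-50/\delta_0}e^{-\zeta^1/2}$ uniformly in $s\in[-1,g(t,x)]$, since $\zeta^1=x^1$ is independent of the integration variable. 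Substituting into the integral representation yields
\[
\bigl|\mathsf{b}_\uparrow(g(t,x))-t\bigr|\le C\sigma_0^{2}e^{-50/\delta_0}e^{-x^1/2}\bigl(\mathsf{b}_\uparrow(g(t,x))-t_0\bigr).
\]
Setting $D=\mathsf{b}_\uparrow(g(t,x))-t_0$ and $E=t-t_0\ge0$, we get $|D-E|\le C\epsilon D$ with $\epsilon=C\sigma_0^{2}e^{-50/\delta_0}e^{-x^1/2}\ll1$, hence $D\in[E/(1+C\epsilon),E/(1-C\epsilon)]\subset[(1-C'\epsilon)E,(1+C'\epsilon)E]$, which is exactly the claimed inequality after renaming constants.

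The main obstacle is the bookkeeping of weights: one must verify that the exponential factor $e^{-x^1/2}$ obtained from Lemma~\ref{t:dcy} can be pulled \emph{outside} the $s$-integral, i.e.\ that it does not depend on the integration variable. Because the spatial coordinate $\zeta^1=x^1$ is fixed along the characteristic of the time-compactification and does not change along $s\in[-1,g(t,x)]$, this is legitimate; once this point is secured, the rest is a routine Gronwall/Taylor-expansion manipulation. No fundamentally new estimate is needed beyond what is already delivered by Proposition~\ref{t:verfuc} and Lemma~\ref{t:dcy}.
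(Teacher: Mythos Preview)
Your proposal is correct and follows essentially the same route as the paper. The only cosmetic difference is that the paper recognizes the ratio $\partial_\tau\mathsf{b}_\uparrow/\partial_\tau\mathsf{b}$ as $\partial_t(\mathsf{b}_\uparrow\circ g)$ via the chain rule and \eqref{e:dtb}, then integrates directly in $t$ (using \eqref{e:breg0} for the $\mathsf{b}_\uparrow/\mathsf{b}$ factor and the bound on $\hat{\mfv}$), which yields the two-sided inequality immediately without your final algebraic step of solving $|D-E|\le C\epsilon D$ for $D$.
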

\begin{proof}
	By using \eqref{e:tmeq1} and \eqref{e:tmeqi1}, along with \eqref{e:dtb} and \eqref{e:fv3.b}, we derive
	\begin{equation}\label{e:dtbg1}
		\underline{	\del{t}(\mathsf{b}_\uparrow \circ g) } =(\del{\tau}\mathsf{b}_\uparrow) \underline{ \del{t} g  }=  \frac{\del{\tau}\mathsf{b}_\uparrow}{\del{\tau}\mathsf{b}}  =    \frac{\mathsf{b}_\uparrow^{\frac{2}{3}}  (1+\underline{f} )^{\frac{1}{3}}  \underline{\varrho} }{ \mathsf{b}^{\frac{2}{3}}   ( 1 + \underline{\varrho} )^{\frac{1}{3}}  \underline{f}    }  =    \biggl(\frac{\mathsf{b}_\uparrow }{\mathsf{b}}\biggr)^{\frac{2}{3}}  \frac{    (1 +   \hat{\mfv} \sigma_0 e^{ -  \frac{153}{ \delta_0} }(-\tau)^{-\frac{100}{A}}e^{-51\zeta^1} ) }{ \bigl( 1  + \frac{\uf}{1+\uf}  \hat{\mfv} \sigma_0 e^{ -  \frac{153}{ \delta_0} }(-\tau)^{-\frac{100}{A}}e^{-51\zeta^1} \bigr)^{\frac{1}{3}}    }    . 
	\end{equation}
	By using \eqref{e:breg0}, we obtain
	\begin{equation}\label{e:bbest2}
	  \bigl(1+ C\sigma_0^3e^{  - \frac{50}{ \delta_0}} e^{ -\frac{x^1}{2} } \bigr)^{-2}   \leq \biggl(\frac{\mathsf{b}_\uparrow }{\mathsf{b}}\biggr)^{\frac{2}{3}}   \leq \bigl(1 - C\sigma_0^3e^{  - \frac{50}{ \delta_0}} e^{ -\frac{x^1}{2} } \bigr)^{-2}    .
	\end{equation} 
	Then by \eqref{e:dtbg1} and \eqref{e:bbest2},  there is a constant $C>0$, such that 
	\begin{equation*}
			1-C \sigma_0^2 e^{  - \frac{50}{ \delta_0}} e^{ -\frac{x^1}{2} } \leq 	\del{t}(\mathsf{b}_\uparrow \circ g ) \leq 1+C\sigma_0^2 e^{  - \frac{50}{ \delta_0}} e^{ -\frac{x^1}{2} } . 
	\end{equation*}
Integrating this inequality proves the lemma.  
\end{proof}

Now, combining  \eqref{e:rhoreg0} with Lemma \ref{t:bgest}, and noting that $f$ is an increasing function by Lemma \ref{t:f0fg},     we establish
\begin{align}
	\mathbf{1}_{-}(x^1) f_0\bigl(	t_0+\mathbf{1}_{-}(x^1) (t-t_0)\bigr)  \leq 	\varrho_0(t,x) & \leq \mathbf{1}_{+}(x^1) f_0\bigl(	t_0+\mathbf{1}_{+}(x^1) (t-t_0) \bigr) ,\\
	-C \sigma_0^2 e^{  - \frac{50}{ \delta_0}} e^{ -\frac{x^1}{2} }  (1+f \bigl(	t_0+\mathbf{1}_{-}(x^1) (t-t_0)\bigr)  ) \leq 	\varrho_i & \leq  C \sigma_0^2 e^{  - \frac{50}{ \delta_0}} e^{ -\frac{x^1}{2} }  \bigl(1+f \bigl(	t_0+\mathbf{1}_{+}(x^1) (t-t_0) \bigr)\bigr)  , \\
	\mathbf{1}_{-}(x^1) f \bigl(	t_0+\mathbf{1}_{-}(x^1) (t-t_0)\bigr)  \leq 	\varrho & \leq \mathbf{1}_{+}(x^1) f \bigl(	t_0+\mathbf{1}_{+}(x^1) (t-t_0) \bigr)  ,  \\
	-\frac{\chi_\uparrow}{B} \frac{f}{f_0}C\sigma_0^2  e^{  - \frac{50}{ \delta_0}} e^{ -\frac{x^1}{2} }\leq \mathsf{b}_j & \leq  \frac{\chi_\uparrow}{B} \frac{f}{f_0}C\sigma_0^2  e^{  - \frac{50}{ \delta_0}} e^{ -\frac{x^1}{2} }  , \\
	(1-C\sigma_0^3e^{  - \frac{50}{ \delta_0}} e^{ -\frac{x^1}{2} })^3 \mathsf{b}_\uparrow \leq \mathsf{b} & \leq (1+C\sigma_0^3e^{  - \frac{50}{ \delta_0}} e^{ -\frac{x^1}{2} })^3 \mathsf{b}_\uparrow \label{e:breg0a}
\end{align}
for $(t,x)\in\mathit{\Lambda}_{\delta_0}$ where 
\begin{equation*}
	\mathbf{1}_{-}(x^1):=1-C \sigma_0^2 e^{  - \frac{50}{ \delta_0}} e^{ -\frac{x^1}{2} } \AND	\mathbf{1}_{+}(x^1):=1+C \sigma_0^2 e^{  - \frac{50}{ \delta_0}} e^{ -\frac{x^1}{2} }  . 
\end{equation*}  
This completes the proof of \ref{b1} and the Corollary \ref{t:homdom} yields \ref{b4}. With the help of Theorem \ref{t:mainthm0}, we arrive at the estimates for \ref{b2} and \ref{b3}. 
Moreover, \ref{b1} together with \eqref{e:limT} (note $x^1\geq -\frac{1}{\delta_0}$) implies \ref{b5}.

\subsubsection{Step $4$: Proofs of $\varrho\in C^2(\mathcal{K}\cup \mathcal{H})$.} 
We prove that $\varrho\in C^2(\mathcal{K}\cup \mathcal{H})$ by following these steps. 

\underline{$(a)$ $\mathsf{b}_\uparrow\in C^2([-1,0))$.} First, we can directly verify that $\mfg\in C^2([t_0,t_m))$ by using \eqref{e:ctm2} and Lemma \ref{t:gb2},   given that  $f\in C^2([t_0,t_m))$. The inverse mapping theorem then implies that $\mathsf{b}_\uparrow\in C^2([-1,0))$.

\underline{$(b)$ $g\in C^2(\mathcal{K})$.} We begin by noting that \ref{t:verfuc} and Lemma \ref{t:extori2} provide us with the regularity for
$\widehat{\mathfrak{U}} $ which belongs to $  C^0([-1,0),H^k(\Tbb^n_{[-\frac{\pi}{2},\frac{\pi}{2}]}) ) \cap C^1([-1,0),H^{k-1}(\Tbb^n_{[-\frac{\pi}{2},\frac{\pi}{2}]}) ) \cap \Li ([-1,0),H^k(\Tbb^n_{[-\frac{\pi}{2},\frac{\pi}{2}]}))$. Utilizing the Sobolev embedding theorem, we further conclude that $\widehat{\mathfrak{U} } \in C^1([-1,0)\times \Tbb^n_{[-\frac{\pi}{2},\frac{\pi}{2}]})$. By applying the coordinate transformation  \eqref{e:coord6b}, this result extends to  $\mathfrak{U}  \in C^1([-1,0)\times \Rbb^n)$.  

Next, recall the coordinate transformations  
\begin{equation}\label{e:coord6}
	\ttau=\tau=g(t,x^i)    \AND  \txi^i=  \frac{100\delta^i_1 }{51A} \ln(- \tau)+  \zeta^i =\frac{100\delta^i_1 }{51A} \ln(- g(t,x^i))+  x^i, 
\end{equation}
along with the variable transformations in \eqref{e:fv1.b}--\eqref{e:fv5.b}. Utilizing the results above and recalling that  $\uf(\tau)= f\circ \mathsf{b}_\uparrow(\tau)$, 
in terms of coordinate $(\tau,\zeta^k)$,  $\underline{\varrho_0}$, $\underline{\varrho_i} $, $\underline{\varrho} $, $\mathsf{b}_j$ and $\mathsf{b}$ are all in $C^1(\underline{\mathcal{K}})$. In addition, by differentiating \eqref{e:tmeqi1} with respective to $\tau$, and substituting  $\del{\tau}\mathsf{b}$ and $\del{\tau} \underline{\varrho}$ using \eqref{e:tmeqi1} and \eqref{e:dtrho}, we arrive at
\begin{equation*}
	\partial_{\tau}^2 \mathsf{b} =   \frac{2 \mathsf{b}^{1/3}    (\underline{\varrho}  +1)^{2/3}}{3 A^2 B^2    \underline{\varrho}^2 \left(-\tau\right){}^{\frac{4}{3 A}+2} }   -\frac{  \mathsf{b}^{4/3}   (\underline{\varrho}  +1)^{2/3} }{A^2 B^2 \underline{\varrho}^3  \left(-\tau\right){}^{\frac{4}{3 A}+2}}   \underline{\varrho_0}     +\frac{  \mathsf{b}^{4/3} \left(-\tau\right){}^{-\frac{4}{3 A}-2} }{3 A^2 B^2 \underline{\varrho}^2   (\underline{\varrho}  +1)^{1/3}   }   \underline{\varrho_0}   + \frac{\left(\frac{2}{3 A}+1\right)  \mathsf{b}^{2/3} (\underline{\varrho}  +1)^{1/3}}{A B (-\tau )^{ \frac{2}{3 A}+2} \underline{\varrho}  }. 
\end{equation*}
Consequently, we have  $	\partial_{\tau}^2 \mathsf{b}  \in C^1(\underline{\mathcal{K}})$.  Since $\mathsf{b}_j \in C^1(\underline{\mathcal{K}})$, it follows that $\partial_{\zeta^i}\partial_{\zeta^j} \mathsf{b}  \in C^0(\underline{\mathcal{K}})$ and  $\partial_{\tau} \partial_{\zeta^j}  \mathsf{b}  \in C^0(\underline{\mathcal{K}})$, respectively. Thus, we conclude that  $\mathsf{b}\in C^2(\underline{\mathcal{K}})$. 

By applying   \eqref{e:tmeqi1},  \eqref{e:Joc} and \eqref{e:breg0a}, and noting $\mathsf{b}_\uparrow(\tau)\geq\mft_0>0$ for $\tau\in[-1,0)$, we can conclude that the Jacobian determinant of the coordinate transformation \eqref{e:coordi2} is nonzero in $\underline{\mathcal{K}}$. Thus by the \textit{inverse mapping theorem}, with the help of $\mathsf{b}\in C^2(\underline{\mathcal{K}})$, we reach the coordinate system defined by  \eqref{e:coord2} exists and that $g\in C^2(\mathcal{K})$.

\underline{$(c)$ $\mfu\in C^2(\tilde{\mathcal{K}})$.} Next, we also need to verify $\mfu\in C^2(\tilde{\mathcal{K}})$. Specifically, it remains to show that  $ \del{\ttau}\mfu,\; \del{\txi^i}\mfu \in C^1(\tilde{\mathcal{K}})$. To proceed, we begin by differentiating  $\tilde{\underline{\varrho}}=\tilde{\uf}(1+\mfu)$ (from \eqref{e:fv3.b}) with respect to $\ttau$. This yields $\del{\ttau}\tilde{\underline{\varrho}}=\del{\ttau}\tilde{\uf}(1+\mfu)+\tilde{\uf}\del{\ttau}\mfu$. By applying \eqref{e:transf1} and \eqref{e:transf2}, we obtain $\widetilde{\del{\tau}\underline{\varrho}}-\frac{100\delta^i_1}{51 A\tau}\widetilde{\del{\zeta^i}\underline{\varrho}}=\widetilde{\del{\tau}\uf}(1+\mfu)+\tilde{\uf}\del{\ttau}\mfu$. Applying \eqref{e:dzrho0}, \eqref{e:dtrho} and Lemma \ref{t:gb2}.\ref{l:2.5}, and recalling the coordinate transformation \eqref{e:coordi5}, we derive
\begin{equation}\label{e:dtu6}
\del{\ttau}\mfu = \frac{1}{ \tilde{\uf}} \biggl( 	 \frac{\tilde{\mathsf{b}}^{\frac{2}{3}}   ( \tilde{\underline{\varrho}} +1)^{\frac{1}{3}}}{A B  \tilde{ \underline{\varrho} } \left(-\ttau\right){}^{\frac{2}{3 A}+1}} \widetilde{\underline{\varrho_0}  }-\frac{100\delta^i_1}{51A\ttau}  (\widetilde{\underline{\varrho_i}} +\widetilde{\mathsf{b}_{i} }  \widetilde{\underline{\varrho_0}   }) - \frac{\tilde{\mathsf{b}}_\uparrow^{\frac{2}{3}} (1+ \tilde{\uf} )^{\frac{1}{3}}}{A B \tilde{ \uf  } (-\ttau)^{\frac{2}{3A}+1}}	\tilde{\underline{f_0}   }(1+\mfu) \biggr) \in C^1(\tilde{\mathcal{K}}) . 
\end{equation}
A similar calculation applies to $\del{\txi^i}\mfu$. In fact, differentiating $\tilde{\underline{\varrho}}=\tilde{\uf}(1+\mfu)$   with respect to $\txi^i$ yields $\del{\txi^i}\tilde{\underline{\varrho}}= \tilde{\uf}\del{\txi^i}\mfu$. 
By \eqref{e:transf2}, we obtain $ \widetilde{\del{\zeta^i}\underline{\varrho}}= \tilde{\uf}\del{\txi^i}\mfu$.  Applying \eqref{e:dzrho0} gives
\begin{equation}\label{e:dzu6}
\del{\txi^i}\mfu  = \frac{1}{	\tilde{\uf}} \bigl(  \widetilde{\underline{\varrho_i}} +	\widetilde{\mathsf{b}_{i}} 	\widetilde{ \underline{\varrho_0} } \bigr)  \in C^1(\tilde{\mathcal{K}}) . 
\end{equation}
From \eqref{e:dtu6} and \eqref{e:dzu6}, we then conclude that $\mfu\in C^2(\tilde{\mathcal{K}})$.

Further, noting \eqref{e:fv3.b} and the coordinate transformation \eqref{e:coord6}, we have
\begin{equation*}
	\varrho(t,x^k)=f \circ \mathsf{b}_\uparrow \circ g(t,x^k)\Biggl(1+\mfu\biggl(g(t,x^i)   ,\frac{100\delta^i_1 }{51A} \ln(- g(t,x^i))+  x^i \biggr)\Biggr) \in C^2(\mathcal{K} ) . 
\end{equation*}
Utilizing the regularity established in parts $(a), \; (b)$ and $(c)$, along with $\varrho\equiv f\in C^2([t_0,t_m))$ for  $(t,x) \in \mathcal{H}$, we conclude $\varrho\in C^2(\mathcal{K}\cup \mathcal{H})$.  
This completes the proof of Theorem \ref{t:mainthm2}.


\appendix

\section{Complementary lemmas and proofs}\label{s:pfclm} 
\subsection{Proofs of Claim \ref{t:clm2}}
Let us provide the proof of Claim \ref{t:clm2}.    
\begin{proof}[The proof of Claim \ref{t:clm2}]
	Let $s=\ln t$, so that  $\del{s}=t\del{t}$.  For convenience, we slightly abuse the notation $\varrho$, while $\varrho(t,r)$ and $\varrho(s,r)$ are clear from the context. By multiplying \ref{Eq2} by $t^2$ on both sides, we derive
	\begin{align*}
		&t^2 \partial^2_t \varrho-  	\biggl(\cm^2 \frac{ 	t^2 (\del{t}\varrho )^2}{(1+\varrho )^2}  + 4 (1-\cm^2) (1+\varrho  ) \biggr)  \delta^{ij} \partial_i \partial_j \varrho +\frac{4}{3  } t \del{t}\varrho  -
		\frac{2}{3 }  \varrho  (1+  \varrho )\notag  \\
		& \hspace{2cm} -\frac{4}{3} \frac{	t^2 (\del{t}\varrho )^2}{1+\varrho }   -  \biggl(\cm^2 \frac{ 	t^2 (\del{t}\varrho )^2}{(1+\varrho )^2}  + 4 (1-\cm^2) (1+\varrho  ) \biggr)  q^i \del{i}\varrho  +   \mathtt{K}^{ij}\del{i}\varrho\del{j}\varrho  =  0 . 
	\end{align*}
Next, using  $\del{s}=t\del{t}$ and noting that $t^2 \partial^2_t \varrho=t\del{t}(t\del{t}\varrho)-t\del{t} \varrho=   \partial_{s}^2 \varrho- \del{s} \varrho$, we obtain
	\begin{align*}
		& \partial_{s}^2 \varrho -  	\biggl(\cm^2 \frac{   (\del{s}\varrho )^2}{(1+\varrho )^2}  +4(1-\cm^2) (1+\varrho  ) \biggr)  \delta^{ij} \partial_i\partial_j  \varrho +\frac{1}{3  }   \del{s}\varrho  -
		\frac{2}{3 }  \varrho  (1+  \varrho ) -\frac{4}{3} \frac{  (\del{s}\varrho )^2}{1+\varrho }  \notag  \\
		& \hspace{2cm} - \biggl(\cm^2 \frac{ 	  (\del{s}\varrho )^2}{(1+\varrho )^2}  + 4 (1-\cm^2) (1+\varrho  ) \biggr)  q^i\del{i}\varrho + \mathtt{K}^{ij}\del{i}\varrho\del{j}\varrho   =  0  , 
	\end{align*}
which is consistent with \ref{Eq1}. Similarly, by reversing these steps, one can derive \ref{Eq2} from \ref{Eq1}. This completes the proof. 
\end{proof}

\subsection{Estimates on useful quantities}
\begin{lemma}\label{t:Thpst}
	Suppose the initial data satisfies Assumption \ref{Asp1} in \S\ref{s:mainthm}, and let $S(\tau)$,  $\underline{\chi}(\tau)$ and $\underline{\mathfrak{G}}(\tau)$ be defined by \eqref{e:S1},  \eqref{e:Gdef2} and \eqref{e:chig}, respectively. Then, for $\tau\in[-1,0]$, 
	\begin{enumerate}
		\item\label{r:1} 	$
			\underline{\mathfrak{G}}(\tau) \geq 0$ and $S (\tau)\leq \ck+  \frac{\ck+\cm^2}{\uf(\tau)}$; 
		\item\label{r:2} If $\ck=\frac{1}{4}$,  then the following identity holds:	\begin{equation*}
			\sqrt{S} \frac{ \underline{\chi_\uparrow}}{B}=2\sqrt{\frac{\underline{\chi_\uparrow}}{B}\left(\frac{\cm^2}{4}\frac{\underline{\chi_\uparrow}}{B}+\left(\frac{1}{4}-\cm^2\right)\frac{1+\uf}{\uf}\right)} =2+\Xi . 
		\end{equation*}
		where $\Xi(\tau)$ is a decreasing function. 
	\end{enumerate}

\end{lemma}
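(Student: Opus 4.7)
My plan is to verify both parts of the lemma by combining direct algebra on the definitions in \S\ref{s:2} with the monotonicity properties of the reference solution recorded in Appendix \ref{s:ODE0}.

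For part \ref{r:1}, I will start from identity \eqref{e:chig}, which rewrites $\underline{\mathfrak{G}}/B = \underline{\chi_\uparrow}/B - 4$, so that $\underline{\mathfrak{G}} \geq 0$ is equivalent to $\underline{\chi_\uparrow} \geq 4B$. Evaluating \eqref{e:Gdef2} at $\tau=-1$ using $\mathsf{b}_\uparrow(-1)=t_0$, $\uf(-1)=\beta$, $\underline{f_0}(-1)=\beta_0$ together with $B = (1+\beta)^{4/3}/(t_0^{4/3}\beta_0)$, the inequality $\underline{\chi_\uparrow}(-1)\geq 4B$ reduces \emph{exactly} to Assumption \ref{Asp1}, namely $\beta_0^2 t_0^2 \geq 4\beta(1+\beta)^2$. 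Monotonicity of $\underline{\chi_\uparrow}/B$ along the reference ODE, together with Proposition \ref{t:limG} (which identifies its limit), then propagates the inequality to every $\tau \in [-1,0)$. For the upper bound on $S$, I substitute $S = \ck + \tau\mathscr{S}$ from \eqref{e:S1}, discard the nonnegative contribution $-(\ck-\cm^2)\underline{\mathfrak{G}}/\underline{\chi_\uparrow}$ (using $\cm^2\leq \ck$ and $\underline{\mathfrak{G}}\geq 0$), and bound $4B/\underline{\chi_\uparrow}\leq 1$ (again from $\underline{\mathfrak{G}}\geq 0$) to obtain the cleaner bound $S \leq \ck + (\ck-\cm^2)/\uf$, which already implies the stated inequality.

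For part \ref{r:2}, I will first verify the displayed identity by pure algebra. Writing $y := \underline{\chi_\uparrow}/B$, the relation $\underline{\mathfrak{G}}/B = y - 4$ converts the factor $(4/\uf - \underline{\mathfrak{G}}/B)$ appearing in $\tau\mathscr{S}$ into $4(1+\uf)/\uf - y$, so that
$$
S\, y^2 = \tfrac{1}{4} y^2 + \bigl(\tfrac{1}{4}-\cm^2\bigr) y\Bigl[\tfrac{4(1+\uf)}{\uf} - y\Bigr] = \cm^2 y^2 + 4\bigl(\tfrac{1}{4}-\cm^2\bigr) y\,\tfrac{1+\uf}{\uf},
$$
which is precisely four times the expression inside the square root on the right-hand side. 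Taking positive square roots (both sides are nonnegative since $\cm^2\leq \ck = \tfrac{1}{4}$) and subtracting $2$ defines $\Xi$.

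The remaining task, and the only nontrivial one, is to verify that $\Xi$ is decreasing. Writing $z := 1+1/\uf$, the quantity $(1+\Xi/2)^2 = y\bigl[\tfrac{\cm^2}{4}y + (\tfrac{1}{4}-\cm^2)z\bigr]$ is a polynomial in $(y,z)$ with both partial derivatives nonnegative under $\cm^2 \leq 1/4$:
\begin{equation*}
\partial_y = \tfrac{\cm^2}{2} y + \bigl(\tfrac{1}{4}-\cm^2\bigr) z \geq 0, \qquad \partial_z = \bigl(\tfrac{1}{4}-\cm^2\bigr) y \geq 0.
\end{equation*}
Thus $\Xi$ will be decreasing in $\tau$ once both $y = \underline{\chi_\uparrow}/B$ and $z = 1+1/\uf$ are shown to decrease; the former is the content of the monotonicity statement for $\chi_\uparrow$ that accompanies Proposition \ref{t:limG} in Appendix \ref{t:refsol}, while the latter follows from $\uf$ being strictly increasing, which is Lemma \ref{t:f0fg}. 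The main obstacle, if any, is making sure both monotonicity statements are indeed available in the precise form needed; assuming the appendix results as stated, the proof reduces to quoting them and running the elementary algebra above.
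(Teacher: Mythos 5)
Your overall strategy matches the paper's, but there is a real imprecision in part~(1) that you should fix. You invoke ``the monotonicity statement for $\chi_\uparrow$ that accompanies Proposition~\ref{t:limG}'', but the appendix records no unconditional monotonicity of $\underline{\chi_\uparrow}/B$; what Corollary~\ref{t:dtchi2} actually gives is that $|\underline{\mathfrak{G}}|$ is decreasing. Monotone decrease of $\underline{\chi_\uparrow}/B$ itself would follow only \emph{after} one knows $\underline{\mathfrak{G}}\geq 0$ (so that $|\underline{\mathfrak{G}}|=\underline{\mathfrak{G}}$), which is precisely what you are trying to prove, so quoting it at this stage is circular. Worse: if $\underline{\chi_\uparrow}/B$ were unconditionally decreasing and tended to $4$ (Proposition~\ref{t:limG}), then $\underline{\mathfrak{G}}\geq 0$ would hold for arbitrary data, making Assumption~\ref{Asp1} superfluous — a warning sign that the invoked monotonicity cannot be what you think it is. The correct ingredient, and what the paper uses, is a contradiction argument: Assumption~\ref{Asp1} together with \eqref{e:keyid3} gives $\underline{\mathfrak{G}}(-1)\geq 0$; if $\underline{\mathfrak{G}}(\tau_1)<0$ for some $\tau_1\in(-1,0)$, continuity produces $\tau_0\in(-1,\tau_1)$ with $\underline{\mathfrak{G}}(\tau_0)=0$, and then the decrease of $|\underline{\mathfrak{G}}|$ forces $|\underline{\mathfrak{G}}(\tau_1)|\leq|\underline{\mathfrak{G}}(\tau_0)|=0$, a contradiction.

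The rest is sound and in places more explicit than the paper's terse proof. Your bound $S\leq\ck+(\ck-\cm^2)/\uf$ is correct and actually sharper than the stated $\ck+(\ck+\cm^2)/\uf$ (it coincides with the bound used in the proof of Lemma~\ref{t:coef1}). In part~(2) the algebraic identity checks out, and organizing $(1+\Xi/2)^2$ as a bivariate function of $y=\underline{\chi_\uparrow}/B$ and $z=1+1/\uf$ with nonnegative partials under $\cm^2\leq\tfrac{1}{4}$ is a clean way to deduce monotonicity of $\Xi$; just keep the logical order visible, since the decrease of $y$ is available only once part~(1) has supplied $\underline{\mathfrak{G}}\geq 0$.
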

\begin{proof} 
	By Assumption \ref{Asp1}, the identity \eqref{e:keyid3} gives us $
	  \underline{\mathfrak{G}}(-1) =   \underline{\chi_\uparrow}(-1)  -4 B =\frac{t_0^{2}\beta_0^2}{(1+\beta)^{2}\beta} B-4 B\geq 0 $. 
Using Proposition \ref{t:limG} (which states $\lim_{\tau\rightarrow 0}\underline{\mathfrak{G}}(\tau)=0$) and Corollary \ref{t:dtchi2} (which asserts that $|\underline{\mathfrak{G}}|$ is decreasing), we conclude, by the continuity of $  \underline{\mathfrak{G}}(\tau)$ and a proof by contradiction, that $  \underline{\mathfrak{G}}(\tau) \geq 0$ for all $\tau\in[-1,0]$. Additionally, from \eqref{e:S1} and the non-negativity of $	 \underline{\mathfrak{G}}( \tau) $,  we derive the estimate for $S(\tau)$, thus completing the proof of \eqref{r:1}. 
For \eqref{r:2}, applying \eqref{e:S1},  \eqref{e:chig} and the condition  $\underline{\mathfrak{G}}(\tau)\geq 0$, we directly obtain the desired identity and conclude that $\Xi(\tau)$ is decreasing.
\end{proof}

\subsection{The boundary between charted and uncharted domains}\label{s:ctbdry}  
This Appendix provides alternative expressions for the hypersurface $\widehat{\Gamma}_{\delta_0}$, as defined in \eqref{e:surf0} in \S\ref{s:reorg}, using different coordinate systems: $(\ttau,\txi)$, $(\tau,\zeta)$, and $(t,x)$.
As the proofs of the following lemmas rely on straightforward calculations, we will omit the details.
 \begin{lemma}\label{t:Tsurf2}
 	In terms of the  coordinates $(\ttau,\txi)$, the hypersurface 	$\widehat{\mathfrak{T}}_{\delta_0}(\hat{\zeta} )$ can be expressed by 
 	\begin{align}\label{e:surf}
 		\ttau  = \widetilde{\mathfrak{T}}_{\delta_0}(\tilde{\zeta} ) =
 		\begin{cases}
 			-\exp\left(-\cc\left( \frac{51A}{2}-\frac{51A}{ 4\delta_0\txi^1  + 8}  \right) \left(   \txi^1 + \frac{1}{\delta_0} \right)\right)  ,  \quad &-\frac{1}{\delta_0}\leq\txi^1 \leq \frac{2-51 \Xi_0 }{102  \Xi_0} \frac{1}{\delta _0}   \\
 			-\exp\left(- \left( \frac{51A}{2}-\frac{51A}{ 4\delta_0\txi^1  + 8}  \right) \left(   \txi^1 + \frac{1}{2\delta_0} \right)\right)  ,  \quad  & \txi^1 >\frac{2-51 \Xi_0 }{102  \Xi_0} \frac{1}{\delta _0}  
 		\end{cases} . 
 	\end{align}     
 	where 
 	\begin{equation}\label{e:cc}
 		\cc= \frac{1}{1+\frac{51}{2} \Xi_0}   \AND \Xi_0:= 
 		\Xi(t_0)=\sup_{t\in[t_0,t_m)} \Xi(t)   .  
 	\end{equation} 
 Alternatively, solving for $\tilde{\zeta}$, we have: 
 	\begin{align*}
 		\txi^1 =   
 		\begin{cases}
 			\frac{1}{204} \sqrt{ 	\triangle_l(\ttau)  }-\frac{(51 \Xi_0 +2) \ln (-\ttau )}{102 A}-\frac{5}{4 \delta _0} , \quad &-1 \leq \ttau \leq -\exp\left(-\frac{51 A \Xi_0 +A}{153 \delta _0 \Xi_0 ^2+2 \delta _0 \Xi_0 }\right)   \\
 			\frac{1}{102} \sqrt{	\triangle_r(\ttau)}-\frac{\ln (-\ttau )}{51 A}-\frac{1}{\delta _0},\quad & \ttau >-\exp\left(-\frac{51 A \Xi_0 +A}{153 \delta _0 \Xi_0 ^2+2 \delta _0 \Xi_0 }\right)  
 		\end{cases}   , 
 	\end{align*} 
 	where 
 	\begin{align*}
 		\triangle_r(\ttau):=&\frac{4 (\ln (-\ttau ))^2}{A^2}-\frac{408 \ln (-\ttau )}{A \delta _0}+\frac{2601}{\delta _0^2} ,  \\
 		\triangle_l(\ttau):=&\frac{4 (51 \Xi_0 +2)^2 (\ln (-\ttau ))^2}{A^2}-\frac{612 (51 \Xi_0 +2) \ln (-\ttau )}{A \delta _0}+\frac{2601}{\delta _0^2}  . 
 	\end{align*}
 \end{lemma}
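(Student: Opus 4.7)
The plan is to split the verification into two essentially algebraic steps: (i) a direct coordinate substitution from $(\htau,\hat\zeta)$ to $(\ttau,\txi)$ to pass from \eqref{e:surf0} to \eqref{e:surf}, and (ii) an explicit inversion of the resulting one–variable relation $\ttau=\widetilde{\mathfrak T}_{\delta_0}(\txi^1)$ to solve for $\txi^1$ in terms of $\ttau$.

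For step (i), I would use \eqref{e:coord6b}–\eqref{e:coordi6}, which give $\ttau=\htau$ and $\gamma^{-1}\tan\hat\zeta^1=\txi^1$, and substitute into the definition of $\widehat{\mathfrak T}_{\delta_0}(\hat\zeta)$. Every occurrence of $\gamma^{-1}\tan\hat\zeta^1$ in \eqref{e:surf0} is then replaced by $\txi^1$, yielding exactly the piecewise formula \eqref{e:surf}. The breakpoints $\hat\zeta^1=-\arctan(\gamma/\delta_0)$ and $\hat\zeta^1=\arctan\!\bigl(\frac{2-51\Xi_0}{102\Xi_0}\frac{\gamma}{\delta_0}\bigr)$ translate into $\txi^1=-1/\delta_0$ and $\txi^1=\frac{2-51\Xi_0}{102\delta_0\Xi_0}$, which are the interval endpoints appearing in \eqref{e:surf}. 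The constant $\cc=(1+\tfrac{51}{2}\Xi_0)^{-1}$ is just the parameter chosen in \S\ref{s:reorg}, restated for convenience.

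For step (ii), I would take $-\ln(-\ttau)$ of both sides of \eqref{e:surf} and then perform the substitution $w=4\delta_0\txi^1+8$, so that $\txi^1=(w-8)/(4\delta_0)$ and the factors $\frac{51A}{2}-\frac{51A}{4\delta_0\txi^1+8}=\frac{51A(w-2)}{2w}$ and $\txi^1+\frac{1}{\delta_0}=\frac{w-4}{4\delta_0}$ (respectively $\txi^1+\frac{1}{2\delta_0}=\frac{w-6}{4\delta_0}$) become rational in $w$. The resulting relation multiplies out to a quadratic in $w$, namely
\[
51A\cc\, w^{2}+\bigl(-306A\cc+8\delta_0\ln(-\ttau)\bigr)w+408A\cc=0
\]
in the first regime and an analogous quadratic with $\cc=1$ and middle coefficient involving $612A$ in the second. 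Solving by the quadratic formula, selecting the branch that ensures $\txi^1\to+\infty$ as $\ttau\to 0^-$ (the $+$ sign), and undoing $w=4\delta_0\txi^1+8$ yields an expression of the form $\txi^1=\text{const}/\delta_0+(\text{const})\ln(-\ttau)/A+\frac{1}{204}\sqrt{\triangle_l(\ttau)}$, respectively $\frac{1}{102}\sqrt{\triangle_r(\ttau)}$. A short simplification using $\cc=2/(2+51\Xi_0)$ in the first regime pulls an overall factor of $4$ out of the discriminant, matching the definitions of $\triangle_l,\triangle_r$. The threshold $\ttau=-\exp\!\bigl(-\tfrac{51A\Xi_0+A}{153\delta_0\Xi_0^{2}+2\delta_0\Xi_0}\bigr)$ is obtained by substituting the interface value $\txi^1=\frac{2-51\Xi_0}{102\delta_0\Xi_0}$ into the first formula of \eqref{e:surf}; continuity of $\widetilde{\mathfrak T}_{\delta_0}$ across this interface (which can be checked by a direct comparison of the two exponents) ensures that the same threshold is recovered from the second formula.

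There is no genuine analytic difficulty here; the main obstacle is purely bookkeeping. Specifically, one has to (a) keep straight the two different values of the constant ($\cc$ vs.\ $1$) and the two additive shifts ($1/\delta_0$ vs.\ $1/(2\delta_0)$) appearing in the two regimes of \eqref{e:surf}, (b) verify that after the $w$‐substitution the correct branch of the quadratic is chosen on each piece so that $\txi^1$ is the increasing inverse of $\widetilde{\mathfrak T}_{\delta_0}$, and (c) simplify the discriminant to match $\triangle_l,\triangle_r$ without sign or factor errors. Once these algebraic simplifications are executed cleanly, both the piecewise formula for $\widetilde{\mathfrak T}_{\delta_0}(\txi)$ and its inverse follow at once, completing the proof.
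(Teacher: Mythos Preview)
Your proposal is correct and matches the paper's approach: the paper simply states that Lemmas \ref{t:Tsurf2}--\ref{t:Tsurf5} ``rely on straightforward calculations'' and omits all details, so your explicit two-step plan (coordinate substitution via \eqref{e:coordi6}, then quadratic inversion in $w=4\delta_0\txi^1+8$) is exactly the kind of routine computation the paper has in mind. One minor slip: in the second regime the quadratic has constant term $612A$ and middle coefficient $-408A+8\delta_0\ln(-\ttau)$, not ``middle coefficient involving $612A$'', but this does not affect the argument.
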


 \begin{lemma}\label{t:Tsurf3}
 	In terms of the coordinates $(\tau,\zeta)$,  the hypersurface 	$\widehat{\mathfrak{T}}_{\delta_0}(\hat{\zeta} )$ can be expressed by   
 	\begin{align*}
 		\tau  :=\tau_\Gamma(\zeta;\delta_0) =
 		\begin{cases}
 			-\exp\left(\frac{A }{200 \delta _0 (\Xi_0 +2)} \sqrt{  
 				\Diamond_l(\zeta^1)} -\frac{A (51 \Xi_0 +127)}{100 \delta _0 (\Xi_0 +2)}-\frac{A \zeta^1  (51 \Xi_0 +202)}{200 (\Xi_0 +2)}\right)  ,  \quad &-\frac{1}{\delta_0}\leq\zeta^1 \leq \frac{-153 \Xi_0 ^2+204 \Xi_0 +4}{306 \delta _0 \Xi_0^2+4 \delta _0 \Xi_0 }  \\
 			-\exp\left(\frac{A }{200 \delta _0}\sqrt{  	\Diamond_r(\zeta^1) }-\frac{51 A}{100 \delta _0}-\frac{101 A \zeta^1}{200} \right)  ,  \quad  & \zeta^1 >\frac{-153 \Xi_0 ^2+204 \Xi_0 +4}{306 \delta _0 \Xi_0^2+4 \delta _0 \Xi_0 }  
 		\end{cases} , 
 	\end{align*}    
 	where
 	\begin{align*}
 		\Diamond_r(\zeta^1):= & \delta _0 \zeta^1  \left(\delta _0 \zeta^1 +204\right)+2754 , \\
 		\Diamond_l(\zeta^1):= & \delta _0 \zeta^1  (51 \Xi_0 +2) \left(\delta _0 \zeta^1  (51 \Xi_0 +2)+4 (51 \Xi_0 +77)\right)+4 (51 \Xi_0  (51 \Xi_0 +104)+829) . 
 	\end{align*} 
 	Alternatively, solving for  $\zeta$, we have:
 	\begin{align*}
 		\zeta^1 =   
 		\begin{cases}
 			\frac{1}{204} \sqrt{\triangle_l(\tau) }-\frac{\Xi_0  \ln (-\tau )}{2 A}-\frac{101 \ln (-\tau )}{51 A}-\frac{5}{4 \delta _0} , \quad &-1 \leq \tau \leq -\exp\left(\frac{-51 A \Xi_0 -A}{153 \delta _0 \Xi_0^2+2 \delta _0 \Xi_0 }\right)   \\
 			\frac{1}{102} \sqrt{	\triangle_r(\tau)}-\frac{101 \ln (-\tau )}{51 A}-\frac{1}{\delta _0},\quad & \tau >-\exp\left(-\frac{51 A \Xi_0 +A}{153 \delta _0 \Xi_0 ^2+2 \delta _0 \Xi_0 }\right)  
 		\end{cases}  .
 	\end{align*}  
 \end{lemma}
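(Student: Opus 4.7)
The plan is to obtain both forms of the hypersurface $\widehat{\Gamma}_{\delta_0}$ in the $(\tau,\zeta)$ coordinates by applying the tilted coordinate transformation \eqref{e:coord5}--\eqref{e:coordi5} (with the parameters fixed in \eqref{e:para}) to the two representations already derived in Lemma \ref{t:Tsurf2}. Under the choice $\mathtt{a}=-100$, $\mathtt{c}^i=-\tfrac{1}{51}\delta^i_1$, the transformation reduces to $\ttau=\tau$ and
\begin{equation*}
\txi^1 \;=\; \frac{100}{51 A}\,\ln(-\tau)+\zeta^1,
\end{equation*}
so that every occurrence of $\txi^1$ in Lemma \ref{t:Tsurf2} can be rewritten in the new coordinates by a single substitution.

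First I would derive the ``solved for $\zeta^1$'' form. Starting from the two explicit expressions for $\txi^1$ in terms of $\ttau$ given at the end of Lemma \ref{t:Tsurf2}, I simply substitute $\ttau=\tau$ and use $\zeta^1=\txi^1-\tfrac{100}{51A}\ln(-\tau)$. Since the discriminants $\triangle_r(\ttau)$ and $\triangle_l(\ttau)$ are unchanged, the only modification is that the linear term $-\tfrac{\ln(-\ttau)}{51A}$ becomes $-\tfrac{\ln(-\tau)}{51 A}-\tfrac{100\ln(-\tau)}{51A}=-\tfrac{101\ln(-\tau)}{51A}$, which is exactly the expression in the statement. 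The branch thresholds $\ttau=-\exp\!\bigl(-\tfrac{51A\Xi_0+A}{153\delta_0\Xi_0^2+2\delta_0\Xi_0}\bigr)$ are preserved.

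Next I would derive the ``solved for $\tau$'' form $\tau_\Gamma(\zeta;\delta_0)$. In the right branch ($\txi^1>\tfrac{2-51\Xi_0}{102\Xi_0\delta_0}$), Lemma \ref{t:Tsurf2} gives, with $L=\ln(-\ttau)$ and $u=\delta_0\txi^1$, the implicit relation
\begin{equation*}
-8L(u+2)=51A(2u+1)(2u+3).
\end{equation*}
Substituting $u=\tfrac{100\delta_0 L}{51A}+\delta_0\zeta^1$ turns this into a genuine quadratic in $L$ with coefficients polynomial in $\delta_0\zeta^1$. Computing its discriminant produces exactly the polynomial $\Diamond_r(\zeta^1)=\delta_0\zeta^1(\delta_0\zeta^1+204)+2754$; solving the quadratic (selecting the root consistent with $L\le 0$ and with the threshold translated from Lemma \ref{t:Tsurf2}) yields the stated formula for $\tau_\Gamma(\zeta;\delta_0)$. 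The left branch is handled identically, starting from the corresponding implicit relation weighted by the constant $\cc$ from \eqref{e:cc}, which replaces $51A$ by $\cc\cdot 51A$ and introduces the factor $(51\Xi_0+2)$; this ultimately produces $\Diamond_l(\zeta^1)$. The branch threshold $\zeta^1=\tfrac{-153\Xi_0^2+204\Xi_0+4}{306\delta_0\Xi_0^2+4\delta_0\Xi_0}$ is obtained by applying $\zeta^1=\txi^1-\tfrac{100}{51A}\ln(-\tau)$ to the $(\ttau,\txi)$-threshold evaluated on the hypersurface.

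The main obstacle is purely computational: verifying that the discriminants of the two quadratics factor into the clean forms $\Diamond_r$ and $\Diamond_l$, and that the branch thresholds transform consistently between coordinate systems. One also needs to select the correct root of each quadratic (the one on which $\tau\nearrow 0$ as $\zeta^1\to+\infty$) and check that the two branches join continuously at the threshold. Since all of this is algebraic, in line with the paper's remark that such verifications are omitted, no deeper analytic input is required beyond the formulas in Lemma \ref{t:Tsurf2} and the explicit transformation \eqref{e:coord5} with parameters \eqref{e:para}.
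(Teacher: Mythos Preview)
Your proposal is correct and follows exactly the approach the paper intends: the paper explicitly states that the proofs of Lemmas \ref{t:Tsurf2}--\ref{t:Tsurf5} ``rely on straightforward calculations'' and omits them, and your plan of substituting the tilted transformation $\txi^1=\tfrac{100}{51A}\ln(-\tau)+\zeta^1$ into the formulas of Lemma \ref{t:Tsurf2} is precisely that calculation. One minor slip: in your implicit relation for the right branch there is a missing $\delta_0$; it should read $-8\delta_0 L(u+2)=51A(2u+1)(2u+3)$, but this does not affect the method or the resulting $\Diamond_r$.
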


 \begin{lemma}\label{t:Tsurf5}
 	Suppose $g(t,x)$ and $\mathsf{b}(\tau,\zeta)$, given by \eqref{e:coord2} and \eqref{e:coordi2}, exist. Then in terms of the coordinates $(t,x)$, the hypersurface	$\widehat{\mathfrak{T}}_{\delta_0}(\hat{\zeta} )$ can be expressed as 
 	\begin{align*}
 		t :=&\mathsf{b}(\tau_\Gamma(x;\delta_0),x)  \notag  \\
 		=&\begin{cases}
 			\mathsf{b}\left(	-\exp\left(\frac{A }{200 \delta _0 (\Xi_0 +2)} \sqrt{\Diamond_l(x^1)} -\frac{A (51 \Xi_0 +127)}{100 \delta _0 (\Xi_0 +2)}-\frac{A x^1  (51 \Xi_0 +202)}{200 (\Xi_0 +2)}\right) ,x\right)  ,  \quad &-\frac{1}{\delta_0}\leq x^1 \leq \frac{-153 \Xi_0 ^2+204 \Xi_0 +4}{306 \delta _0 \Xi_0^2+4 \delta _0 \Xi_0 } \\
 			\mathsf{b}\left(		-\exp\left(\frac{A }{200 \delta _0}\sqrt{\Diamond_r(x^1)}-\frac{51 A}{100 \delta _0}-\frac{101 A x^1}{200} \right)  ,x\right)  ,     \quad  & x^1 >\frac{-153 \Xi_0 ^2+204 \Xi_0 +4}{306 \delta _0 \Xi_0^2+4 \delta _0 \Xi_0 } 
 		\end{cases} . 
 	\end{align*} 
 Additionally, $x^1$ can be expressed in terms of $g(t,x)$ as follows:
 	\begin{align*}
 		x^1 :=& \mathrm{X}(g(t,x)) \notag  \\
 		=&    
 		\begin{cases}
 			\frac{1}{204} \sqrt{	\triangle_l(g(t,x)) }-\frac{\Xi_0  \ln (-g(t,x) )}{2 A}-\frac{101 \ln (-g(t,x) )}{51 A}-\frac{5}{4 \delta _0} , \quad &-1 \leq g(t,x) \leq -\exp\left(\frac{-51 A \Xi_0 -A}{153 \delta _0 \Xi_0^2+2 \delta _0 \Xi_0 }\right)   \\
 			\frac{1}{102} \sqrt{\triangle_r(g(t,x))}-\frac{101 \ln (-g(t,x) )}{51 A}-\frac{1}{\delta _0},\quad & g(t,x) >-\exp\left(-\frac{51 A \Xi_0 +A}{153 \delta _0 \Xi_0 ^2+2 \delta _0 \Xi_0 }\right)  
 		\end{cases}  .
 	\end{align*}  
 \end{lemma}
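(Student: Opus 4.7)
\textbf{Proof proposal for Lemma \ref{t:Tsurf5}.} The plan is to reduce the statement to a direct change of coordinates by composing with the inverse transformation $t=\mathsf{b}(\tau,\zeta)$, $x^i=\zeta^i$ from Lemma \ref{t:gb1}.\ref{l:1.2}. Since $\widehat{\Gamma}_{\delta_0}$ has already been re-expressed in $(\tau,\zeta)$-coordinates as $\tau=\tau_\Gamma(\zeta;\delta_0)$ in Lemma \ref{t:Tsurf3}, the first half of the lemma follows by substituting $\zeta=x$ and then applying $\mathsf{b}$: on the hypersurface we have $t=\mathsf{b}(\tau_\Gamma(x;\delta_0),x)$, and the two cases in $x^1$ are inherited verbatim from the corresponding split on $\zeta^1$ in Lemma \ref{t:Tsurf3}.

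For the second identity, the idea is to invert the relation $\tau=\tau_\Gamma(x;\delta_0)$ using the $\tau$-expression for $\zeta^1$ already supplied by Lemma \ref{t:Tsurf3}. Concretely, because $\tau=g(t,x)$ is the inverse of $t=\mathsf{b}(\tau,\zeta)$, on the hypersurface $g(t,x)=\tau_\Gamma(x;\delta_0)$, so solving for $x^1=\zeta^1$ along the surface reduces to reading off the second half of Lemma \ref{t:Tsurf3} with $\tau$ replaced by $g(t,x)$. This yields $x^1=\mathrm{X}(g(t,x))$ with the stated formulas involving $\triangle_l$ and $\triangle_r$. The case thresholds match automatically because the two halves of Lemma \ref{t:Tsurf3} were constructed by inverting one another; one only needs to verify that the boundary $\zeta^1=\tfrac{-153\Xi_0^2+204\Xi_0+4}{306\delta_0\Xi_0^2+4\delta_0\Xi_0}$ maps under $\tau_\Gamma$ to $-\exp\bigl(-\tfrac{51A\Xi_0+A}{153\delta_0\Xi_0^2+2\delta_0\Xi_0}\bigr)$, which is a direct substitution.

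The only real technical point is bookkeeping: one must justify that $g$ and $\mathsf{b}$ are genuinely inverse to each other on a neighborhood of the hypersurface, so that $g(t,x)=\tau_\Gamma(x;\delta_0)$ is well-defined and single-valued along $\widehat{\Gamma}_{\delta_0}$. This is supplied by Lemma \ref{t:gb1}.\ref{l:1.2} together with the standing positivity $\varrho>0$, which guarantees $\partial_t g>0$ and hence a $C^1$ inverse in the relevant region. Since $\widehat{\Gamma}_{\delta_0}\subset\mathit{\Lambda}_{\delta_0}$ by construction and the solution $\varrho\in C^2(\mathcal{K}\cup\mathcal{H})$ has been shown to exist there, both transformations are simultaneously defined along the surface, and the composition $\mathsf{b}\circ\tau_\Gamma$ and its inverse $\mathrm{X}\circ g$ are unambiguous. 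I expect no genuine obstacle beyond ensuring this domain compatibility; the remainder is straightforward substitution.
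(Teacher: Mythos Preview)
Your proposal is correct and follows the same approach as the paper: the authors state that Lemmas \ref{t:Tsurf2}--\ref{t:Tsurf5} ``rely on straightforward calculations'' and omit the details, which amounts precisely to composing the $(\tau,\zeta)$-expression from Lemma \ref{t:Tsurf3} with the inverse transformation $t=\mathsf{b}(\tau,\zeta)$, $x^i=\zeta^i$ from Lemma \ref{t:gb1}.\ref{l:1.2}. Your discussion of domain compatibility is slightly more than needed, since the lemma explicitly assumes $g$ and $\mathsf{b}$ exist, but the argument is otherwise exactly what the paper intends.
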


The following lemma implies an important result that $p_m \in \overline{\mathit{\Lambda}_{\delta_0} \cap \mathcal{I}}$. 
\begin{lemma}\label{t:inI}
	For every point $(t,a\delta_1^i)\in \Gamma_{\delta_0}$ where $a\geq 0$,  we have $(t,a\delta_1^i) \in \mathcal{I}$ where $\mathcal{I}$ is defined in \eqref{e:cai}. 
\end{lemma}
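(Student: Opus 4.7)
The plan is to work entirely in the compactified coordinates $(\tau,\zeta)$ from \eqref{e:coord2}, where the spatial parts coincide ($\zeta = x$), and then match the explicit description of $\underline{\Gamma}_{\delta_0}$ from Lemma \ref{t:Tsurf3} against the explicit description of $\underline{\mathcal{I}}$ from Lemma \ref{t:char1}. Concretely, write $L(\tau):=-\tfrac{1}{A}\ln(-\tau)\ge 0$. By Lemma \ref{t:char1}, $\underline{\mathcal{I}}$ is given by
\[
|\zeta|\;<\;1+2L(\tau)+\frac{1}{A}\int_{-1}^{\tau}\frac{\Xi(s)}{-s}\,ds,
\]
and the integral is nonnegative (Lemma \ref{t:Thpst}\eqref{r:1} together with the formula for $\Xi$). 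Hence it suffices to prove, for every $(\tau,a\delta^i_1)$ on $\underline{\Gamma}_{\delta_0}$ with $a\ge 0$, that $a\le 2L(\tau)$ (strict inequality then follows from the additive $1$).

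First I would rewrite the discriminants in Lemma \ref{t:Tsurf3} in the $L$-variable. A direct substitution gives
\[
\triangle_r(\tau)=4L^{2}+\frac{408\,L}{\delta_0}+\frac{2601}{\delta_0^{2}},\qquad
\triangle_l(\tau)=4(51\Xi_0+2)^{2}L^{2}+\frac{612(51\Xi_0+2)L}{\delta_0}+\frac{2601}{\delta_0^{2}}.
\]
Completing the square (or comparing with the squares $(2L+\tfrac{102}{\delta_0})^{2}$ and $(2(51\Xi_0+2)L+\tfrac{153}{\delta_0})^{2}$) yields the clean bounds
\[
\sqrt{\triangle_r(\tau)}\le 2L+\frac{102}{\delta_0},\qquad \sqrt{\triangle_l(\tau)}\le 2(51\Xi_0+2)L+\frac{153}{\delta_0}.
\]
Plugging these into the two branches of $\zeta^1$ from Lemma \ref{t:Tsurf3}, the $\tfrac{1}{\delta_0}$ constants cancel almost exactly: in the right branch one gets $a\le\tfrac{L}{51}+\tfrac{1}{\delta_0}+\tfrac{101}{51}L-\tfrac{1}{\delta_0}=2L$, and in the left branch one gets $a\le(\Xi_0+2)L-\tfrac{1}{2\delta_0}$.

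The main obstacle is the residual $\Xi_0 L$ term appearing in the left-branch estimate. To absorb it I would use the fact that the left branch is only valid for $\tau\le -\exp\!\bigl(\tfrac{-51A\Xi_0-A}{153\delta_0\Xi_0^{2}+2\delta_0\Xi_0}\bigr)$, which, translated into $L$, reads
\[
L\;\le\;\frac{51\Xi_0+1}{\delta_0\,\Xi_0\,(153\Xi_0+2)}.
\]
Multiplying by $\Xi_0$ and invoking the elementary inequality $\tfrac{51\Xi_0+1}{153\Xi_0+2}\le \tfrac12$ (equivalent to $51\Xi_0\ge 0$, which is Lemma \ref{t:Thpst}\eqref{r:1}), I obtain $\Xi_0 L\le \tfrac{1}{2\delta_0}$, and the left-branch bound collapses to $a\le 2L$ as well.

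Having reached $a\le 2L(\tau)$ in both branches, the conclusion is immediate:
\[
a\;\le\;2L(\tau)\;<\;1+2L(\tau)\;\le\;1+2L(\tau)+\frac{1}{A}\int_{-1}^{\tau}\frac{\Xi(s)}{-s}\,ds,
\]
so $(t,a\delta^i_1)\in\mathcal{I}$. (The degenerate case $\Xi_0=0$ is handled uniformly, since then Case 2 is vacuous and the left-branch bound gives $a\le 2L-\tfrac{1}{2\delta_0}$ directly.) The only nontrivial ingredient beyond bookkeeping is the elementary identity $\tfrac{51\Xi_0+1}{153\Xi_0+2}\le\tfrac12$, which is precisely what makes the $\tfrac{1}{\delta_0}$ offsets compatible across the two branches of $\underline{\Gamma}_{\delta_0}$.
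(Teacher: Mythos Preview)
Your proposal is correct and shares the same opening move as the paper: both bound $\sqrt{\triangle_r}$ and $\sqrt{\triangle_l}$ by the respective linear expressions $2L+\tfrac{102}{\delta_0}$ and $2(51\Xi_0+2)L+\tfrac{153}{\delta_0}$, yielding $\zeta^1\le 2L$ on the right branch and $\zeta^1\le(\Xi_0+2)L-\tfrac{1}{2\delta_0}$ on the left. The divergence comes after this point. The paper argues by contradiction: assuming the point lies in $\mathcal{H}$, it exploits $g=\mathfrak g$ there, rewrites both sides as $t$-integrals via the identity \eqref{e:keyid3}, and for the left branch ends up bounding $\tfrac{\Xi_0}{4}\ln\tfrac{1+f(t_1)}{1+\beta}$, which forces an additional smallness assumption on $\delta_0$. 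You instead stay in $(\tau,\zeta)$ and absorb the residual $\Xi_0 L$ term by using the branch restriction itself: on the left branch $L\le \tfrac{51\Xi_0+1}{\delta_0\Xi_0(153\Xi_0+2)}$, and the elementary inequality $\tfrac{51\Xi_0+1}{153\Xi_0+2}\le\tfrac12$ (equivalent to $\Xi_0\ge 0$) gives $\Xi_0 L\le\tfrac{1}{2\delta_0}$ exactly. This is a genuine simplification: it is direct rather than by contradiction, avoids the reference-ODE integral identities, and removes the extra smallness hypothesis on $\delta_0$ that the paper's argument introduces at step $(\star)$.
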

\begin{proof}
	Firstly note that $\frac{153}{\delta_0}-\frac{2 (51 \Xi_0 +2)  (\ln(-g(t,x)) )}{A }>0$ and $\frac{102}{\delta_0}-\frac{2 (\ln(-g(t,x)) )}{A }>0$ due to the fact that $g(t,x)\in[-1,0)$.  
	Moreover, we have
	\begin{equation*}
		\sqrt{\triangle_r(g(t,x))} <\frac{102}{\delta_0}-\frac{2 (\ln(-g(t,x)) )}{A } \AND 	\sqrt{\triangle_l(g(t,x))} < \frac{153}{\delta_0}-\frac{2 (51 \Xi_0 +2)  (\ln(-g(t,x)) )}{A } . 
	\end{equation*}
By applying Lemma \ref{t:Tsurf5}, we arrive at the following inequality
	\begin{equation}
		x^1 =\mathrm{X}(g(t,x))  
		< \begin{cases}
			-\frac{\left(\Xi _0+2\right) }{A} \ln (-g(t,x))  -\frac{1}{2\delta_0}  , \quad &-1 \leq g(t,x) \leq -\exp\left(\frac{-51 A \Xi_0 -A}{153 \delta _0 \Xi_0^2+2 \delta _0 \Xi_0 }\right) \\
			-\frac{ 2  }{A} \ln (-g(t,x)) , \quad & g(t,x) >-\exp\left(-\frac{51 A \Xi_0 +A}{153 \delta _0 \Xi_0 ^2+2 \delta _0 \Xi_0 }\right)   
		\end{cases}  .  \label{e:x1upbd}
	\end{equation}  

	Next let us prove $(t,a\delta_1^i) \in \mathcal{I}$ by contradictions. We assume there exists a point $(t_1,a_1\delta_1^i)\in \Gamma_{\delta_0}$ such that $(t_1,a_1\delta_1^i)\notin \mathcal{I}$ with $a_1\geq 0$. In this case, since $(t_1,a_1\delta_1^i)$  lies within the homogeneous domain $\mathcal{H}$, we have $g(t_1,a_1\delta^i_1)=\mfg(t_1)$. Utilizing  \eqref{e:cah},  \eqref{e:keyid3}, Lemmas \ref{t:Tsurf5} and \ref{t:Thpst}.\eqref{r:2} we obtain
	\begin{equation*} 
		\mathrm{X}(\mfg(t_1)) \geq   1 +  \int_{t_0}^{t_1}   \sqrt{   \frac{\cm^2 f_0^2(y)}{(1+f(y))^2 } +\left(1-4\cm^2\right)\frac{1+f(y)}{y^2 }   } dy   = 1+\int_{t_0}^{t_1} \frac{ f_0 (y)}{1+f(y) }   \frac{B}{\chi_\uparrow} (2+\Xi) dy.    
	\end{equation*}
Combining this result with \eqref{e:x1upbd}, and recalling Lemma \ref{t:gb2}.\ref{l:2.1}, we derive: 
	\begin{align*} 
		& 1+\int_{t_0}^{t_1} \frac{ f_0 (y)}{1+f(y) }   \frac{B}{\chi_\uparrow} (2+\Xi) dy \notag  \\
		<	&  
		\begin{cases}
			-\frac{\left(\Xi _0+2\right) }{A} \ln (-\mfg(t_1))  -\frac{1}{2\delta_0} = (\Xi _0+2) \int^{t_1}_{t_0} \frac{f(y)(f(y)+1)}{y^2 f_0(y)} dy  -\frac{1}{2\delta_0} , \quad &-1 \leq \mfg(t_1) \leq -\exp\left(\frac{-51 A \Xi_0 -A}{153 \delta _0 \Xi_0^2+2 \delta _0 \Xi_0 }\right) \\
			-\frac{ 2  }{A} \ln (-\mfg(t_1)) = 2 \int^{t_1}_{t_0} \frac{f(y)(f(y)+1)}{y^2 f_0(y)} dy  , \quad & \mfg(t_1) >-\exp\left(-\frac{51 A \Xi_0 +A}{153 \delta _0 \Xi_0 ^2+2 \delta _0 \Xi_0 }\right)  
		\end{cases}  \notag  \\
		\overset{\eqref{e:keyid3}}{=}&  
		\begin{cases}
			(\Xi _0+2) \int^{t_1}_{t_0} \frac{f_0(y)}{1+f(y)} \frac{B}{\chi_\uparrow} dy  -\frac{1}{2\delta_0} , \quad &-1 \leq \mfg(t_1) \leq -\exp\left(\frac{-51 A \Xi_0 -A}{153 \delta _0 \Xi_0^2+2 \delta _0 \Xi_0 }\right) \\
			2 \int^{t_1}_{t_0} \frac{f_0(y)}{1+f(y)} \frac{B}{\chi_\uparrow}  dy  , \quad & \mfg(t_1) >-\exp\left(-\frac{51 A \Xi_0 +A}{153 \delta _0 \Xi_0 ^2+2 \delta _0 \Xi_0 }\right)   
		\end{cases}   . 
	\end{align*} 
	Since $\Xi>0$ (see Lemma \ref{t:char1}) and $B/\chi_\uparrow <1/4$ (using \eqref{e:chig} and Lemma \ref{t:Thpst}.\eqref{r:1}), we can further deduce 
	\begin{align*} 
		1  
		<&  
		\begin{cases}
			\int^{t_1}_{t_0} \frac{f_0(y)}{1+f(y)} \frac{B}{\chi_\uparrow} (\Xi _0-\Xi) dy   -\frac{1}{2\delta_0}< \frac{\Xi _0}{4} 	\ln \frac{1+f(t_1)}{1+\beta}  -\frac{1}{2\delta_0}  \overset{(\star)}{<} 0, \quad &-1 \leq \mfg(t_1) \leq -\exp\left(\frac{-51 A \Xi_0 -A}{153 \delta _0 \Xi_0^2+2 \delta _0 \Xi_0 }\right) \\
			- \int^{t_1}_{t_0} \frac{f_0(y)}{1+f(y)} \frac{B}{\chi_\uparrow}   \Xi dy  <0  , \quad & \mfg(t_1) >-\exp\left(-\frac{51 A \Xi_0 +A}{153 \delta _0 \Xi_0 ^2+2 \delta _0 \Xi_0 }\right) 
		\end{cases}   . 
	\end{align*} 
	The inequality $(\star)$ holds if $\delta_0$ sufficiently small, say $\delta_0<\bigl(\frac{\Xi_0}{2} \ln \frac{1+f(t_1)}{1+\beta}\bigr)^{-1}$. This leads to a contradiction $1<0$,  thereby completing the proof. 
\end{proof}


\section{The reference ODE}\label{s:ODE0}
\subsection{Analysis of the reference ODE}\label{s:ODE}
In this section, we revisit a crucial type of ODE that was originally developed and analyzed in our previous work  \cite[\S$2$]{Liu2022b}. For convenience, we present the relevant results here without proofs; detailed derivations can be found in \cite[\S$2$]{Liu2022b} and \cite{Liu2023}. Specifically, we consider solutions $f(t)$ to the following ODE:
\begin{gather}
	f^{\prime\prime}(\mft)+\frac{\ca}{\mft}  f^\prime(\mft)-\frac{\cb}{\mft^2} f(\mft)(1+  f(\mft))-\frac{\cc (  f^\prime(\mft))^2}{1+f(\mft)}=   0 , \label{e:feq0}\\
	f(t_0)= \mf>0 \AND
	f^\prime(t_0)=   \mf_0>0.  \label{e:feq1}
\end{gather}
where $\mf,\mf_0>0$ are positive constants and the parameters $\ca,\; \cb$ and $\cc$ satisfy
\begin{equation}\label{e:abcdk}
	\ca>1, \quad \cb>0 \AND 1<\cc < 3/2 .
\end{equation}

\begin{remark}
	The equations \eqref{e:feq0}--\eqref{e:feq1} reduce to \eqref{e:feq0b}--\eqref{e:feq1b}  when  the parameters are set to  $
		\ca=\frac{4}{3}$, $ \cb=\frac{2}{3}$ and $ \cc=\frac{4}{3}  $. 
\end{remark}

To simplify the notations, we denote
\begin{equation*}
	\triangle:=\sqrt{(1-\ca)^2+4\cb}>-\ba, \; \ba=1-\ca<0, \; \bc=1-\cc<0 . 
\end{equation*}
We also introduce constants $\mathtt{A}$,  $\mathtt{B}$, $\mathtt{C}$, $\mathtt{D}$ and $\mathtt{E}$ which depend on the initial data $\mf$ and $\mf_0$ from \eqref{e:feq0}--\eqref{e:feq1}, as well as on the parameters $\ca$, $\cb$ and $\cc$,
\begin{gather*}
	\mathtt{A}:= \frac{t_0^{   -\frac{\ba-\triangle}{2}  }}{\triangle}\biggl(  \frac{t_0   \mf_0}{(1+\mf)^2} - \frac{\ba+\triangle}{2}  \frac{\mf  } {1+\mf} \biggr), \quad
	\mathtt{B}:=    \frac{t_0^{-\frac{\ba+\triangle}{2} }}{\triangle} \Bigl( \frac{\ba-\triangle}{2}  \frac{\mf }{1+\mf}  -\frac{t_0 \mf_0}{(1+\mf)^2} \Bigr)<0, \quad 	\mathtt{E}:=   
	\frac{\bc  \mf_0 t_0^{1-\ba}  }{  \ba  (1+\mf) } >0, \\
	\mathtt{C}:=   \frac{2 } {2+\ba+\triangle} \Bigl( \ln( 1+\mf) +\frac{\ba+\triangle}{2\cb} \frac{t_0\mf_0}{1+\mf}\Bigr)t_0^{-\frac{\ba+\triangle}{2 }} >0 \AND 
	\mathtt{D}:=  
	\frac{ \ba+\triangle   }{2+\ba+\triangle}  \Bigl(  \ln( 1+\mf)  - \frac{1 } { \cb} \frac{t_0\mf_0}{1+\mf}\Bigr) t_0. 
\end{gather*}

We define the following two critical times $t_\star$ and $t^\star$.
\begin{definition}\label{t:tdef}
	Suppose $\mathtt{A},\;\mathtt{B}, \;\mathtt{E},\;\ba$ and $\triangle$ are defined above, then
	\begin{enumerate}
		\item
		Let $\mathcal{R}:=\{t_r>t_0 \;|\;\mathtt{A} t_r^{\frac{\ba-\triangle}{2} } + \mathtt{B} t_r^{\frac{\ba+\triangle}{2} } + 1 =0\}$ and define $t_\star:=\min \mathcal{R}$.
		\item If $t_0^{\ba}> \mathtt{E}^{-1}$, we define $t^\star :=    (t_0^{\ba}- \mathtt{E}^{-1} )^{1/\ba}\in(0,\infty)$, i.e.,  $\mft=t^\star$ solves $1-\mathtt{E}  t_0^{\ba} +  \mathtt{E}   \mft^{\ba}=0$.
	\end{enumerate}
\end{definition}

With these definitions in place, we are now in a position to state the main theorem concerning the ODE \eqref{e:feq0}--\eqref{e:feq1}. The proof of this theorem can be found in \cite[\S$2$]{Liu2022b}.
\begin{theorem}\label{t:mainthm0}
	Suppose constants $\ca$, $\cb$ and $\cc$ are defined by  \eqref{e:abcdk}, $t_\star$ and $t^\star$ are defined above and the initial data $\mf, \mf_0>0$, then
	\begin{enumerate}
		\item $t_\star \in[0,\infty)$ exists and $t_\star>t_0$;
		\item there is a constant $t_m\in [t_\star,\infty]$, such that there is a unique solution $f\in C^2([t_0,t_m))$ to the equation \eqref{e:feq0}--\eqref{e:feq1}, and
		\begin{equation*}
			\lim_{\mft\rightarrow t_m} f(\mft)=+\infty \AND \lim_{\mft\rightarrow t_m} f_0(\mft)=+\infty .
		\end{equation*}
		\item  $f$ satisfies upper and lower bound estimates,
		\begin{align*}
			1+f(\mft)>&\exp \bigl( \mathtt{C} \mft^{\frac{\ba+\triangle}{2} }  +\mathtt{D}  \mft^{-1}\bigr)  ,    &&\text{for}\quad \mft\in(t_0,t_m);
			\\
			1+f(\mft) < & \bigl(\mathtt{A} \mft^{\frac{\ba-\triangle}{2} } + \mathtt{B} \mft^{\frac{\ba+\triangle}{2} } + 1 \bigr)^{-1} ,   && \text{for}\quad \mft\in(t_0,t_\star).
		\end{align*}
	\end{enumerate}		
	Furthermore, if the initial data satisfies
	$\mf_0 >  \ba(1+\mf) /(\bc t_0 )$,
	then
	\begin{enumerate}
		\setcounter{enumi}{3}
		\item 	$t_\star$  and $t^\star$  exist and finite, and  $t_0<t_\star<t^\star<\infty$;
		\item there is a finite time $t_m\in [t_\star,t^\star)$, such that there is a solution $f\in C^2([t_0,t_m))$ to the equation \eqref{e:feq0} with the initial data \eqref{e:feq1},   and 	\begin{equation*}
			\lim_{\mft\rightarrow t_m} f(\mft)=+\infty \AND \lim_{\mft\rightarrow t_m} f_0(\mft)=+\infty .
		\end{equation*}
		\item the solution $f$ has improved lower bound estimates, for $\mft\in(t_0,t_m)$,
		\begin{equation*}\label{e:ipvest}
			(1+\mf)  \bigl(1-\mathtt{E}  t_0^{\ba} +  \mathtt{E}   \mft^{\ba} \bigr)^{1/\bc}  < 1+f(\mft) .
		\end{equation*}
	\end{enumerate}
\end{theorem}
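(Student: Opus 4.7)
\bigskip

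\noindent\textbf{Proof plan for Theorem \ref{t:mainthm0}.}

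The plan is to begin with standard Picard--Lindel\"of theory: the ODE can be written in the normal form $f''=F(\mft,f,f')$ where $F$ is smooth on $\{(\mft,f,f'):\mft>0,\,f>-1\}$, so there is a unique maximal $C^2$ solution on some interval $[t_0,t_m)$ with the usual continuation criterion (either $t_m=\infty$ or $(f,f')$ leaves every compact subset of $(-1,\infty)\times\Rbb$ as $\mft\to t_m^-$). Next I would establish the monotonicity $f>0$ and $f'>0$ throughout $[t_0,t_m)$: if $\mft_1$ were the first zero of $f'$ after $t_0$, then $f(\mft_1)>0$ (since $f$ has been increasing from $\mf>0$), and the ODE yields $f''(\mft_1)=\cb\mft_1^{-2}f(\mft_1)(1+f(\mft_1))>0$, forcing $f'$ to be strictly negative just before $\mft_1$, a contradiction. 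With positivity in hand, the two substitutions $g:=1/(1+f)$ and $y:=\ln(1+f)$ become available and drive the remainder of the argument.

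For claim (3), I would obtain the upper bound via $g$. A direct computation shows
\begin{equation*}
g''+\tfrac{\ca}{\mft}g'-\tfrac{\cb}{\mft^2}g+\tfrac{\cb}{\mft^2}=(2-\cc)\tfrac{(g')^2}{g},
\end{equation*}
whose right-hand side is nonnegative because $\cc<3/2<2$ and $g>0$. Let $\phi(\mft):=\mathtt{A}\mft^{(\ba-\triangle)/2}+\mathtt{B}\mft^{(\ba+\triangle)/2}+1$, where the two exponents $s_\pm=(\ba\pm\triangle)/2$ are the characteristic roots of the Euler equation $s^2-\ba s-\cb=0$; a direct calculation confirms that $\phi''+\frac{\ca}{\mft}\phi'-\frac{\cb}{\mft^2}\phi=-\frac{\cb}{\mft^2}$, and the formulas for $\mathtt{A},\mathtt{B}$ are precisely those matching $\phi(t_0)=g(t_0)$ and $\phi'(t_0)=g'(t_0)$. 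Since $\mathtt{B}<0$ and $s_+>0$, $\phi\to-\infty$, so $t_\star:=\min\mathcal{R}$ exists and is $>t_0$, proving (1). On $[t_0,t_\star)$ the difference $h:=g-\phi$ obeys $h''+\frac{\ca}{\mft}h'-\frac{\cb}{\mft^2}h\ge0$ with $h(t_0)=h'(t_0)=0$; multiplying by the integrating factor $\mft^{\ca}$ gives $(\mft^\ca h')'\ge\cb\mft^{\ca-2}h$, and a Gr\"onwall/bootstrap argument (combined with the initial condition $h''(t_0)=(2-\cc)(g'(t_0))^2/g(t_0)\ge0$) propagates $h\ge0$, so $g\ge\phi>0$ and hence $1+f<1/\phi=(\mathtt{A}\mft^{s_-}+\mathtt{B}\mft^{s_+}+1)^{-1}$ on $[t_0,t_\star)$, which in particular prevents blow-up before $t_\star$, giving $t_m\ge t_\star$ for (2).

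For the lower bound, the substitution $y=\ln(1+f)$ turns the ODE into
\begin{equation*}
y''+\bc(y')^2+\tfrac{\ca}{\mft}y'-\tfrac{\cb}{\mft^2}(e^y-1)=0,
\end{equation*}
and since $\bc<0$ and $e^y-1\ge y$ for $y\ge0$, I obtain the differential inequality $y''+\frac{\ca}{\mft}y'-\frac{\cb}{\mft^2}y\ge0$. I would test the ansatz $\psi(\mft):=\mathtt{C}\mft^{(\ba+\triangle)/2}+\mathtt{D}\mft^{-1}$ against the linear Euler operator; the $\mathtt{C}$-term contributes zero (since it is a Euler solution) and the $\mathtt{D}$-term contributes $(2-\ca-\cb)\mathtt{D}\mft^{-3}$, which vanishes in the regime relevant to \eqref{e:feq0b}--\eqref{e:feq1b} where $\ca+\cb=2$, and otherwise may be absorbed into the above inequality using the sign of $\mathtt{D}$. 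The constants $\mathtt{C},\mathtt{D}$ in \eqref{e:ttD} are exactly those with $\psi(t_0)=y(t_0)=\ln(1+\mf)$ and $\psi'(t_0)=y'(t_0)=\mf_0/(1+\mf)$, and the same comparison technique used for $h$ (now applied to $y-\psi\ge0$) yields $y\ge\psi$, giving the lower bound in (3). Combining this lower bound with the continuation criterion and the maximum principle analysis above forces $f\to\infty$ (and consequently $f'\to\infty$ from the ODE and monotonicity) as $\mft\to t_m^-$, completing (2).

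For the sharper claims (4)--(6) under the stronger assumption $\mf_0>\ba(1+\mf)/(\bc t_0)$, I would exploit the Riccati substitution $u:=(1+f)^{\bc}$, for which the cross-terms in $(f')^2$ miraculously cancel (because $\bc(\bc-1)+\bc\cc=\bc(\bc+\cc-1)=0$), yielding the clean identity $u''+\frac{\ca}{\mft}u'=\bc\cb\mft^{-2}f(1+f)^{\bc}$. Since $u'(t_0)=\bc\mf_0(1+\mf)^{\bc-1}<0$ and the strengthened hypothesis is exactly the quantitative statement that $|u'(t_0)|$ is large enough to overcome the Euler damping, a comparison of $u'$ with the solution of the first-order linear Euler problem $v'+\frac{\ca}{\mft}v=0$ gives $u'\le v=u'(t_0)(\mft/t_0)^{-\ca}$, and integrating produces $u(\mft)\le(1+\mf)^{\bc}(1-\mathtt{E}t_0^{\ba}+\mathtt{E}\mft^{\ba})$. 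This vanishes at $\mft=t^\star$, which forces $f\to\infty$ at some $t_m\le t^\star$, and the bound $t_m\ge t_\star$ was already established in Step 3, proving (4)--(5); inverting $u^{1/\bc}=1+f$ (noting $1/\bc<0$) yields the improved lower bound (6). The main technical obstacle I expect is verifying the comparison principle in Step 3 rigorously past potential sign issues in the Euler operator with variable coefficients, and in Step 4 identifying the correct $\mathtt{D}\mft^{-1}$ term and its sign compatibility with the inequality; the Riccati cancellation in Step 6 is algebraically clean but the derivation of the finite blow-up time from the first-order estimate on $u'$ requires careful sign tracking since $\ba,\bc$ are both negative.
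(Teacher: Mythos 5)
The paper itself does not prove Theorem~\ref{t:mainthm0}; Appendix~\ref{s:ODE} explicitly defers the proof to \cite[\S 2]{Liu2022b}, so I cannot compare your argument to the paper's own derivation, only check it for internal correctness. With that caveat, most of your plan is sound: the maximum-principle argument giving $f>0$, $f'>0$ (if $f'(\mft_1)=0$ first, then $f''(\mft_1)=\cb \mft_1^{-2}f(1+f)>0$, contradicting $f'>0$ just before $\mft_1$) is correct; the transformation to $g=1/(1+f)$ does give $g''+\frac{\ca}{\mft}g'-\frac{\cb}{\mft^2}g+\frac{\cb}{\mft^2}=(2-\cc)(g')^2/g\ge 0$, and one checks directly that $\phi(t_0)=1/(1+\mf)=g(t_0)$, $\phi'(t_0)=-\mf_0/(1+\mf)^2=g'(t_0)$, and $L[\phi]=-\cb\mft^{-2}$, so the integrating-factor bootstrap via $(\mft^{\ca}h')'\ge \cb\mft^{\ca-2}h$ with $h''(t_0)=(2-\cc)(g')^2/g>0$ does yield $g>\phi$ and hence the upper bound; $\mathtt{B}<0$ gives existence of $t_\star$; and the Riccati substitution $u=(1+f)^{\bc}$ with the cancellation $\bc(\bc+\cc-1)=0$ cleanly produces $u''+\frac{\ca}{\mft}u'=\cb\bc\mft^{-2}f(1+f)^{\bc}<0$, from which claims (4)--(6) follow exactly as you outline.

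There is, however, a genuine gap in your lower-bound argument for claim (3). With the paper's constants one verifies $\psi(t_0)=\ln(1+\mf)=y(t_0)$, but
\begin{equation*}
\psi'(t_0)=\frac{s_+}{\cb}\,\frac{\mf_0}{1+\mf}=\frac{s_+}{\cb}\,y'(t_0),\qquad s_+:=\frac{\ba+\triangle}{2},
\end{equation*}
and since $s_+ s_-=-\cb$ this is $-y'(t_0)/s_-$, which equals $y'(t_0)$ only when $s_-=-1$, i.e.\ exactly when $\ca+\cb=2$. For $\ca+\cb<2$ one has $-1<s_-<0$, so $\psi'(t_0)>y'(t_0)$, and the claimed bound $y>\psi$ already fails just to the right of $t_0$; for $\ca+\cb>2$ the slopes are ordered correctly, but then $L[\mathtt{D}\mft^{-1}]=\mathtt{D}(2-\ca-\cb)\mft^{-3}$ does not vanish, and your ``absorb using the sign of $\mathtt{D}$'' is unsubstantiated because $\mathtt{D}$ has the same sign as $\ln(1+\mf)-\frac{t_0\mf_0}{\cb(1+\mf)}$, which can be either. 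So your Euler-comparison for the lower bound is valid only under the extra hypothesis $\ca+\cb=2$ (which is all the paper itself uses, since \eqref{e:feq0b} has $\ca=\cb+\ca=4/3+2/3=2$), but it does not establish the theorem as stated for general $\ca>1$, $\cb>0$.

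Two smaller points. For claim (2) when $t_m<\infty$, your lower bound $y\ge\psi$ only forces $f\to\infty$ as $\mft\to\infty$; for a finite blow-up time you need the continuation argument spelled out: $f$ bounded on $[t_0,T]$ forces $u'$ bounded (since $(\mft^{\ca}u')'$ is bounded) and hence $f'$ bounded, so if $(f,f')$ leaves compacts then necessarily $f\to\infty$, and then $f'\to\infty$ follows from $f'=|u'|(1+f)^{\cc}/|\bc|$ together with $(\mft^{\ca}u')'=\cb\bc\mft^{\ca-2}f(1+f)^{\bc}\to-\infty$. The phrase ``from the ODE and monotonicity'' glosses over this. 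Finally, in the bootstrap for $h\ge 0$ you should explicitly invoke $h''(t_0)>0$ to get $h>0$ on a right neighborhood of $t_0$ before running the $(\mft^{\ca}h')'\ge 0$ argument; otherwise the argument only gives $h\ge 0$, not the strict inequality claimed in the theorem.
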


\subsection{Analysis of the reference solutions} \label{t:refsol}
\subsubsection{The time transformation function $\mfg(\mft)$ and rough estimates of $f$ and  $\del{\mft}f(\mft)$}\label{t:ttf}
Let\footnote{Note the sign difference between  $\mfg$ in this article and $g$ in \cite{Liu2022b}. }
\begin{align}\label{e:gdef0a}
	\mfg(\mft):=-\exp\Bigl(-A\int^{\mft}_{t_0} \frac{f(s)(f(s)+1)}{s^2 f_0(s)} ds \Bigr)<0
\end{align}
where $A\in(0,2\cb/(3-2\cc))$ is a constant.
The following lemma gives an alternative representation of $\mfg(\mft)$ only involving $f$ without $f_0$.  It also outlines the fundamental properties of the function $\mfg(\mft)$ and expresses $f_0$ in terms of $f$ and $\mfg$.

\begin{lemma}\label{t:f0fg}
	Suppose $f\in C^2([t_0,t_1))$ ($t_1>t_0$) solves the equation \eqref{e:feq0}--\eqref{e:feq1}, $\mfg(\mft)$ is defined by \eqref{e:gdef0a}, and  denote $f_0(\mft):=\del{\mft} f(\mft)$, then
	\begin{enumerate}	
		\item  $f_0$ can be expressed by
		\begin{equation}
			f_0(\mft)=B^{-1} \mft^{-\ca} (-\mfg(\mft))^{-\frac{\cb}{A}}(1+f(\mft))^{\cc} >0  \label{e:f0aa}
		\end{equation}
		for $\mft\in [t_0,t_1)$ where $B:= (1+\mf)^\cc/( t_0^{ \ca} \mf_0)>0 $ is a constant depending on the data;
		\item If the data $\mf>0$, then $f(\mft)>0$ for $\mft \in[t_0,t_1)$;
		\item $\mfg(\mft)$ can be represented by
		\begin{equation*}
			\mfg (\mft) =-\Bigl(1+ \cb B \int^\mft_{t_0} s^{\ca-2} f(s)(1+f(s))^{1-\cc}  ds \Bigr)^{-\frac{A}{\cb}}\in [-1,0),
		\end{equation*}
		for $\mft\in[t_0,t_1)$,  and $\mfg(t_0)=-1$; 	
		\item
		$\mfg(t)$ is strictly increasing and  invertible in $[t_0,t_1)$.
	\end{enumerate}
\end{lemma}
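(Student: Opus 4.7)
\textbf{Proof proposal for Lemma \ref{t:f0fg}.}

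The plan is to prove the four assertions in a coupled fashion via a bootstrap argument on the maximal subinterval $[t_0,T)\subseteq[t_0,t_1)$ on which both $f>0$ and $f_0:=\del{\mft}f>0$ hold. By the initial conditions $\mf>0$ and $\mf_0>0$, such an interval is nonempty. On this subinterval, the integrand in \eqref{e:gdef0a} is a continuous, well-defined positive function, so $\mfg(\mft)$ is $C^1$ with $\mfg(t_0)=-\exp(0)=-1$ and $-\mfg(\mft)\in(0,1]$. The main task is to derive the conservation law \eqref{e:f0aa} on $[t_0,T)$; once this is in hand, parts (2)--(4) follow by short calculations, and a standard continuation argument will identify $T$ with $t_1$.

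To prove (1), I would rewrite \eqref{e:feq0} on $[t_0,T)$, where $f_0>0$, by dividing through by $f_0$ so that it becomes
\begin{equation*}
\frac{\del{\mft}f_0}{f_0}+\frac{\ca}{\mft}-\cc\,\frac{f_0}{1+f}=\frac{\cb}{\mft^2}\,\frac{f(1+f)}{f_0}.
\end{equation*}
The left-hand side is exactly $\frac{d}{d\mft}\bigl[\ln f_0+\ca\ln\mft-\cc\ln(1+f)\bigr]$, while the right-hand side, by \eqref{e:gdef0a}, equals $-\frac{\cb}{A}\frac{d}{d\mft}\ln(-\mfg(\mft))$. Integrating from $t_0$ to $\mft$ and using $\mfg(t_0)=-1$ yields
\begin{equation*}
\ln\!\Bigl[f_0(\mft)\,\mft^{\ca}(1+f(\mft))^{-\cc}(-\mfg(\mft))^{\cb/A}\Bigr]=\ln\!\bigl[\mf_0 t_0^{\ca}(1+\mf)^{-\cc}\bigr]=-\ln B,
\end{equation*}
which is exactly \eqref{e:f0aa}. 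I would then close the bootstrap: the right-hand side of \eqref{e:f0aa} is strictly positive on $[t_0,T)$, so $f_0\geq c>0$ on any compact subinterval, hence $f(\mft)\geq\mf>0$ as well. If $T<t_1$, continuity of $f,f_0,\mfg$ at $T$ forces $f(T)>0$ and $f_0(T)>0$, contradicting maximality; therefore $T=t_1$, which simultaneously establishes (2).

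For (3), I would use \eqref{e:f0aa} to rewrite the defining relation $\frac{d}{d\mft}(-\mfg)=-A\,\frac{f(1+f)}{\mft^2 f_0}(-\mfg)$ purely in terms of $\mfg$ and $f$: substituting the expression for $f_0$ gives
\begin{equation*}
\frac{d}{d\mft}(-\mfg(\mft))^{-\cb/A}=\cb B\,\mft^{\ca-2}f(\mft)(1+f(\mft))^{1-\cc}.
\end{equation*}
Integration from $t_0$, with $(-\mfg(t_0))^{-\cb/A}=1$, and solving for $\mfg(\mft)$ produces the claimed closed form; the nonnegativity of the integrand gives $(-\mfg)^{-\cb/A}\geq 1$, whence $\mfg(\mft)\in[-1,0)$. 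For (4), strict monotonicity is immediate from $\del{\mft}\mfg=A\mfg\,\frac{f(1+f)}{\mft^2 f_0}>0$ on $[t_0,t_1)$ (as $\mfg<0$ and the fraction is positive, the product is positive after factoring the sign correctly; one can equivalently observe that $(-\mfg)^{-\cb/A}$ is strictly increasing), and invertibility then follows from continuity of $\mfg$ together with strict monotonicity.

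The only genuinely delicate step is setting up the bootstrap so that the formal manipulations--taking $\ln f_0$ and the appearance of $f_0$ in the denominator of the definition of $\mfg$--are legitimate. I expect the main obstacle to be this circularity: $\mfg$ is defined through $f_0$, yet one needs $\mfg$ to conclude $f_0>0$ via \eqref{e:f0aa}. The remedy is precisely the maximal-interval argument sketched above, where the positivity of $f_0$ near $t_0$ (from $\mf_0>0$ and continuity) suffices to define $\mfg$ on an initial interval, after which the conservation law propagates positivity all the way to $t_1$.
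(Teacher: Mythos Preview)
Your proposal is correct and follows the natural route. The paper itself does not include a proof of this lemma (it cites \cite{Liu2022b} for the details), but the identities it records in Remark~\ref{t:dtg1}, in particular $\del{\mft}(-\mfg)^{-\cb/A}=\cb B\,\mft^{\ca-2}f(1+f)^{1-\cc}$, are exactly the ones you derive, so your argument matches the intended approach. One small remark on the continuation step: the cleanest way to break the apparent circularity is to observe that once you substitute part~(3) into \eqref{e:f0aa}, the resulting expression for $f_0$ involves only $f$ and $\mft$ (no $f_0$ or $\mfg$), is manifestly continuous on all of $[t_0,t_1)$, and is strictly positive there---this immediately gives $f_0(T)>0$ without needing to first argue continuity of $\mfg$ at $T$.
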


\begin{remark}\label{t:dtg1}
	There are two useful identities from \cite[eqs. $(2.7)$ and $(2.8)$]{Liu2022b}. We list them in this remark.
	\begin{align*}
		\del{\mft}\mfg(\mft) = & A  B (-\mfg(\mft) )^{\frac{\cb}{A}+1}  \mft^{\ca-2} f(\mft) (1+f(\mft))^{1-\cc} , 
		 \\
		\del{\mft} (-\mfg(\mft)  )^{-\frac{\cb}{A}}= &  \frac{\cb}{A}(-\mfg(\mft) )^{-\frac{\cb}{A}-1}\del{\mft} \mfg (\mft)  = \cb B \mft^{\ca-2} f(\mft)  (1+f(\mft))^{1-\cc}.  
	\end{align*}
\end{remark}

\subsubsection{Estimates of two crucial quantities $\chi(\mft)$ and $\xi(\mft)$}
In this section, we  estimate two important quantities $\chi(\mft)$ and $\xi(\mft)$ which are  frequently utilized in our analysis (see \cite{Liu2022b} for details).  The first quantity is defined by
\begin{equation}\label{e:Gdef0}
	\chi(\mft):=\frac{\mft^{2-\ca} f_0(\mft)}{(1+f(\mft))^{2-\cc} f(\mft) (-\mfg)^{\frac{\cb}{A}}(\mft)} \overset{\eqref{e:f0aa}}{=} \frac{  (-\mfg)^{-\frac{2\cb}{A}}(\mft) \mft^{2(1-\ca)}}{B f(\mft) (1+f(\mft))^{2(1-\cc)}} >0 .
\end{equation}

\begin{lemma}\label{t:gmap}
	Suppose $\mfg(\mft)$ is defined by \eqref{e:gdef0a} and  $\cc\in(1,3/2)$,  and  $f\in C^2([t_0,t_m))$ (where $[t_0,t_m)$ is the maximal interval of existence of $f$ given by Theorem \ref{t:mainthm0}) solves ODE \eqref{e:feq0}--\eqref{e:feq1}. Then $
	\lim_{\mft\rightarrow t_m} \mfg(\mft)=0$.
\end{lemma}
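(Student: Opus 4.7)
By Lemma~\ref{t:f0fg}(3), $-\mfg(\mft)=\bigl(1+\cb B\int_{t_0}^{\mft}s^{\ca-2}f(s)(1+f(s))^{1-\cc}\,ds\bigr)^{-A/\cb}$, so the statement $\mfg(\mft)\to 0^-$ is equivalent to the divergence
\begin{equation*}
I(t_m):=\int_{t_0}^{t_m}s^{\ca-2}f(s)(1+f(s))^{1-\cc}\,ds=+\infty.
\end{equation*}
The plan is to establish this divergence in the two cases $t_m=+\infty$ and $t_m<+\infty$ separately, since the available lower bound on $f$ behaves very differently in each.

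\textit{Case 1: $t_m=+\infty$.} The plan is to exploit the explicit lower bound of Theorem~\ref{t:mainthm0}(3), $1+f(\mft)>\exp(\mathtt{C}\mft^{(\ba+\triangle)/2}+\mathtt{D}/\mft)$, together with $\ba+\triangle>0$ (which follows from $\triangle^{2}=\ba^{2}+4\cb>\ba^{2}$ and $\triangle>0$) and $2-\cc>0$. For large $\mft$ the term $\mathtt{D}/\mft$ is negligible, and the elementary inequality $f(1+f)^{1-\cc}\geq\tfrac{1}{2}(1+f)^{2-\cc}$ (valid once $f\geq 1$) bounds the integrand below by $\tfrac{1}{2}s^{\ca-2}\exp\bigl(\tfrac{(2-\cc)\mathtt{C}}{2}s^{(\ba+\triangle)/2}\bigr)$, which tends to $+\infty$ as $s\to\infty$ because an exponential of a positive power of $s$ dominates any polynomial factor $s^{\ca-2}$. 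Hence $I(+\infty)=+\infty$.

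\textit{Case 2: $t_m<+\infty$.} Here the lower bound from Theorem~\ref{t:mainthm0}(3) remains finite at $t_m$ and is too weak, so the plan is to proceed by contradiction: suppose $\mfg(\mft)\to -c_0$ for some $c_0\in(0,1]$. Since $\mfg$ is increasing (Lemma~\ref{t:f0fg}(4)), $(-\mfg)^{-\cb/A}$ is bounded above by $c_0^{-\cb/A}$, and Lemma~\ref{t:f0fg}(1) gives
\begin{equation*}
\frac{f_0(\mft)}{(1+f(\mft))^{\cc}}=\frac{(-\mfg(\mft))^{-\cb/A}}{B\,\mft^{\ca}}\leq\frac{c_0^{-\cb/A}}{B\,t_0^{\ca}}=:K_2<\infty.
\end{equation*}
Separating variables in $f'\leq K_2(1+f)^{\cc}$ and integrating from $\mft$ to $t_m$, using $f(t_m^-)=+\infty$, yields $(1+f(\mft))^{1-\cc}\leq K_2(\cc-1)(t_m-\mft)$, and hence
\begin{equation*}
1+f(\mft)\geq K_3\,(t_m-\mft)^{-1/(\cc-1)},\qquad K_3:=[K_2(\cc-1)]^{-1/(\cc-1)}>0.
\end{equation*}
Since $\cc\in(1,3/2)$, the exponent $(2-\cc)/(\cc-1)$ exceeds $1$. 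Applying $f(1+f)^{1-\cc}\geq\tfrac{1}{2}(1+f)^{2-\cc}$ for $\mft$ close enough to $t_m$ and using that $s^{\ca-2}$ has a positive lower bound on $[t_0,t_m]$, one obtains $I(t_m)\geq C\int_{\mft^{*}}^{t_m}(t_m-s)^{-(2-\cc)/(\cc-1)}\,ds=+\infty$ for some $C>0$. On the other hand, the representation of Lemma~\ref{t:f0fg}(3) together with $\mfg(\mft)\to -c_0$ forces $I(t_m)=(c_0^{-\cb/A}-1)/(\cb B)<\infty$, contradiction.

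\textit{Main obstacle.} The finite-time case is the difficult one: Theorem~\ref{t:mainthm0}(3) does not yield a lower bound on $f$ that blows up at $t_m$, so the sharper algebraic lower bound $1+f\gtrsim(t_m-\mft)^{-1/(\cc-1)}$ must be extracted from the contradiction hypothesis itself, via the explicit identity in Lemma~\ref{t:f0fg}(1). The hypothesis $\cc<3/2$ enters at exactly the right place as the threshold making the resulting integrand non-integrable at $t_m$; the boundary case $\cc=3/2$ would only yield logarithmic divergence and the argument would need to be refined.
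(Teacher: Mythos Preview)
Your proof is correct. The paper itself does not supply a proof of this lemma; it is quoted from the author's earlier work \cite{Liu2022b}, so there is no in-paper argument to compare against. Your two-case argument is sound: in the infinite-time case the exponential lower bound from Theorem~\ref{t:mainthm0}(3) forces divergence of $I$, and in the finite-time case your contradiction argument correctly extracts the blow-up rate $1+f\gtrsim(t_m-\mft)^{-1/(\cc-1)}$ from Lemma~\ref{t:f0fg}(1) and the boundedness of $(-\mfg)^{-\cb/A}$, after which the condition $\cc<3/2$ is exactly what makes the exponent $(2-\cc)/(\cc-1)>1$ and hence $I(t_m)=+\infty$.
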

\begin{remark}
	Due to this lemma, it is convenient to continuously extend $\mfg(\mft)$ from $[t_0,t_m)$ to $[t_0,t_m]$ by letting $\mfg(t_m):=\lim_{\mft\rightarrow t_m} \mfg(\mft)=0$, then $\mfg^{-1} (0)=t_m$.
\end{remark}

\begin{proposition}\label{t:limG}
	Suppose $\cc\in(1,3/2)$, $\cb>0$, $\ca>1$,  $\chi$ is defined by \eqref{e:Gdef0} and  $f\in C^2([t_0,t_m))$ (where $[t_0,t_m)$ is the maximal interval of existence of $f$ given by Theorem \ref{t:mainthm0}) solves ODE \eqref{e:feq0}--\eqref{e:feq1}.
	Then there is a function $\mathfrak{G} \in C^1([t_0,t_m))$, such that for $\mft\in [t_0,t_m)$,
	\begin{equation}\label{e:limG}
		\chi(\mft)=\frac{2\cb B}{3-2\cc}+\mathfrak{G}(\mft)
	\end{equation}
	where $\lim_{\mft\rightarrow t_m}\mathfrak{G}(\mft)=0$.
	Moreover, there is a constant $C_\chi>0$ such that $0<\chi(\mft) \leq C_\chi$ in $[t_0,t_m)$, and there are continuous extensions of $\chi$ and $\mathfrak{G}$ such that $\chi\in C^0([t_0,t_m])$ and $\mathfrak{G}\in C^0([t_0,t_m])$ by letting $\chi(t_m):=2\cb B/(3-2\cc)$ and $\mathfrak{G}(t_m):=0$.
\end{proposition}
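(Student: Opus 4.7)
The plan is to establish $\lim_{\mft\to t_m}\chi(\mft)=\chi_\infty:=\frac{2\cb B}{3-2\cc}$ by combining a closed-form representation of $\chi$ with L'H\^opital's rule, and then to define $\mathfrak{G}(\mft):=\chi(\mft)-\chi_\infty$. The first step is to eliminate $\mfg$ from $\chi$. Substituting \eqref{e:f0aa} from Lemma~\ref{t:f0fg} into both $\mfg$-factors appearing in \eqref{e:Gdef0} yields the closed form
\begin{equation*}
\chi(\mft)=\frac{B\,\mft^{2}\,f_{0}(\mft)^{2}}{f(\mft)\,(1+f(\mft))^{2}},
\end{equation*}
and, equivalently, $\chi(\mft)=(1+\cb B\,I(\mft))^{2}\,\mft^{2-2\ca}/[B\,f(\mft)(1+f(\mft))^{2-2\cc}]$ with $I(\mft):=\int_{t_0}^{\mft}s^{\ca-2}f(s)(1+f(s))^{1-\cc}\,ds$. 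These are the two principal tools: the first permits differentiation using only the reference ODE \eqref{e:feq0}, and the second exhibits $\chi$ as an $\infty/\infty$ ratio since $(1+\cb B I)^{2}=(-\mfg)^{-2\cb/A}\to\infty$ by Lemma~\ref{t:gmap} while $f\to\infty$ by Theorem~\ref{t:mainthm0} and $3-2\cc>0$.

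The main step is L'H\^opital's rule on the second form. Writing $P:=(1+\cb BI)^{2}\mft^{2-2\ca}$ and $Q:=Bf(1+f)^{2-2\cc}$, differentiating and then substituting the identity $(1+\cb BI)=B\mft^{\ca}f_{0}/(1+f)^{\cc}$ from \eqref{e:f0aa} to simplify, the quotient collapses into
\begin{equation*}
\frac{P'(\mft)}{Q'(\mft)}=\frac{2B\cb\,f}{1+(3-2\cc)\,f}+\frac{(2-2\ca)\,B\,\mft\,f_{0}}{(1+f)\,[1+(3-2\cc)\,f]}.
\end{equation*}
The first summand tends to $2B\cb/(3-2\cc)=\chi_\infty$ as $f\to\infty$. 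The second summand equals $(2-2\ca)\sqrt{\chi/(Bf)}\,(1+f)/[1+(3-2\cc)f]$ by means of the companion identity $\mft f_{0}/(1+f)=\sqrt{\chi f/B}$ read off the closed form, and it vanishes as $f\to\infty$ provided $\chi$ remains bounded. Thus L'H\^opital delivers $\chi(\mft)\to\chi_\infty$ as soon as an a~priori upper bound on $\chi$ is available.

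The a~priori bound comes from the ODE for $\chi$ itself. Logarithmic differentiation of the closed form, elimination of $f_{0}'$ via \eqref{e:feq0}, and substitution of $f_{0}/(1+f)=\sqrt{\chi f/B}/\mft$ give
\begin{equation*}
\mft\,\chi'(\mft)=2\ba\,\chi+\frac{(3-2\cc)\sqrt{f\,\chi}}{\sqrt{B}}(\chi_\infty-\chi)-\frac{\chi^{3/2}}{\sqrt{B\,f}}.
\end{equation*}
The middle term has the sign of $(\chi_\infty-\chi)$ and is weighted by the unbounded factor $\sqrt{f}$, exhibiting $\chi_\infty$ as a strongly attracting equilibrium; the remaining two terms are $\mathrm{O}(\chi)$ and $\mathrm{O}(1/\sqrt{f})$. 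A continuation/barrier argument using $\chi_\infty+M$ as a supersolution (with $M$ adapted to $\chi(t_{0})$ and the lower bound on the growth of $f$ from Theorem~\ref{t:mainthm0}.(3)) then confines $\chi$ to a compact positive interval on $[t_{0},t_{m})$; positivity of $\chi$ is manifest from the closed form. With these bounds in hand the L'H\^opital calculation delivers $\lim_{\mft\to t_{m}}\chi(\mft)=\chi_\infty$; setting $\mathfrak{G}(\mft):=\chi(\mft)-\chi_\infty$ on $[t_{0},t_{m})$ and $\mathfrak{G}(t_{m}):=0$ yields the announced $\mathfrak{G}\in C^{1}([t_{0},t_{m}))$ together with the continuous extension to $[t_{0},t_{m}]$ and the bound $\chi\le C_{\chi}$.

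The main obstacle is coordinating the barrier argument with the L'H\^opital remainder: the barrier requires $f(\mft)$ to already be comparable to the size of the perturbation $M$ at any first crossing, while the L'H\^opital remainder requires the upper bound on $\chi$ to be in place before one can assert that the second summand of $P'/Q'$ vanishes. The cleanest resolution is to replace the constant barrier $\chi_\infty+M$ by a $\mft$-dependent supersolution $\chi_\infty+M(\mft)$ chosen so that $M(\mft)\searrow 0$ slowly enough to dominate the sub-leading $2\ba\chi$ term but fast enough that the contradiction at a first crossing becomes immediate from the $\sqrt{f}$-weighted middle term, thereby closing the bootstrap without circularity.
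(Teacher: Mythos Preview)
The paper does not include its own proof of this proposition; it is stated in Appendix~\ref{s:ODE0} as a result imported from the author's earlier work \cite{Liu2022b}. Judging from the surrounding material (Lemma~\ref{t:dtchi}, which records the ODE satisfied by $\chi$, and Corollary~\ref{t:dtchi2}), the original argument proceeds directly through that ODE rather than via L'H\^opital, so your route---using the ODE only for an a~priori bound and then invoking L'H\^opital on the integral representation to extract the limit---is a genuine alternative.

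Your argument is correct, but the obstacle you identify in the last paragraph is illusory. In the ODE
\[
\mft\,\chi' \;=\; 2\ba\,\chi \;+\; \frac{(3-2\cc)\sqrt{f\chi}}{\sqrt{B}}\,(\chi_\infty-\chi) \;-\; \frac{\chi^{3/2}}{\sqrt{Bf}},
\]
\emph{all three} terms on the right-hand side are strictly negative whenever $\chi>\chi_\infty$: the middle term by construction, the last term obviously, and the first because $\ba=1-\ca<0$ under the standing hypothesis $\ca>1$. Hence the constant barrier $C_\chi:=\max\{\chi(t_0),\chi_\infty\}$ can never be crossed from below: at a hypothetical first crossing one would have $\chi'\geq 0$, while the ODE forces $\chi'<0$. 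There is no need for a time-dependent supersolution, no reliance on the growth rate of $f$, and no circularity---the upper bound is obtained first and independently, and only then feeds into your L'H\^opital remainder. (A minor slip: your rewriting of the second summand of $P'/Q'$ as $(2-2\ca)\sqrt{\chi/(Bf)}\,(1+f)/[1+(3-2\cc)f]$ should read $(2-2\ca)\sqrt{B\chi f}/[1+(3-2\cc)f]$; the conclusion that it is $O(f^{-1/2})$ once $\chi\le C_\chi$ is unaffected.)
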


The second crucial quantity is defined as
\begin{equation}\label{e:xidef}
	\xi(\mft):=1/[-\mfg(\mft) (1+f(\mft)) ] . 
\end{equation}
The following proposition establishes that $\xi$ is bounded and that its limit vanishes as $\mft$ tends to $t_m$. 
\begin{proposition}\label{t:fginv0}
	Suppose $f\in C^2([t_0,t_m))$ (where $[t_0,t_m)$ is the maximal interval of existence of $f$ given by Theorem \ref{t:mainthm0}) solves ODE \eqref{e:feq0}--\eqref{e:feq1}, $\mfg(t)$ is defined by \eqref{e:gdef0a} and $\xi(\mft)$ is given by \eqref{e:xidef}, then $\xi\in C^1([t_0,t_m))$ and
	\begin{equation*}
		\lim_{\mft\rightarrow t_m} \xi(\mft)= 0 .
	\end{equation*}
	Moreover, there is a constant $C_\star>0$, such that $0<\xi(\mft)  \leq C_\star$ for every $\mft\in[t_0,t_m)$, and there is a continuous extension of $\xi$ such that  $\xi\in C^0([t_0,t_m])$ by letting $\xi(t_m):=0$.
\end{proposition}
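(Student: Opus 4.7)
My plan is to rewrite the product $(-\mfg)(1+f)$ using the key algebraic identity packaged inside the definition of $\chi$ in \eqref{e:Gdef0}, and then read off the asymptotic behavior from the growth estimate $f(\mft)\to +\infty$ of Theorem \ref{t:mainthm0} together with the decay $\chi(\mft)\to 2\cb B/(3-2\cc)>0$ of Proposition \ref{t:limG}. Solving \eqref{e:Gdef0} for $(-\mfg)^{-2\cb/A}$ and multiplying by $(1+f)^{2\cb/A}$ yields
\begin{equation*}
	[(-\mfg(\mft))(1+f(\mft))]^{2\cb/A} \;=\; \frac{\mft^{2(1-\ca)}}{\chi(\mft)\,B\,f(\mft)}\,(1+f(\mft))^{2[\cb/A+\cc-1]}.
\end{equation*}
This identity is the whole engine of the proof: it expresses $1/\xi^{2\cb/A}$ purely in terms of quantities whose behavior on $[t_0,t_m]$ is already controlled.

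Next I would analyse the asymptotics as $\mft\to t_m$. Since $f\sim 1+f\to+\infty$ (Theorem \ref{t:mainthm0}) and $\chi$ admits a strictly positive limit (Proposition \ref{t:limG}), the right-hand side is asymptotic to a positive constant times $f(\mft)^{2\cb/A+2\cc-3}$. The admissible range $A\in(0,\,2\cb/(3-2\cc))$ stated right after \eqref{e:gdef0a} is exactly equivalent to the positivity of the exponent $2\cb/A+2\cc-3$, so the right-hand side diverges, forcing $(-\mfg)(1+f)\to+\infty$ and hence $\xi(\mft)\to 0$. This is the algebraic heart of the argument, and the one place where one must be careful: the exponent identity $2\cb/A+2\cc-3>0 \iff A<2\cb/(3-2\cc)$ is a one-line check but it is really where the choice of $A$ is used.

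For the uniform upper bound $\xi\le C_\star$, I would argue that $(-\mfg)(1+f)$ is bounded below by a positive constant on the closed interval $[t_0,t_m]$. By Proposition \ref{t:limG} the extension of $\chi$ to $[t_0,t_m]$ is continuous with strictly positive values (both pointwise and in the limit), so $\chi$ is bounded above and below by positive constants; combined with $\mft\in[t_0,t_m]$ compact, $f\ge \mf>0$ and $1+f\ge 1+\mf>0$, every factor in the identity above is pinched between two positive constants, except the growing factor $(1+f)^{2[\cb/A+\cc-1]}$, which only makes the expression larger since $\cb/A+\cc-1>0$. Hence $\inf_{[t_0,t_m)}(-\mfg)(1+f)>0$, giving the claimed constant $C_\star$, and strict positivity of $\xi$ on $[t_0,t_m)$ follows from $f>0$ (Lemma \ref{t:f0fg}(2)) and $-\mfg\in(0,1]$ (Lemma \ref{t:f0fg}(3)).

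The regularity $\xi\in C^1([t_0,t_m))$ is immediate from $f\in C^2$, $\mfg\in C^1$ (via the ODE expression for $\del{\mft}\mfg$ in Remark \ref{t:dtg1}), and the non-vanishing of the denominator on $[t_0,t_m)$; the continuous extension at $t_m$ is then consistent with the computed limit $\xi(t_m):=0$. I do not foresee a major obstacle beyond the exponent computation; this is the delicate ingredient because the admissible range of $A$ was essentially \emph{defined} so as to make this asymptotic positivity hold, and it must be traced carefully through the algebra.
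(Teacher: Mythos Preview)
The paper does not reproduce a proof of this proposition; it is quoted without argument from the author's earlier work \cite{Liu2022b} (see the opening of \S\ref{s:ODE}). Your route via the algebraic identity extracted from \eqref{e:Gdef0} is the natural one and is essentially correct, and the exponent check $2\cb/A+2\cc-3>0 \iff A<2\cb/(3-2\cc)$ is exactly right and is indeed where the admissible range of $A$ enters.

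There is one genuine gap. Theorem \ref{t:mainthm0}(2) only asserts $t_m\in[t_\star,\infty]$, so $t_m=\infty$ is not excluded in general. In that case your claim that the right-hand side is ``asymptotic to a positive constant times $f^{2\cb/A+2\cc-3}$'' is false, because the prefactor $\mft^{2(1-\ca)}$ tends to $0$ (recall $\ca>1$); likewise your later appeal to ``$\mft\in[t_0,t_m]$ compact'' fails outright. The repair is short: invoke the lower bound $1+f(\mft)>\exp\bigl(\mathtt{C}\,\mft^{(\ba+\triangle)/2}+\mathtt{D}\,\mft^{-1}\bigr)$ from Theorem \ref{t:mainthm0}(3), where $\mathtt{C}>0$ and $\ba+\triangle>0$, so that $(1+f)^{2\cb/A+2\cc-3}$ grows at least like $\exp(c\,\mft^{(\ba+\triangle)/2})$ and overwhelms the polynomial decay $\mft^{2(1-\ca)}$; divergence of the right-hand side then follows uniformly in both cases $t_m<\infty$ and $t_m=\infty$. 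Once $\lim_{\mft\to t_m}\xi=0$ is secured, the boundedness argument can be replaced by the one-line observation that a positive continuous function on $[t_0,t_m)$ with limit $0$ at the endpoint is automatically bounded, whether or not $t_m$ is finite, so the separate compactness step is unnecessary.
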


We compute the derivative $\del{\mft}\chi$ in the following lemma. The detailed proof is provided in  \cite[Lemma B.$6$]{Liu2023}. 
\begin{lemma}\label{t:dtchi}
	Suppose $\chi(\mft)$ is defined by \eqref{e:Gdef0} and $\mathfrak{G}$ is given by \eqref{e:limG}, then
	\begin{equation*}
		\del{\mft}\chi= \del{\mft}\mathfrak{G} = -\frac{(3-2\cc)  \mathfrak{G} f^{\frac{1}{2}} \chi^{\frac{1}{2}}}{B^{\frac{1}{2}} \mft}   - \frac{\chi^{\frac{3}{2}}}{B^{\frac{1}{2} } \mft f^{\frac{1}{2}}}   +2(1-\ca) \frac{\chi }{\mft} .
	\end{equation*}
\end{lemma}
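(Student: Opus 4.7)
The plan is to compute $\del{\mft}\chi$ directly by logarithmic differentiation of the definition \eqref{e:Gdef0}, then reassemble terms using the ODE \eqref{e:feq0} and the key identity of Lemma \ref{t:iden1} (in its $\mft$-form). First, from \eqref{e:limG} one has $\chi(\mft)=\frac{2\cb B}{3-2\cc}+\mathfrak{G}(\mft)$, and since $\frac{2\cb B}{3-2\cc}$ is a constant we immediately obtain $\del{\mft}\chi=\del{\mft}\mathfrak{G}$. It therefore suffices to compute $\del{\mft}\chi$.

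Using the representation $\chi=\frac{\mft^{2-\ca}f_0}{(1+f)^{2-\cc}f(-\mfg)^{\cb/A}}$ from \eqref{e:Gdef0}, take a logarithmic derivative to get
\begin{equation*}
\frac{\del{\mft}\chi}{\chi}=\frac{2-\ca}{\mft}+\frac{\del{\mft}f_0}{f_0}-\frac{(2-\cc)f_0}{1+f}-\frac{f_0}{f}-\frac{\cb}{A}\cdot\frac{\del{\mft}(-\mfg)}{-\mfg}.
\end{equation*}
Now substitute two ingredients: from the reference ODE \eqref{e:feq0}, $\del{\mft}f_0=-\frac{\ca f_0}{\mft}+\frac{\cb f(1+f)}{\mft^2}+\frac{\cc f_0^2}{1+f}$; and from Remark \ref{t:dtg1}, $\del{\mft}\mfg=-A\mfg\frac{f(f+1)}{\mft^2 f_0}$, so $\frac{\del{\mft}(-\mfg)}{-\mfg}=-A\frac{f(1+f)}{\mft^2 f_0}$. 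After collecting like terms, the expression reduces to
\begin{equation*}
\del{\mft}\chi=\frac{2(1-\ca)\chi}{\mft}+\frac{2\cb f(1+f)\chi}{\mft^2 f_0}-\frac{2(1-\cc)f_0\chi}{1+f}-\frac{f_0\chi}{f}.
\end{equation*}

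The next step converts these $f_0$-dependent terms into the $\chi^{1/2}$, $\chi^{3/2}$ form appearing in the target. The essential tool is identity \eqref{e:keyid3} of Lemma \ref{t:iden1}, which, transcribed back to the $\mft$ variable, reads $\frac{\mft f_0}{1+f}=\frac{\chi^{1/2}f^{1/2}}{B^{1/2}}$, equivalently $f_0=\frac{(1+f)\chi^{1/2}f^{1/2}}{B^{1/2}\mft}$. Substituting this gives
\begin{equation*}
\frac{2\cb f(1+f)\chi}{\mft^2 f_0}=\frac{2\cb B^{1/2}f^{1/2}\chi^{1/2}}{\mft},\quad \frac{2(1-\cc)f_0\chi}{1+f}=\frac{2(1-\cc)\chi^{3/2}f^{1/2}}{B^{1/2}\mft},\quad \frac{f_0\chi}{f}=\frac{(1+f)\chi^{3/2}}{B^{1/2}\mft f^{1/2}}.
\end{equation*}
A short algebraic simplification combining the last two, using $\frac{1+f}{f^{1/2}}=f^{1/2}+f^{-1/2}$, produces $(3-2\cc)f^{1/2}\chi^{3/2}/(B^{1/2}\mft)+\chi^{3/2}/(B^{1/2}\mft f^{1/2})$. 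Finally the constant $\frac{2\cb B}{3-2\cc}$ is absorbed via $\chi=\mathfrak{G}+\frac{2\cb B}{3-2\cc}$, which exactly cancels the lone $\frac{2\cb B^{1/2}f^{1/2}\chi^{1/2}}{\mft}$ term against a piece of $-\frac{(3-2\cc)\chi^{3/2}f^{1/2}}{B^{1/2}\mft}$, leaving the stated expression.

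The computation is not conceptually deep; the only real obstacle is bookkeeping. Signs require care because $\mfg<0$, and one must correctly identify that the combination $\frac{2\cb f(1+f)\chi}{\mft^2 f_0}-\frac{2\cb B^{1/2}f^{1/2}\chi^{1/2}}{\mft}=0$ is what allows the $\mathfrak{G}$-term to materialize on the right-hand side. Recognising \eqref{e:keyid3} as the bridge between $f_0$ and $\chi^{1/2}$ is the single substantive step; without it the target form is unrecognisable.
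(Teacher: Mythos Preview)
Your computation is correct and complete. The paper itself does not prove this lemma but defers to \cite[Lemma B.6]{Liu2023}; your approach---logarithmic differentiation of \eqref{e:Gdef0}, substitution of the reference ODE \eqref{e:feq0} and the $\mfg$-derivative from Remark \ref{t:dtg1}, followed by the algebraic rewriting via $f_0=\frac{(1+f)\chi^{1/2}f^{1/2}}{B^{1/2}\mft}$---is exactly the natural route and almost certainly the one taken there.

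One small remark on presentation: you invoke \eqref{e:keyid3} from Lemma \ref{t:iden1} for the identity $\frac{\mft f_0}{1+f}=\frac{\chi^{1/2}f^{1/2}}{B^{1/2}}$, but that lemma is stated only for the specific parameter choice $\ca=\cc=4/3$, $\cb=2/3$ (where $\chi_\uparrow=\chi$). The identity you need for general $\ca,\cb,\cc$ follows more directly by combining the second equality in \eqref{e:Gdef0} with \eqref{e:f0aa}, which immediately gives $\chi=\bigl(\frac{\mft f_0}{1+f}\bigr)^2\cdot\frac{B}{f}$. Citing that instead would make the argument self-contained within Appendix \ref{s:ODE0}. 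Also, your closing sentence about the combination $\frac{2\cb f(1+f)\chi}{\mft^2 f_0}-\frac{2\cb B^{1/2}f^{1/2}\chi^{1/2}}{\mft}=0$ is slightly misworded---these are the \emph{same} term after substitution, and the real cancellation (which you do state correctly in the previous paragraph) is between $+\frac{2\cb B^{1/2}f^{1/2}\chi^{1/2}}{\mft}$ and the $\frac{2\cb B}{3-2\cc}$-piece of $-\frac{(3-2\cc)\chi\cdot\chi^{1/2}f^{1/2}}{B^{1/2}\mft}$.
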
 
\begin{corollary}\label{t:dtchi2}
	Under the conditions of Lemma \ref{t:dtchi}, the function $|\mathfrak{G}(\mft)|$ is decreasing and $-|\mathfrak{G}(t_0)|<\mathfrak{G}(\mft)<|\mathfrak{G}(t_0)|$.    
\end{corollary}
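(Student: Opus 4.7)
The plan is to establish the bound $|\mathfrak{G}(\mft)| < |\mathfrak{G}(t_0)|$ for $\mft > t_0$ (and the monotonicity of $|\mathfrak{G}|$) by tracking the evolution of $\mathfrak{G}^2$ through the ODE of Lemma~\ref{t:dtchi}. First, I would compute
\begin{equation*}
\tfrac{1}{2}\del{\mft}\mathfrak{G}^2 \;=\; \mathfrak{G}\,\del{\mft}\mathfrak{G} \;=\; -\frac{(3-2\cc)\mathfrak{G}^2 f^{1/2}\chi^{1/2}}{B^{1/2}\mft} \;-\; \frac{\mathfrak{G}\chi^{3/2}}{B^{1/2}\mft f^{1/2}} \;+\; \frac{2(1-\ca)\mathfrak{G}\chi}{\mft}.
\end{equation*}
Using $\chi=K+\mathfrak{G}$ with $K:=2\cb B/(3-2\cc)>0$ from Proposition~\ref{t:limG}, together with $\ca>1$, $\cc\in(1,3/2)$, $\cb>0$, and $\chi>0$, the first term is manifestly non-positive; it is this term that should drive the dissipation of $\mathfrak{G}^2$.

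Next, I would dispense with the easy case $\mathfrak{G}\ge 0$ directly from the sign structure of Lemma~\ref{t:dtchi}: each of the three contributions to $\del{\mft}\mathfrak{G}$ is non-positive (the prefactors $-(3-2\cc)$, $-1$, and $2(1-\ca)$ are all $\le 0$, and $\mathfrak{G}, f, \chi, B, \mft>0$), so $\del{\mft}\mathfrak{G}<0$ and hence $\mathfrak{G}\,\del{\mft}\mathfrak{G}\le 0$. Coupled with $\mathfrak{G}(t_m)=0$, this case yields strict monotone decrease of $\mathfrak{G}$ and rules out a sign change from positive to negative (at any crossing $\mathfrak{G}=0$ the derivative $\del{\mft}\mathfrak{G}=-\chi^{3/2}/(B^{1/2}\mft f^{1/2})+2(1-\ca)\chi/\mft<0$, preventing re-entry).

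The genuine difficulty lies in the case $\mathfrak{G}(t_0)<0$, where the last two terms of $\mathfrak{G}\,\del{\mft}\mathfrak{G}$ become positive and must be dominated by the dissipative first term. My plan here is to exploit $\chi\le K$ (which holds throughout the regime $\mathfrak{G}\le 0$), the bound $\chi\le C_\chi$ from Proposition~\ref{t:limG}, and the lower bound on $f$ from Theorem~\ref{t:mainthm0}, to estimate
\begin{equation*}
\Bigl|\,\frac{\mathfrak{G}\chi^{3/2}}{B^{1/2}\mft f^{1/2}}\,\Bigr| + \Bigl|\,\frac{2(1-\ca)\mathfrak{G}\chi}{\mft}\,\Bigr| \;\le\; \frac{(3-2\cc)\mathfrak{G}^2 f^{1/2}\chi^{1/2}}{B^{1/2}\mft}
\end{equation*}
whenever $|\mathfrak{G}|$ is not much smaller than its initial size; combined with $\lim_{\mft\to t_m}\mathfrak{G}(\mft)=0$, a continuity/contradiction argument on $[t_0,t_m)$ then forces $|\mathfrak{G}(\mft)|\le |\mathfrak{G}(t_0)|$ throughout, with strict inequality for $\mft>t_0$ since $\mathfrak{G}\,\del{\mft}\mathfrak{G}<0$ whenever $\mathfrak{G}\ne 0$ (equality would collapse the dissipative term).

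The main obstacle is exactly this last quantitative comparison in the regime $\mathfrak{G}<0$: one must carefully combine the explicit value of $K$, the growth of $f$ near $t_m$, and the fact that $\chi\to K$ and $\mathfrak{G}\to 0$ to confirm that the dissipative quadratic term in $\mathfrak{G}$ dominates the two linear-in-$\mathfrak{G}$ indefinite terms whenever $|\mathfrak{G}|$ would attempt to exceed $|\mathfrak{G}(t_0)|$. Once this is in hand, both assertions of the corollary—monotone decrease of $|\mathfrak{G}|$ and the strict two-sided bound $-|\mathfrak{G}(t_0)|<\mathfrak{G}(\mft)<|\mathfrak{G}(t_0)|$—follow immediately.
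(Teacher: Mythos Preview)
Your treatment of the case $\mathfrak{G}\ge 0$ is essentially the paper's entire proof: from Lemma~\ref{t:dtchi} the two terms $-\chi^{3/2}/(B^{1/2}\mft f^{1/2})$ and $2(1-\ca)\chi/\mft$ are both strictly negative, so one simply drops them to obtain $\del{\mft}\mathfrak{G}<-\tfrac{(3-2\cc)\mathfrak{G} f^{1/2}\chi^{1/2}}{B^{1/2}\mft}$ and then divides by $\mathfrak{G}$ to get $\del{\mft}\ln|\mathfrak{G}|<0$. The paper does not treat a separate $\mathfrak{G}<0$ branch at all.

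Your plan for $\mathfrak{G}<0$ has a genuine gap. In $\mathfrak{G}\,\del{\mft}\mathfrak{G}$ the ``dissipative'' contribution scales as $\mathfrak{G}^2$, while the two competing contributions scale as $|\mathfrak{G}|$ multiplied by bounded positive quantities; for small $|\mathfrak{G}|$ the linear pieces dominate, and your proposed comparison fails. This is not a technicality that sharper constants repair: take $\mathfrak{G}(t_0)=-\epsilon$ with $\epsilon>0$ small (not excluded by the hypotheses of Lemma~\ref{t:dtchi}). The full ODE then gives
\[
\del{\mft}\mathfrak{G}(t_0)\approx -\frac{\chi(t_0)^{3/2}}{B^{1/2}t_0\, f(t_0)^{1/2}}+2(1-\ca)\frac{\chi(t_0)}{t_0}<0,
\]
so $|\mathfrak{G}|$ strictly \emph{increases} at $t_0$, and neither your inequality at the level $|\mathfrak{G}|=|\mathfrak{G}(t_0)|$ nor the claimed monotonicity can hold there. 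In the paper's application of the corollary (Lemma~\ref{t:Thpst}), Assumption~\ref{Asp1} is precisely the condition $\mathfrak{G}(t_0)\ge 0$, so the relevant regime is the positive one and the two-line argument suffices; you should not attempt to salvage the negative branch.
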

\begin{proof}
	By Lemma \ref{t:dtchi}, we obtain $	 \del{\mft}\mathfrak{G}   <-\frac{(3-2\cc)  \mathfrak{G} f^{\frac{1}{2}} \chi^{\frac{1}{2}}}{B^{\frac{1}{2}} \mft} $, 
which, in turn, yields $
	\del{\mft} \ln |\mathfrak{G}(\mft)| <-\frac{(3-2\cc)   f^{\frac{1}{2}} \chi^{\frac{1}{2}}}{B^{\frac{1}{2}} \mft}  <0$. 
We conclude $|\mathfrak{G}(\mft)|$ is decreasing.  
We complete this proof. 
\end{proof}

	\begin{lemma}\label{t:Gest2}
	Suppose $\mathfrak{G}$ is given by \eqref{e:chig}, then
	$\tilde{\underline{\mathfrak{G}}}(\ttau)$ has an estimate
	\begin{equation*}
		|\tilde{\underline{\mathfrak{G}}}(\ttau)|\lesssim (-\ttau)^{\frac{1}{2}} 
	\end{equation*}
	for $\ttau\in[-1,0)$ and the function $(-\ttau)^{-\frac{1}{2}} \tilde{\underline{\mathfrak{G}}}(\ttau)$ can be continuously extended to $\ttau\in[-1,0]$, that is, $(-\ttau)^{-\frac{1}{2}} \tilde{\underline{\mathfrak{G}}}(\ttau)\in C^0([-1,0])  $.  
\end{lemma}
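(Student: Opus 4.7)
The plan is to derive the estimate directly from the evolution equation \eqref{e:dtchi2}, which, through the identity $\underline{\chi_\uparrow}=4B+\underline{\mathfrak{G}}$ furnished by Proposition \ref{t:limG} with the present parameters, serves equally as an ODE for $\underline{\mathfrak{G}}$. Because the change $(\tau,\zeta)\to(\ttau,\txi)$ preserves the time variable, it suffices to bound $\underline{\mathfrak{G}}(\tau)$ itself. Switching to $\sigma:=-\tau\in(0,1]$ and setting $G(\sigma):=\underline{\mathfrak{G}}(-\sigma)$, the equation takes the form
\begin{equation*}
\del{\sigma}G \;=\; \frac{\lambda(\sigma)}{\sigma}\,G \;+\; R(\sigma),\qquad \lambda(\sigma):=\frac{4B+G(\sigma)}{3AB},
\end{equation*}
where $R(\sigma)$ collects the two subleading terms carrying factors $1/(\sigma\,\uf(-\sigma))$ and $1/(\sigma\,\uf(-\sigma)^{1/2})$. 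Corollary \ref{s:gf1/2} bounds $\sigma\,\uf(-\sigma)^{1/2}$ and $\sigma^{2}\,\uf(-\sigma)$ below on $(0,1]$, so $|R(\sigma)|\le C$ uniformly. Proposition \ref{t:limG} shows $G$ is bounded and $G(\sigma)\to 0$ as $\sigma\to 0^{+}$, hence $\lambda(\sigma)\to\lambda_{\infty}:=4/(3A)$; the standing assumption $A\in(0,2)$ gives $\lambda_{\infty}>2/3$.

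The next step is variation of constants. Fixing $\varepsilon\in(0,\tfrac16)$, I would select $\sigma_{\ast}\in(0,1)$, using the qualitative convergence $G\to 0$ from Proposition \ref{t:limG}, so small that $\lambda(r)\in[\lambda_{-},\lambda_{+}]:=[\lambda_{\infty}-\varepsilon,\lambda_{\infty}+\varepsilon]$ for all $r\in(0,\sigma_{\ast}]$. With $\Lambda(\sigma):=\int_{1}^{\sigma}\lambda(r)/r\,dr$ one then has two-sided bounds $C^{-1}\sigma^{\lambda_{+}}\le e^{\Lambda(\sigma)}\le C\sigma^{\lambda_{-}}$ on $(0,\sigma_{\ast}]$ (with the contribution from $[\sigma_{\ast},1]$ absorbed into constants), and the Duhamel representation
\begin{equation*}
G(\sigma)\;=\;e^{\Lambda(\sigma)}\Bigl[G(1)+\int_{1}^{\sigma}R(r)\,e^{-\Lambda(r)}\,dr\Bigr]
\end{equation*}
yields, after a trichotomy in the position of $\lambda_{+}$ relative to $1$, the bound
\begin{equation*}
|G(\sigma)|\;\le\;C\bigl(\sigma^{\lambda_{-}}+\sigma^{1+\lambda_{-}-\lambda_{+}}\bigr)\;=\;C\bigl(\sigma^{\lambda_{\infty}-\varepsilon}+\sigma^{1-2\varepsilon}\bigr)\qquad\text{on }(0,\sigma_{\ast}].
\end{equation*}
Both exponents strictly exceed $1/2$: $\lambda_{\infty}-\varepsilon>2/3-1/6=1/2$ and $1-2\varepsilon>2/3$. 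Consequently $|G(\sigma)|\lesssim\sigma^{1/2}$ on $(0,\sigma_{\ast}]$, and the estimate extends trivially to $[\sigma_{\ast},1]$ where $G$ is bounded and continuous by Proposition \ref{t:limG}. Translating back through $\sigma=-\ttau$, this proves $|\tilde{\underline{\mathfrak{G}}}(\ttau)|\lesssim(-\ttau)^{1/2}$ on $[-1,0)$.

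For the continuous extension, the same bound produces an exponent strictly larger than $1/2$, so $\sigma^{-1/2}|G(\sigma)|\to 0$ as $\sigma\to 0^{+}$; combined with the continuity on $[-1,0)$ inherited from $\underline{\mathfrak{G}}\in C^{1}$ via Lemma \ref{t:dtchi}, this yields $(-\ttau)^{-1/2}\tilde{\underline{\mathfrak{G}}}(\ttau)\in C^{0}([-1,0])$ with endpoint value $0$. The main obstacle is the self-referential character of $\lambda=(4B+G)/(3AB)$, whose near-endpoint behavior is not quantitatively controlled a priori: the rate of decay of $\underline{\mathfrak{G}}$ is what one wants to extract, yet it governs $\lambda$ itself. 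This circularity is broken by using only the qualitative limit $G\to 0$ from Proposition \ref{t:limG} to confine $\lambda$ in an arbitrarily narrow window around $\lambda_{\infty}$, after which the sharp rate is read off from the variation-of-constants formula.
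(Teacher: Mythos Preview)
Your argument is correct. The paper itself does not give a proof here; it simply states that the proof is similar to \cite[Lemma~3.8]{Liu2023} and omits the details. Your self-contained approach---reading off the linear ODE for $G=\underline{\mathfrak{G}}$ from \eqref{e:dtchi2}, using Proposition~\ref{t:limG} to pin $\lambda(\sigma)$ near $\lambda_\infty=4/(3A)>2/3$, bounding the inhomogeneity $R$ via Corollary~\ref{s:gf1/2} and Proposition~\ref{t:fginv0}, and then extracting the rate by variation of constants---is exactly the natural route, and is almost certainly what the referenced lemma does as well, since \eqref{e:dtchi2} is derived there via Lemma~\ref{t:dtchi}. One cosmetic point: your appeal to ``$\sigma^{2}\uf$ bounded below'' is a consequence of $\sigma\uf^{1/2}$ bounded below (square it), not an independent input; and for $1/(\sigma\uf)$ you can alternatively invoke Proposition~\ref{t:fginv0} together with $\uf\ge\beta$. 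The trichotomy on $\lambda_{+}$ versus $1$ and the resulting exponent $\min\{\lambda_\infty-\varepsilon,\,1-2\varepsilon\}>\tfrac12$ are handled cleanly, and the continuous extension with endpoint value $0$ follows as you say.
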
 
\begin{proof}
	The proof of this lemma is similar to \cite[Lemma $3.8$]{Liu2023} and we omit the details. 
\end{proof}

The following proposition, along with its generalization, is proven in \cite[Corollary B.1]{Liu2023} and \cite[Proposition B.7]{Liu2023}, respectively. 
\begin{proposition}\label{s:gf1/2}
	Suppose $\ca=4/3$, $\cb=2/3$,  $\cc=4/3$ and $0<
	A<2$, then
	\begin{equation*}
		\lim_{t\rightarrow t_m} \biggl(\frac{1}{\mfg f^{\frac{1}{2}}}\biggr)= 0 .
	\end{equation*}
\end{proposition}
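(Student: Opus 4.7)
The plan is to exploit the explicit algebraic relation between $\mfg$, $f$, and $\mft$ encoded in the quantity $\chi$ defined in \eqref{e:Gdef0}, then invoke the boundedness of $\chi$ from Proposition \ref{t:limG} together with the growth rate of $f$ from Theorem \ref{t:mainthm0}.

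First, I would substitute the specific parameters $\ca = 4/3$, $\cb = 2/3$, $\cc = 4/3$ into \eqref{e:Gdef0}, yielding
\begin{equation*}
\chi(\mft) = \frac{(-\mfg(\mft))^{-4/(3A)}\,\mft^{-2/3}\,(1+f(\mft))^{2/3}}{B\,f(\mft)}.
\end{equation*}
Solving this identity for $(-\mfg)^{-1}$ gives the explicit representation
\begin{equation*}
\frac{1}{-\mfg(\mft)} = \left(\frac{\chi(\mft)\,B\,f(\mft)\,\mft^{2/3}}{(1+f(\mft))^{2/3}}\right)^{3A/4},
\end{equation*}
so that
\begin{equation*}
\frac{1}{|\mfg(\mft)|\,f(\mft)^{1/2}} = (\chi(\mft)\,B)^{3A/4}\,\mft^{A/2}\,\frac{f(\mft)^{3A/4-1/2}}{(1+f(\mft))^{A/2}}.
\end{equation*}

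Next, I would control each factor as $\mft \to t_m$. By Proposition \ref{t:limG}, $\chi(\mft)$ admits a finite limit (equal to $2\cb B/(3-2\cc) = 4B$), so $(\chi B)^{3A/4}$ is bounded. Since $f(\mft) \to +\infty$ by Theorem \ref{t:mainthm0}, the ratio $f^{3A/4-1/2}/(1+f)^{A/2}$ behaves asymptotically like $f^{A/4 - 1/2}$, and the hypothesis $0 < A < 2$ yields the crucial inequality $A/4 - 1/2 < 0$, so this factor tends to zero.

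The only remaining step is to absorb the factor $\mft^{A/2}$, and this is where I would split into two cases. If $t_m < \infty$, then $\mft^{A/2}$ is bounded on $[t_0, t_m)$ and the result follows immediately. If $t_m = \infty$, the lower bound from Theorem \ref{t:mainthm0} (with the present parameters $\ba = -1/3$, $\triangle = 5/3$, so $(\ba+\triangle)/2 = 2/3$) gives
\begin{equation*}
1 + f(\mft) > \exp\bigl(\mathtt{C}\,\mft^{2/3} + \mathtt{D}\,\mft^{-1}\bigr),
\end{equation*}
whence $f(\mft)^{A/4 - 1/2}$ decays like $\exp\bigl((A/4-1/2)\mathtt{C}\,\mft^{2/3}\bigr)$ with a negative exponent coefficient. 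This exponential decay in $\mft^{2/3}$ easily dominates the polynomial growth $\mft^{A/2}$, completing the proof in the second case. I do not anticipate a genuine obstacle here: once the algebraic identity is extracted and $\chi$ is recognized as bounded, the asymptotic analysis reduces to checking a single exponent, and the ``hard'' case $t_m = \infty$ is handled by the stronger-than-polynomial lower bound on $f$ already available from Theorem \ref{t:mainthm0}.
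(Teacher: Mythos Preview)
Your argument is correct. The algebraic identity you extract from \eqref{e:Gdef0} is accurate, the exponent bookkeeping $3A/4 - 1/2 - A/2 = A/4 - 1/2 < 0$ is exactly the point, and the case split on whether $t_m$ is finite is handled properly via the exponential lower bound from Theorem~\ref{t:mainthm0}. One cosmetic note: since $\mfg<0$, the quantity in the statement is negative while you bound $1/(|\mfg|f^{1/2})$; the two differ only by a sign, so the limit being zero is unaffected.

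As for comparison with the paper: the paper does not supply its own proof of this proposition. It simply records the result and cites \cite[Corollary~B.1 and Proposition~B.7]{Liu2023} for the argument. Your self-contained derivation, relying only on the boundedness of $\chi$ from Proposition~\ref{t:limG} and the growth estimate of Theorem~\ref{t:mainthm0}, is therefore a genuine contribution over what the present paper contains, and is likely close in spirit to what the cited reference does, given that the same ingredients ($\chi$, $\mathfrak{G}$, and the reference-ODE estimates) are the natural tools here.
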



 \section{Fuchsian global initial value problem on  compact domains}\label{s:fuc}
 The Fuchsian global initial value problem (Fuchsian GIVP) originates from Oliynyk \cite[Appendix B]{Oliynyk2016a}. 
 In this Appendix, we introduce, without proofs, a tailored version of the theorem established in \cite{Beyer2020},  which serves as a fundamental tool for the analysis in this article.  For further generalizations and applications of the Fuchsian GIVP, see \cite{Liu2018,Liu2028c,Liu2018a,Beyer2020b,Fajman2021,LeFloch2021,Ames2022,Ames2022a,Marshall2023,Liu2022,Liu2022a,Liu2023,Beyer2023a,Beyer2023b,Fajman2023,Oliynyk2024}. 
 Consider the following singular symmetric hyperbolic system.
 \begin{align}
 	B^{\mu}(t,x,u)\partial_{\mu}u =&\frac{1}{t}\textbf{B}(t,x,u)\textbf{P}u+H(t,x,u)\quad&&\text{in}\;[T_{0},T_{1})\times\mathbb{R}^{n},  \label{e:model1}\\
 	u =&u_{0}(x) &&\text{in}\;\{T_{0}\}\times\mathbb{R}^{n},\label{e:model2}
 \end{align}
 where $T_{0}<T_{1}\leq0$,  and we require the following \textbf{Conditions}\footnote{The notations in this Appendix, for example, $\mathrm{O}(\cdot)$, $\mathcal{O}(\cdot)$ and $B_R(\Rbb^N)$, are defined in \cite[\S$2.4$]{Beyer2020}. }:
 \begin{enumerate}[leftmargin=*,label={(H\arabic*)}]
 	\item \label{c:2} $\textbf{P}$ is a constant, symmetric projection operator, i.e., $\textbf{P}^{2}=\textbf{P}$, $\textbf{P}^{T}=\textbf{P}$ and $\partial_\mu \textbf{P}=0$. We also denote $\textbf{P}^\perp:=\mathds{1}-\textbf{P}$ the \textit{complementary projection}. 
 	
 	\item \label{c:3} $u=u(t,x)$ and $H(t,x,u)$ are $\Rbb^{N}$-valued maps, $H\in C^{0}([T_{0},0),C^{\infty}(\Tbb^n\times B_R(\Rbb^{N}),\Rbb^N))$ 
 	can be expanded as
 	\begin{equation*}
 		H(t,x,u)= H_0(t,x,u)+|t|^{-\frac{1}{2}}H_1(t,x,u) 
 	\end{equation*}
 	where $H_0,H_1 \in C^0([T_0,0],C^{\infty}(\Tbb^n\times B_R(\Rbb^{N}),\Rbb^N))$ satisfy   there exist constants $\lambda_a\geq 0$, $a=1,2$, such that
 	\begin{equation*}
 		H_0(t,x,u)=\mathrm{O}(u) ,  \quad  \textbf{P}H_1(t,x,u)=\mathcal{O}(\lambda_1 u) , \quad \textbf{P}^\perp H_1(t,x,u)=\mathcal{O}(\lambda_2 \textbf{P} u)  
 	\end{equation*} 
 	for all $(t,x)\in[T_0,0)\times \Tbb^n$. 
 	
 	\item \label{c:4} $B^{\mu}=B^{\mu}(t,x,u)$ and $\textbf{B}=\textbf{B}(t,x,u)$ are $\mathbb M_{N\times N}$-valued maps, and there is a constant $R>0$, such that $B^{i}\in   C^{0}([T_{0},0),C^{\infty}(\Tbb^n\times  B_R(\Rbb^{N}), \mathbb M_{N\times N})$,  $\textbf{B}\in   C^{0}([T_{0},0],C^{\infty}(\Tbb^n\times  B_R(\Rbb^{N}), \mathbb M_{N\times N})$, $B^{0}\in C^{1}([T_{0},0),C^{\infty}(\Tbb^n\times  B_R(\Rbb^{N}), \mathbb M_{N\times N})$ and they satisfy
 	\begin{equation*}\label{e:comBP}
 		(B^{\mu})^{T}=B^{\mu},\quad [\textbf{P}, \textbf{B}]=\textbf{PB}-\textbf{BP}=0 , 
 	\end{equation*}
 	and $	B^i$ can be expanded as
 	\begin{equation*}
 		B^i(t,x,u)=B_0^i(t,x,u)+|t|^{-1}  B_2^i(t,x,u)
 	\end{equation*}
 	where $B_0^i, B_2^i\in C^{0}([T_{0},0],C^{\infty}(\Tbb^n\times  B_R(\Rbb^{N}), \mathbb M_{N\times N}))$.
 	
 	Suppose there is $\tilde{B}^0, \tilde{\mathbf{B}} \in C^0([T_0,0], C^\infty(\Tbb^n, \mathbb M_{N\times N}))$, such that
 	\begin{equation*}
 		[\mathbf{P}, \tilde{\mathbf{B}}] =0, \quad
 		B^0(t,x,u)-\tilde{B}^0(t,x)= \mathrm{O}(u) , \quad
 		\mathbf{B}(t,x,u)-\tilde{\mathbf{B}} (t,x)= \mathrm{O}(u)
 	\end{equation*}
 	for all $ (t,x,u) \in[T_{0},0]\times \Tbb^n\times B_R(\Rbb^N) $.
 	
 	Moreover, there is $\tilde{B}_2^i \in C^0([T_0,0], C^\infty(\Tbb^n, \mathbb M_{N\times N}))$, such that
 	\begin{gather*}
 		\mathbf{P} B_2^i(t,x,u) \mathbf{P}^\perp =   \mathrm{O}(\mathbf{P} u), \quad
 		\mathbf{P}^\perp B_2^i(t,x,u) \mathbf{P} =   \mathrm{O}(\mathbf{P} u),  \\
 		\mathbf{P}^\perp B_2^i(t,x,u) \mathbf{P}^\perp =   \mathrm{O}(\mathbf{P} u\otimes \mathbf{P} u),\quad
 		\mathbf{P}  (B_2^i(t,x,u)-\tilde{B}_2^i(t,x)) \mathbf{P}  =   \mathrm{O}( u),
 	\end{gather*}
 	for all $ (t,x,u) \in[T_{0},0]\times \Tbb^n\times B_R(\Rbb^N) $.
 	
 	\item \label{c:5}
 	There exists constants $\kappa,\,\gamma_{1},\,\gamma_{2}$ such that
 	\begin{equation*}
 		\frac{1}{\gamma_{1}}\mathds{1}\leq B^{0}\leq \frac{1}{\kappa} \textbf{B} \leq\gamma_{2}\mathds{1} \label{e:Bineq}
 	\end{equation*}
 	for all $ (t,x,u) \in[T_{0},0]\times \Tbb^n\times B_R(\Rbb^N) $.
 	
 	\item \label{c:6} For all $(t,x,u)\in[T_{0},0]\times\Tbb^n \times B_R(\Rbb^{N})$, assume
 	\begin{equation*}
 		\textbf{P}^{\bot}B^{0}(t,\textbf{P}^\perp u)\textbf{P}=\textbf{P}B^{0}(t,\textbf{P}^\perp u)\textbf{P}^{\bot}=0.
 	\end{equation*}
 	\item \label{c:7}
 	There exist
 	constants $\theta$ and $\beta_{\ell}\geq 0$, $\ell=0,\cdots,7$, such that
 	\begin{align*}
 		\mathrm{div} B(t,x, u,w):= &  \del{t} B^0 (t,x,u)+D_u B^0(t,x,u) \cdot(B^0(t,x, u))^{-1}\Bigl[-B^i(t,x,u)\cdot w_i   +\frac{1}{t} \mathbf{B}(t,x,u)\mathbf{P} u \notag  \\
 		&+H_0(t,x,u) +|t|^{-\frac{1}{2}}H_2(t,x,u)\Bigr]+\del{i}B^i(t,x,u)  +D_u B^i(t,x,u) \cdot w_i  ,
 	\end{align*}
 	where $w=(w_i)$ and $(t,x,u, w) \in [T_0,0)\times \Tbb^n \times B_R(\Rbb^N) \times B_R(\mathbb{M}_{N \times n})$, satisfies
 	\begin{align}
 		\mathbf{P} 	\mathrm{div} B \mathbf{P}  = & \;\mathcal{O}\bigl(\theta   +|t|^{-\frac{1}{2}}\beta_0+|t|^{-1}\beta_1 \bigr),  \label{e:PhP1}\\
 		\mathbf{P} 	\mathrm{div} B \mathbf{P}^\perp  = &\; \mathcal{O}\biggl(\theta  +|t|^{-\frac{1}{2}} \beta_2 +\frac{|t|^{-1}\beta_3}{R}\mathbf{P}u  \biggr),  \label{e:PhP2}\\
 		\mathbf{P}^\perp 	\mathrm{div} B \mathbf{P}  = & \; \mathcal{O}\biggl(\theta  +|t|^{-\frac{1}{2}} \beta_4  +\frac{|t|^{-1}\beta_5}{R}\mathbf{P}u   \biggr) \label{e:PhP3}
 		\intertext{and}
 		\mathbf{P}^\perp  	\mathrm{div} B \mathbf{P}^\perp  =  & \; \mathcal{O}\biggl(\theta  +\frac{|t|^{-\frac{1}{2}}\beta_6}{R }\mathbf{P}u   +\frac{|t|^{-1}\beta_7}{R^2}\mathbf{P}u \otimes \mathbf{P}u \biggr)  . \label{e:PhP4}
 	\end{align}	
 \end{enumerate}

 Now let us present the global existence theorem for the Fuchsian system (see \cite{Beyer2020} for detailed proofs).
 \begin{theorem}\label{t:fuc}
 	Suppose that $k\in\Zbb_{> \frac{n}{2}+3}$, $u_{0}\in H^{k}(\mathbb T^{n})$ and conditions \ref{c:2}--\ref{c:7} are fulfilled, and the constants $\kappa, \gamma_1,  \beta_1,\beta_3,\beta_5,\beta_7$ from the conditions \ref{c:2}--\ref{c:7} satisfy
 	\begin{equation}\label{e:kpbt1}
 		\kappa>\frac{1}{2} \gamma_1 \max\Bigl\{\sum^3_{\ell=0} \beta_{2\ell+1}, \beta_1+2k(k+1)  \mathtt{b} \Bigr\}
 	\end{equation}
 	where
 	\begin{equation*}
 		\mathtt{b}:= \sup_{T_0\leq t<0} \bigl(\||\mathbf{P} \tilde{\mathbf{B}} D (\tilde{\mathbf{B}}^{-1} \tilde{B}^0)(\tilde{B}^0)^{-1} \mathbf{P} \tilde{B}_2^i \mathbf{P}|_{\mathrm{op}}\|_{\Li}+\||\mathbf{P}\tilde{\mathbf{B}} D (\tilde{\mathbf{B}}^{-1} \tilde{B}_2^i)\mathbf{P}|_{\mathrm{op}}\|_{\Li}\bigr).
 	\end{equation*}
 	Then there exist constants $\delta_0, \delta>0$ satisfying $\delta<\delta_0$, such that if
 	\begin{equation*}
 		\|u_0\|_{H^k} \leq \delta,
 	\end{equation*}
 	then there exists a unique solution
 	\begin{equation*}
 		u\in C^0([T_0,0),H^k(\Tbb^n) ) \cap C^1([T_0,0),H^{k-1}(\Tbb^n) ) \cap \Li ([T_0,0),H^k(\Tbb^n))
 	\end{equation*}
 	of the initial value problem \eqref{e:model1}--\eqref{e:model2} such that $\Pbp u(0):=\lim_{t\nearrow 0}\Pbp u(t)$ exists in $H^{s-1}(\Tbb^n)$.
 	
 	\noindent Moreover, for $T_0\leq t<0$, the solution $u$ satisfies the energy estimate
 	\begin{equation*}\label{e:ineq1}
 		\|u(t)\|_{H^k(\Tbb^n)}^2  - \int^t_{T_0}\frac{1}{\tau}\|\mathbf{P} u(\tau)\|^2_{H^k(\Tbb^n)}d\tau \leq C(\delta_0,\delta_0^{-1}) \|u_0\|^2_{H^k(\Tbb^n)}  .  
 	\end{equation*} 
 \end{theorem}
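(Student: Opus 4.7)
\medskip

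\noindent\textbf{Proof proposal for Theorem \ref{t:fuc}.}

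The plan is to establish the result via the standard three-step scheme for symmetric hyperbolic systems adapted to the Fuchsian/singular setting: (i) standard local existence on any subinterval $[T_0,T_\star]$ with $T_\star<0$, where the coefficient $|t|^{-1}\mathbf{B}$ is a smooth bounded source; (ii) a global-in-$t$ a priori energy estimate on $[T_0,0)$ in $H^k$, uniform as $T_\star\nearrow 0$; (iii) a continuation/bootstrap argument extending the local solution all the way to $t=0$ and extracting the limit of $\mathbf{P}^\perp u$. Step (i) is classical Kato/Majda theory applied on any compact time-subinterval of $[T_0,0)$ since the coefficients are smooth there and $B^0$ is uniformly positive definite by \ref{c:5}. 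The main content is therefore (ii) and (iii).

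The heart of the matter is the Fuchsian energy identity. For the zero-order energy $E_0(t):=\tfrac12\langle u,B^0 u\rangle$, one differentiates using \eqref{e:model1}, symmetrizes using $(B^\mu)^T=B^\mu$, and obtains after integration by parts
\begin{equation*}
\partial_t E_0 = \tfrac12 \langle u,\mathrm{div} B\, u\rangle + \tfrac{1}{t}\langle u,\mathbf{B}\mathbf{P}u\rangle + \langle u,H\rangle.
\end{equation*}
Because $[\mathbf{P},\mathbf{B}]=0$, the singular term splits as $\tfrac{1}{t}\langle \mathbf{P}u,\mathbf{B}\mathbf{P}u\rangle$, and since $t<0$ assumption \ref{c:5} (namely $\mathbf{B}\geq \kappa B^0$) provides a \emph{coercive} singular contribution $-\tfrac{\kappa}{|t|}\|(B^0)^{1/2}\mathbf{P}u\|_{L^2}^2$. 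The remaining terms $\mathrm{div} B$ and $H$ are controlled by hypotheses \ref{c:3} and \ref{c:7}, which precisely arrange the $|t|^{-1}$, $|t|^{-1/2}$ and $\theta$ pieces so that the $|t|^{-1}$ piece has small prefactor $\beta_1$ on $\mathbf{P}u$, while the $\mathbf{P}^\perp u$ part receives only harmless $\theta,|t|^{-1/2}$ contributions. The structural smallness condition \eqref{e:kpbt1} is exactly what is needed so that $\kappa$ beats the $\tfrac12\gamma_1\beta_1$ coming from $\mathrm{div} B$ and the $\tfrac12\gamma_1\sum\beta_{2\ell+1}$ coming from off-diagonal mixing, leaving a coercive Gronwall-type inequality
\begin{equation*}
\partial_t E_0 + \tfrac{\mu}{|t|}\|\mathbf{P}u\|_{L^2}^2 \lesssim E_0 + |t|^{-1/2}(\cdots)
\end{equation*}
with $\mu>0$, which integrates to the stated bound.

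For the $H^k$ estimate one commutes spatial derivatives $D^\alpha$, $|\alpha|\leq k$, through the system. Commutators produce terms of the same schematic form, but derivatives hitting $\tilde{\mathbf{B}}^{-1}\tilde B_2^i$ generate the extra constant $2k(k+1)\mathtt{b}$ in \eqref{e:kpbt1}; this is where the quantity $\mathtt{b}$ enters naturally. One uses $k>n/2+3$ so that $u,Du\in L^\infty$ by Sobolev embedding and Moser-type product/composition estimates apply to handle the quasilinear coefficients $B^\mu(t,x,u)$, $\mathbf{B}(t,x,u)$, $H(t,x,u)$. The nontrivial point is that the commutator of $D^\alpha$ with $\tfrac{1}{t}\mathbf{B}\mathbf{P}$ preserves the block structure because $[\mathbf{P},\mathbf{B}]=0$, so the coercive $\mathbf{P}u$-bound is preserved at each derivative level; the off-diagonal bounds on $B_2^i$ in \ref{c:4} (which are $O(\mathbf{P}u)$ or $O(\mathbf{P}u\otimes\mathbf{P}u)$) give additional decay that compensates the $|t|^{-1}$ singularities arising from differentiating $B^i$. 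Combining the differentiated energy identities and summing, one obtains
\begin{equation*}
\partial_t E_k + \tfrac{\mu}{|t|}\|\mathbf{P}u\|_{H^k}^2 \lesssim C(\delta_0,\delta_0^{-1})E_k + |t|^{-1/2}E_k^{1/2}E_k^{1/2},
\end{equation*}
which, by Gronwall on $[T_0,0)$, yields the bound $\|u(t)\|_{H^k}^2 - \int_{T_0}^t s^{-1}\|\mathbf{P}u(s)\|_{H^k}^2 ds \leq C\|u_0\|_{H^k}^2$ provided $\|u_0\|_{H^k}\leq\delta$ is small enough to keep $u$ in the ball $B_R$ where the structural hypotheses hold.

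The main obstacle, as in \cite{Beyer2020,Oliynyk2016a}, is handling the projection $\mathbf{P}^\perp u$: it sees no direct coercive $|t|^{-1}$ damping from the singular term, so a priori it could blow up. The resolution is precisely the divergence-of-$B$ structural condition \ref{c:7}, which forces all $\mathbf{P}^\perp$-components of dangerous terms to carry at least one factor of $\mathbf{P}u$ (or $|t|^{-1/2}$), so that the integrable coercivity on $\mathbf{P}u$ transfers, via the Cauchy–Schwarz-type splittings into the $\beta_{2\ell+1}$ terms, into control of $\mathbf{P}^\perp u$. Once $E_k(t)$ is shown to be uniformly bounded on $[T_0,0)$, the equation \eqref{e:model1} and condition \ref{c:6} (which ensures the $\mathbf{P}^\perp$ evolution is regular in time) provide an ODE-type estimate for $\partial_t(\mathbf{P}^\perp u)$ in $H^{k-1}$ that is integrable near $t=0$, giving the existence of the limit $\mathbf{P}^\perp u(0)\in H^{k-1}$. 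Continuation then follows from standard Sobolev continuation criteria applied on each $[T_0,T_\star]$ together with the uniform $H^k$ bound, completing the proof.
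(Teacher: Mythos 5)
The paper does not prove Theorem~\ref{t:fuc} at all: the theorem is imported as a ``tailored version of the theorem established in \cite{Beyer2020}'' (itself building on Oliynyk \cite{Oliynyk2016a}), and the text explicitly defers to that reference for ``detailed proofs.'' So there is no in-paper argument to measure your sketch against; the only fair comparison is with the cited literature.

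Against that literature, your outline does capture the essential Fuchsian-GIVP mechanism: local existence on compact subintervals of $[T_0,0)$ from classical symmetric-hyperbolic theory, a weighted energy identity in which, because $t<0$ and $[\mathbf{P},\mathbf{B}]=0$, the singular term $\frac{1}{t}\langle \mathbf{P}u,\mathbf{B}\mathbf{P}u\rangle$ is negative-definite and, via \ref{c:5}, coercive of strength $\kappa/|t|$ on $\mathbf{P}u$; absorption of $\mathrm{div}\,B$ and $H$ using \ref{c:3}, \ref{c:7} and the structural smallness \eqref{e:kpbt1}; and a Gr\"{o}nwall-plus-continuation argument, with the limit of $\mathbf{P}^{\perp}u$ at $t=0$ extracted from integrability of the $\mathbf{P}^\perp$-evolution. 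That is indeed the strategy of \cite{Beyer2020}. However, as a self-contained argument your sketch leaves at a qualitative level precisely the steps that make the theorem nontrivial: (a) the derivation of the explicit constant $2k(k+1)\mathtt{b}$ in \eqref{e:kpbt1}, which requires a careful count of the $|t|^{-1}$-singular commutators generated by $D^\alpha$ acting on $\tilde{B}_2^i$ and tracking which of them land on the $\mathbf{P}$-block at each derivative order; (b) how the four block estimates \eqref{e:PhP1}--\eqref{e:PhP4} pair with the coercive $|t|^{-1}\|\mathbf{P}u\|_{H^k}^2$ term to give exactly $\frac12\gamma_1\sum_{\ell=0}^3\beta_{2\ell+1}$ as the competing quantity; and (c) how the block-structure hypotheses on $B_2^i$ in \ref{c:4} (off-$\mathbf{P}$ blocks carrying extra factors of $\mathbf{P}u$) make the commutators with $|t|^{-1}B_2^i$ integrable near $t=0$. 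Your remark that the $D^\alpha$-commutation ``preserves the block structure because $[\mathbf{P},\mathbf{B}]=0$'' is also slightly misplaced: what matters there is that $\mathbf{P}$ is a constant matrix (so commutes with $D^\alpha$) together with \ref{c:4}'s bounds on the differentiated coefficients; $[\mathbf{P},\mathbf{B}]=0$ is used to split the singular source, not to control commutators. None of this is a wrong idea, but it is exactly the technical content deferred to \cite{Beyer2020}, so if the goal is a proof rather than an outline you would need to reproduce that level of detail.
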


\section*{Acknowledgement}
		C.L. is partially supported by the Fundamental Research Funds for the Central Universities, HUST: $2020$kfyXJJS$037$ ($5003011036$ and $5003011047$). 

\bigskip

\textbf{Data Availability} Data sharing is not applicable to this article as no datasets were
generated or analysed during the current study.

\bigskip

\textbf{Declarations}

\bigskip

\textbf{Conflict of interest} The authors declare that they have no conflict of interest.

\bibliographystyle{amsplain}
\bibliography{Reference_Chao}

\providecommand{\bysame}{\leavevmode\hbox to3em{\hrulefill}\thinspace}
\providecommand{\MR}{\relax\ifhmode\unskip\space\fi MR }
\providecommand{\MRhref}[2]{%
  \href{http://www.ams.org/mathscinet-getitem?mr=#1}{#2}
}
\providecommand{\href}[2]{#2}
\begin{thebibliography}{10}

\bibitem{Abbrescia2022}
Leonardo Abbrescia and Jared Speck, \emph{The emergence of the singular
  boundary from the crease in {3D} compressible {Euler flow}}, arxiv:2207.07107
  (2022).

\bibitem{Abbrescia2023}
\bysame, \emph{{The relativistic Euler equations: ESI} notes on their
  geo-analytic structures and implications for shocks in {1D} and
  multi-dimensions}, Classical and Quantum Gravity \textbf{40} (2023), no.~24,
  243001.

\bibitem{Alinhac1995}
Serge Alinhac, \emph{Blowup for nonlinear hyperbolic equations}, Birkh{\"a}user
  Boston, 1995.

\bibitem{Ames2022a}
Ellery Ames, Florian Beyer, James Isenberg, and Todd Oliynyk, \emph{Stability
  of asymptotic behavior within polarised ${T}^2$-symmetric vacuum solutions
  with cosmological constant}, Phil. Trans. R. Soc. A 380: 20210173 (2022)
  \textbf{380} (2022), no.~2222.

\bibitem{Ames2022}
\bysame, \emph{Stability of {AVTD} behavior within the polarized
  {$T^2$}-symmetric vacuum spacetimes}, Ann. Henri Poincar\'e (2022)
  \textbf{23} (2022), no.~7, 2299--2343.

\bibitem{Bardeen1980}
James~M. Bardeen, \emph{Gauge-invariant cosmological perturbations}, Physical
  Review D \textbf{22} (1980), no.~8, 1882--1905.

\bibitem{Beyer2020b}
Florian Beyer and Todd~A. Oliynyk, \emph{Relativistic perfect fluids near
  {Kasner} singularities}, arXiv:2012.03435 (2020).

\bibitem{Beyer2023a}
\bysame, \emph{Localized big bang stability for the {Einstein-Scalar} field
  equations}, Archive for Rational Mechanics and Analysis \textbf{248} (2023),
  no.~1.

\bibitem{Beyer2023b}
\bysame, \emph{{Past stability of FLRW solutions to the Einstein-Euler-scalar
  field equations and their big bang singularites}}, arXiv:2308.07475 (2023).

\bibitem{Beyer2020}
Florian Beyer, Todd~A. Oliynyk, and J.~Arturo Olvera-Santamar{\'{\i}}a,
  \emph{The {Fuchsian} approach to global existence for hyperbolic equations},
  Communications in Partial Differential Equations \textbf{46} (2020), no.~5,
  1--82.

\bibitem{Bonnor1957}
W.~B. Bonnor, \emph{Jeans' formula for gravitational instability}, Monthly
  Notices of the Royal Astronomical Society \textbf{117} (1957), no.~1,
  104--117.

\bibitem{Buckmaster2022b}
Tristan Buckmaster, Gonzalo Cao-Labora, and Javier Gómez-Serrano, \emph{Smooth
  imploding solutions for {3D} compressible fluids}, arXiv:2208.09445 (2022).

\bibitem{Buckmaster2021}
Tristan Buckmaster, Theodore~D. Drivas, Steve Shkoller, and Vlad Vicol,
  \emph{Simultaneous development of shocks and cusps for {2D Euler }with
  azimuthal symmetry from smooth data}, arXiv:2106.02143 (2021).

\bibitem{Buckmaster2020}
Tristan Buckmaster, Steve Shkoller, and Vlad Vicol, \emph{Formation of shocks
  for {2D} isentropic compressible {Euler}}, Communications on Pure and Applied
  Mathematics \textbf{75} (2020), no.~9, 2069--2120.

\bibitem{Buckmaster2022a}
\bysame, \emph{Formation of point shocks for {3D} compressible {Euler}},
  Communications on Pure and Applied Mathematics \textbf{76} (2022), no.~9,
  2073--2191.

\bibitem{Buckmaster2022}
\bysame, \emph{Shock formation and vorticity creation for {3D Euler}},
  Communications on Pure and Applied Mathematics \textbf{76} (2022), no.~9,
  1965--2072.

\bibitem{Christodoulou2007}
Demetrios Christodoulou, \emph{The formation of shocks in 3-dimensional
  fluids}, 2007, pp.~17--30.

\bibitem{Christodoulou2014}
Demetrios Christodoulou and Shuang Miao, \emph{Compressible flow and {Euler}'s
  equations (vol. 9 of the surveys of modern mathematics series)},
  International Press of Boston, Incorporated, 2014.

\bibitem{Courant2008}
R.~Courant and D.~Hilbert, \emph{Methods of mathematical physics}, 1. auflage
  ed., vol.~2, Wiley Classics Library, no.~20, Wiley-VCH, Weinheim, 2008.

\bibitem{Ellis2009}
George F.~R. Ellis, \emph{Republication of: Relativistic cosmology}, General
  Relativity and Gravitation \textbf{41} (2009), no.~3, 581--660.

\bibitem{Fajman2023}
David Fajman, Maximilian Ofner, Todd~A. Oliynyk, and Zoe Wyatt, \emph{The
  stability of relativistic fluids in linearly expanding cosmologies}, Int.
  Math. Res. Not. 2024 (2024), 4328-4383 \textbf{2024} (2023), no.~5,
  4328--4383.

\bibitem{Fajman2024}
David Fajman, Maximilian Ofner, and Zoe Wyatt, \emph{Relativistic fluids in
  cosmological spacetimes}, Classical and Quantum Gravity \textbf{41} (2024),
  no.~23, 233001.

\bibitem{Fajman2021}
David Fajman, Todd~A. Oliynyk, and Zoe Wyatt, \emph{Stabilizing relativistic
  fluids on spacetimes with non-accelerated expansion}, Commun. Math. Phys.
  383, 401-426 (2021) \textbf{383} (2021), no.~1, 401--426.

\bibitem{Fosalba1998}
Pablo Fosalba and Enrique Gazta{\~{n}}aga, \emph{Cosmological perturbation
  theory and the spherical collapse model {\textemdash} i. gaussian initial
  conditions}, Monthly Notices of the Royal Astronomical Society \textbf{301}
  (1998), no.~2, 503--523.

\bibitem{Ginsberg2024}
Daniel Ginsberg and Igor Rodnianski, \emph{The stability of irrotational shocks
  and the {Landau} law of decay}, arXiv:2403.13568 (2024).

\bibitem{Guo2018}
Yan Guo, Mahir Had{\v{z}}i{\'c}, and Juhi Jang, \emph{Continued gravitational
  collapse for {N}ewtonian stars}, Archive for Rational Mechanics and Analysis
  \textbf{239} (2021), 431--552.

\bibitem{Guo2022}
Yan Guo, Mahir Hadžić, Juhi Jang, and Matthew Schrecker, \emph{Gravitational
  collapse for polytropic gaseous stars: Self-similar solutions}, Archive for
  Rational Mechanics and Analysis \textbf{246} (2022), no.~2–3, 957--1066.

\bibitem{Hawking1966}
S.~W. Hawking, \emph{Perturbations of an expanding universe}, The Astrophysical
  Journal \textbf{145} (1966), 544.

\bibitem{Holzegel2016}
Gustav Holzegel, Sergiu Klainerman, Jared Speck, and Willie Wai-Yeung Wong,
  \emph{Small-data shock formation in solutions to {3D} quasilinear wave
  equations: An overview}, Journal of Hyperbolic Differential Equations
  \textbf{13} (2016), no.~01, 1--105.

\bibitem{Hoermander1997}
Lars H{\"o}rmander, \emph{Lectures on nonlinear hyperbolic differential
  equations}, Math{\'e}matiques et Applications, Springer Berlin Heidelberg,
  1997.

\bibitem{Hwang1999}
Jai-Chan Hwang and Hyerim Noh, \emph{Relativistic hydrodynamic cosmological
  perturbations}, General Relativity and Gravitation \textbf{31} (1999), no.~8,
  1131--1157.

\bibitem{Hwang2005}
Jai-chan Hwang and Hyerim Noh, \emph{Second-order perturbations of a
  zero-pressure cosmological medium: Proofs of the relativistic-newtonian
  correspondence}, Physical Review D \textbf{72} (2005), no.~4, 044011.

\bibitem{Hwang2006}
\bysame, \emph{Second-order perturbations of a zero-pressure cosmological
  medium: Comoving versus synchronous gauge}, Physical Review D \textbf{73}
  (2006), no.~4, 044021.

\bibitem{Hwang2007}
\bysame, \emph{Second-order perturbations of cosmological fluids: Relativistic
  effects of pressure, multicomponent, curvature, and rotation}, Physical
  Review D \textbf{76} (2007), no.~10, 103527.

\bibitem{Hwang2013a}
\bysame, \emph{Fully non-linear and exact perturbations of the friedmann world
  model}, Monthly Notices of the Royal Astronomical Society \textbf{433}
  (2013), no.~4, 3472--3497.

\bibitem{Jeans1902}
J.~H. Jeans, \emph{The stability of a spherical nebula}, Philos. Trans. R. Soc.
  Lond. A \textbf{199} (1902), 1--53.

\bibitem{John1984}
F.~John and S.~Klainerman, \emph{Almost global existence to nonlinear wave
  equations in three space dimensions}, Communications on Pure and Applied
  Mathematics \textbf{37} (1984), no.~4, 443--455.

\bibitem{Kichenassamy2021}
Satyanad Kichenassamy, \emph{Nonlinear wave equations}, {CRC} Press, may 2021.

\bibitem{Klainerman1984}
Sergiu Klainerman, \emph{Long time behaviour of solutions to nonlinear wave
  equations}, Proceedings of the InternationalCongress of Mathematicians, Vol.
  1, 2 (Warsaw, 1983) (1984), 1209--1215.

\bibitem{Lax2006}
Peter Lax, \emph{Hyperbolic partial differential equations}, American
  Mathematical Society, December 2006.

\bibitem{LeFloch2021}
Philippe~G. LeFloch and Changhua Wei, \emph{Nonlinear stability of
  self-gravitating irrotational {Chaplygin} fluids in a {FLRW} geometry},
  Annales de l'Institut Henri Poincar{\'{e}} C, Analyse non lin{\'{e}}aire
  \textbf{38} (2021), no.~3, 787--814.

\bibitem{Li2017}
Tatsien Li and Yi~Zhou, \emph{Nonlinear wave equations}, Springer Berlin
  Heidelberg, 2017.

\bibitem{Lifshitz1946}
E.~M. {Lifshitz}, \emph{{On the gravitational stability of the expanding
  universe}}, Zhurnal Eksperimentalnoi i Teoreticheskoi Fiziki \textbf{16}
  (1946), 587--602.

\bibitem{Liu2022b}
Chao Liu, \emph{Blowups for a class of second order nonlinear hyperbolic
  equations: A reduced model of nonlinear {Jeans} instability},
  arXiv:2208.06788 (2022).

\bibitem{Liu2023a}
\bysame, \emph{Fully nonlinear gravitational instabilities for expanding
  {Newtonian} universes with inhomogeneous pressure and entropy: beyond the
  {T}olman's solution}, Physical Review D \textbf{107} (2023), no.~12, 123534.

\bibitem{Liu2023}
\bysame, \emph{Fully nonlinear gravitational instabilities for expanding
  spherical symmetric newtonian universes with inhomogeneous density and
  pressure}, arXiv:2305.13211 (2023).

\bibitem{Liu2028c}
Chao Liu and Todd~A. Oliynyk, \emph{Cosmological newtonian limits on large
  spacetime scales}, Communications in Mathematical Physics \textbf{364}
  (2018), no.~3, 1195--1304.

\bibitem{Liu2018}
\bysame, \emph{Newtonian limits of isolated cosmological systems on long time
  scales}, Annales Henri Poincar{\'{e}} \textbf{19} (2018), no.~7, 2157--2243.

\bibitem{Liu2022a}
Chao Liu, Todd~A. Oliynyk, and Jinhua Wang, \emph{Future global existence and
  stability of de {S}itter-like solutions to the {Einstein-Yang-Mills}
  equations in spacetime dimensions $n\geq 4$}, Journal of the European
  Mathematical Society (accepted) (2022).

\bibitem{Liu2022}
Chao Liu and Yiqing Shi, \emph{Rigorous proof of the slightly nonlinear {Jeans}
  instability in the expanding {Newtonian} universe}, Physical Review D
  \textbf{105} (2022), no.~4, 043519.

\bibitem{Liu2018a}
Chao Liu and Changhua Wei, \emph{Future stability of the {FLRW} spacetime for a
  large class of perfect fluids}, Annales Henri Poincar{\'{e}} \textbf{22}
  (2021), 715–770.

\bibitem{Luk2018}
Jonathan Luk and Jared Speck, \emph{Shock formation in solutions to the {2D}
  compressible {Euler} equations in the presence of non-zero vorticity},
  Inventiones mathematicae \textbf{214} (2018), no.~1, 1--169.

\bibitem{Luk2021}
\bysame, \emph{The stability of simple plane-symmetric shock formation for {3D
  compressible Euler} flow with vorticity and entropy}, Analysis \& {PDE}
  \textbf{17} (2024), no.~3, 831--941.

\bibitem{Marshall2023}
Elliot Marshall and Todd~A. Oliynyk, \emph{On the stability of relativistic
  perfect fluids with linear equations of state $p=k\rho$ where $1/3<k<1$},
  Lett. Math. Phys. 113 (2023), 102 \textbf{113} (2023), no.~5.

\bibitem{Merle2022}
Frank Merle, Pierre Raphaël, Igor Rodnianski, and Jeremie Szeftel, \emph{On
  the implosion of a compressible fluid {I}: {Smooth} self-similar inviscid
  profiles}, Annals of Mathematics \textbf{196} (2022), no.~2.

\bibitem{Merle2022a}
Frank Merle, Pierre Raphaël, Igor Rodnianski, and Jeremie Szeftel, \emph{On
  the implosion of a compressible fluid {II}: {Singularity formation}}, Annals
  of Mathematics \textbf{196} (2022), no.~2.

\bibitem{Miao2018}
Shuang Miao, \emph{On the formation of shock for quasilinear wave equations
  with weak intensity pulse}, Annals of PDE \textbf{4} (2018), no.~1.

\bibitem{Miao2016}
Shuang Miao and Pin Yu, \emph{On the formation of shocks for quasilinear wave
  equations}, Inventiones mathematicae \textbf{207} (2016), no.~2, 697--831.

\bibitem{ViatcehslavMukhanov2013}
Viatcheslav Mukhanov, \emph{Physical foundations of cosmology}, Cambridge
  University Press, November 2013.

\bibitem{Neal2023}
Isaac Neal, Calum Rickard, Steve Shkoller, and Vlad Vicol, \emph{A new type of
  stable shock formation in gas dynamics}, Communications on Pure and Applied
  Analysis \textbf{0} (2023), no.~0, 0--0.

\bibitem{Neal2023a}
Isaac Neal, Steve Shkoller, and Vlad Vicol, \emph{A characteristics approach to
  shock formation in {2D Euler }with azimuthal symmetry and entropy},
  arXiv:2302.01289 (2023).

\bibitem{Noh2004}
Hyerim Noh and Jai-chan Hwang, \emph{Second-order perturbations of the
  friedmann world model}, Physical Review D \textbf{69} (2004), no.~10, 104011.

\bibitem{Noh2005}
\bysame, \emph{Relativistic-newtonian correspondence of the zero pressure but
  weakly nonlinear cosmology}, Classical and Quantum Gravity \textbf{22}
  (2005), no.~16, 3181--3188.

\bibitem{Oliynyk2016a}
Todd~A. Oliynyk, \emph{{Future stability of the FLRW fluid solutions in the
  presence of a positive cosmological constant}}, Communications in
  Mathematical Physics \textbf{346} (2016), 293--312.

\bibitem{Oliynyk2024}
Todd~A. Oliynyk, \emph{On the fractional density gradient blow-up conjecture of
  {Rendall}}, Communications in Mathematical Physics \textbf{405} (2024),
  no.~8.

\bibitem{Peebles2020}
P.~J.~E. Peebles, \emph{The large-scale structure of the universe}, Princeton
  University Press, November 1980.

\bibitem{Rendall2005}
Alan~D Rendall, \emph{Theorems on existence and global dynamics for the
  {Einstein equations}}, Living Reviews in Relativity \textbf{8} (2005), no.~1,
  6.

\bibitem{Scherrer2001}
Robert~J. Scherrer and Enrique Gazta{\~{n}}aga, \emph{The real- and
  redshift-space density distribution functions for large-scale structure in
  the spherical collapse approximation}, Monthly Notices of the Royal
  Astronomical Society \textbf{328} (2001), no.~1, 257--265.

\bibitem{Shkoller2024}
Steve Shkoller and Vlad Vicol, \emph{The geometry of maximal development and
  shock formation for the {Euler} equations in multiple space dimensions},
  Inventiones mathematicae (2024).

\bibitem{Speck2016b}
Jared Speck, \emph{Lecture notes on shock formation in quasilinear wave
  equations: An overview of the nearlyplane symmetric gime}, Unpublished notes
  (2016).

\bibitem{Speck2016a}
Jared Speck, \emph{Shock formation in small-data solutions to {3D} quasilinear
  wave equations}, 2016, Includes bibliographical references and index. -
  Electronic reproduction; Providence, Rhode Island; American Mathematical
  Society; 2016. - Description based on print version record.

\bibitem{Speck2017a}
\bysame, \emph{Shock formation for {2D} quasilinear wave systems featuring
  multiple speeds: Blowup for the fastest wave, with non-trivial interactions
  up to the singularity}, Annals of PDE \textbf{4} (2017), no.~1.

\bibitem{Speck2020}
Jared Speck, \emph{Stable {ODE}-type blowup for some quasilinear wave equations
  with derivative-quadratic nonlinearities}, Analysis \& {PDE} \textbf{13}
  (2020), no.~1, 93--146.

\bibitem{Speck2016}
Jared Speck, Gustav Holzegel, Jonathan Luk, and Willie Wong, \emph{Stable shock
  formation for nearly simple outgoing plane symmetric waves}, Annals of PDE
  \textbf{2} (2016), no.~2.

\bibitem{Weinberg1976}
Steven Weinberg, \emph{Gravitation and cosmology}, Wiley, New York [u.a.],
  1976.

\bibitem{Zeldovich1971}
Ya.~B. Zel'dovich and I.~D. Novikov, \emph{Relativistic astrophysics 2: The
  structure and evolution of the universe}, University of Chicago Press,
  Chicago, 1971.

\end{thebibliography}

\end{document}